\theoremstyle{definition}
\numberwithin{equation}{section}
\newcommand{\R}{\mathbb{R}}
\newcommand{\norm}[1]{\left\lVert#1\right\rVert}
\newcommand{\inner}[2]{\left\langle #1, #2 \right\rangle}
\newcommand{\I}{\int\limits}
\newcommand{\FI}{\fint\limits}
\newcommand{\F}{\mathcal{F}}
\newcommand{\RN}[1]{%
  \textup{\uppercase\expandafter{\romannumeral#1}}%
}
\DeclareMathOperator{\Supp}{supp}
\DeclareMathOperator{\loc}{loc}
\DeclareMathOperator{\Div}{div}
\DeclareMathOperator{\dist}{dist}
\DeclareMathOperator{\data}{\textbf{data}}
\DeclareMathOperator*{\osc}{osc}
\def\u{\upsilon}
\def\O{\Omega}
\def\F{\mathcal{F}}
\def\P{\mathcal{P}}
\def\N{\mathcal{N}}
\theoremstyle{plain}
\newtheorem{thm}{Theorem}[section]
\newtheorem{lem}{Lemma}[section]
\theoremstyle{definition}
\newtheorem{defn}{Definition}[section]
\theoremstyle{remark}
\newtheorem{rmk}{Remark}[section]
\numberwithin{equation}{section}
\begin{document}

\title[Regularity for Orlicz phase problems]
{Regularity for Orlicz phase problems}


\author{Sumiya Baasandorj}
\address{Department of Mathematical Sciences, Seoul National University, Seoul 08826, Korea}
\email{summa2017@snu.ac.kr}

\author{Sun-Sig Byun}
\address{Department of Mathematical Sciences and Research Institute of Mathematics,
Seoul National University, Seoul 08826, Korea}
\email{byun@snu.ac.kr}

\thanks{This work was supported by NRF-2017R1A2B2003877.}

\subjclass[2010]{Primary  49N60; Secondary 35B27, 35B65, 35J20.}

\date{\today.}


\keywords{Gradient H\"older regularity; Orlicz multi-phas problems; Lavrentiev phenomenon; non-standard growth}

\begin{abstract}
We provide comprehensive regularity results and optimal conditions for a general class of functionals involving Orlicz multi-phase of the type
\begin{align}
	\label{abst:1}
	\u\mapsto \I_{\O} F(x,\u,D\u)\,dx,
\end{align}
exhibiting non-standard growth conditions and non-uniformly elliptic properties. 

The model functional under consideration is given by the Orlicz multi-phase integral
\begin{align}
	\label{abst:2}
	\u\mapsto \I_{\O} f(x,\u)\left[ G(|D\u|) + \sum\limits_{k=1}^{N}a_k(x)H_{k}(|D\u|) \right]\,dx,\quad N\geqslant 1,
\end{align}
where $G,H_{k}$ are $N$-functions and $ 0\leqslant a_{k}(\cdot)\in L^{\infty}(\O) $ with $0 < \nu \leqslant f(\cdot) \leqslant L$. Its ellipticity ratio varies according to the geometry of the level sets $\{a_{k}(x)=0\}$ of the modulating coefficient functions $a_{k}(\cdot)$ for every $k\in \{1,\ldots,N\}$. 

We give a unified treatment to show various regularity results for such multi-phase problems with the coefficient functions $\{a_{k}(\cdot)\}_{k=1}^{N}$ not necessarily H\"older continuous even for a lower level of the regularity. Moreover, assuming that minima of the functional in \eqref{abst:2} belong to better spaces such as $C^{0,\gamma}(\O)$ or $L^{\kappa}(\O)$ for some $\gamma\in (0,1)$ and $\kappa\in (1,\infty]$, we address optimal conditions on nonlinearity for each variant under which we build comprehensive regularity results. 

On the other hand, since there is a lack of homogeneity properties in the nonlinearity, we consider an appropriate scaling with keeping the structures of the problems under which we apply Harmonic type approximation in the setting varying on the a priori assumption on minima. We believe that the methods and proofs developed in this paper are suitable to build regularity theorems for a larger class of non-autonomous functionals.

\end{abstract}

\maketitle
\tableofcontents
\section{Introduction}

We aim to provide optimal and comprehensive regularity results for minimizers of  functionals featuring a non-standard  growth  and a non-uniform ellipticity. The primary model case in mind under investigation is given by an Orlicz multi-phase functional
\begin{align}
    \label{ifunct}
    W^{1,1}(\O)\ni \u\mapsto \mathcal{P}(\u,\O):= \I_{\O} \Psi(x,|D\u|)\,dx
\end{align}
for a bounded open domain $\O\subset\R^n$ with $n\geqslant 2$, where throughout the paper we shall denote by
\begin{align}
	\label{psi}
	\Psi(x,t):= G(t) +a(x)H_{a}(t)+ b(x)H_{b}(t)
	\quad (x\in\O,\, t\geqslant 0)
\end{align}
for functions $G,H_{a}, H_{b}\in \mathcal{N}$  in the sense of Definition \ref{N-func} and $0\leqslant a(\cdot),b(\cdot)\in L^{\infty}(\O)$.
The Orlicz multi-phase functional $\mathcal{P}$ in \eqref{ifunct} is naturally defined for  functions $\u\in W^{1,1}(\O)$, which is a considerable one including the following examples of functionals for the regularity theory:

\begin{enumerate}
	\item[1.] $p$-growth: $\Psi(x,t) := t^{p}$,\quad $p>1$, see for instance \cite{Evans1, GG1,LU1, Lewis1, Lind1, Man1, Uh1, Ur1}.
	\item[2.] Orlicz growth: $\Psi(x,t):= G(t)$, see for instance \cite{BBL1,BC1,DSV1, DSV2,L1}.
	\item[3.] $(p,q)$-double phase: $\Psi(x,t):= t^{p} + a(x)t^{q}$ for $1<p\leqslant q$, see for instance \cite{BCM1, BCM3, CM1,CM2,CM3,DM1}.
	\item[4.] Borderline case of double phase: $\Psi(x,t):= t^{p} + a(x)t^{p}\log(1+t)$ for $1<p$, see for instance \cite{BCM2,BO2}.
	\item[5.] Multi-phase: $\Psi(x,t):= t^{p} + a(x)t^{q} + b(x)t^{s}$ for $1<p\leqslant q,s$, see for instance \cite{BBO2,DO1}.
	\item[6.] Orlicz double phase: $\Psi(x,t):= G(t)+a(x)H_{a}(t)$, see for instance \cite{ BBO1, BO3}.
	\item[7.] Orlicz multi-phase: $\Psi(x,t):= G(t)+a(x)H_{a}(t) + b(x)H_{b}(t)$, see for instance \cite{BBO2}.
\end{enumerate}

Over last several years a systematic analysis of the functionals aforementioned has been an object of intensive studies for the regularity theory. Among them the $(p,q)$-double phase functional is a significant example given by 

\begin{align}
	\label{func:(p,q)}
	W^{1,1}(\O)\ni v\mapsto \P_{p,q}(v,\O):= \I_{\O}\left[ |Dv|^{p} + a(x)|Dv|^{q} \right]\,dx,\quad 1<p\leqslant q.
\end{align}
Another example is the so-called borderline case of double phase defined by 
\begin{align}
	\label{func:log}
	W^{1,1}(\O)\ni v\mapsto \P_{\log}(v,\O):= \I_{\O}\left[ |Dv|^{p} + a(x)|Dv|^{p}\log(1+|Dv|) \right]\,dx,\quad 1<p.
\end{align}
 The last functional we would like to single out is the so-called multi-phase functional introduced in \cite{DO1} is of type
 \begin{align}
 	\label{func:multi}
 	W^{1,1}(\O)\ni v\mapsto \P_{p,q,s}(v,\O):= \I_{\O}\left[ |Dv|^{p} + a(x)|Dv|^{q} + b(x)|Dv|^{s} \right]\,dx,\quad 1<p\leqslant q,s.
 \end{align}

 The $(p,q)$-double phase functional was initially introduced by Zhikov \cite{Zh1,Zh2,ZKO} in order to study the feature of strongly anisotropic materials in the context of homogenization and nonlinear elasticity. A main feature of the functionals $\P_{p,q}$, $\P_{p,q,s}$ and $\P_{\log}$ in \eqref{func:(p,q)}-\eqref{func:multi} is that their integrand changes their growth and ellipticity ration depending on the geometric behavior of the coefficient functions $a(\cdot)$ and $b(\cdot)$, which determines the geometry of the mixture of different materials. As shown in \cite{ELM3,FMM,Zh3,Zh4}, such functionals exhibit Lavrentiev phenomenon which means that minimizers are discontinuous. 
 
Each functional mentioned above belongs to a family of functionals satisfying nonstandard growth conditions of $(p,q)$-type. These are functionals of type 
\begin{align*}
	W^{1,1}(\O)\ni v\mapsto \I_{\O}F(x,Dv)\,dx,
\end{align*} 
 whose energy density $F(x,z)$ satisfies
 \begin{align*}
 	|z|^{p} \lesssim F(x,z) \lesssim |z|^{q} +1,\quad 1<p<q,
 \end{align*}
according to Marcellini's terminology \cite{Ma1,Ma2,Ma3}. Over the several decades, functionals with nonstandard growth have been extensively investigated, see for instance 
\cite{Br1, ELM1, ELM2, ELM3, FMM, FS1, Sch1, Sch2} and the references therein. Those functionals aforementioned give a relevant example of the energy overlying in the so-called Musielak-Orlicz space which will described in Section \ref{subsec:2.2} below.

For the regularity theory, the optimal conditions for the gradient of a local minimizer $v$ of the functional $\P_{p,q}$ to be H\"older continuous have been discovered in \cite{BCM3,CM1,CM2}. They are

\begin{subequations}
	 \begin{empheq}[left={\empheqlbrace}]{align}
        \quad & \frac{q}{p} \leqslant 1+\frac{\alpha}{n} 
	\quad\text{and}\quad
	0\leqslant a(\cdot)\in C^{0,\alpha}(\O)   &\text{if }& v\in W^{1,p}(\O)  \label{(p,q):con1},\\
        & q \leqslant p + \alpha
	\quad\text{and}\quad
	0\leqslant a(\cdot)\in C^{0,\alpha}(\O) &\text{if }& v\in  W^{1,p}(\O)\cap L^{\infty}(\O)  \label{(p,q):con2},\\
       & q < p + \frac{\alpha}{1-\gamma}
	\quad\text{and}\quad
	0\leqslant a(\cdot)\in C^{0,\alpha}(\O)
         &\text{if }& v\in W^{1,p}(\O)\cap C^{0,\gamma}(\O)\text{ for some }\gamma\in (0,1).\label{(p,q):con3}
      \end{empheq}
	\end{subequations}
Those conditions in \eqref{(p,q):con1}-\eqref{(p,q):con3} are essentially sharp in the sense of Lavrentiev gap for the functional $\P_{p,q}$. Here we refer to the very recent interesting paper \cite{BDS} and see also \cite{ELM3,FMM}. On the other hand, letting $a(\cdot)\in C^{\omega_a}(\O)$ with a concave function $\omega_{a}: [0,\infty)\rightarrow [0,\infty)$ vanishing at the origin, the conditions for a local minimizer $v$ of the functional $\P_{\log}$ to be regular have been discovered in \cite{BCM2}, which are

\begin{subequations}
	 \begin{empheq}[left={\empheqlbrace}]{align}
        \quad & v \text{ is H\"older continuous}  && \text{if } \limsup\limits_{t\to 0^{+}}\omega_{a}(t)\log\left(\frac{1}{t} \right) < \infty  \label{log:con1},\\
        & v\text{ is H\"older continuous with an arbitrary exponent}  && \text{if } \limsup\limits_{t\to 0^{+}}\omega_{a}(t)\log\left(\frac{1}{t} \right) = 0  \label{log:con2},\\
       & Dv \text{ is H\"older continuous} 
         && \text{if } \omega_{a}(t)\lesssim t^{\alpha}\text{ with some } \alpha\in (0,1).\label{log:con3}
      \end{empheq}
	\end{subequations}
	
Furthermore, the optimal condition for a local minimizer of the multi-phase functional $\P_{p,q,s}$ in \eqref{func:multi} to be gradient H\"older continuous has been obtained in \cite{DO1}, that is
\begin{align}
	\label{(p,q,s):con1}
	\frac{q}{p} \leqslant 1+\frac{\alpha}{n},\quad
	\frac{s}{p} \leqslant 1 + \frac{\beta}{n},\quad
	0\leqslant a(\cdot)\in C^{0,\alpha}(\O)\quad\text{and}\quad
	0\leqslant b(\cdot)\in C^{0,\beta}(\O)
\end{align}
for some $\alpha,\beta\in (0,1]$. This condition essentially is a natural outcome of the condition \eqref{(p,q):con1} and sharp. So looking at the conditions presented in \eqref{(p,q):con1}-\eqref{(p,q):con3}, \eqref{(p,q,s):con1} and the ones \eqref{log:con1}-\eqref{log:con3}, there is a natural question as to whether  the coefficient functions $a(\cdot), b(\cdot)$ in \eqref{func:(p,q)} and \eqref{func:multi} are necessarily H\"older continuous even for a lower level of the regularity, depending on the a priori assumptions on a local minimizer under consideration. Here in this paper we intend to answer such questions by treating much more general class of functionals with Orlicz multi-phase growth below. Apart from the papers mentioned above, the regularity theory for the double phase problems has been the object of an intensive investigation over the last years, see for instance \cite{BM, BO1, BY1, DM2, DM3, HHT, Ok1, Ok2} and the references therein.

 In this paper we shall deal with a class of  general functionals of type 
 \begin{align}
 	\label{functional}
 	W^{1,1}(\O)\ni \u\mapsto \F(\u,\O):= \I_{\O} F(x,\u,D\u)\,dx,
 \end{align}
 where $F:\O\times\R\times \R^{n}\rightarrow \R$ is a Carathe\'odory function satisfying the double-sided bound
 \begin{align}
 	\label{sa:1}
 	\nu\Psi(x,|z|) \leqslant F(x,y,z) \leqslant L\Psi(x,|z|)\quad (x\in\O,\, y\in\R,\,z\in\R^n),
 \end{align}
 where $\Psi$ is the same as in \eqref{psi}. Under the growth conditions \eqref{sa:1}, local minimizers ($Q$-minimizers) of the functional $\mathcal{F}$ in \eqref{functional} for some number $Q\geqslant 1$ can be defined as follows:
 \begin{defn}
    \label{def_min}
 	A function $u\in W^{1,1}_{\loc}(\O)$ is a local minimizer ($Q$-minimizer) of the functional $\F$ defined in \eqref{functional}  if $\Psi(x,|Du|)\in L^{1}(\O)$ and the minimality condition 
 	$\F(u,\Supp(u-\u))\leqslant \F(\u,\Supp(u-\u))$ $\left(\F(u,\Supp(u-\u))\leqslant Q\F(\u,\Supp(u-\u))\right)$ is satisfied, whenever $\u\in W^{1,1}_{\loc}(\O)$ with $\Supp(u-\u)\Subset\O$. 
 \end{defn}  
In the rest of the paper we always assume $a(\cdot)\in C^{\omega_{a}}(\O)$ and $b(\cdot)\in C^{\omega_{b}}(\O)$, where  $\omega_{a}, \omega_{b} : [0,\infty)\rightarrow [0,\infty) $ are concave functions such that $\omega_{a}(0)=0$ and $\omega_{b}(0)=0$ unless they are specified. Throughout the paper we define the function $\Lambda : (0,\infty)\times (0,\infty)\rightarrow (0,\infty)$ given by
\begin{align}
	\label{Lambda}
	\Lambda(s,t):= \frac{\omega_{a}(s)}{1+ \omega_{a}(s)} \frac{H_{a}(t)}{G(t)}+ \frac{\omega_{b}(s)}{1+ \omega_{b}(s)} \frac{H_{b}(t)}{G(t)}
	\quad\text{for any}\quad
	s,t>0.
\end{align}

 We shall consider  a local $Q$-minimizer $u$ of the functional $\P$ in \eqref{ifunct} or a local minimizer $u$ of the functional $\F$ in \eqref{functional} under one of the following main assumptions:
 \begin{fleqn}[\parindent]
\begin{align}
     \label{ma:1}
     \begin{cases}
         u\in W^{1,\Psi}(\O), \\
          \displaystyle \lambda_{1}:= \sup\limits_{t>0} \Lambda\left(t,G^{-1}(t^{-n})\right) < \infty,
     \end{cases}
 \end{align}
 \end{fleqn}

\begin{fleqn}[\parindent]
\begin{align}
     \label{ma:2}
     \begin{cases}
         u\in W^{1,\Psi}(\O)\cap L^{\infty}(\O), \\
          \displaystyle \lambda_{2}:=\sup\limits_{t>0} \Lambda\left(t,\frac{1}{t}\right) < \infty,
     \end{cases}
 \end{align}
 \end{fleqn}

\begin{fleqn}[\parindent]
\begin{align}
     \label{ma:3}
     \begin{cases}
         u\in W^{1,\Psi}(\O)\cap C^{0,\gamma}(\O)\quad\text{for some}\quad \gamma\in (0,1), \\
          \displaystyle \lambda_{3}:=\sup\limits_{t>0} \Lambda\left(t^{\frac{1}{1-\gamma}},\frac{1}{t}\right) < \infty.
     \end{cases}
 \end{align}
 \end{fleqn}
 Here $G^{-1}$ is the inverse function of $G$. We straightforwardly check that the conditions \eqref{ma:1}-\eqref{ma:3} read as \eqref{(p,q):con1}-\eqref{(p,q):con3}, respectively, when $G(t)=t^{p}$, $H_{a}(t)=t^{q}$, $w_a(t)=t^{\alpha}$ and $b(\cdot)\equiv 0$ for some $1<p\leqslant q$ and $\alpha\in (0,1]$. Also the condition \eqref{(p,q,s):con1} is the same as \eqref{ma:1} for relevant choices of the functions. Moreover, $\eqref{ma:1}_{2}$ and $\eqref{ma:2}_{2}$ are the same as the one in \eqref{log:con1} when $G(t)=t^{p}$ and $H_{a}(t)=t^{p}\log(1+t)$ for some $p>1$ and $b(\cdot)\equiv 0$. It has been shown that the assumption \eqref{sa:1} is not enough already in the special case of $G(t)=t^{p}$ for $p > 1$ together with $a(\cdot)\equiv 0$ and $b(\cdot) \equiv 0$ for obtaining higher regularity of minimizers of the functional $\F$ in \eqref{functional}. In this regard we consider the energy density $F$ in \eqref{functional} of type

\begin{align}
	\label{type}
	F(x,y,z):= F_{G}(x,y,z)+ a(x)F_{H_{a}}(x,y,z) + b(x)F_{H_{b}}(x,y,z)
\end{align}
for every $x\in\O$, $y\in\R$ and $z\in\R^{n}$, where $F_{G}(\cdot)$, $F_{H_{a}}(\cdot)$ and $F_{H_{b}}(\cdot)$ are continuous functions belonging to $C^{2}(\R^{n}\setminus \{0\})$ with respect to $z$-variable and satisfying the following structure assumptions with fixed constants $0<\nu\leqslant L$:
 
 \begin{align}
 	\label{sa:2}
 	\begin{cases}
 		|D_{z}F_{\Phi}(x,y,z)||z| + |D^2_{zz}F_{\Phi}(x,y,z)||z|^2 \leqslant L\Phi(|z|), \\
 		\nu\dfrac{\Phi(|z|)}{|z|^2}|\xi|^2 \leqslant \inner{D_{zz}^{2}F_{\Phi}(x,y,z)\xi}{\xi}, \\
 		|D_{z}F_{\Phi}(x_1,y,z)-D_{z}F_{\Phi}(x_2,y,z)||z| \leqslant  L\omega(|x_1-x_2|)\Phi(|z|), \\
 		|F_{\Phi}(x,y_1,z)-F_{\Phi}(x,y_2,z)| \leqslant L\omega(|y_1-y_2|)\Phi(|z|)
 	\end{cases}
 \end{align}
for every $\Phi\in\{G,H_{a},H_{b} \}$, whenever  $x,x_1,x_2\in\O$, $y,y_1,y_2\in \R$, $z\in\R^n\setminus\{0\}$, $\xi\in\R^n$, here either 
 \begin{align}
 	\label{omega1} 	
 	\omega(t) := \min\{t^{\mu},1\} \text{ with some } \mu\in (0,1) \text{ for all } t\geqslant 0
 	\end{align}
 or
  \begin{align}
     \label{omega2}
     \omega : [0,+\infty)\rightarrow [0,+\infty) \text{ is concave such that } \omega(0)=0 \text{ and } \omega(\cdot) \leqslant 1.
 \end{align}
 
 The structure conditions in \eqref{sa:2} are satisfied for instance by the model functional 
 \begin{align*}
 	W^{1,1}(\O)\ni\u\mapsto \I_{\O}f(x,\u)\Psi(x,|D\u|)\,dx,
 \end{align*}
 where $0<\nu_1 \leqslant f(x,y)\leqslant L_1$ for some constants $\nu_1, L_1$ and for some suitable  continuous function $f(\cdot)$ satisfying the following inequality
 \begin{align*}
 	|f(x_1,y_1)-f(x_2,y_2)|\leqslant L\omega(|x_1-x_2| +|y_1-y_2|)
\end{align*} 
whenever $x_1,x_2\in\R^n$ and $y_1,y_2\in\R$, where $\omega$ is the same as defined in \eqref{omega1} or \eqref{omega2}. We also remark that those general functionals mentioned above have not been considered in the present literature for the regularity theory as far as we are concerned, moreover the functionals in \eqref{functional} with structure assumptions \eqref{sa:1} and \eqref{sa:2} is not differentiable with respect to the second variable and so it can not be treated by its Euler-Lagrange equation. In order to shorten the notations in this paper, for a given local minimizer $u$ of the functional $\F$, we shall use a set of various basic parameters which is ``data of the problem" depending on which assumption of \eqref{ma:1}-\eqref{ma:3} is  considered as follows: 

\begin{align}
	\label{data}
	\data
	 \equiv \left\{\begin{array}{ll}
        &n,\lambda_{1},s(G),s(H_{a}),s(H_{b}),\nu,L, \norm{a}_{C^{\omega_{a}}(\O)},\norm{b}_{C^{\omega_{b}}(\O)}, \omega(\cdot), \norm{\Psi(x,|Du|)}_{L^1(\O)}, \\
        &  \norm{u}_{L^{1}(\O)}, \omega_a(1), \omega_{b}(1)\quad \text{if } \eqref{ma:1} \text{ is considered, }\\
        & n,\lambda_{2},s(G),s(H_{a}),s(H_{b}),\nu,L,  \norm{a}_{C^{\omega_{a}}(\O)},\norm{b}_{C^{\omega_{b}}(\O)},\omega(\cdot), \norm{u}_{L^{\infty}(\O)},\omega_a(1), \omega_{b}(1)\\
         & \text{if } \eqref{ma:2} \text{ is considered, } \\
       & n,\lambda_{3},s(G),s(H_{a}),s(H_{b}),\nu,L,  \norm{a}_{C^{\omega_{a}}(\O)},\norm{b}_{C^{\omega_{b}}(\O)},\omega(\cdot), [u]_{0,\gamma},\omega_a(1), \omega_{b}(1) \\
       & \text{ if } \eqref{ma:3} \text{ is considered, }
        \end{array}\right.
\end{align}
where $\lambda_1$, $\lambda_2$, $\lambda_3$ are the same numbers as defined in \eqref{ma:1}-\eqref{ma:3} and $s(G),s(H_a),s(H_{b})$ are indices of the functions $G,H_{a}, H_{b}$ in the sense of Definition \ref{N-func}, respectively. For a given local $Q$-minimizer $u$ of the functional $\P$, $\data$ is understood by the above set of parameters with the constants $L,\nu$ having been replaced by $Q$ in any case of \eqref{ma:1}-\eqref{ma:3} into the consideration. 
With $\O_0\Subset\O$ being a fixed open subset, we also denote by $\data(\O_0)$ the set of parameters in \eqref{data} together with $\dist(\O_{0},\partial\O)$ under one of the assumptions \eqref{ma:1}-\eqref{ma:3}:
\begin{align}
    \label{data0}
    \data(\O_0) \equiv \data, \dist(\O_0,\partial\O).
\end{align}

Now we are ready to state our main results in this paper.
 
 \begin{thm}[Maximal regularity]
 	\label{mth:mr}
 	Let $u\in W^{1,\Psi}(\O)$ be a local minimizer of the functional $\F$ defined in \eqref{functional}, under the assumptions \eqref{sa:1}, \eqref{sa:2} and \eqref{omega1}. Suppose that $\omega_{a}(t) = t^{\alpha}$ and $\omega_{b}(t) = t^{\beta}$ for some $\alpha,\beta\in (0,1]$. If one of the following assumptions
 \begin{subequations}
	 \begin{empheq}[left={\empheqlbrace}]{align}
        &\eqref{ma:1}   \label{mth:mr:1},\\
        &\eqref{ma:2}  \label{mth:mr:2}, \\
        &\eqref{ma:3} \quad\text{with}\quad \limsup\limits_{t\to 0^{+}} \Lambda\left(t^{\frac{1}{1-\gamma}},\frac{1}{t}\right)=0 \label{mth:mr:3}
      \end{empheq}
	\end{subequations}
	is satisfied,  then there exists $\theta\in (0,1)$ depending only on $n,s(G),s(H_{a}),s(H_{b}),\nu,L,\alpha,\beta$ and $\mu$ such that $Du\in C^{0,\theta}_{\loc}(\O)$. 	
 \end{thm}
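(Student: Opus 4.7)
Since $\omega_{a}(t)=t^{\alpha}$ and $\omega_{b}(t)=t^{\beta}$ are genuine H\"older moduli, my first move is to reduce the two weaker alternatives \eqref{mth:mr:1}--\eqref{mth:mr:2} to the H\"older case \eqref{mth:mr:3}, and then to prove $Du\in C^{0,\theta}_{\loc}$ in the H\"older setting by a phase-alternative plus harmonic approximation scheme.

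\textbf{Reduction to \eqref{mth:mr:3}.} Under \eqref{mth:mr:1} the assumption $\lambda_{1}<\infty$ should, via the De~Giorgi/Moser iteration for Orlicz multi-phase functionals developed earlier in the paper, give $u\in C^{0,\gamma_{0}}_{\loc}(\O)$ for some $\gamma_{0}\in(0,1)$; under \eqref{mth:mr:2}, the analogous $L^{\infty}$-based Moser iteration allows $\gamma_{0}$ to be chosen arbitrarily close to $1$. Since $\omega_{a},\omega_{b}$ are powers,
\begin{align*}
    \Lambda\!\left(t^{1/(1-\gamma_{0})},1/t\right)\,\leqslant\,t^{\alpha/(1-\gamma_{0})}\,\frac{H_{a}(1/t)}{G(1/t)}+t^{\beta/(1-\gamma_{0})}\,\frac{H_{b}(1/t)}{G(1/t)},
\end{align*}
and using $\lambda_{1}<\infty$ (respectively $\lambda_{2}<\infty$) to control the ratios $H_{a,b}/G$ at the relevant scales one verifies that, for $\gamma_{0}$ close enough to $1$, $\limsup_{t\to 0^{+}}\Lambda(t^{1/(1-\gamma_{0})},1/t)=0$. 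Hence \eqref{mth:mr:3} is satisfied and it suffices to prove the theorem under that assumption.

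\textbf{Main freezing argument.} Fix $B_{r}=B_{r}(x_{0})\Subset\O_{0}\Subset\O$ with $r$ small, set $\bar{a}:=\inf_{B_{r}}a$, $\bar{b}:=\inf_{B_{r}}b$, and distinguish four regimes according to whether $\bar a$ and $\bar b$ are each comparable to $\norm{a}_{C^{\omega_{a}}(\O)}r^{\alpha}$ and $\norm{b}_{C^{\omega_{b}}(\O)}r^{\beta}$ respectively (``nondegenerate''), or much smaller (``degenerate''). In each regime compare $u$ with the minimizer $v$ on $B_{r}$, with boundary datum $u|_{\partial B_{r}}$, of the functional obtained by replacing $a,b$ by $\bar a,\bar b$ and freezing the $(x,y)$-dependence of $F_{G},F_{H_{a}},F_{H_{b}}$ at $(x_{0},u(x_{0}))$. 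In every regime $v$ satisfies uniform $C^{1,\alpha_{*}}$ interior estimates coming from Lieberman-type theory for equations of Orlicz growth and its Orlicz double/triple phase extensions developed in the previous works on the Orlicz double phase. The minimality of $u$, combined with \eqref{sa:2}, \eqref{omega1}, the oscillation bound $\osc_{B_{r}}u\leqslant[u]_{0,\gamma}r^{\gamma}$, and the H\"older continuity of $a,b$, then yields a comparison estimate of the form
\begin{align*}
    \fint_{B_{r}}\Psi(x,|Du-Dv|)\,dx\,\leqslant\, C\bigl[\,r^{\mu}+\Lambda\!\left(r^{1/(1-\gamma)},r^{-1}\right)\bigr]\,\fint_{B_{r}}\bigl[\Psi(x,|Du|)+1\bigr]\,dx.
\end{align*}

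\textbf{Excess decay and main obstacle.} Combining the $C^{1,\alpha_{*}}$ regularity of $v$ with this comparison and the H\"older bound on $u$ produces, for a suitable Campanato-type excess $E(x_{0},r)$ built from $V$-function differences of $Du$, a decay inequality
\begin{align*}
    E(x_{0},\tau r)\,\leqslant\, C\tau^{2\alpha_{*}}E(x_{0},r)+\sigma(r),\qquad \sigma(r)\to 0 \text{ as } r\to 0,
\end{align*}
where the vanishing of $\sigma(r)$ is guaranteed precisely by the $\limsup=0$ clause of \eqref{mth:mr:3}. A standard iteration (first $\tau$ small to absorb the first term, then $r$ small and using that $\sigma(r)$ decays at a power rate in the H\"older power case) produces power decay of $E$, whence $Du\in C^{0,\theta}_{\loc}(\O)$ by the Campanato characterization. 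The main obstacle is the phase-alternative step: one has to balance two independent scales $r^{\alpha},r^{\beta}$ against two thresholds for $\bar a,\bar b$, and in the fully non-degenerate regime one needs a uniform $C^{1,\alpha_{*}}$ estimate for the frozen Orlicz triple-phase functional with nonnegative constant weights that degenerates only polynomially in $\bar a H_{a}(M)/G(M)$ and $\bar b H_{b}(M)/G(M)$, where $M$ is the $L^{\infty}$-scale of $Du$; this is exactly the quantity controlled by the a priori H\"older bound on $u$ together with the finiteness of $\lambda_{3}$.
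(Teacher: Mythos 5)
Your overall architecture --- freeze the $(x,y)$-dependence and the coefficients at their infima on a ball, compare with the regular minimizer of the resulting autonomous functional, and conclude by excess decay and Campanato --- is the same as the paper's (Sections \ref{sec:7}--\ref{sec:9}). But there is a genuine gap in how you feed the three hypotheses into this scheme, and it sits exactly at the step you treat most casually: the reduction of \eqref{mth:mr:1}--\eqref{mth:mr:2} to \eqref{mth:mr:3}. De Giorgi/Moser iteration (Theorem \ref{thm:hc}) produces one fixed H\"older exponent $\gamma_0\equiv\gamma_0(\data)$, not an exponent ``arbitrarily close to $1$''; arbitrary exponents are the content of the Morrey decay Theorem \ref{mth:md}, which itself requires the full comparison/harmonic-approximation machinery and cannot be obtained from the Caccioppoli--De Giorgi estimates alone. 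With only the fixed De Giorgi exponent the verification $\limsup_{t\to0^+}\Lambda(t^{1/(1-\gamma_0)},1/t)=0$ fails in general: take $G(t)=t^{p}$, $H_a(t)=t^{q}$ with $q=p(1+\alpha/n)$ and $p$ large, so that $\Lambda(t^{1/(1-\gamma_0)},1/t)\approx t^{\alpha/(1-\gamma_0)-\alpha p/n}$, which tends to $0$ only if $\gamma_0>1-n/p$. So case \eqref{mth:mr:1} cannot be absorbed into case \eqref{mth:mr:3} by your route; the paper instead handles it directly through Gehring's higher integrability, producing the multiplier $R^{\alpha\delta/((1+\delta)(1+s(H_a)))}$ in \eqref{mr:3} without ever improving the H\"older exponent of $u$ by hand.

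A second, related gap appears inside your treatment of case \eqref{mth:mr:3}. Your perturbation term is $\sigma(r)=\Lambda(r^{1/(1-\gamma)},r^{-1})$, and the hypothesis only gives $\sigma(r)\to0$; for $Du\in C^{0,\theta}$ you need $\sigma(r)\lesssim r^{\epsilon}$, and the power moduli $\omega_a(t)=t^{\alpha}$ do not by themselves convert ``$\limsup=0$'' into a power rate (the ratio $H_a(1/r)/G(1/r)$ can eat almost all of $r^{\alpha/(1-\gamma)}$). The paper extracts the power rate by first running the Morrey decay theorem to upgrade $u$ to $C^{0,(1+\gamma)/2}$ and only then re-estimating the frozen-coefficient error, obtaining $I_a\leqslant cR^{\alpha/2}$ in Lemma \ref{lem:5ce}. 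So both defects have the same cure: you must insert the intermediate decay estimate \eqref{mth:md:7} (equivalently, the bound $\fint_{B_{2R}}\Psi(x,|Du|)\,dx\leqslant cR^{-k}$ for every $k>0$ and the arbitrary H\"older continuity of $u$) before the final excess iteration. As written, your iteration yields at best continuity of $Du$, not H\"older continuity, in case \eqref{mth:mr:3}, and does not cover cases \eqref{mth:mr:1}--\eqref{mth:mr:2} at all. A minor further point: the uniform $C^{1,\alpha_*}$ estimates for the frozen functional do not ``degenerate polynomially'' in $\bar a,\bar b$; by Lemma \ref{lem:nf2} the frozen $\Psi_0$ lies in $\mathcal N$ with index $s(G)+s(H_a)+s(H_b)$ independently of the constant weights, so the estimates of \cite{L1} are already uniform.
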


\begin{thm}[Morrey decay]
	\label{mth:md}
	Let $u\in W^{1,\Psi}(\O)$ be a local minimizer of the functional $\F$ defined in \eqref{functional}, under the assumptions \eqref{sa:1}, \eqref{sa:2} and \eqref{omega2}.
 If one of the following assumptions
	\begin{subequations}
	 \begin{empheq}[left={\empheqlbrace}]{align}
          &\eqref{ma:1}  \quad\text{with}\quad \limsup\limits_{t\to 0^{+}} \Lambda\left(t,G^{-1}(t^{-n})\right)=0 \label{mth:md:1},\\
         &\eqref{ma:2}\quad\text{with}\quad \limsup\limits_{t\to 0^{+}} \Lambda\left(t,\frac{1}{t}\right)=0 \label{mth:md:2},\\
        &\eqref{ma:3} \quad\text{with}\quad \limsup\limits_{t\to 0^{+}} \Lambda\left(t^{\frac{1}{1-\gamma}},\frac{1}{t}\right)=0,\label{mth:md:3} \\
        &\eqref{ma:1}  \quad\text{with}\quad \omega_{a}(t)=t^{\alpha}
          \quad\text{and}\quad \omega_{b}(t)=t^{\beta}\quad \text{for some}\quad \alpha,\beta\in (0,1] \label{mth:md:4},\\
          &\eqref{ma:2}\quad\text{with}\quad \omega_{a}(t)=t^{\alpha}
          \quad\text{and}\quad \omega_{b}(t)=t^{\beta}\quad \text{for some}\quad \alpha,\beta\in (0,1]  \label{mth:md:5}
      \end{empheq}
	\end{subequations}
	is satisfied, then
	\begin{align}
	    \label{mth:md:6}
	    u\in C^{0,\theta}_{\loc}(\O)\quad \text{for every}\quad \theta\in (0,1).
	\end{align}
	Moreover, for every $\sigma\in (0,n)$, there exists a positive constant $c\equiv c(\data(\O_0),\sigma)$ such that the decay estimate
	\begin{align}
		\label{mth:md:7}
		\I_{B_{\rho}} \Psi(x,|Du|)\,dx \leqslant c\left(\frac{\rho}{R} \right)^{n-\sigma}\I_{B_{R}}\Psi(x,|Du|)\,dx
	\end{align}
holds for every concentric balls  $B_{\rho}\subset B_{R} \subset \O_{0}\Subset\O$ with $R\leqslant 1$.
\end{thm}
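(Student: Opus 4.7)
The plan is to derive the Morrey decay \eqref{mth:md:7} via a freezing--comparison scheme coupled with a Campanato iteration, and then extract \eqref{mth:md:6} from the decay by the integral characterization of H\"older spaces in the Musielak--Orlicz setting. Fix a ball $B_{R}\equiv B_{R}(x_{0})\Subset \O_{0}$ with $R\leqslant 1$. Let $\bar F(z):=F(x_{0},(u)_{B_{R}},z)$ be the frozen integrand and $v\in u+W^{1,\bar\Psi}_{0}(B_{R})$ the corresponding Dirichlet minimizer, where $\bar\Psi(t):=G(t)+a(x_{0})H_{a}(t)+b(x_{0})H_{b}(t)$. Since $\bar F$ depends only on $z$ and inherits the structural assumptions \eqref{sa:2}, standard Orlicz regularity theory delivers $Dv\in L^{\infty}_{\loc}(B_{R})$ together with the Campanato decay
\[
\I_{B_{\rho}}\bar\Psi(|Dv|)\,dx \leqslant c\left(\frac{\rho}{R}\right)^{n}\I_{B_{R}}\bar\Psi(|Dv|)\,dx,
\qquad 0<\rho\leqslant R.
\]

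The heart of the argument is the comparison estimate
\[
\I_{B_{R}}\bar\Psi(|Du-Dv|)\,dx \leqslant c\,\eta(R)\I_{B_{R}}\Psi(x,|Du|)\,dx,
\]
obtained by testing the minimality of $u$ against $v$ and invoking \eqref{sa:1}--\eqref{sa:2}, the modulus $\omega(\cdot)$ from \eqref{omega2}, and the moduli $\omega_{a},\omega_{b}$ of $a,b$. The error $\eta(R)$ decomposes into an $\omega$-part, which vanishes by concavity of $\omega$, plus a $\Lambda$-part of the form $\Lambda(R,\mathcal{A}(R))$, where $\mathcal{A}(R)$ is the ``natural'' gradient scale on $B_{R}$ dictated by the a priori information on $u$: Sobolev--Poincar\'e embedding on $W^{1,\Psi}$ produces $\mathcal{A}(R)\simeq G^{-1}(R^{-n})$ under \eqref{ma:1}, a Caccioppoli argument exploiting the $L^{\infty}$-bound produces $\mathcal{A}(R)\simeq R^{-1}$ under \eqref{ma:2}, and the $C^{0,\gamma}$-bound combined with an intrinsic rescaling produces $\mathcal{A}(R)\simeq R^{\gamma-1}$ under \eqref{ma:3}. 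In each of the cases \eqref{mth:md:1}--\eqref{mth:md:5} this scale matches exactly the argument of $\Lambda$ appearing in the corresponding hypothesis, and one concludes $\eta(R)\to 0$ as $R\to 0$: directly in cases \eqref{mth:md:1}--\eqref{mth:md:3}, and via the power bound $\Lambda(R,\mathcal{A}(R))\lesssim R^{\delta}$ forced by $\omega_{a}(t)=t^{\alpha}$, $\omega_{b}(t)=t^{\beta}$ in cases \eqref{mth:md:4}--\eqref{mth:md:5}, which reflects the classical double-phase threshold.

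Coupling the two displays with the pointwise equivalence $\bar\Psi(\cdot)\simeq\Psi(x,\cdot)$ on $B_{R}$ (again up to an error of the same $\Lambda$-type) yields, for every $0<\rho\leqslant R$,
\[
\I_{B_{\rho}}\Psi(x,|Du|)\,dx \leqslant c\left[\left(\frac{\rho}{R}\right)^{n}+\eta(R)\right]\I_{B_{R}}\Psi(x,|Du|)\,dx.
\]
A standard iteration lemma, exploiting $\eta(R)\to 0$, then upgrades this into \eqref{mth:md:7} for every prescribed $\sigma\in(0,n)$; selecting $\sigma$ arbitrarily small and invoking the Morrey--Campanato embedding in Musielak--Orlicz spaces delivers \eqref{mth:md:6}.

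The hardest ingredient is the precise calibration of $\mathcal{A}(R)$ in the comparison estimate: the scale entering $\Lambda$ must match \emph{exactly} the one prescribed in $\eqref{ma:1}_{2}$--$\eqref{ma:3}_{2}$, because any slack would destroy the vanishing of $\eta(R)$. Achieving this requires an intrinsic rescaling that preserves both the frozen $N$-function $\bar\Psi$ and the a priori bound assumed on $u$, together with a harmonic-type approximation tailored to the non-differentiable structure \eqref{sa:2}; once this calibration is in place, the remaining technical pieces are standard.
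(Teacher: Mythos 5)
Your overall strategy (freeze, compare with a regular reference problem, calibrate the $\Lambda$-error against the natural gradient scale $\mathcal{A}(R)$ dictated by the a priori assumption, then run a Campanato iteration) is the same as the paper's, and your identification of $\mathcal{A}(R)\simeq G^{-1}(R^{-n})$, $R^{-1}$, $R^{\gamma-1}$ in the three regimes matches Lemma \ref{lem:2ce} exactly. However, there are two places where the proposal, as written, would break down. First, you freeze the coefficients at the \emph{center} value, taking $\bar\Psi(t)=G(t)+a(x_0)H_a(t)+b(x_0)H_b(t)$ and minimizing over $u+W^{1,\bar\Psi}_0(B_R)$. Since $a(x_0)$ may exceed $\inf_{B_R}a$ by an amount of order $\omega_a(R)$, it is not guaranteed that $u\in W^{1,\bar\Psi}(B_R)$ at all (this is precisely the Lavrentiev obstruction), so the comparison class may be empty of the function you need, and the energy of the frozen minimizer need not be controlled by that of $u$. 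The paper avoids this by freezing the coefficients at their \emph{infima} $a(\bar x_a)=\inf_{B}a$, $b(\bar x_b)=\inf_{B}b$, so that $\bar\Psi_0(t)\leqslant \Psi(x,t)$ pointwise and the error $a(x)-a(\bar x_a)\geqslant 0$ is one-signed; moreover it does \emph{not} perform the coefficient-freezing by a direct energy comparison but through the harmonic-type approximation Lemma \ref{hta: lemma_hta}, after two preliminary comparison steps (Lemma \ref{lem:0ce} removes the $u$-dependence, Lemma \ref{lem:1ce} freezes only the explicit $x$-dependence of $F_G,F_{H_a},F_{H_b}$ while keeping $a(x),b(x)$). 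Collapsing these three steps into one hides exactly the step where a naive energy comparison fails.

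Second, your final iteration treats the passage from the decay of the reference problem back to $\I_{B_\rho}\Psi(x,|Du|)\,dx$ as a ``pointwise equivalence up to a $\Lambda$-type error,'' absorbed because $\eta(R)\to0$. But the one-step decay the comparison actually yields (Lemma \ref{lem:4ce}) controls $\I_{B_{\tau R}}\Psi_{B_R}^-\bigl(|u-(u)_{B_{\tau R}}|/(\tau R)\bigr)\,dx$, and converting this into gradient decay for $\Psi(x,\cdot)$ on the smaller ball requires a Caccioppoli inequality whose right-hand side depends on whether $a^-$, $b^-$ on the smaller ball exceed $4[a]_{\omega_a}\omega_a(\cdot)$, $4[b]_{\omega_b}\omega_b(\cdot)$ (the $G$-, $(G,H_a)$-, $(G,H_b)$-, $(G,H_a,H_b)$-phases of Lemma \ref{lem:1cacct}). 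These phases can switch as the radius shrinks, and when the inner ball is in a ``richer'' phase than the outer one, the term $a^-(B_{\tau R})H_a$ is not dominated by $\Psi_{B_R}^-$ and must instead be absorbed into $G$ via the $\Lambda$-condition — which is only possible in the appropriate phase. The paper's double nested exit-time argument (Step 2 of Section \ref{sec:8}) exists precisely to chain the one-step estimates across these finitely many phase transitions with uniform constants; a single-parameter Campanato iteration with one fixed $\tau$ does not close. This combinatorial layer is a genuine missing ingredient in the proposal rather than a routine detail.
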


First of all we note that the gradient H\"older regularity of a local minimizer in Theorem \ref{mth:mr} is already optimal in the classical $p$-Laplace case that $G(t)=t^{p}$ and $a(\cdot)\equiv b(\cdot)\equiv 0$ \cite{Uh1,Ur1}. The assumptions in \eqref{mth:mr:1}-\eqref{mth:mr:3} are optimal by the counterexamples given in \cite{ELM2,FMM}, see also \cite{BDS}.
The regularity results reported here complement in a unified way the main results of \cite{BCM2,BCM3,CM1,CM2,DO1}, where the functions in \eqref{func:(p,q)}-\eqref{func:multi} are considered under the corresponding conditions we have discussed in \eqref{(p,q):con1}-\eqref{(p,q):con3},\eqref{log:con1}-\eqref{log:con3} and \eqref{(p,q,s):con1}, respectively, and the arguments used in those papers are strongly dependent of the number of phases along with the H\"older continuity of the coefficient function in the non-linearity. Our approaches for proving the above theorems are in fact independent of this weakness. The assumptions of the above theorems lead to showing new instances of Lavrantiev phenomenon \cite{Zh1,Zh2,Zh3,Zh4,ZKO}. According to the classical definition, the Lavrentiev gap for the functional $\F$ defined in \eqref{functional} under the growth assumption \eqref{sa:1} may appear if 
\begin{align}
	\label{lg:1}
	\inf\limits_{v\in v_0 + W_{0}^{1,G}(B)}\F(v,B)
	< 
	\inf\limits_{v\in v_0 + W_{0}^{1,G}(B)\cap W_{\loc}^{1,\Psi_{\O}^{+}}(B) }\F(v,B)
\end{align}
holds for a ball $B\Subset \O$ and a function $v_0\in W^{1,\infty}(B)$, where $\Psi_{\O}^{+}$ is defined in \eqref{ispsi} below. That is, local minimizers of $\F$ do not belong to $W^{1,\Psi_{\O}^{+}}_{\loc}(B)$ in general. To see this, let us look at the classical case that $G(t)=t^{p}$, $H_{a}(t)=t^{q}$, $\omega_{a}(t)=t^{\alpha}$ and $\omega_{b}(\cdot)\equiv 0$ for $1<p <q$ and $\alpha\in (0,1]$ such that 
\begin{align}
	\label{lg:2}
	1< p < n < n+\alpha <q.
\end{align}
Under classical double phase setting together with \eqref{lg:2}, the results of \cite[Theorem 4.1]{CM1} and \cite[Section 3]{ELM3} provide us the existence of a coefficient function $a(\cdot)\in C^{0,\alpha}(\O)$ and a boundary datum $u_0 \in W^{1,p}(B)\cap L^{\infty}(B)$ such that the Lavrentiev phenomenon \eqref{lg:1} is occurred. In this regard, we show that there is no Lavrentiev gap for the functional $\F$ in \eqref{functional} satisfying the basic structure assumption \eqref{sa:1} under the one of assumptions $\eqref{ma:1}_{2}$, $\eqref{ma:2}_{2}$ and $\eqref{ma:3}_{2}$, see Theorem \ref{lp:thm1} below. The approaches we present in this paper lead to avoiding the use of difference quotient methods employed in \cite{CM1,CM2} for obtaining various regularity properties of minimizers of the functional $\P$ representing the $(p,q)$-double phase growth. In fact, the difference quotient techniques can deal with the case that the coefficient functions in the nonlinearity are H\"older continuous. On the other hand, we are treating the case of not necessarily having H\"older continuous coefficient functions in the nonlinearity by applying a Harmonic type approximation (see Lemma \ref{hta: lemma_hta} below) for comparing a homogeneous equation with a limiting equation having the lipschitz regularity property (see Lemma \ref{lem:2ce} and \ref{lem:3ce}).
\begin{rmk}	
	\label{rmk:multi}
We would like to point out, in the same spirit as this paper, the results of Theorem \ref{mth:mr} and Theorem \ref{mth:md} can be restated and proved for the functional having a finite number of phases with replacing the function in \eqref{psi} by 
\begin{align}
	\label{multi:1}
	\Psi(x,t):= G(t) + \sum\limits_{i=1}^{N}a_i(x)H_{i}(t),\quad m\geqslant 1,
\end{align}
where $G,H_{i}\in \mathcal{N}$ in the sense of the Definition \ref{N-func} and $a_i(\cdot)\in C^{\omega_{i}}(\O)$ with $\omega_{i} : [0,\infty)\rightarrow [0,\infty)$ being a concave function vanishing at the origin for every $i\in \{1,\ldots,N\}$. Under this setting we replace the function in \eqref{Lambda} by 
\begin{align}
	\label{multi:2}
	\Lambda(s,t):= \sum\limits_{i=1}^{N} \frac{\omega_i(s)}{1+\omega_i(s)}\frac{H_i(t)}{G(t)}\quad\text{for every}\quad s,t>0.
\end{align}
The coefficient functions in Theorem \ref{mth:mr} along with \eqref{mth:md:4} and \eqref{mth:md:5} in  Theorem \ref{mth:md} are understood by letting $\omega_{i}(t)=t^{\alpha_i}$ with some $\alpha_{i}\in (0,1]$ for every $i\in \{1,\ldots,N\}$.
\end{rmk}
The contents in this paper could provide a guideline to deal with a very general class of non-autonomous functionals whose energy density behaves like
\begin{align}
	\label{gcf:1}
	F(x,y,z)\approx \Phi(x,|z|)
\end{align}
for $\Phi$ being a certain Young function as we shall introduce in Definition \ref{N-func} below. The investigation of such problems has been a field of interest for research activities over the decades. In fact, a main difficulty lies in discovering the optimal conditions to be placed on $\Phi(x,t)$ with respect to $(x,t)$-variables. Here we mention a very recent and interesting paper \cite{HO} in which the authors give a reasonable answer to such a question by considering a class of functionals of Uhlenbeck type without any a priori assumption on the minimizers involved. Essensially, the assumption \cite[(VA1)]{HO} is not comparable with the assumption $\eqref{ma:1}_{2}$. Moreover, the method used in \cite{HO} can not be applicable to treat the regularity of minimizers of the functional $\F$ in \eqref{functional} having the solution dependence. Besides the papers mentioned before, there is a rich literature, see for instance \cite{AM1,AM2,AM3,BF,BBr1,BR1,BY1,Cup1,CMM,EMT1,Ka1, KS1,RT1,RT2,TU} and reference therein. We also refer to a survey paper \cite{M1}.

Finally, we close the introduction part with outlining the organization of the paper. In the next section, we introduce our basic settings and some backgrounds. In Section \ref{sec:3} we shall deal with the absence of Lavrentiev phenomenon for the functional $\F$ under our main assumptions $\eqref{ma:1}_{2}$, $\eqref{ma:2}_{2}$ and $\eqref{ma:3}_{2}$. Section \ref{sec:4} is devoted to obtaining Sobolev-Poincar\'e type inequality. In Section \ref{sec:5} we start with showing basic regularity results of quasi-minimizers of the functional $\P$ introduced in \eqref{ifunct}, while in Section \ref{sec:6} we provide a certain harmonic type approximation fitting in our settings. From Section \ref{sec:7} we start with making the comparison estimates in order to reach a limiting functional and in Section \ref{sec:8} and Section \ref{sec:9} we prove Theorem \ref{mth:mr} and Theorem \ref{mth:md}. In Section \ref{sec:10} we shall consider a more general class of functionals having the Orlicz double phase growth. Finally, we shall dealt with our multi-phase problems under some additional integrability condition of Sobolev type and some relevant optimal condition in Section \ref{sec:11}.


\section{Notations and preliminaries}

\label{sec:2}

\subsection{Notations}
\label{subsec:2.1}
In what follows we shall always denote by $c$ to mean a generic positive constant, possibly varying from line to line, while special constants will be denoted by $c_1,\bar{c}, c_{*}, c_{\varepsilon}$, and so on. All such constants will be always not smaller than one; moreover relevant dependencies on parameters will be emphasized using parentheses, that is, for example $c\equiv c(n,s(G),\nu,L)$ means that $c$ depends only on $n,s(G),\nu,L$. We denote by
 $B_{R}(x_0) = \{ x\in\R^n : |x-x_0|<R \}$ the open ball in $\R^n$ centered at $x_0\in \R^n$ with a radius $R > 0$. If the center is clear in the context, we shall omit the center point by writing $B_{R}\equiv B_{R}(x_0)$.  We shall also denote $B_{1}\equiv B_{1}(0)\subset\R^n$ unless the center is specified. With $f:\mathcal{B}\rightarrow \R^{N}$ $(N\geqslant 1)$ being a measurable map for a measurable subset $\mathcal{B}\subset\R^n$ having finite and positive measure, we denote by
\begin{align*}
    (f)_{\mathcal{B}}\equiv \FI_{\mathcal{B}} f(x)\,dx = \frac{1}{|\mathcal{B}|}\I_{\mathcal{B}}f(x)\,dx
\end{align*}
its integral average over $\mathcal{B}$. For a measurable map $f:\O\rightarrow \R$ and an open subset $\mathcal{B}\subset\O$ with $\sigma : [0,\infty)\rightarrow [0,\infty)$ being a concave function such that $\sigma(0)=0$, we shall use the notation as 
\begin{align*}
    [f]_{\sigma;\mathcal{B}}:= \sup\limits_{x,y\in\mathcal{B},x\neq y} \frac{|f(x)-f(y)|}{\sigma(|x-y|)}
    \quad\text{and}\quad
    [f]_{\sigma}\equiv [f]_{\sigma;\O}.
\end{align*}
We denote by $C^{\sigma}(\O)$ the space of uniformly continuous functions on $\O$ whose modulus of continuity does not exceed $\sigma$. The space $C^{\sigma}(\O)$ is endowed with the norm defined for a function $f$ by 
\begin{align*}
	\norm{f}_{C^{\sigma}(\O)} = \norm{f}_{L^{\infty}(\O)} + [f]_{\sigma;\O}.
\end{align*}
In particular, if $\sigma(t)=t^{\alpha}$ for some $\alpha\in (0,1]$, then we denote 
\begin{align*}
    [f]_{0,\alpha;\mathcal{B}}:= \sup\limits_{x,y\in\mathcal{B},x\neq y} \frac{|f(x)-f(y)|}{|x-y|^{\alpha}}
    \quad\text{and}\quad
    [f]_{0,\alpha}\equiv [f]_{0,\alpha;\O}.
\end{align*}
For a given concave function $\sigma : [0,\infty)\rightarrow [0,\infty)$ vanishing at the origin, we shall use some elementary properties in the future as
\begin{align}
	\label{concave1}
	&\sigma(\lambda t) \leqslant \lambda \sigma(t)
	\quad\text{for every}\quad \lambda\geqslant 1 
	\quad\text{and}\quad t\geqslant 0
\end{align}
and
\begin{align}
	\label{concave2}
	\frac{1}{\sigma(\lambda t)} \leqslant \frac{1}{\sigma(t)} + \frac{1}{\lambda\sigma(t)} 
	\quad\text{for every}\quad \lambda, t> 0 
	\quad\text{unless}\quad \sigma\text{ is constant. }
\end{align}

Throughout the paper, for any given open subset $\mathcal{B}\subset\O$, we shall also use the notations by
\begin{align}
	\label{ispsi}
	\begin{split}
		& a^{-}(\mathcal{B}):= \inf\limits_{x\in \mathcal{B}}a(x),\quad
		a^{+}(\mathcal{B}):= \sup\limits_{x\in \mathcal{B}}a(x),
		\\&
		b^{-}(\mathcal{B}):= \inf\limits_{x\in \mathcal{B}}b(x),\quad
		b^{+}(\mathcal{B}):= \sup\limits_{x\in \mathcal{B}}b(x),
		\\&
		\Psi_{\mathcal{B}}^{-}(t):= G(t) + \inf\limits_{x\in \mathcal{B}}a(x)H_{a}(t) + 
		\inf\limits_{x\in \mathcal{B}}b(x)H_{b}(t),
		\\&
		\Psi_{\mathcal{B}}^{+}(t):= G(t) + \sup\limits_{x\in \mathcal{B}}a(x)H_{a}(t) + 
		\sup\limits_{x\in \mathcal{B}}b(x)H_{b}(t)
	\end{split}
\end{align}
for every $t\geqslant 0$.

\begin{defn}
\label{N-func}
A measurable function $\Phi : \O\times [0,\infty)\rightarrow [0,\infty)$ is called  an Young function if, for any fixed $x\in\O$, the function $\Phi(x,\cdot)$ increasing and convex such that 
\begin{align*}
    \Phi(x,0) = 0,\ \lim_{t\to\infty}\Phi(x,t) = +\infty,\ \lim_{t\to 0^{+}}\frac{\Phi(x,t)}{t} = 0 \ \text{and} \
    \lim_{t\to\infty}\frac{\Phi(x,t)}{t} = +\infty.
\end{align*}

We denote by $\mathcal{N}(\O)$ the set of Young functions $\Phi : \O\times [0,\infty) \rightarrow [0,\infty)$ such that, for any fixed $x\in\O$, $\Phi(x,\cdot)\in C^{1}([0,\infty))\cap C^{2}((0,\infty))$ and there exists a constant $s(\Phi) \geqslant 1$ with
\begin{align}
    \label{defs}
     \frac{1}{s(\Phi)} \leqslant \frac{\partial_{tt}^{2}\Phi(x,t)t}{\partial_{t}\Phi(x,t)} \leqslant s(\Phi)
\end{align}
uniformly for all $x\in\O$ and $t>0$, where in the future we shall call this number $s(\Phi)$ by an index of $\Phi$. Furthermore, we denote also $\mathcal{N}$ to mean the set of Young functions $\Phi\in \mathcal{N}(\O)$ such that $\Phi$ does not depend on the first variable $x$.

\end{defn}

As a direct consequence of the above definition, for any $\Phi\in\mathcal{N}(\O)$ with an index $s(\Phi)\geqslant 1$ and any fixed point $x\in\O$, we can observe
\begin{align}
	\label{approx}
		t^2\partial_{tt}^{2}\Phi(x,t) \approx t\partial_{t}\Phi(x,t) \approx \Phi(x,t)
\end{align}
	for uniformly all $t > 0$, where note that all implied constants only depend only on $s(\Phi)$.
Now we state some important properties of functions of $\mathcal{N}(\O)$, see \cite{BBO1,BBO2,BO3} for their proofs.
\begin{lem}
    \label{lem:nf1}
    Let $\Phi\in \mathcal{N}(\O)$ with an index $s(\Phi) \geqslant 1$. Then, for any fixed $x\in\O$, we have
    \begin{enumerate}
        \item[1.] $\Lambda_{0}^{1+\frac{1}{s(\Phi)}}\Phi(x,t)\leqslant \Phi(x,\Lambda_{0} t) \leqslant \Lambda_{0}^{s(\Phi)+1}\Phi(x,t)$ for any $\Lambda_{0}\geqslant 1$ and $t\geqslant 0$.
        \item[2.] $\lambda_{0}^{1+s(\Phi)}\Phi(x,t) \leqslant \Phi(x,\lambda_{0} t) \leqslant \lambda_{0}^{\frac{1}{s(\Phi)}+1}\Phi(x,t)$ for any $0 < \lambda_{0}\leqslant 1$ and $t\geqslant 0$.
        \item[3.] $ \Lambda_{0}^{\frac{1}{1+s(\Phi)}}\Phi^{-1}_{t}(x,t) \leqslant \Phi^{-1}_{t}(x,\Lambda_{0} t) \leqslant \Lambda_{0}^{\frac{s(\Phi)}{1+s(\Phi)}}\Phi^{-1}_{t}(x,t)$ for any $\Lambda_{0}\geqslant 1$ and $t\geqslant 0$. 
        \item[4.] $\lambda_{0}^{\frac{s(\Phi)}{1+s(\Phi)}}\Phi^{-1}_{t}(x,t) \leqslant \Phi^{-1}_{t}(x,\lambda_{0} t) \leqslant \lambda_{0}^{\frac{1}{1+s(\Phi)}}\Phi^{-1}_{t}(x,t)$ for any $0<        
        \lambda_{0}\leqslant 1$ and $t\geqslant 0$. 
    \end{enumerate}
\end{lem}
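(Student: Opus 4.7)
Fix $x\in\O$ and set $\phi(t):=\partial_t\Phi(x,t)$; since the case $t=0$ is trivial I work on $t>0$. The first step is to recast the defining inequality \eqref{defs} as the log-derivative bound $\frac{1}{s(\Phi)}\leqslant \frac{d\log\phi(t)}{d\log t}\leqslant s(\Phi)$. Integrating this between $\log t$ and $\log(\Lambda_0 t)$ for $\Lambda_0\geqslant 1$ yields $\Lambda_0^{1/s(\Phi)}\phi(t)\leqslant \phi(\Lambda_0 t)\leqslant \Lambda_0^{s(\Phi)}\phi(t)$, and the analogous integration on $[\log(\lambda_0 t),\log t]$ for $0<\lambda_0\leqslant 1$ yields $\lambda_0^{s(\Phi)}\phi(t)\leqslant \phi(\lambda_0 t)\leqslant \lambda_0^{1/s(\Phi)}\phi(t)$.

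For parts (1) and (2), the plan is to recover $\Phi$ from $\phi$ through the identity $\Phi(x,\lambda t)=\lambda\int_0^t\phi(\lambda v)\,dv$ obtained by the substitution $u=\lambda v$ in $\int_0^{\lambda t}\phi(u)\,du$, and then to insert the pointwise bounds on $\phi(\lambda v)$ from the first paragraph (with $\lambda\in\{\Lambda_0,\lambda_0\}$). This immediately produces the exponents $1+s(\Phi)$ and $1+1/s(\Phi)$ required by (1) and (2).

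Parts (3) and (4) will follow by inverting (1) and (2); note that $\Phi(x,\cdot)$ is strictly increasing on $(0,\infty)$ by \eqref{defs}, so $\Phi^{-1}_t(x,\cdot)$ is well-defined and monotone. From the upper half of (1), $\Phi(x,\Lambda_0\Phi^{-1}_t(x,y))\leqslant \Lambda_0^{1+s(\Phi)}y$; applying $\Phi^{-1}_t(x,\cdot)$ and reparametrizing via $\sigma:=\Lambda_0^{1+s(\Phi)}\geqslant 1$ delivers the lower bound $\sigma^{1/(1+s(\Phi))}\Phi^{-1}_t(x,y)\leqslant \Phi^{-1}_t(x,\sigma y)$ of (3). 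The upper bound in (3) follows analogously by reparametrizing $\sigma:=\Lambda_0^{1+1/s(\Phi)}$ in the lower half of (1), and (4) is obtained by the same procedure starting from (2). The only delicate point of the entire argument will be to keep the change of exponents $\Lambda_0\leftrightarrow\sigma$ consistent, so that $1+s(\Phi)$ and $1+1/s(\Phi)$ invert respectively to $\frac{1}{1+s(\Phi)}$ and $\frac{s(\Phi)}{1+s(\Phi)}$; analytically there is no obstacle, as everything reduces to elementary manipulations built on the log-derivative constraint \eqref{defs}.
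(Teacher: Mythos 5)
Your argument is correct and complete: the paper itself omits the proof of Lemma \ref{lem:nf1} (deferring to \cite{BBO1,BBO2,BO3}), and what you give is the standard derivation used there — integrate the log-derivative bound coming from \eqref{defs} to control $\partial_t\Phi(x,\lambda t)/\partial_t\Phi(x,t)$, recover (1)–(2) via $\Phi(x,\lambda t)=\lambda\int_0^t\partial_t\Phi(x,\lambda v)\,dv$, and invert with the correct reparametrization $\sigma=\Lambda_0^{1+s(\Phi)}$, resp.\ $\sigma=\Lambda_0^{1+1/s(\Phi)}$, to get (3)–(4). The only implicit point worth a word in a written version is that $\partial_t\Phi(x,t)>0$ for $t>0$ (needed for the logarithmic derivative), which is built into \eqref{defs}.
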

In the above lemma, for a fixed point $x\in\O$, $\Phi^{-1}_{t}(x,t)$ is understood by the inverse function of $\Phi(x,t)$ with respect to $t$-variable.

\begin{rmk}
    \label{rmk:nf1}
    For a given $\Phi\in \mathcal{N}(\O)$ with an index $s(\Phi)\geqslant 1$, we notice useful but direct consequences of Lemma \ref{lem:nf1} as
    \begin{align}
        \label{growth1}
        \Phi(x,t + s) \leqslant \Phi(x,2t) + \Phi(x,2s) \leqslant 2^{1+s(\Phi)}\left( \Phi(x,t) + \Phi(x,s) \right) 
    \end{align}
    for every $x\in\O$ and $t,s\geqslant 0$. Furthermore, for any fixed $x\in\O$, we have 
    \begin{align*}
        \partial_{t}\Phi(x,t) \leqslant (1+s(\Phi))\frac{\Phi(x,t)}{t} \leqslant (1+s(\Phi)) [\Phi(x,1)]^{\frac{s(\Phi)}{1+s(\Phi)}}[\Phi(x,t)]^{\frac{1}{1+s(\Phi)}} \text{  for every  }
        0\leqslant t\leqslant 1
    \end{align*}
    and 
     \begin{align*}
        \partial_{t}\Phi(x,t) \leqslant (1+s(\Phi))\frac{\Phi(x,t)}{t} \leqslant (1+s(\Phi)) [\Phi(x,1)]^{\frac{1}{1+s(\Phi)}}[\Phi(x,t)]^{\frac{s(\Phi)}{1+s(\Phi)}} \text{  for every  }
        t\geqslant 1.
    \end{align*}
    Putting together the last two inequalities, we have the following very useful inequality which will be applied in the future
    \begin{align}
        \label{growth2}
        \frac{\Phi(x,t)}{t} \approx
        \partial_{t}\Phi(x,t)\leqslant (1+s(\Phi))\left([\Phi(x,1)]^{\frac{s(\Phi)}{1+s(\Phi)}}[\Phi(x,t)]^{\frac{1}{1+s(\Phi)}}  + [\Phi(x,1)]^{\frac{1}{1+s(\Phi)}}[\Phi(x,t)]^{\frac{s(\Phi)}{1+s(\Phi)}}\right)
    \end{align}
     for every $x\in\O$ and $t\geqslant 0$.
\end{rmk}

\begin{lem}
    \label{lem:nf2}
    Let $\Phi, \tilde{\Phi} \in \mathcal{N}(\O)$ with indices $s(\Phi),s(\tilde{\Phi}) \geqslant 1$. Then,
    \begin{enumerate}
        \item[1.]  For any non-negative real numbers $a,b$ satisfying $a+b > 0$, $a\Phi + b\tilde{\Phi} \in\mathcal{N}(\O)$ with $s(a\Phi + b\tilde{\Phi}) = s(\Phi) + s(\tilde{\Phi})$ and $\Phi\tilde{\Phi}\in \mathcal{N}(\O)$ with $s(\Phi\tilde{\Phi})= 4s(\Phi)s(\tilde{\Phi})(s(\Phi)+s(\tilde{\Phi}))$.
        \item[2.] For any number $m\geqslant 1$, $\Phi^{m}\in \mathcal{N}(\O)$ with $s(\Phi^{m}) = s(\Phi) + (m-1)(s(\Phi)+1)$.
        \item[3.] For any number $\mu \geqslant 0$, $\Phi_{\mu}(x,t):=t^{\mu}\Phi(x,t)\in \mathcal{N}(\O)$ with  $s(\Phi_{\mu})= \mu + 3[s(\Phi)]^2$.
        \item[4.] There exists $\theta_{0}\in (0,1)$ depending only on $s(\Phi)$ such that 
        $\Phi^{\theta} \in \mathcal{N}(\O)$ for every $\theta\in (\theta_{0},1]$ with $s(\Phi^{\theta})$ depending only on $s(\Phi)$ and $\theta$.
    \end{enumerate}
\end{lem}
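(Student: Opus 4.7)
All four statements reduce to direct computations of first and second $t$-derivatives and verifying: (i) that the resulting function is a Young function in the sense of Definition \ref{N-func} (positivity, convexity, and the required limits at $0$ and $\infty$), and (ii) that the ratio $t\partial^{2}_{tt}F/\partial_{t}F$ is pinched between two positive constants depending only on the indices. The key technical input throughout is the bound $1 \leqslant t\partial_{t}\Phi(x,t)/\Phi(x,t) \leqslant s(\Phi)+1$, which I would derive once at the start from \eqref{defs} by noting that $\partial_{t}\Phi$ is increasing (so $\Phi\leqslant t\partial_{t}\Phi$) and by integrating the inequality $(\log\partial_{t}\Phi)'\leqslant s(\Phi)/t$ against $\Phi(t)=\int_{0}^{t}\partial_{t}\Phi\,ds$. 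Combined with \eqref{approx}, this gives the pointwise comparability $\Phi \approx t\partial_t\Phi \approx t^2\partial^2_{tt}\Phi$ with constants depending only on $s(\Phi)$.

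For Part~1, the sum case is immediate: differentiating $a\Phi+b\tilde\Phi$ and applying \eqref{defs} term by term gives $\min(1/s(\Phi),1/s(\tilde\Phi)) \leqslant t\partial^{2}_{tt}(a\Phi+b\tilde\Phi)/\partial_{t}(a\Phi+b\tilde\Phi) \leqslant \max(s(\Phi),s(\tilde\Phi))$, which is bounded by $s(\Phi)+s(\tilde\Phi)$ and by $(s(\Phi)+s(\tilde\Phi))^{-1}$ respectively. For the product $F=\Phi\tilde\Phi$, write $tF_{tt}/F_{t}$ as the ratio of $t\Phi_{tt}\tilde\Phi+2t\Phi_{t}\tilde\Phi_{t}+\Phi\,t\tilde\Phi_{tt}$ to $\Phi_{t}\tilde\Phi+\Phi\tilde\Phi_{t}$; bound each summand using \eqref{defs} and the estimate $t\partial_{t}\Phi\approx\Phi$ to replace $t\Phi_{t}\tilde\Phi_{t}$ by a constant multiple of $\Phi_{t}\tilde\Phi+\Phi\tilde\Phi_{t}$, from which the quantitative bound $s(\Phi\tilde\Phi)\leqslant 4s(\Phi)s(\tilde\Phi)(s(\Phi)+s(\tilde\Phi))$ will fall out.

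For Parts~2 and~3, the identity
\[
\frac{t\partial^{2}_{tt}(\Phi^{m})}{\partial_{t}(\Phi^{m})} = (m-1)\frac{t\partial_{t}\Phi}{\Phi} + \frac{t\partial^{2}_{tt}\Phi}{\partial_{t}\Phi}
\]
together with $1\leqslant t\partial_{t}\Phi/\Phi \leqslant s(\Phi)+1$ yields the explicit upper bound $s(\Phi)+(m-1)(s(\Phi)+1)$ and a matching positive lower bound, and similarly for $\Phi_{\mu}=t^{\mu}\Phi$ one simplifies the quotient to $[\mu(\mu-1)\Phi+2\mu t\Phi_{t}+t^{2}\Phi_{tt}]/[\mu\Phi+t\Phi_{t}]$; one small caveat is that for $0<\mu<1$ the first term of the numerator is negative, so I would absorb $|\mu(\mu-1)|\Phi$ into $2\mu t\Phi_{t}$ via $\Phi\leqslant t\Phi_{t}$ before concluding both convexity ($\partial^{2}_{tt}\Phi_{\mu}\geqslant 0$) and the stated index bound. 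In both parts, the limits at $0$ and $\infty$ required by Definition \ref{N-func} follow from the corresponding limits for $\Phi$ combined with the power factors.

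For Part~4, writing $(\Phi^{\theta})''=\theta\Phi^{\theta-2}[(\theta-1)\Phi_{t}^{2}+\Phi\Phi_{tt}]$, convexity reduces to the requirement $\Phi\Phi_{tt}\geqslant(1-\theta)\Phi_{t}^{2}$, equivalently
\[
1-\theta \;\leqslant\; \frac{\Phi\Phi_{tt}}{\Phi_{t}^{2}} \;=\; \frac{\Phi}{t\Phi_{t}}\cdot\frac{t\Phi_{tt}}{\Phi_{t}} \;\geqslant\; \frac{1}{s(\Phi)(s(\Phi)+1)},
\]
so the natural choice is $\theta_{0}:=1-[s(\Phi)(s(\Phi)+1)]^{-1}$. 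For this range of $\theta$ the same computation as in Part~2 gives $t(\Phi^{\theta})''/(\Phi^{\theta})'=(\theta-1)t\Phi_{t}/\Phi+t\Phi_{tt}/\Phi_{t}$, which is at most $s(\Phi)$ (since $\theta-1\leqslant 0$) and at least $(\theta-1)(s(\Phi)+1)+1/s(\Phi)$, and the choice of $\theta_{0}$ is exactly what keeps this lower bound strictly positive. The only genuinely subtle point in the whole lemma is thus the threshold analysis in Part~4; everything else is bookkeeping on top of \eqref{defs} and \eqref{approx}.
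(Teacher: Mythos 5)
Your plan is correct: the paper itself omits the proof of Lemma \ref{lem:nf2} (deferring to \cite{BBO1,BBO2,BO3}), and your route --- the comparability $1\leqslant t\partial_{t}\Phi/\Phi\leqslant s(\Phi)+1$ followed by direct computation of $t\partial^{2}_{tt}F/\partial_{t}F$ for each of the four constructions --- is exactly the standard argument, and all the stated indices do check out (in Part 3 one must keep the $2\mu t\partial_t\Phi$ term in the numerator when bounding from below, since the crude bound $1/[s(\Phi)(\mu+1)]$ alone does not beat $1/(\mu+3[s(\Phi)]^{2})$ for large $\mu$, but this is the bookkeeping you describe). The one point to tighten in Part 4 is that, besides convexity, you must also verify $\lim_{t\to 0^{+}}\Phi^{\theta}(x,t)/t=0$ and $\lim_{t\to\infty}\Phi^{\theta}(x,t)/t=\infty$ --- a genuine constraint since $\Phi^{\theta}$ can be sublinear at infinity for small $\theta$ --- but your threshold $\theta_{0}=1-[s(\Phi)(s(\Phi)+1)]^{-1}\geqslant 1-[s(\Phi)+1]^{-1}$ already forces $\theta(1+1/s(\Phi))>1$, which settles both limits via Lemma \ref{lem:nf1}.
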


\begin{lem}
	\label{lem:nf2_1}
	Let $\Phi\in \mathcal{N}$ with an index $s(\Phi)\geqslant 1$. Then $t\mapsto \Phi\left(t^{\frac{1}{s(\Phi)+1}}\right)$ is a concave function.
\end{lem}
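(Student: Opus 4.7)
Write $s := s(\Phi) \geqslant 1$ and $\alpha := 1/(s+1) \in (0,1/2]$, and set $\Psi(t) := \Phi(t^{\alpha})$. The plan is to show $\Psi$ is concave by computing $\Psi''$ directly and reducing the nonpositivity of $\Psi''$ to the upper bound in the defining inequality \eqref{defs}.

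First I would note that since $\Phi \in C^{1}([0,\infty)) \cap C^{2}((0,\infty))$, the function $\Psi$ is $C^{2}$ on $(0,\infty)$, and concavity on the closed half-line will follow from $\Psi'' \leqslant 0$ on $(0,\infty)$ together with $\Psi(0)=0$ and continuity. The chain rule gives, for $t>0$ and $u := t^{\alpha}$,
\begin{align*}
	\Psi'(t) &= \alpha t^{\alpha-1}\Phi'(u), \\
	\Psi''(t) &= \alpha t^{\alpha-2}\bigl[(\alpha-1)\Phi'(u) + \alpha u \Phi''(u)\bigr].
\end{align*}
Since $\alpha t^{\alpha-2}>0$ and $\Phi'(u)>0$, the sign of $\Psi''(t)$ is that of $\alpha u\Phi''(u)/\Phi'(u) - (1-\alpha)$. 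Hence the claim reduces to the pointwise inequality
\begin{align*}
	\frac{u\Phi''(u)}{\Phi'(u)} \leqslant \frac{1-\alpha}{\alpha} = s.
\end{align*}

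This, however, is exactly the upper bound in \eqref{defs} of Definition \ref{N-func}, which states $u\partial_{tt}^{2}\Phi(u)/\partial_{t}\Phi(u) \leqslant s(\Phi) = s$ for all $u>0$. So $\Psi'' \leqslant 0$ on $(0,\infty)$ and $\Psi$ is concave on $(0,\infty)$, and finally concavity on $[0,\infty)$ follows because $\Psi$ is continuous at $0$ with $\Psi(0)=0$. There is no substantive obstacle here: the exponent $1/(s(\Phi)+1)$ is chosen precisely to make the computation match the upper index bound in \eqref{defs}, and any exponent strictly smaller than $1/(s(\Phi)+1)$ would give a strict inequality while exponents larger would fail for power functions $\Phi(t) = t^{s+1}$, showing the exponent is sharp.
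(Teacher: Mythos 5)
Your proof is correct: the chain-rule computation is accurate, the sign of $\Psi''$ reduces exactly to the upper bound $u\Phi''(u)/\Phi'(u)\leqslant s(\Phi)$ from \eqref{defs}, and passing from concavity on $(0,\infty)$ to $[0,\infty)$ by continuity is legitimate. The paper itself gives no proof of this lemma (it defers to the cited references), and your direct second-derivative argument is the standard one, so there is nothing to reconcile.
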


\begin{lem}
\label{lem:nf3}
 Let $\Phi\in\mathcal{N}(\O)$ with an index $s(\Phi) \geqslant 1$. Then there exists a positive constant $c\equiv c(s(\Phi))$ such that
 \begin{align*}
     s_1\partial_{t}\Phi(x,s_2) + s_2\partial_{t}\Phi(x,s_1) \leqslant \varepsilon \Phi(x,s_1) + \frac{c}{\varepsilon^{s(\Phi)}}\Phi(x,s_2)
 \end{align*}
 holds for all $s_1,s_2\geqslant 0$ and $0 < \varepsilon \leqslant 1$.
\end{lem}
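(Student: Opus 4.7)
\textbf{Proof plan for Lemma \ref{lem:nf3}.} This is a Young-type inequality in the Orlicz setting. The plan is to use the equivalence $\partial_t\Phi(x,t)\approx \Phi(x,t)/t$ from \eqref{approx} to reduce the claim to an inequality between multiplicative ratios and values of $\Phi$, and then exploit the two-sided growth bounds of Lemma \ref{lem:nf1}. I may assume $s_1,s_2>0$, otherwise one of the two left-hand terms vanishes (since $\partial_t\Phi(x,0)=0$) and the remaining one is handled by the argument below.

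\textbf{Step 1 (reduction).} Applying \eqref{approx} to $\partial_t\Phi(x,s_j)\leq C_0(s(\Phi))\,\Phi(x,s_j)/s_j$ gives
\[
s_1\partial_t\Phi(x,s_2)\leq C_0\frac{s_1}{s_2}\Phi(x,s_2),\qquad
s_2\partial_t\Phi(x,s_1)\leq C_0\frac{s_2}{s_1}\Phi(x,s_1),
\]
so it suffices to bound each of $\tfrac{s_1}{s_2}\Phi(x,s_2)$ and $\tfrac{s_2}{s_1}\Phi(x,s_1)$ by $\tfrac{\varepsilon}{2C_0}\Phi(x,s_1)+\tfrac{c}{2C_0\varepsilon^{s(\Phi)}}\Phi(x,s_2)$.

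\textbf{Step 2 (first term, threshold $u=\varepsilon^{-s(\Phi)}$).} Write $u:=s_1/s_2$. If $u\leq \varepsilon^{-s(\Phi)}$ then $u\Phi(x,s_2)\leq \varepsilon^{-s(\Phi)}\Phi(x,s_2)$ and we are done. If instead $u>\varepsilon^{-s(\Phi)}\geq 1$, Lemma \ref{lem:nf1}(1) with $\Lambda_0=u$ yields $\Phi(x,s_1)=\Phi(x,us_2)\geq u^{1+1/s(\Phi)}\Phi(x,s_2)$, hence
\[
\varepsilon\,\Phi(x,s_1)\geq \varepsilon\, u^{1/s(\Phi)}\cdot u\,\Phi(x,s_2)\geq u\,\Phi(x,s_2),
\]
because $\varepsilon u^{1/s(\Phi)}\geq 1$ in this regime. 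This absorbs $u\Phi(x,s_2)$ into the $\varepsilon\Phi(x,s_1)$ term.

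\textbf{Step 3 (second term, threshold $v=\varepsilon$).} Set $v:=s_2/s_1$. If $v\leq \varepsilon/(2C_0)$ then $C_0 v\Phi(x,s_1)\leq \tfrac{\varepsilon}{2}\Phi(x,s_1)$ and the first term on the right absorbs it. If $v\geq 1$, Lemma \ref{lem:nf1}(1) gives $\Phi(x,s_2)\geq v^{1+1/s(\Phi)}\Phi(x,s_1)\geq v\Phi(x,s_1)$, so $v\Phi(x,s_1)\leq \Phi(x,s_2)$ is controlled by the second right-hand term. In the remaining range $\varepsilon/(2C_0)<v<1$, Lemma \ref{lem:nf1}(2) with $\lambda_0=v$ yields $\Phi(x,s_2)\geq v^{1+s(\Phi)}\Phi(x,s_1)$, therefore
\[
v\,\Phi(x,s_1)\leq v^{-s(\Phi)}\,\Phi(x,s_2)\leq \Bigl(\frac{2C_0}{\varepsilon}\Bigr)^{s(\Phi)}\Phi(x,s_2),
\]
which is again absorbed into $c\varepsilon^{-s(\Phi)}\Phi(x,s_2)$ after adjusting $c$ to depend on $s(\Phi)$.

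\textbf{Combination and obstacle.} Adding the two estimates from Steps 2--3, with constants now depending only on $s(\Phi)$, delivers the claimed inequality. The only subtle point is the bookkeeping of which half of Lemma \ref{lem:nf1} to invoke in each regime: the lower bound $\Phi(x,\Lambda_0 t)\geq \Lambda_0^{1+1/s(\Phi)}\Phi(x,t)$ when the argument is scaled up, and the lower bound $\Phi(x,\lambda_0 t)\geq \lambda_0^{1+s(\Phi)}\Phi(x,t)$ when it is scaled down. The threshold $u=\varepsilon^{-s(\Phi)}$ in Step 2 is chosen precisely so that the critical exponent matches, producing the sharp power $\varepsilon^{-s(\Phi)}$ in the final constant; this is the only place where one could lose the correct dependence, and it is what the argument must be calibrated against.
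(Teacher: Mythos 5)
Your proof is correct, and it is worth noting that the paper itself does not prove Lemma \ref{lem:nf3} at all: it is stated among the properties of $\mathcal{N}(\O)$ whose proofs are deferred to the references \cite{BBO1,BBO2,BO3}, where the usual argument runs through the conjugate Young function $\Phi^*$ via $st\leqslant \Phi^*(x,s)+\Phi(x,t)$ and $\Phi^*(x,\partial_t\Phi(x,t))\approx\Phi(x,t)$. Your route is more elementary and entirely self-contained within the paper: it only uses \eqref{approx} and the two lower bounds of Lemma \ref{lem:nf1}, with a case split on the ratios $s_1/s_2$ and $s_2/s_1$ calibrated so that the critical threshold $u=\varepsilon^{-s(\Phi)}$ reproduces exactly the power $\varepsilon^{-s(\Phi)}$ in the constant. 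All three regimes in Step 3 and both regimes in Step 2 check out against Lemma \ref{lem:nf1}(1)--(2), and the degenerate cases $s_1=0$ or $s_2=0$ are correctly dispatched since $\partial_t\Phi(x,0)=0$.

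The one small bookkeeping slip is in Step 2, case $u>\varepsilon^{-s(\Phi)}$: there you obtain $u\Phi(x,s_2)\leqslant\varepsilon\Phi(x,s_1)$, so after multiplying back by $C_0$ and adding the contribution $\tfrac{\varepsilon}{2}\Phi(x,s_1)$ from Step 3, the coefficient of $\Phi(x,s_1)$ is $(C_0+\tfrac12)\varepsilon$ rather than $\varepsilon$. This is harmless: either recalibrate the Step 2 threshold to $u\leqslant(2C_0/\varepsilon)^{s(\Phi)}$ (mirroring what you already did in Step 3), or apply the finished inequality with $\varepsilon$ replaced by $\varepsilon/(C_0+\tfrac12)$, which only inflates $c$ by a factor $(C_0+\tfrac12)^{s(\Phi)}$ depending on $s(\Phi)$ alone. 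Either repair is one line, and the stated dependence $c\equiv c(s(\Phi))$ survives.
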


\begin{rmk}
	\label{rmk:nf2} We note that $\Psi\in\mathcal{N}(\O)$ with an index $s(\Psi)= s(G) + s(H_{a}) + s(H_{b})$ by Lemma \ref{lem:nf2}. In particular, for every open subset $\mathcal{B}\subset\O$, it holds that $\Psi_{\mathcal{B}}^{+}, \Psi_{\mathcal{B}}^{-}\in \mathcal{N} $ with indices $s\left(\Psi_{\mathcal{B}}^{+}\right)= s(G)+s(H_{a}) + s(H_{b})$ and $s\left(\Psi_{\mathcal{B}}^{-}\right)= s(G)+s(H_{a}) + s(H_{b})$.
\end{rmk}

For a given Young function $\Phi\in\mathcal{N}(\O)$ with an index $s(\Phi)\geqslant 1$, we define the vector field $V_{\Phi}: \O\times\R^n\setminus \{0\}\rightarrow \R^n$ as follows 
\begin{align}
	\label{defV}
	V_{\Phi}(x,z):= \left[ \frac{\partial_{t}\Phi(x,|z|)}{|z|} \right]^{\frac{1}{2}} z.
\end{align}

Furthermore, we shall often use the following inequalities that
\begin{align}
	\label{defV1}
	\I_{0}^{1}\frac{\Phi(x,|\theta z_1 + (1-\theta)z_2|)}{|\theta z_1 + (1-\theta)z_2|^{2}}\,d\theta
	\approx
	\frac{\Phi(x,|z_1|+|z_2|)}{(|z_1|+|z_2|)^2},
\end{align}
\begin{align}
	\label{defV2}
	|V_{\Phi}(x,z_1)-V_{\Phi}(x,z_2)|^2 \approx \partial_{tt}^{2}\Phi(x,|z_1|+ |z_2|)|z_1-z_2|^2
	\approx \frac{\partial_{t}\Phi(x,|z_1| + |z_2|)}{|z_1|+|z_2|}|z_1-z_2|^{2},
\end{align}
\begin{align}
	\label{defV3}
	\Phi(x,|z_1-z_2|) \lesssim \Phi(x,|z_1|+|z_2|)\dfrac{|z_1-z_2|}{|z_1|+|z_2|}
\end{align}
and
\begin{align}
	\label{defV4}
	\inner{\partial_{t}\Phi(x,|z_1|)\frac{z_1}{|z_1|}-\partial_{t}\Phi(x,|z_2|)\frac{z_2}{|z_2|}}{z_1-z_2}\approx |V_{\Phi}(x,z_1)-V_{\Phi}(x,z_2)|^2
\end{align}
hold true, whenever $x\in\O$ and $z_1,z_2\in\R^n\setminus\{0\}$, where all implied constants in \eqref{defV1}-\eqref{defV4} depend on $n$ and $s(\Phi)$ (see \cite{DE1} for further discussions). Moreover, we have the following useful inequality
\begin{align}
    \label{defV5}
    |V_{\Phi}(x,z_2)-V_{\Phi}(x,z_1)|^2 \lesssim \I_{0}^{1}|V_{\Phi}(x,\theta z_2 + (1-\theta)z_1)-V_{\Phi}(x,z_1)|^2 \,\frac{d\theta}{\theta}\quad (\forall x\in \O),
\end{align}
which follows from the following estimates that
\begin{align*}
    \begin{split}
        \I_{0}^{1}&|V_{\Phi}(x,\theta z_2 + (1-\theta)z_1)-V_{\Phi}(x,z_1)|^2\,\frac{d\theta}{\theta}
        \\&
        \stackrel{\eqref{defV2}}{\gtrsim}
        \I_{0}^{1}  \frac{\Phi(x,|\theta z_2 + (1-\theta)z_1|+|z_1|)}{(|\theta z_2 + (1-\theta)z_1|+|z_1|)^2}\theta |z_2-z_1|^2\,d\theta
        \\&
        \gtrsim
        \frac{|z_2-z_1|^2}{(|z_2|+|z_1|)^2}\I_{0}^{1}  \Phi(x,|\theta z_2 + (1-\theta)z_1|+|z_1|) \theta\,d\theta
        \\&
        \gtrsim
        \frac{|z_2-z_1|^2}{(|z_2|+|z_1|)^2}\Phi\left(x, \I_{0}^{1}  (|\theta z_2 + (1-\theta)z_1|+|z_1|) \theta\,d\theta\right)
        \\&
        \gtrsim
        \frac{|z_2-z_1|^2}{(|z_2|+|z_1|)^2}\Phi\left(x,|z_2|+|z_1|\right)
        \stackrel{\eqref{defV2}}{\approx} |V_{\Phi}(x,z_2)-V_{\Phi}(x,z_1)|^2
    \end{split}
\end{align*}
hold with having all implied constants in the above display depending on $n$ and $s(\Phi)$, whenever $x\in\O$ and $z_1,z_2\in\R^n\setminus\{0\}$, where in the third inequality of the last display we have applied Jensen's inequality to the convex function $\Phi(x,\cdot)$ with respect to measure $\theta\,d\theta$.

Moreover, the maps introduced in \eqref{defV} are very convenient to formulate the monotonicity properties of the vector field $D_{z}F(x,y,z)$ with respect to the gradient variable $z$ and some growth properties of the integrand $F$ defined in \eqref{functional}.
\begin{lem}
    \label{lem:nf4}
    Let $F: \O\times\R\times\R^n\rightarrow \R$ be a function defined in \eqref{type} satisfying \eqref{sa:1} and \eqref{sa:2}.Then there exist positive constants $c_1,c_2\equiv c_1,c_2(n,s(G),s(H_{a}),s(H_{b}),\nu)$ and $c_{3}\equiv c_{3}(n,s(G),s(H_{a}),s(H_{b}),L)$ such that the following inequalities
     \begin{align}
	    \label{defV1_1}
	    \begin{split}
	    &|V_{G}(z_1)-V_{G}(z_2)|^2 + a(x)|V_{H_{a}}(z_1)-V_{H_{a}}(z_2)|^2
	    + b(x)|V_{H_{b}}(z_1)-V_{H_{b}}(z_2)|^2 
	    \\&
	    \leqslant c_1\inner{D_{z}F(x,y,z_1)-D_{z}F(x,y,z_2)}{z_1-z_2},
	    \end{split}
    \end{align}
    \begin{align}
        \label{defV1_2}
        \begin{split}
            |V_{G}(z_1)-V_{G}(z_2)|^2 +& a(x)|V_{H_{a}}(z_1)-V_{H_{a}}(z_2)|^2 + b(x)|V_{H_{b}}(z_1)-V_{H_{b}}(z_2)|^2
            \\&
            \quad
             + c_2\inner{D_{z}F(x,y,z_1)}{z_2-z_1}
            \leqslant
            c_2[F(x,y,z_2)-F(x,y,z_1)]
        \end{split}
    \end{align}
    and 
    \begin{align}
        \label{defV1_3}
        \begin{split}
        |F(x_1,y,z)&-F(x_2,y,z)| 
        \\&
        \leqslant
        c_3\omega(|x_1-x_2|)\left[ G(|z|) + \min\{a(x_1),a(x_2)\}H_{a}(|z|) + \min\{b(x_1),b(x_2)\}H_{b}(|z|) \right]
        \\&
        \quad
         + c_3|a(x_1)-a(x_2)|H_{a}(|z|)
        + c_3|b(x_1)-b(x_2)|H_{b}(|z|)
        \end{split}
    \end{align}
    hold true, whenever $z,z_1,z_2\in \R^n \setminus \{0\}$, $x,x_1,x_2\in\O$ and $y\in\R$.
\end{lem}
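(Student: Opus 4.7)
The plan is to prove the three inequalities by exploiting the additive decomposition $F(x,y,z) = F_G(x,y,z) + a(x) F_{H_a}(x,y,z) + b(x) F_{H_b}(x,y,z)$ and handling each component $F_\Phi$ for $\Phi \in \{G, H_a, H_b\}$ separately using the structural conditions in \eqref{sa:2}. The vector-field calculus \eqref{defV1}--\eqref{defV4} will provide the bridge between pointwise second-order information on $F_\Phi$ and quantitative estimates in terms of $V_\Phi$.

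The monotonicity estimate \eqref{defV1_1} and the lower Taylor bound \eqref{defV1_2} follow the same template. Fix $x \in \O$, $y \in \R$, and consider $z_1, z_2 \in \R^n \setminus \{0\}$. For a segment avoiding the origin the fundamental theorem of calculus gives
\[
\langle D_z F_\Phi(x,y,z_1) - D_z F_\Phi(x,y,z_2),\, z_1-z_2\rangle = \int_0^1 \langle D^2_{zz} F_\Phi(x,y,z_\theta)(z_1-z_2),\, z_1-z_2\rangle \, d\theta,
\]
with $z_\theta = z_2 + \theta(z_1-z_2)$. Invoking the ellipticity bound $\eqref{sa:2}_2$ and then combining \eqref{defV1} with \eqref{defV2} yields the pointwise comparison
\[
\langle D_z F_\Phi(x,y,z_1) - D_z F_\Phi(x,y,z_2),\, z_1-z_2\rangle \gtrsim |V_\Phi(z_1) - V_\Phi(z_2)|^2.
\]
Summing over $\Phi \in \{G, H_a, H_b\}$ with weights $1, a(x), b(x)$ yields \eqref{defV1_1}. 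For \eqref{defV1_2} I would instead start from Taylor's formula with integral remainder,
\[
F_\Phi(x,y,z_2) - F_\Phi(x,y,z_1) - \langle D_z F_\Phi(x,y,z_1),\, z_2-z_1\rangle = \int_0^1 (1-\theta) \langle D^2_{zz} F_\Phi(x,y,z_\theta)(z_2-z_1),\, z_2-z_1\rangle \, d\theta,
\]
and apply the same ellipticity/$V_\Phi$ machinery before summing. Degenerate configurations (antipodal $z_1, z_2$ whose connecting segment hits $0$) are disposed of by a density argument, since $\eqref{sa:2}_1$ forces $D_z F_\Phi(x,y,\cdot)$ to extend continuously through the origin.

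For the $x$-continuity estimate \eqref{defV1_3}, I would first establish two auxiliary bounds. From $\eqref{sa:2}_1$, together with $F_\Phi(x,y,0)=0$ (implied by $|D_z F_\Phi(x,y,z)| \lesssim \Phi(|z|)/|z| \to 0$ as $|z|\to 0$), the identity $F_\Phi(x,y,z) = \int_0^1 \langle D_z F_\Phi(x,y,\tau z), z\rangle \, d\tau$ combined with Lemma \ref{lem:nf1}(2) delivers $|F_\Phi(x,y,z)| \leq c\Phi(|z|)$. The same argument run on $F_\Phi(x_1,y,z) - F_\Phi(x_2,y,z)$ using $\eqref{sa:2}_3$ gives
\[
|F_\Phi(x_1,y,z) - F_\Phi(x_2,y,z)| \leq c\, \omega(|x_1-x_2|)\, \Phi(|z|).
\]
To close the argument, I would decompose each modulated difference as
\[
a(x_1)F_{H_a}(x_1,y,z) - a(x_2)F_{H_a}(x_2,y,z) = \min\{a(x_1),a(x_2)\}\bigl[F_{H_a}(x_1,y,z) - F_{H_a}(x_2,y,z)\bigr] + [a(x_1) - a(x_2)]\, F_{H_a}(x_j, y, z),
\]
where $j \in \{1,2\}$ is the index realizing the maximum of $a$, then apply the two auxiliary bounds to the two pieces. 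Treating the $b$-term analogously and adding the unmodulated $F_G$ estimate assembles \eqref{defV1_3}.

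The main technical care across all three parts is the handling of the singular set $\{z = 0\}$ inside the segment integrals; this is absorbed via the decay $|D_z F_\Phi| \lesssim \Phi(|z|)/|z|$ combined with Lemma \ref{lem:nf1}(2), which makes the relevant integrands pointwise controlled by $\tau^{1/s(\Phi)}\Phi(|z|)$ and hence integrable on $[0,1]$ with constants depending only on $s(\Phi)$.
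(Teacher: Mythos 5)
Your proposal is correct, and for \eqref{defV1_1} and \eqref{defV1_3} it is essentially the paper's argument: the fundamental theorem of calculus along the segment, the ellipticity bound $\eqref{sa:2}_2$, and \eqref{defV1}--\eqref{defV2} for the monotonicity estimate; and for the $x$-continuity estimate the same splitting of $a(x_1)F_{H_a}(x_1,\cdot)-a(x_2)F_{H_a}(x_2,\cdot)$ into a $\min\{a(x_1),a(x_2)\}$-weighted difference plus an $|a(x_1)-a(x_2)|$-weighted single term (the paper performs this split under the integral $\int_0^1\langle D_zF(\cdot,\theta z),z\rangle\,d\theta$ rather than after first isolating the bounds $|F_\Phi|\lesssim\Phi$ and $|F_\Phi(x_1,\cdot)-F_\Phi(x_2,\cdot)|\lesssim\omega\,\Phi$, but the content is identical, including the use of $\int_0^1\theta^{-1}\Phi(\theta|z|)\,d\theta\leqslant c\,\Phi(|z|)$ from Lemma \ref{lem:nf1}).

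The one place you genuinely diverge is \eqref{defV1_2}. The paper avoids the second derivative there: it writes $F(x,y,z_2)-F(x,y,z_1)-\langle D_zF(x,y,z_1),z_2-z_1\rangle=\int_0^1\langle D_zF(x,y,\theta z_2+(1-\theta)z_1)-D_zF(x,y,z_1),z_2-z_1\rangle\,d\theta$, applies the already-proved \eqref{defV1_1} with the factor $1/\theta$, and then invokes the auxiliary inequality \eqref{defV5}. You instead use Taylor's formula with integral remainder and the ellipticity of $D^2_{zz}F_\Phi$ directly. This works, but note that it requires a \emph{weighted} version of \eqref{defV1}: you need
\begin{align*}
\int_0^1(1-\theta)\,\frac{\Phi(|z_1+\theta(z_2-z_1)|)}{|z_1+\theta(z_2-z_1)|^2}\,d\theta\ \gtrsim\ \frac{\Phi(|z_1|+|z_2|)}{(|z_1|+|z_2|)^2},
\end{align*}
which is not literally \eqref{defV1}. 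It does hold, with implied constant depending only on $s(\Phi)$, by the usual case analysis (on a quarter of the segment adjacent to whichever of $z_1,z_2$ has the larger norm one has $|z_\theta|\approx|z_1|+|z_2|$, and $\int(1-\theta)\,d\theta$ over that quarter is a universal constant), so your route is sound and in fact slightly more self-contained than the paper's, which has to establish \eqref{defV5} separately via a Jensen-type argument. You should, however, make this weighted variant explicit rather than citing \eqref{defV1} as stated. The remaining technical points you raise (the singular set $\{z=0\}$ on the segment, the normalization $F_\Phi(x,y,0)=0$) are handled at the same level of rigor as in the paper.
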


\begin{proof}
    It follows from $\eqref{sa:2}_{2}$ that
    \begin{align*}
        \begin{split}
            &\inner{D_{z}F(x,y,z_1)-D_{z}F(x,y,z_2)}{z_1-z_2} 
            =
            \I_{0}^{1}\inner{D_{zz}^{2}F(x,y,\theta z_1 + (1-\theta)z_2)[z_1-z_2]}{z_1-z_2}\,d\theta
            \\&
            \geqslant
            \nu \I_{0}^{1}\frac{\Psi\left(x, \theta z_1 + (1-\theta)z_2 \right)}{|\theta z_1 + (1-\theta)z_2|^2}|z_1-z_2|^2\,d\theta 
            \\&
            \geqslant
            c\left(|V_{G}(z_1)-V_{G}(z_2)|^2 + a(x)|V_{H_{a}}(z_1)-V_{H_{a}}(z_2)|^2 + b(x)|V_{H_{b}}(z_1)-V_{H_{b}}(z_2)|^2\right),
        \end{split}
    \end{align*}
    where in the last inequality of the last display we have used \eqref{defV1} and \eqref{defV2}. Then \eqref{defV1_1} follows. The inequality \eqref{defV1_2} follows from the following observation that
    \begin{align*}
        \begin{split}
            &[F(x,y,z_2)-F(x,y,z_1)]-\inner{D_{z}F(x,y,z_1)}{z_2-z_1} 
            \\&
             =
             \I_{0}^{1}\inner{D_{z}F(x,y, \theta z_2+ (1-\theta)z_1)-D_{z}F(x,y,z_1)}{z_2-z_1}\,d\theta
             \\&
             \stackrel{\eqref{defV1_1}}{\geqslant}
             c\I_{0}^{1} \frac{1}{\theta} \left(|V_{G}(\theta z_2 + (1-\theta)z_1)-V_{G}(z_1)|^2 + a(x)|V_{H_{a}}(\theta z_2 + (1-\theta)z_1)-V_{H_{a}}(z_1)|^2\right)\,d\theta
             \\&
             \qquad
             +
             \I_{0}^{1}\frac{b(x)}{\theta}|V_{H_{b}}(\theta z_2 + (1-\theta)z_1)-V_{H_{b}}(z_1)|^2\,d\theta
             \\&
             \stackrel{\eqref{defV5}}{\geqslant}
             c|V_{G}(z_1)-V_{G}(z_2)|^2 + a(x)|V_{H_{a}}(z_1)-V_{H_{a}}(z_2)|^2 +
             b(x)|V_{H_{b}}(z_1)-V_{H_{b}}(z_2)|^2.
        \end{split}
    \end{align*}
    Since $F(x,y,0)=0$ for every $x\in\O$ and $y\in\R$, using \eqref{type}, we have
    \begin{align*}
    \begin{split}
        &|F(x_1,y,z)-F(x_2,y,z)| = |(F(x_1,y,z)-F(x_1,y,0))-(F(x_2,y,z)-F(x_2,y,0))|
        \\&
        =
        \left|\I_{0}^{1}\inner{D_{z}F(x_1,y,\theta z)}{z}\,d\theta - \I_{0}^{1}\inner{D_{z}F(x_2,y,\theta z)}{z}\,d\theta\right|
        \\&
        \leqslant
        \I_{0}^{1}|D_{z}F(x_1,y,\theta z)-D_{z}F(x_2,y,\theta z)|\,|z|d\theta
        \\&
        \leqslant
        \I_{0}^{1}\left|D_{z}F_{G}(x_1,y,\theta z)-D_{z}F_{G}(x_2,y,\theta z)\right||z|\,d\theta
        \\&
        \quad
        + \I_{0}^{1}\left|a(x_1)D_{z}F_{H_{a}}(x_1,y,\theta z)-a(x_2)D_{z}F_{H_{a}}(x_2,y,\theta z)\right||z|\,d\theta
        \\&
        \quad 
        + \I_{0}^{1}\left|b(x_1)D_{z}F_{H_{b}}(x_1,y,\theta z)-b(x_2)D_{z}F_{H_{b}}(x_2,y,\theta z)\right||z|\,d\theta.
    \end{split}
    \end{align*}
Without loss of generality, we can assume $a(x_2)\leqslant a(x_1)$ and $b(x_2)\leqslant b(x_1)$. Then using the structure assumption \eqref{sa:2}, we find 
\begin{align*}
	\begin{split}
	\I_{0}^{1}&\left|a(x_1)D_{z}F_{H_{a}}(x_1,y,\theta z)-a(x_2)D_{z}F_{H_{a}}(x_2,y,\theta z)\right||z|\,d\theta
	\\&
	\leqslant
	L|a(x_1)-a(x_2)|\I_{0}^{1}\frac{H_{a}(\theta|z|)}{\theta}\,d\theta
	+
	a(x_2)\omega(|x_1-x_2|)\I_{0}^{1}\frac{H_{a}(\theta|z|)}{\theta}\,d\theta
	\\&
	\leqslant	
	c a(x_2)H_{a}(|z|) + c\omega(|x_1-x_2|)H_{a}(|z|)
	\end{split}
\end{align*}
for some constant $c\equiv c(s(H_{a}),L)$. Similarly, we get 
\begin{align*}
	\begin{split}
	\I_{0}^{1}&\left|b(x_1)D_{z}F_{H_{b}}(x_1,y,\theta z)-b(x_2)D_{z}F_{H_{b}}(x_2,y,\theta z)\right||z|\,d\theta
	\\&
	\leqslant
	c b(x_2)H_{b}(|z|) + c\omega(|x_1-x_2|)H_{b}(|z|),
	\end{split}
\end{align*}
    where the validity of the last display is ensured by $\eqref{sa:2}_{3}$. Combining the last three displays, \eqref{defV1_3} follows.
\end{proof}


\subsection{Musielak-Orlicz and Musielak-Orlicz-Sobolev spaces}
\label{subsec:2.2}
We now introduce the Musielak-Orlicz spaces (generalized Orlicz spaces), which generalize the Orlicz spaces. Let $\Phi : \Omega \times [0,\infty) \rightarrow [0,\infty)$ be an Young function. Here we present some definitions and properties associated to Young functions. 
\begin{defn}
Let $\Phi$ be a Young function.
\begin{enumerate}
\item $\Phi$ is said to satisfy the $\Delta_2$-condition, denoted by $\Phi \in \Delta_2$, if there exists a positive number $\Delta_2(\Phi)$ such that $\Phi(x,2t) \leqslant \Delta_2(\Phi) \, \Phi(x,t)$ for all $x \in \Omega$ and $t \geqslant 0$.
\item $\Phi$ is said to satisfy the $\nabla_2$-condition, denoted by $\Phi \in \nabla_2$, if there exists a positive number $\nabla_2(\Phi)>1$ such that $\Phi(x,\nabla_2(\Phi) \, t) \geqslant 2\nabla_2(\Phi) \, \Phi(x,t)$ for all $x \in \Omega$ and $t \geq 0$.
\item We write $\Phi \in \Delta_2 \cap \nabla_2$ if $\Phi \in \Delta_2$ and $\Phi \in \nabla_2$.
\end{enumerate}
\end{defn}

For a given Young function $\Phi$, we define the complementary function $\Phi^*$ of $\Phi$ by, for each $x \in \Omega$ and $t\geqslant 0$,
\begin{equation*}
\Phi^*(x,t) = \sup \{ st-\Phi(x,s) : s \geq 0 \}.
\end{equation*}
Then $\Phi^*$ satisfies all the conditions to be a Young function. One can see that $(\Phi^*)^*=\Phi$ and that $\Phi \in \nabla_2$ if and only if $\Phi^* \in \Delta_2$ with $2\nabla_2(\Phi)=\Delta_2(\Phi^*)$.

For an Young function $\Phi$, the Musielak-Orlicz class $K^{\Phi}(\Omega;\R^N)$, $N\geqslant 1$, consists of all measurable functions $v : \Omega \rightarrow \R^N$ satisfying
\begin{equation*}
\I_{\Omega} \Phi(x,|v(x)|) \, dx < +\infty.
\end{equation*}
The Musielak-Orlicz space $L^{\Phi}(\Omega;\R^N)$ is the vector space generated by $K^{\Phi}(\Omega;\R^N)$.
If $\Phi \in \Delta_2$, then $K^{\Phi}(\Omega;\R^N)=L^{\Phi}(\Omega;\R^N)$ and this space is a Banach space under the Luxemburg norm
\begin{equation*}
\norm{v}_{L^{\Phi}(\Omega;\R^N)} = \inf \left\lbrace \sigma > 0 : \I_{\Omega} \Phi \left( x,\frac{|v(x)|}{\sigma} \right) \, dx \leq 1 \right\rbrace.
\end{equation*}

The Musielak-Orlicz-Sobolev space $W^{1,\Phi}(\Omega;\R^N)$ is the function space of all measurable functions $v \in L^{\Phi}(\Omega;\R^N)$ such that its distributional gradient vector $Dv$ belongs to $L^{\Phi}(\Omega;\R^{Nn})$.
For $v \in W^{1,\Phi}(\Omega;\R^N)$, we define its norm to be
\begin{equation*}
\norm{v}_{W^{1,\Phi}(\Omega;\R^N)} = \norm{v}_{L^{\Phi}(\Omega;\R^N)} + \norm{Dv}_{L^{\Phi}(\Omega;\R^{Nn})}.
\end{equation*}
The space $W^{1,\Phi}_0(\Omega;\R^N)$ is defined as the closure of $C^{\infty}_0(\Omega;\R^N)$ in $W^{1,\Phi}(\Omega;\R^N)$.
For $N=1$, we simply write $L^{\Phi}(\Omega):=L^{\Phi}(\Omega;\R)$ and $W^{1,\Phi}(\Omega):=W^{1,\Phi}(\Omega;\R)$.
For a detailed discussion of the Musielak-Orlicz spaces and the associated Sobolev spaces, we refer the reader to \cite{AF1,BS, Chleb1, Di1,HH, HHK, Mu, RR1, Si} and references therein.

We end up this preliminary section with presenting some standard technical lemmas which will be applied later, see for instance \cite{Gi1,GT, LU1}.

\begin{lem}
    \label{lem:t0}
    Let $h:[\rho_1,\rho_2]\rightarrow \R$ be a non-negative and bounded function, and $\theta\in (0,1)$, $A_{0}\geqslant 0$, $\gamma_0\geqslant 0$. Assume that 
    \begin{align*}
        h(t) \leqslant \theta h(s) + \frac{A_{0}}{(s-t)^{\gamma_0}}
    \end{align*}
    holds for $\rho_1 \leqslant t < s \leqslant \rho_2$. Then there is a constant $c\equiv c(\theta,\gamma_0)$ satisfying the following inequality:
    \begin{align*}
        h(\rho_0) \leqslant \frac{cA_0}{(\rho_1-\rho_0)^{\gamma_0}} .
    \end{align*}
\end{lem}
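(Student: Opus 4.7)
This is the classical hole-filling iteration lemma (sometimes attributed to Giaquinta--Giusti), and I would prove it by a geometric iteration. The plan is to build an increasing sequence of radii $t_i \in [\rho_1,\rho_2]$ converging to $\rho_2$ and repeatedly apply the hypothesis between consecutive radii, choosing the geometric ratio so that the resulting recursion sums.

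First I would fix a parameter $\tau \in (\theta^{1/\gamma_0}, 1)$, which is possible because $\theta \in (0,1)$; this choice guarantees $\theta \tau^{-\gamma_0} < 1$. (In the degenerate case $\gamma_0 = 0$ one may take $\tau = 1/2$, and the argument trivializes.) Then I would set
\begin{equation*}
t_0 := \rho_1, \qquad t_{i+1} := t_i + (1-\tau)\tau^{i}(\rho_2 - \rho_1),
\end{equation*}
so that $t_{i+1} - t_i = (1-\tau)\tau^{i}(\rho_2-\rho_1)$ and $t_i \nearrow \rho_2$. Applying the assumed inequality with $t = t_i$ and $s = t_{i+1}$ gives
\begin{equation*}
h(t_i) \leqslant \theta\, h(t_{i+1}) + \frac{A_0}{(1-\tau)^{\gamma_0}(\rho_2-\rho_1)^{\gamma_0}}\tau^{-i\gamma_0}.
\end{equation*}

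Next I would iterate this relation $N$ times. A straightforward induction yields
\begin{equation*}
h(\rho_1) \leqslant \theta^{N} h(t_N) + \frac{A_0}{(1-\tau)^{\gamma_0}(\rho_2-\rho_1)^{\gamma_0}}\sum_{i=0}^{N-1}(\theta\tau^{-\gamma_0})^{i}.
\end{equation*}
Since $h$ is bounded on $[\rho_1,\rho_2]$ and $\theta^{N} \to 0$, the first term vanishes as $N\to\infty$; by the choice of $\tau$ the geometric series $\sum(\theta\tau^{-\gamma_0})^{i}$ converges to a constant depending only on $\theta$ and $\gamma_0$. Letting $N\to\infty$ therefore produces the desired bound with $c = c(\theta,\gamma_0) := (1-\tau)^{-\gamma_0}\bigl(1 - \theta\tau^{-\gamma_0}\bigr)^{-1}$.

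The only point requiring any thought is the selection of $\tau$ so that $\theta\tau^{-\gamma_0} < 1$, which is what makes the geometric series finite; once this is in place the rest is bookkeeping. The boundedness of $h$ on the whole interval is used only to absorb the leading $\theta^{N} h(t_N)$ term in the limit, so I expect no real obstacle.
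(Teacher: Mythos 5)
Your proof is correct and is precisely the standard geometric-iteration argument from the references the paper cites for this lemma (Giusti, Gilbarg--Trudinger); the paper itself gives no proof. Note only that the printed statement garbles the endpoints --- $\rho_0$ is never introduced and the conclusion should read $h(\rho_1)\leqslant cA_0/(\rho_2-\rho_1)^{\gamma_0}$ --- which is exactly the version you proved and the one the paper actually invokes later (e.g.\ with $h(t)=\sup_{B_t}|\bar u|$ on $[1,2]$ in the proof of Lemma \ref{lem:3har}).
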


\begin{lem}	
	\label{lem:t1}
	Let $\{Y_{i}\}_{i=0}^{\infty}$ be a sequence of nonnegative numbers satisfying the following recursive inequalities
	\begin{align*}
		Y_{i+1} \leqslant Cb^{i}Y_{i}^{1+\tau_0}
	\end{align*}
	with some fixed positive constant $C$, $b>1$ and $\tau_0>0$ for every $i = 0,1,2,\ldots$. If 
	\begin{align*}
		Y_0 \leqslant C^{-\frac{1}{\tau_0}}b^{-\frac{1}{\tau_0^2}},
	\end{align*}
	then $Y_{i}\rightarrow 0$ as $i\rightarrow \infty$.
\end{lem}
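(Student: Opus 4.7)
The plan is to prove by induction that $Y_i \leqslant Y_0 q^i$ for all $i \geqslant 0$, where $q := b^{-1/\tau_0} \in (0,1)$. This is the classical geometric decay route for such recursive inequalities, and once the inductive estimate is established the conclusion $Y_i \to 0$ is immediate, since $Y_0 \geqslant 0$ and $q<1$.

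First I would fix the candidate decay rate $q = b^{-1/\tau_0}$, which is the unique exponent making the factor $b q^{\tau_0}$ equal to $1$; this is essentially forced by dimensional analysis on the recursion. The base case $i=0$ is trivial. For the inductive step, assuming $Y_i \leqslant Y_0 q^i$, I would plug this into the recursion and compute
\begin{equation*}
Y_{i+1} \leqslant C b^i Y_i^{1+\tau_0} \leqslant C b^i Y_0^{1+\tau_0} q^{i(1+\tau_0)} = Y_0 q^{i+1} \cdot \bigl(C Y_0^{\tau_0} q^{-1}\bigr) \cdot \bigl(b q^{\tau_0}\bigr)^i .
\end{equation*}
Since $b q^{\tau_0} = 1$ by the choice of $q$, the induction closes provided $C Y_0^{\tau_0} q^{-1} \leqslant 1$, i.e., $Y_0^{\tau_0} \leqslant C^{-1} b^{-1/\tau_0}$. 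Taking $\tau_0$-th roots yields exactly the hypothesis $Y_0 \leqslant C^{-1/\tau_0} b^{-1/\tau_0^2}$.

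With the inductive inequality $0 \leqslant Y_i \leqslant Y_0 q^i$ in hand and $q \in (0,1)$, I would conclude $Y_i \to 0$ as $i \to \infty$ directly.

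There is essentially no main obstacle here; the only point requiring care is the algebraic verification that the choice $q = b^{-1/\tau_0}$ together with the smallness hypothesis on $Y_0$ is precisely what makes the induction self-sustaining. No auxiliary lemma from the preliminaries is needed, and the argument is independent of the Orlicz and Musielak-Orlicz machinery developed earlier in the paper.
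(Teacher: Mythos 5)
Your induction is correct: with $q=b^{-1/\tau_0}$ the factor $bq^{\tau_0}$ equals $1$, the step closes exactly under the stated smallness of $Y_0$, and $Y_i\leqslant Y_0q^i\to 0$. This is the standard argument for this iteration lemma (the paper gives no proof, only citing \cite{Gi1,GT,LU1}, where precisely this geometric-decay induction is carried out), so your route coincides with the intended one.
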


\begin{lem}
	\label{lem:t2}
	Let $v\in W^{1,1}(B_{\rho})$ for some ball $B_{\rho}\subset\R^n$. Then there exists $c\equiv c(n)$ such that 
	\begin{align*}
		(l-k)|B_{\rho}\cap \{v>l\}|^{1-\frac{1}{n}}
		\leqslant \frac{c|B_{\rho}|}{|B_{\rho}\setminus\{v>k\}|}\I_{B_{\rho}\cap \{k<v\leqslant l\}} |Dv|\,dx
	\end{align*}
	holds, whenever $l$ and $k$ are real numbers with $l>k$.
\end{lem}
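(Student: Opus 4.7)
The plan is to apply the classical De Giorgi truncation and combine the Sobolev--Poincar\'e inequality with an elementary lower bound on the average of the truncation.

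First I would introduce the truncated function
\[
w := \min\{(v-k)_{+}, l-k\},
\]
which lies in $W^{1,1}(B_\rho)$ and has the three features I need: $w \equiv l-k$ on $B_\rho \cap \{v>l\}$, $w \equiv 0$ on $E := B_\rho \setminus \{v>k\}$, and $|Dw| = |Dv|\,\chi_{\{k<v\leqslant l\}}$ almost everywhere. The immediate payoff is the pointwise bound
\[
(l-k)\,|B_\rho \cap \{v>l\}|^{\frac{n-1}{n}} \leqslant \left(\int_{B_\rho} w^{\frac{n}{n-1}}\,dx\right)^{\frac{n-1}{n}},
\]
which isolates the quantity I want to control on the left-hand side of the target inequality.

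Next I would split $\|w\|_{L^{n/(n-1)}(B_\rho)}$ via the triangle inequality into the oscillation term $\|w-(w)_{B_\rho}\|_{L^{n/(n-1)}(B_\rho)}$ and the mean term $(w)_{B_\rho}|B_\rho|^{(n-1)/n}$. The oscillation term is handled by the standard Sobolev--Poincar\'e inequality
\[
\|w-(w)_{B_\rho}\|_{L^{n/(n-1)}(B_\rho)} \leqslant c(n)\int_{B_\rho}|Dw|\,dx = c(n)\int_{B_\rho \cap \{k<v\leqslant l\}}|Dv|\,dx.
\]
For the mean term, the key observation is that $w$ vanishes on $E$, so $|w-(w)_{B_\rho}| = (w)_{B_\rho}$ on $E$, giving
\[
(w)_{B_\rho}\,|E| \leqslant \int_{B_\rho}|w-(w)_{B_\rho}|\,dx \leqslant |B_\rho|^{1/n}\,\|w-(w)_{B_\rho}\|_{L^{n/(n-1)}(B_\rho)}
\]
by H\"older. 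Rearranging and multiplying through by $|B_\rho|^{(n-1)/n}$ yields the factor $|B_\rho|/|E|$ exactly as it appears in the statement.

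Combining the two bounds gives
\[
(l-k)\,|B_\rho\cap\{v>l\}|^{\frac{n-1}{n}} \leqslant c(n)\left(1 + \frac{|B_\rho|}{|E|}\right)\int_{B_\rho\cap\{k<v\leqslant l\}}|Dv|\,dx,
\]
and since $|E|\leqslant |B_\rho|$ we absorb the $1$ into the $|B_\rho|/|E|$ term, recovering the desired inequality with $|E| = |B_\rho\setminus\{v>k\}|$. There is essentially no obstacle here beyond choosing the right decomposition; the only place to be careful is extracting the factor $|B_\rho|/|E|$ to the first power (not to the power $(n-1)/n$), which is exactly why one applies H\"older with exponents $(n, n/(n-1))$ at the step bounding $(w)_{B_\rho}$.
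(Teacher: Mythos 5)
Your proof is correct and complete. The paper does not actually prove this lemma --- it is stated among the ``standard technical lemmas'' with a citation to \cite{Gi1,GT,LU1} --- so there is no in-paper argument to compare against; your write-up is exactly the standard derivation of this De Giorgi-type estimate. All the steps check out: the truncation $w=\min\{(v-k)_+,l-k\}$ is in $W^{1,1}(B_\rho)$ with $Dw=Dv\,\chi_{\{k<v<l\}}$ a.e. (and $Dv=0$ a.e.\ on $\{v=l\}$, so your $\chi_{\{k<v\leqslant l\}}$ is harmless); the scale-invariant Sobolev--Poincar\'e inequality $\|w-(w)_{B_\rho}\|_{L^{n/(n-1)}}\leqslant c(n)\|Dw\|_{L^1}$ holds on balls with a purely dimensional constant; and your H\"older step with exponents $(n,n/(n-1))$ correctly produces the factor $|B_\rho|/|B_\rho\setminus\{v>k\}|$ to the first power, after which $1\leqslant |B_\rho|/|E|$ absorbs the remaining term.
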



\section{Absence of Lavrentiev phenomenon}
\label{sec:3}
Here we deal with the absence of Lavrantiev phenomenon under the assumptions introduced in $\eqref{ma:1}_{2}$, $\eqref{ma:2}_{2}$ and $\eqref{ma:3}_{2}$. The following theorem widely covers the results of \cite[Theorem 4.1]{CM1}, \cite[Proposition 3.6]{CM2}, \cite[Theorem 3.1]{BO3}, \cite[Theorem 4]{BCM3}, \cite[Theorem 4.1]{BBO1} and \cite[Theorem 3.1]{BBO2}.
\begin{thm}
	\label{lp:thm1}
Let $\P$ be the functional defined in \eqref{ifunct} with the coefficient functions $a(\cdot)\in C^{\omega_a}(\O)$ and $b(\cdot)\in C^{\omega_{b}}(\O)$ for the functions $\omega_{a}, \omega_{b}$ being concave and vanishing at 0.
\begin{enumerate}
		\item[1.] If the condition $\eqref{ma:1}_{2}$ is satisfied, then for any function $v\in W^{1,\Psi}_{\loc}(\O)$ and ball $B_{R}\equiv B_{R}(x_0)\Subset \tilde{B}\Subset\O$ with $\P(v,\tilde{B})<\infty$, there exists a sequence of functions $\{v_{k}\}_{k=1}^{\infty}\subset W^{1,\infty}(B_{R})$ such that 
		\begin{align}
			\label{lp:1}
			v_k\rightarrow v\quad\text{in}\quad W^{1,G}(B_{R})
			\quad\text{and}\quad
			\P(v_k,B_{R})\rightarrow \P(v,B_{R}).
		\end{align}
		\item[2.] If the condition $\eqref{ma:2}_{2}$ is satisfied, then for any function $v\in W^{1,\Psi}_{\loc}(\O)\cap L^{\infty}_{\loc}(\O)$ and ball $B_{R}\equiv B_{R}(x_0)\Subset \tilde{B}\Subset\O$ with $\P(v,\tilde{B})<\infty$, there exists a sequence of functions $\{v_{k}\}_{k=1}^{\infty}\subset W^{1,\infty}(B_{R})$ such that 
		\begin{align}
			\label{lp:2}
			v_k\rightarrow v\quad\text{in}\quad W^{1,G}(B_{R}),
			\quad
			\P(v_k,B_{R})\rightarrow \P(v,B_{R})
			\quad\text{and}\quad
			\limsup_{k\to \infty}\norm{v_k}_{L^{\infty}(B_{R})} \leqslant 
			\norm{v}_{L^{\infty}(B_{R})}.
		\end{align}
		\item[3.] Let $v\in W^{1,\Psi}(\O)\in C^{0,\gamma}(\O)$ with some $\gamma\in (0,1)$ be a local $Q$-minimizer of the functional $\P$ under the assumption $\eqref{ma:3}_{2}$. Then, for every ball $B_{R}\Subset\O$, there exists a sequence of functions $\{v_{k}\}_{k=1}^{\infty}\subset W^{1,\infty}(B_{R})$ such that
	\begin{align}
		\label{lp:3}
		v_{k}\rightarrow v\quad\text{in}\quad W^{1,G}(B_{R})
		\quad\text{and}\quad
		\P(v_k,B_{R})\rightarrow \P(v,B_{R}).
	\end{align}
\end{enumerate}		
\end{thm}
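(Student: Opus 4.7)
The plan is to build the approximating sequences through standard mollification, converting the structural condition $\eqref{ma:1}_2$, $\eqref{ma:2}_2$, or $\eqref{ma:3}_2$ (together with the Sobolev--Poincar\'e inequality to be proved in Section~\ref{sec:4}) into convergence of the Orlicz multi-phase energy. Fix a symmetric smooth mollifier $\phi_\varepsilon$ supported in $B_\varepsilon$ and set $v_\varepsilon := v*\phi_\varepsilon$, so that $v_\varepsilon \in C^\infty \cap W^{1,\infty}(B_R)$ whenever $\varepsilon < \dist(B_R,\partial \tilde B)$. Since $G \in \Delta_2 \cap \nabla_2$ with doubling constant controlled by $s(G)$, standard mollifier theory in Orlicz spaces gives $v_\varepsilon \to v$ in $W^{1,G}(B_R)$ and a.e.\ pointwise $Dv_\varepsilon(x) \to Dv(x)$ at Lebesgue points; in particular the $G$-part of the energy converges.

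The core task is the phase-term convergence
$$\int_{B_R} a(x) H_a(|Dv_\varepsilon|)\,dx \to \int_{B_R} a(x) H_a(|Dv|)\,dx$$
(the $b$-term is entirely symmetric). Applying Jensen's inequality to the convex function $H_a$ yields $H_a(|Dv_\varepsilon(x)|) \leqslant (H_a(|Dv|)*\phi_\varepsilon)(x)$, and a Fubini computation combined with the $\omega_a$-modulus of continuity of $a$ produces an upper bound of the form $\int a H_a(|Dv|) + [a]_{\omega_a}\omega_a(\varepsilon) \int_{B_R} H_a(|Dv_\varepsilon|)\,dx + o_\varepsilon(1)$. Using the a.e.\ pointwise convergence and a Vitali-type argument, everything reduces to showing that the defect $\omega_a(\varepsilon) \int_{B_R} H_a(|Dv_\varepsilon|)\,dx$ tends to zero as $\varepsilon \to 0$, together with its $b$-counterpart.

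The three parts differ only in how the defect is defeated. In Part~1, I would appeal to the intrinsic Sobolev--Poincar\'e inequality developed in Section~\ref{sec:4}: condition $\eqref{ma:1}_2$ is precisely what makes the bound $\omega_a(\varepsilon) H_a(|Dv_\varepsilon(x)|) \lesssim \fint_{B_{c\varepsilon}(x)} \Psi(y,|Dv|)\,dy$ available on each $\varepsilon$-ball, and a routine covering argument then absorbs the defect into the total $\Psi$-energy on $\tilde B$, forcing equi-integrability of the family $\{a(\cdot)H_a(|Dv_\varepsilon|)\}$. In Part~2, the property $\|v_\varepsilon\|_{L^\infty(B_R)} \leqslant \|v\|_{L^\infty(B_R)}$ is automatic, and the identity $Dv_\varepsilon(x) = \int [v(y)-c_0] D\phi_\varepsilon(x-y)\,dy$ gives the pointwise bound $|Dv_\varepsilon(x)| \leqslant c \|v\|_{L^\infty(B_R)}/\varepsilon$; then $\eqref{ma:2}_2$ together with Lemma~\ref{lem:nf1} yields $\omega_a(\varepsilon) H_a(|Dv_\varepsilon|) \leqslant c(\|v\|_\infty) G(|Dv_\varepsilon|)$ pointwise on $B_R$, reducing the defect to the $G$-energy, which converges. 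In Part~3, the $C^{0,\gamma}$-regularity of $v$ gives, by the same mollifier identity applied to $v - v(x)$, the bound $|Dv_\varepsilon(x)| \leqslant c[v]_{0,\gamma}\varepsilon^{\gamma-1}$, and $\eqref{ma:3}_2$ upgrades this to $\omega_a(\varepsilon) H_a(|Dv_\varepsilon|) \leqslant c([v]_{0,\gamma}) G(|Dv_\varepsilon|)$; the role of $Q$-minimality is to permit localisation via a cut-off $\eta \in C^\infty_c(B_R)$ and the comparison function $w_\varepsilon := v + \eta(v_\varepsilon - v)$, so that minimality together with the lower semicontinuity of $\mathcal{P}(\cdot,B_R)$ supplies the matching reverse inequality.

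The hardest step will be Part~1, where neither an $L^\infty$ nor a H\"older bound on $v$ is available, so nothing direct can be said about $|Dv_\varepsilon|$ pointwise; the whole argument there stands or falls with the sharpness of the intrinsic Sobolev--Poincar\'e inequality of Section~\ref{sec:4}, which has to be strong enough (e.g.\ in the form of a below-$1$ Sobolev--Poincar\'e) that the factor $\omega_a(\varepsilon)$ actually cancels the possibly infinite local growth of $H_a(|Dv|) + H_b(|Dv|)$. This is precisely the role of condition $\eqref{ma:1}_2$.
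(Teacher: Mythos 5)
Your overall strategy --- mollify, majorise the multi-phase energy of $v_\varepsilon$ by a convolution of the energy of $v$ plus a defect proportional to $\omega_a(\varepsilon)$, and then kill the defect using $\eqref{ma:1}_{2}$, $\eqref{ma:2}_{2}$ or $\eqref{ma:3}_{2}$ through the respective bounds $G(|Dv_\varepsilon|)\lesssim\varepsilon^{-n}$, $|Dv_\varepsilon|\lesssim\varepsilon^{-1}$, $|Dv_\varepsilon|\lesssim\varepsilon^{\gamma-1}$ --- is exactly the paper's. However, the step where you produce the majorant does not cohere as written, and it is precisely where the Lavrentiev subtlety lives. From ``Jensen: $H_a(|Dv_\varepsilon(x)|)\leqslant (H_a(|Dv|)*\phi_\varepsilon)(x)$'' followed by Fubini and $(a*\phi_\varepsilon)(y)\leqslant a(y)+[a]_{\omega_a}\omega_a(\varepsilon)$ you obtain a defect $\omega_a(\varepsilon)\int H_a(|Dv(y)|)\,dy$ involving the \emph{unmollified} gradient, and this quantity may be $+\infty$ for every $\varepsilon$, since only $\int a\,H_a(|Dv|)$ is known to be finite. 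The defect you actually go on to estimate, $\omega_a(\varepsilon)\int_{B_R} H_a(|Dv_\varepsilon|)\,dx$, does not follow from that chain. The correct order (the paper's, \eqref{lp:4}--\eqref{lp:12}) is to lower the coefficient \emph{before} applying Jensen: set $a_\varepsilon(x):=\inf_{B_{2\varepsilon}(x)}a$, split $a(x)H_a(|Dv_\varepsilon(x)|)\leqslant a_\varepsilon(x)H_a(|Dv_\varepsilon(x)|)+c\,\omega_a(\varepsilon)H_a(|Dv_\varepsilon(x)|)$, and apply Jensen to the frozen Young function $\Psi_\varepsilon(x,\cdot)=G+a_\varepsilon(x)H_a+b_\varepsilon(x)H_b$, using $a_\varepsilon(x)\leqslant a(y)$ for $y\in B_\varepsilon(x)$ to get $\Psi_\varepsilon(x,|Dv_\varepsilon(x)|)\leqslant[\Psi(\cdot,|Dv|)]_\varepsilon(x)$, an $L^1$-convergent (hence equi-integrable) majorant. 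The defect then genuinely involves $|Dv_\varepsilon|$, your three estimates apply, and the proof closes by generalised dominated convergence.

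Two smaller remarks. In Part 1 the pointwise bound is not supplied by the Sobolev--Poincar\'e inequality of Section \ref{sec:4}; it is the elementary Jensen estimate $G(|Dv_\varepsilon(x)|)\leqslant\norm{\rho_\varepsilon}_{L^\infty}\norm{G(|Dv|)}_{L^1(\tilde{B})}\leqslant c\varepsilon^{-n}$ combined with $\eqref{ma:1}_{2}$ evaluated at $[G(|Dv_\varepsilon(x)|)]^{-1/n}\gtrsim\varepsilon$, which makes $\omega_a(\varepsilon)H_a(|Dv_\varepsilon|)\lesssim\Psi_\varepsilon(x,|Dv_\varepsilon|)$. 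In Part 3 your direct bound $|Dv_\varepsilon(x)|\leqslant c[v]_{0,\gamma}\varepsilon^{\gamma-1}$ is correct and in fact simpler than the paper's route through the Caccioppoli inequality of Lemma \ref{lem:cacct} (which is the reason the paper assumes $Q$-minimality there); the additional comparison-function and lower-semicontinuity step you propose for a ``reverse inequality'' is unnecessary, since the dominated-convergence argument already yields two-sided convergence of the energies.
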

\begin{proof}
	Essentially, the proof for the first two parts is similar to the one of \cite[Theorem 3.1]{BO3}. Since our assumptions are weaker than the assumptions considered there, we provide the detailed proof in any case. First we fix $\varepsilon_{0}\in (0,1)$ such that $B_{R}\Subset B_{R+\varepsilon_{0}}\Subset \tilde{B}\Subset \O$. 
	Let $\rho\in C_{0}^{\infty}(B_1)$ be a non-negative standard mollifier with $\I_{\R^n}\rho\,dx = 1$. Then we set 
	$\rho_{\varepsilon}(x):= \frac{1}{\varepsilon^n}\rho\left( \frac{x}{\varepsilon} \right)$ for $x\in B_{\varepsilon}$ with $0 < \varepsilon < \varepsilon_{0}$. Clearly $\rho_{\varepsilon}\in C_{0}^{\infty}(B_{\varepsilon})$, $\I_{\R^n} \rho_{\varepsilon}\,dx = 1$, $0\leqslant \rho_{\varepsilon} \leqslant c(n)\varepsilon^{-n}$ and $|D\rho_{\varepsilon}| \leqslant c(n)\varepsilon^{-(n+1)}$. For every $0<\varepsilon < \varepsilon_{0}/2$, we consider the following functions:
	\begin{align}
		\label{lp:4}
		v_{\varepsilon}(x):= (v\ast \rho_{\varepsilon} )(x),\quad a_{\varepsilon}(x):= \inf\limits_{y\in B_{2\varepsilon}(x)}a(y),\quad b_{\varepsilon}(x):= \inf\limits_{y\in B_{2\varepsilon}(x)}b(y)
	\end{align}
	and 
	\begin{align}
		\label{lp:npsi}
		\Psi_{\varepsilon}(x,t):= G(t) + a_{\varepsilon}(x)H_{a}(t) + b_{\varepsilon}(x)H_{b}(t)
	\end{align}
	for every $x\in B_{R}$ and $t\geqslant 0$.
\begin{enumerate}
	\item[1.] By Jensen's inequality, for a fixed $x\in B_{R}$, we have 
	\begin{align*}
		G(|Dv_{\varepsilon}(x)|) = G\left(|(Dv\ast \rho_{\varepsilon})(x)|\right) \leqslant \I_{\R^n} G(|Dv(x-y)|)\rho_{\varepsilon}(y)\,dy \leqslant c\varepsilon^{-n}.
	\end{align*}	
	It follows from $\eqref{ma:1}_{2}$ and the last display that 
	\begin{align}
		\label{lp:5}
		\begin{split}
			H_{a}(|Dv_{\varepsilon}(x)|) &= \frac{\left(H_{a}\circ G^{-1}\right)\left( G(|Dv_{\varepsilon}(x)|) \right)}{G(|Dv_{\varepsilon}(x)|)} G(|Dv_{\varepsilon}(x)|)
			\\&
			\leqslant 
			\lambda_{1}\left( 1+ \left[\omega_{a}\left(\left[G
			(|Dv_{\varepsilon}(x)|)\right]^{-\frac{1}{n}}\right)\right]^{-1} \right)G(|Dv_{\varepsilon}(x)|)
			\\&
			\leqslant
			c\left(1+[\omega_{a}(\varepsilon)]^{-1}\right)G(|Dv_{\varepsilon}(x)|) \leqslant c\left(1+[\omega_{a}(\varepsilon)]^{-1}\right) \Psi_{\varepsilon}(x,|Dv_{\varepsilon}(x)|).
		\end{split}
	\end{align}
	Similarly as above, we have 
	\begin{align}
		\label{lp:6}
		H_{b}(|Dv_{\varepsilon}(x)|)
		\leqslant
		c \left(1+[\omega_{b}(\varepsilon)]^{-1}\right) \Psi_{\varepsilon}(x,|Dv_{\varepsilon}(x)|).
	\end{align}
	\item[2.] Since $v$ is locally bounded in $\O$, we have
\begin{align*}
		|Dv_{\varepsilon}(x)|= |(v\ast D\rho_{\varepsilon})(x)| \leqslant \I_{\R^n}|v(x-y)||D\rho_{\varepsilon}(y)|\,dy \leqslant c(n)\norm{v}_{L^{\infty}(\tilde{B})}\varepsilon^{-1}.
\end{align*}
Then the assumption $\eqref{ma:2}_{2}$ and the last display imply
	\begin{align}
		\label{lp:7}
		\begin{split}
			H_{a}(|Dv_{\varepsilon}(x)|) 
			&= \frac{H_{a}(|Dv_{\varepsilon}(x)|)}{G(|Dv_{\varepsilon}(x)|)} G(|Dv_{\varepsilon}(x)|)
			\leqslant \lambda_{2}\left( 1+ 
			\left[\omega_{a}\left(|Dv_{\varepsilon}(x)|^{-1}\right)\right]^{-1} \right)G(|Dv_{\varepsilon}(x)|)
			\\&
			\leqslant
			c\left(1+[\omega_{a}(\varepsilon)]^{-1}\right)G(|Dv_{\varepsilon}(x)|) \leqslant c\left(1+[\omega_{a}(\varepsilon)]^{-1}\right) \Psi_{\varepsilon}(x,|Dv_{\varepsilon}(x)|)
		\end{split}
	\end{align}
	with some constant $c\equiv c\left(n,\lambda_{2},\norm{v}_{L^{\infty}(\tilde{B})} \right)$ for every $x\in B_{R}$. Arguing in the same way, for every $x\in B_{R}$, we have
	\begin{align}
		\label{lp:8}
		H_{b}(|Dv_{\varepsilon}(x)|) 
		\leqslant
		 c\left(1+[\omega_{b}(\varepsilon)]^{-1}\right) \Psi_{\varepsilon}(x,|Dv_{\varepsilon}(x)|).
	\end{align}
\end{enumerate}	
Using the continuity of the coefficient functions $a(\cdot)$ and $b(\cdot)$ and recalling the very definition of $\Psi_{\varepsilon}$ in \eqref{lp:5}, for every $x\in B_{R}$, we have
	\begin{align}
	\label{lp:9}
	\begin{split}
		\Psi(x,|Dv_{\varepsilon}(x)|) 
		&\leqslant \Psi_{\varepsilon}(x,|Dv_{\varepsilon}(x)|)+ |a(x)-a_{\varepsilon}(x)|H_{a}(|Df_{\varepsilon}(x)|)
		+|b(x)-b_{\varepsilon}(x)|H_{b}(|Df_{\varepsilon}(x)|) 
		\\&
		\leqslant \Psi_{\varepsilon}(x,|Dv_{\varepsilon}(x)|)+ 4[a]_{\omega_{a}}\omega_{a}(\varepsilon)H_{a}(|Dv_{\varepsilon}(x)|) + 4[b]_{\omega_{b}}\omega_{b}(\varepsilon)H_{b}(|Dv_{\varepsilon}(x)|).
	\end{split}
	\end{align}
	Therefore, taking into account \eqref{lp:5}-\eqref{lp:6} when the first case comes into play, and \eqref{lp:7}-\eqref{lp:8} when the second case is considered, in any case, it follows from \eqref{lp:9} that
	\begin{align}
		\label{lp:10}
		\begin{split}
			\Psi(x,|Dv_{\varepsilon}(x)|) &\leqslant
			c\Psi_{\varepsilon}(x,|Dv_{\varepsilon}(x)|) + c\omega_{a}(\varepsilon)(1+[\omega_{a}(\varepsilon)]^{-1}) \Psi_{\varepsilon}(x,|Dv_{\varepsilon}(x)|)
			\\&
			\quad
			c\omega_{b}(\varepsilon)(1+[\omega_{b}(\varepsilon)]^{-1}) \Psi_{\varepsilon}(x,|Dv_{\varepsilon}(x)|)
		 \leqslant c\Psi_{\varepsilon}(x,|Dv_{\varepsilon}(x)|)
		\end{split}
	\end{align}	
	for some constant $c$ being independent of $\varepsilon$. Therefore, by Jensen's inequality, we get 
	\begin{align}
		\label{lp:11}
		\begin{split}
		\Psi_{\varepsilon}(x,|Dv_{\varepsilon}(x)|) &\leqslant \I_{B_{\varepsilon}(x)} \Psi_{\varepsilon}(x,|Dv(y)|)\rho_{\varepsilon}(x-y)\,dy
		\leqslant
		\I_{B_{\varepsilon}(x)} \Psi(y,|Dv(y)|)\rho_{\varepsilon}(x-y)\,dy
		\\&
		=[\Psi(\cdot,|Dv(\cdot)|)\ast\rho_{\varepsilon}](x)=: [\Psi(\cdot,|Dv(\cdot)|]_{\varepsilon}(x).
		\end{split}	
	\end{align}
	
	Hence, in any case, using \eqref{lp:10}-\eqref{lp:11}, we conclude that 
	\begin{align}
		\label{lp:12}
		\Psi(x,|Dv_{\varepsilon}(x)|) \leqslant c[\Psi(\cdot,|Dv(\cdot)|)]_{\varepsilon}(x)
	\end{align}
	holds every $x\in B_{R}$ with a constant $c$ independent of $\varepsilon$.
	Since $[\Psi(\cdot,|Dv(\cdot)|)]_{\varepsilon}\rightarrow \Psi(\cdot,|Dv(\cdot)|)$ strongly in $L^{1}(B_{R})$, we are able to apply the general Lebesgue's dominated convergence theorem of \cite[Theorem 19]{RF1} to obtain a sequence of functions $\{v_{k}\} := \{ v_{\varepsilon_{k}}\}\subset C_{0}^{\infty}(\tilde{B})$ satisfying \eqref{lp:1} for the first case and $\eqref{lp:2}_{1,2}$ for the second case with some suitable choice of  $\varepsilon_{k}\rightarrow 0$. Clearly, the assertion $\eqref{lp:2}_{3}$ comes from the very definition of mollification of $v$ defined in \eqref{lp:4}. 
\begin{enumerate}
	\item[3.] Now we turn our attention to proving the last part of the theorem. Applying a Caccioppoli type inequality of Lemma \ref{lem:cacct} under the assumption $\eqref{ma:3}_{2}$ below, we see that 
	\begin{align}
		\label{lp:13}
		\FI_{B_{\varepsilon}(x)}\Psi_{\varepsilon}(x,|Dv(z)|)\,dz
		\leqslant
		c \FI_{B_{2\varepsilon}(x)}\Psi_{\varepsilon}\left(x, \left|\frac{v(z)-(v)_{B_{2\varepsilon}(x)}}{\varepsilon} \right| \right)\,dz
	\end{align}
	for a constant $c$ independent of $\varepsilon$. Therefore, by the very definition of the convolution, the fact that $\Psi_{\varepsilon}(x,\cdot)$ is convex for any fixed $x\in B_{R}$ and \eqref{lp:13}, we have 
	\begin{align}
		\label{lp:14}
		\begin{split}
		|Dv_{\varepsilon}(x)| &\leqslant
		c\left(\Psi_{\varepsilon}(x,\cdot)\right)^{-1}_{t}\circ \Psi_{\varepsilon}\left(x,\FI_{B_{\varepsilon}(x)}|Dv(z)|\,dz \right)
		\\&
		\leqslant
		c\left(\Psi_{\varepsilon}(x,\cdot)\right)^{-1}_{t}\left( \FI_{B_{\varepsilon}(x)}\Psi_{\varepsilon}(x,|Dv(z)|)\,dz\right)
		\\&
		\leqslant
		c\left(\Psi_{\varepsilon}(x,\cdot)\right)^{-1}_{t}\left( \FI_{B_{2\varepsilon}(x)}\Psi_{\varepsilon}\left(x, \left|\frac{v(z)-(v)_{B_{2\varepsilon}(x)}}{\varepsilon} \right| \right)\,dz\right)
		\leqslant 
		c\varepsilon^{\gamma-1}
		\end{split}
	\end{align}
	with some constant $c$ independent of $\varepsilon$, whenever $x\in B_{R}$ and $\varepsilon\in (0,\varepsilon_{0}/4)$, where we have also used the assumption $v\in C^{0,\gamma}(\O)$ for some $\gamma\in (0,1)$ and Lemma \ref{lem:nf1} together with Remark \ref{rmk:nf2}. Recalling the definition of $\Psi_{\varepsilon}$ in \eqref{lp:npsi}, using the modulus of continuity of functions $a(\cdot)$, $b(\cdot)$ and the assumption $\eqref{ma:3}_{2}$, for every $x\in B_{R}$, we estimate
	\begin{align*}
		\begin{split}
		\Psi(x,|Dv_{\varepsilon}(x)|) &\leqslant
		\Psi_{\varepsilon}(x, |Dv_{\varepsilon}(x)|) + |a_{\varepsilon}(x)-a(x)|H_{a}(|Dv_{\varepsilon}(x)|) + |b_{\varepsilon}(x)-b(x)|H_{b}(|Dv_{\varepsilon}(x)|)
		\\&
		\leqslant
		\Psi_{\varepsilon}(x, |Dv_{\varepsilon}(x)|) + c \omega_{a}(\varepsilon) \left( 1+ \left[ \omega_{a}\left( |Dv_{\varepsilon}(x)|^{-\frac{1}{1-\gamma}} \right) \right]^{-1} \right)G(|Dv_{\varepsilon}(x)|)
		\\&
		\quad
		+ c \omega_{b}(\varepsilon) \left( 1+ \left[ \omega_{b}\left( |Dv_{\varepsilon}(x)|^{-\frac{1}{1-\gamma}} \right) \right]^{-1} \right)G(|Dv_{\varepsilon}(x)|)
		\\&
		\leqslant
		c\Psi_{\varepsilon}(x,|Dv_{\varepsilon}(x)|)
		\end{split}
	\end{align*}
	for a constant $c$ independent of $\varepsilon$, where we have also used \eqref{lp:14}. Then arguing in the same way as in \eqref{lp:11}-\eqref{lp:12}, we find a sequence of functions $\{v_{k}\}_{k=1}^{\infty}\subset W^{1,\infty}(B_{R})$ satisfying \eqref{lp:3}. The proof is now finished.
\end{enumerate}
\end{proof}

\section{Sobolev-Poincar\'e type inequalities}
\label{sec:4}
In the present section we provide a Sobolev-Poincar\'e type inequality for functions $v\in W^{1,\Psi}(B_{R})$ with some ball $B_{R}\subset\O$, which is one of key points for further investigations.  For this, first we give a Sobolev-Poincar\'e type inequality for functions of $W^{1,\Phi}(B_{R})$ with $\Phi\in \mathcal{N}$ and a ball $B_{R}\subset\R^n$.

\begin{lem}
    \label{lem:os}
    Let $\Phi\in \mathcal{N}$ with an index $s(\Phi)\geqslant 1$. For any $d_{0} \in [1,\frac{n}{n-1})$, there exists $\theta\equiv \theta(n,s(\Phi),d_{0})\in (0,1)$ such that 
    \begin{align}
        \label{os:os1}
         \left(\FI_{B_{R}}\left[\Phi\left(\left|\frac{v-(v)_{B_{R}}}{R} \right| \right)\right]^{d_{0}}\,dx\right)^{\frac{1}{d_{0}}}
        \leqslant
        c\left(\FI_{B_{R}}\left[\Phi(|Dv|)\right]^{\theta}\,dx\right)^{\frac{1}{\theta}}
    \end{align}
    holds for some constant $c\equiv c(n,s(\Phi),d_{0})$, whenever $v\in W^{1,\Phi}(B_{R})$ and $B_{R}\subset \R^{n}$ is a ball. Moreover, the above estimate still holds with $v-(v)_{B_{R}}$ replaced by $v$ if $v\in W^{1,\Phi}_{0}(B_{R})$.
\end{lem}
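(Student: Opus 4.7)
The plan is to reduce the desired inequality to the classical Euclidean Sobolev--Poincar\'e inequality by exploiting the structural results on N-functions already established in Lemmas~\ref{lem:nf1}, \ref{lem:nf2} and~\ref{lem:nf2_1}. The mismatch between the exponent $d_0\geq 1$ on the left and the exponent $\theta<1$ on the right is precisely a genuine Sobolev gain, and the restriction $d_0<n/(n-1)$ is a shadow of the classical embedding $W^{1,1}\hookrightarrow L^{n/(n-1)}$ transferred through $\Phi$.

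First, by Lemma~\ref{lem:nf2}(4), I would pick $\theta_0\in(0,1)$ depending only on $s(\Phi)$ so that $\Phi^{\tau}\in\mathcal{N}$ for every $\tau\in(\theta_0,1]$, and then fix $\theta\in(\theta_0,1)$ close enough to $1$; its precise value is determined by the exponent-matching step below. The decisive structural input is Lemma~\ref{lem:nf2_1}: the map $s\mapsto\Phi(s^{1/m})$ with $m=s(\Phi)+1$ is concave, which via Jensen's inequality lets me pass $\Phi$ inside integral averages in a sharper way than the crude polynomial envelopes of Lemma~\ref{lem:nf1} alone would allow.

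The core step is to apply the classical Sobolev--Poincar\'e inequality on $B_R$ to $\tilde v:=v-(v)_{B_R}$ at one exponent $q\in[1,n)$ whose Sobolev conjugate $q^{*}=qn/(n-q)$ matches the effective growth exponent of $\Phi(|\tilde v|/R)^{d_0}$ coming from the upper envelope in Lemma~\ref{lem:nf1}. This yields
\[
\Bigl(\FI_{B_R}|\tilde v/R|^{q^{*}}\Bigr)^{1/q^{*}}
\leq c\Bigl(\FI_{B_R}|Dv|^{q}\Bigr)^{1/q},
\]
and I would then convert both sides: on the left, the polynomial envelope of Lemma~\ref{lem:nf1} together with Jensen applied to the concave map of Lemma~\ref{lem:nf2_1} recovers $\Phi(|\tilde v|/R)^{d_0}$; on the right, splitting on $\{|Dv|\leq 1\}$ and $\{|Dv|>1\}$ and using the reverse envelope replaces $|Dv|^{q}$ by $\Phi(|Dv|)^{\theta}$ up to a harmless additive constant absorbable into the $L^{\theta}$-average. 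The zero-boundary variant requires no change beyond dropping the mean $(v)_{B_R}$ via Friedrichs' inequality.

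The main obstacle is the exponent-matching: the crude upper envelope $\Phi(t)\lesssim\Phi(1)t^{s(\Phi)+1}$ by itself pushes $q$ uncomfortably close to $n$ when $s(\Phi)$ is large, which would shrink the admissible range of $d_0$ away from $n/(n-1)$. This is precisely why Lemma~\ref{lem:nf2_1} is indispensable: the concavity of $\Phi(s^{1/m})$ gives a strictly sharper Jensen bound than the raw polynomial comparison, and it is this refinement that allows a single $\theta\in(\theta_0,1)$, depending only on $n$, $s(\Phi)$ and $d_0$, to work uniformly over the full range $d_0\in[1,n/(n-1))$, with constants of the same dependence.
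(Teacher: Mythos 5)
Your proposal has a genuine gap, and it lies exactly where you locate the difficulty: the conversion between $\Phi$ and power functions. Two problems. First, the additive constant produced by splitting on $\{|Dv|\leqslant 1\}$ and $\{|Dv|>1\}$ is \emph{not} absorbable into the $L^{\theta}$-average. The inequality \eqref{os:os1} must hold for all $v\in W^{1,\Phi}(B_{R})$, in particular for $v$ with arbitrarily small nonconstant gradient, in which case $\big(\fint_{B_R}[\Phi(|Dv|)]^{\theta}\,dx\big)^{1/\theta}$ is arbitrarily small while your constant is fixed; a chain ending in $\text{RHS}+C$ does not yield $\text{LHS}\leqslant c\,\text{RHS}$. (The constant in \eqref{os:os1} may depend only on $n,s(\Phi),d_0$, so you also cannot hide $\Phi(1)$ anywhere.) Second, the exponent matching you flag as the "main obstacle" is a genuine obstruction that Lemma~\ref{lem:nf2_1} does not remove. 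The concavity of $s\mapsto\Phi(s^{1/(s(\Phi)+1)})$ forces the Lebesgue exponent on the left to be at least $s(\Phi)+1$ (or $s(\Phi^{d_0})+1$ after raising to the power $d_0$), whereas the convexity available on the right (via $\Phi^{\theta}\in\mathcal{N}$, $\theta<1$) only lets you start from exponents $q$ close to $1$; the Sobolev conjugate $q^{*}\leqslant n/(n-1)$ can never reach $s(\Phi)+1$ for large $s(\Phi)$. So the two Jensen steps cannot be bridged by the classical Sobolev--Poincar\'e inequality for powers.

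The paper's proof avoids the issue entirely by never leaving the Orlicz scale. It uses the pointwise Riesz potential representation $|v(x)-(v)_{B_R}|\leqslant c(n)\int_{B_R}|Dv(y)|\,|x-y|^{1-n}\,dy$, chooses $\theta\in\big(\tfrac{(n-1)d_0}{n},1\big)$ with $\Phi^{\theta}\in\mathcal{N}$ (Lemma~\ref{lem:nf2}, part 4), and applies Jensen's inequality twice: first to the convex function $\Phi^{\theta}$ against the normalized Riesz kernel measure $R^{-1}|x-y|^{1-n}\,dy$, then to $t\mapsto t^{d_0/\theta}$ against the probability measure $E^{-1}[\Phi(|Dv|)]^{\theta}\,dy$. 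The only exponent constraint is $(n-1)d_0/\theta<n$, which is where the restriction $d_0<n/(n-1)$ enters, and Lemma~\ref{lem:nf2_1} plays no role. If you want to repair your argument, replace the detour through the power-function Sobolev inequality by this direct potential-theoretic route.
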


\begin{proof}
    First note by Lemma $\ref{lem:nf2}_{4}$ that there exists $\theta \equiv \theta(n,s(\Phi),d_{0})\in \left(\frac{(n-1)d_{0}}{n},1  \right)$ such that $\Phi^{\theta}\in\mathcal{N}$ with an index $s(\Phi^{\theta})$ depending on $n, s(\Phi),d_{0}$. Therefore, the following classical formula 
    \begin{align}
        \label{os:os2}
        |v(x)-(v)_{B_{R}}| \leqslant c(n) \I_{B_{R}} \frac{|Dv(y)|}{|x-y|^{n-1}}\,dy
    \end{align}
    holds for a.e $x\in B_{R}$, see for instance \cite[Lemma 7.14]{GT}. Letting $\displaystyle E:= \FI_{B_{R}}\Phi^{\theta}(|Dv|)\,dx$, one can assume that $E>0$, otherwise $v$ is constant on $B_{R}$ and the inequality \eqref{os:os1} is trivial. Using \eqref{os:os2}, the fact that $\Phi$ is increasing and Lemma \ref{lem:nf1}, we have
    \begin{align*}
        I:= \FI_{B_{R}}\left[\Phi\left(\left|\frac{v-(v)_{B_{R}}}{R} \right| \right)\right]^{d_{0}}\,dx
        \leqslant
        c\FI_{B_{R}}\left[\Phi\left( \I_{B_{R}} \frac{|Dv(y)|}{R|x-y|^{n-1}}\,dy \right)\right]^{d_{0}}\,dx
    \end{align*}
    with $c\equiv c(n,s(\Phi),d_{0})$. Since $\I_{B_{R}}\frac{1}{R|x-y|^{n-1}}\,dy < c(n)$, where this constant is independent of $x\in B_{R}$ and a ball $B_{R}$, we apply Jensen's inequality to the convex function $\Phi^{\theta}$ with respect to the measure $R^{-1}|x-y|^{-(n-1)}\,dy$ to obtain 
    \begin{align}
        \label{os:os4}
        \begin{split}
        \displaystyle
            I 
            \leqslant
            c\FI_{B_{R}}\left( \I_{B_{R}} \frac{\left[\Phi(|Dv(y)|)\right]^{\theta}}{R|x-y|^{n-1}}\,dy \right)^{\frac{d_{0}}{\theta}}\,dx
            &=
            cR^{\frac{(n-1)d_{0}}{\theta}} E^{\frac{d_{0}}{\theta}}
            \FI_{B_{R}}\left( \FI_{B_{R}} \frac{\left[\Phi(|Dv(y)|)\right]^{\theta}}{|x-y|^{n-1}}E^{-1}\,dy \right)^{\frac{d_{0}}{\theta}}\,dx
            \\&
            \leqslant
            cR^{\frac{(n-1)d_{0}}{\theta}} E^{\frac{d_{0}}{\theta}}
            \FI_{B_{R}} \FI_{B_{R}} \frac{\left[\Phi(|Dv(y)|)\right]^{\theta}}{|x-y|^{\frac{(n-1)d_{0}}{\theta}}}E^{-1}\,dy\,dx,
        \end{split}
    \end{align}
where in the last estimate we have applied again Jensen's inequality to the convex function $t\mapsto t^{\frac{d_{0}}{\theta}}$ with respect to the probability measure $E^{-1}\Phi^{\theta}(|Dv(y)|)\,dy$. We observe that  
\begin{align*}
    \displaystyle
    \FI_{B_{R}} \frac{1}{|x-y|^{\frac{(n-1)d_{0}}{\theta}}}\,dx
    \leqslant
    \frac{1}{|B_{R}|} \I_{B_{2R}(y)} \frac{1}{|x-y|^{\frac{(n-1)d_{0}}{\theta}}}\,dx 
    \leqslant
    c(n,s(\Phi),d) R^{-\frac{(n-1)d_{0}}{\theta}},
\end{align*}
which is possible since $\frac{(n-1)d_{0}}{\theta} < n$. Inserting the last estimate into \eqref{os:os4}, the inequality \eqref{os:os1} follows. Finally, if we replace $v-(v)_{B_{R}}$ by $v$ if $v\in W^{1,\Phi}_{0}(B_{R})$, then the estimate \eqref{os:os1} still holds true since the following classical formula 
\begin{align*}
        |v(x)| \leqslant c(n) \I_{B_{R}} \frac{|Dv(y)|}{|x-y|^{n-1}}\,dy
    \end{align*}
    is valid for a.e $x\in B_{R}$, whenever $v\in W^{1,1}_{0}(B_{R})$, see for instance \cite[Lemma 7.14]{GT}.
\end{proof}

\begin{thm}
	\label{thm:sp}
	  Let $v\in W^{1,\Psi}(B_{R})$ for a ball $B_{R}\subset\O$ with $R\leqslant 1$ under $a(\cdot)\in C^{\omega_{a}}(\O)$ and $b(\cdot)\in C^{\omega_{b}}(\O)$. Then, for any $d\in \left[1,\frac{n^2}{n^2-1} \right)$, there exist constants $\theta\equiv \theta(n,s(G),s(H_{a}),s(H_{b}),d)\in (0,1)$ and $c\equiv c(n,s(G),s(H_{a}),s(H_{b}), \omega_a(1), \omega_b(1),d)$ such that the following Sobolev-Poincar\'e-type inequality holds:
	\begin{align}
		\label{sp:1}
		\left[\FI_{B_{R}}\left[\Psi\left(x,\left|\frac{v-(v)_{B_{R}}}{R}\right|\right)\right]^{d}\,dx\right]^{\frac{1}{d}} \leqslant
		c\lambda_{sp}\left[ \FI_{B_{R}} [\Psi(x,|Dv|)]^{\theta}\,dx \right]^{\frac{1}{\theta}},
	\end{align}
	 where
	 
\begin{subequations}
	 \begin{empheq}[left={\lambda_{sp}=\empheqlbrace}]{align}
        \qquad & 1+([a]_{\omega_{a}}+ [b]_{\omega_{b}})\left(\lambda_{1} +  \lambda_{1}\left( \I_{B_{R}}G(|Dv|)\,dx \right)^{\frac{1}{n}}\right)   &\text{if }& v\in W^{1,\Psi}(B_{R}) \text{ with } \eqref{ma:1}_{2} \label{sp:2_1}.\\
        & 1+ ([a]_{\omega_{a}}+ [b]_{\omega_{b}})\left(\lambda_{2} + \lambda_{2} \norm{v}_{L^{\infty}(B_{R})}\right) &\text{if }& v\in  L^{\infty}(B_{R}) \text{ with } \eqref{ma:2}_{2} \label{sp:2_2}.\\
       &1+ ([a]_{\omega_{a}}+ [b]_{\omega_{b}})\left(\lambda_{3} + \lambda_{3}\left[ R^{-\gamma}\osc\limits_{B_{R}} v \right]^{\frac{1}{1-\gamma}}\right)
         &\text{if }& v\in C^{0,\gamma}(B_{R}) \text{ with } \eqref{ma:3}_{2}.\label{sp:2_3}
      \end{empheq}
	\end{subequations}
Moreover, the above estimate \eqref{sp:1} is still valid with $v-(v)_{B_{R}}$ replaced by $v$ depending on which one of \eqref{sp:2_1}-\eqref{sp:2_3} comes into play if $v\in W^{1,\Psi}_{0}(B_{R})$.
\end{thm}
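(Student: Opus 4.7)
My plan is to decompose $\Psi$ into a ``frozen-coefficient'' backbone and an oscillation remainder, treat the backbone directly with Lemma \ref{lem:os}, and exploit the scaling built into $\eqref{ma:1}_{2}$--$\eqref{ma:3}_{2}$ to convert the oscillation remainder into a $G$-estimate. Using $a(x)\leq a^{-}(B_{R})+[a]_{\omega_{a}}\omega_{a}(R)$ and its $b$-analogue, I will write
\begin{equation*}
\Psi(x,t)\leq \Psi_{B_{R}}^{-}(t)+[a]_{\omega_{a}}\omega_{a}(R)H_{a}(t)+[b]_{\omega_{b}}\omega_{b}(R)H_{b}(t),
\end{equation*}
raise to the $d$-th power and average. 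The $\Psi_{B_{R}}^{-}$ piece is handled immediately by applying Lemma \ref{lem:os} to $\Phi=\Psi_{B_{R}}^{-}\in\mathcal{N}$ (Remark \ref{rmk:nf2}) at the exponent $d_{0}=d<n^{2}/(n^{2}-1)<n/(n-1)$; since $\Psi_{B_{R}}^{-}\leq\Psi$, this piece already meets the target bound $c\bigl(\FI_{B_{R}}\Psi(x,|Dv|)^{\theta}\,dx\bigr)^{d/\theta}$.

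The substantive step will be to convert the oscillation terms $\omega_{a}(R)H_{a}$ and $\omega_{b}(R)H_{b}$ pointwise into $G$. Evaluating the relevant case of $\eqref{ma:1}_{2}$--$\eqref{ma:3}_{2}$ at the scale coupling its first variable $t$ with the argument $s$, and using the concavity inequalities \eqref{concave1}--\eqref{concave2} to compare $\omega_{a}(R)$ with $\omega_{a}$ at that scale, I obtain the pointwise bound
\begin{equation*}
\omega_{a}(R)H_{a}(s)\leq c\lambda_{i}\bigl(1+\Xi_{i}(s)\bigr)G(s),\qquad i=1,2,3,
\end{equation*}
with $\Xi_{1}(s)=RG(s)^{1/n}$, $\Xi_{2}(s)=Rs$, and $\Xi_{3}(s)=Rs^{1/(1-\gamma)}$. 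In Cases 2 and 3 the substitution $s=|v-(v)_{B_{R}}|/R$ collapses $\Xi_{i}$ pointwise via the a priori bound on $v$: $Rs=|v-(v)_{B_{R}}|\leq 2\|v\|_{L^{\infty}(B_{R})}$ in Case 2, and $Rs^{1/(1-\gamma)}=(R^{-\gamma}|v-(v)_{B_{R}}|)^{1/(1-\gamma)}\leq (R^{-\gamma}\osc_{B_{R}}v)^{1/(1-\gamma)}$ in Case 3. A direct application of Lemma \ref{lem:os} with $\Phi=G$ at exponent $d_{0}=d$ then closes these two cases with precisely the $\lambda_{sp}$ of \eqref{sp:2_2} and \eqref{sp:2_3}; the $H_{b}$-contribution runs in parallel.

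The hard part will be Case 1, where $\Xi_{1}(s)=RG(s)^{1/n}$ does not collapse pointwise. After the $d$-th power and average, one is left with a term $R^{d}\FI_{B_{R}}G(|v-(v)_{B_{R}}|/R)^{d(n+1)/n}\,dx$, and this is precisely where the hypothesis $d<n^{2}/(n^{2}-1)$ enters: it guarantees $d(n+1)/n<n/(n-1)$, so that Lemma \ref{lem:os} applies with $\Phi=G$ at the enlarged exponent $d_{0}=d(n+1)/n$, producing $c\bigl(\FI_{B_{R}}G(|Dv|)^{\theta}\,dx\bigr)^{d(n+1)/(n\theta)}$ for some $\theta\in(0,1)$. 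I will split this factor as $\bigl(\FI G^{\theta}\bigr)^{d/\theta}\cdot\bigl(\FI G^{\theta}\bigr)^{d/(n\theta)}$, apply Jensen's inequality $(\FI G^{\theta})^{1/\theta}\leq\FI G$ to the second factor, and observe that $R^{d}(\FI_{B_{R}}G)^{d/n}$ coincides with a dimensional constant times $(\I_{B_{R}}G)^{d/n}$; this reproduces exactly the $(\I_{B_{R}}G(|Dv|))^{1/n}$ contribution in $\lambda_{sp}$ of \eqref{sp:2_1}. Combining the three pieces yields \eqref{sp:1}, and the $W^{1,\Psi}_{0}$ version with $v$ in place of $v-(v)_{B_{R}}$ follows verbatim from the corresponding refinement of Lemma \ref{lem:os}.
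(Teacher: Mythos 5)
Your proposal is correct and follows essentially the same route as the paper's proof: the same three-term decomposition of $\Psi$ into $\Psi_{B_{R}}^{-}$ plus the $\omega_{a}(R)H_{a}$ and $\omega_{b}(R)H_{b}$ remainders, the same pointwise conversion of those remainders into $G$ via $\eqref{ma:1}_{2}$--$\eqref{ma:3}_{2}$ and \eqref{concave2}, the same application of Lemma \ref{lem:os} to $G$ at exponents $d$ and $(1+\tfrac{1}{n})d$ (with $d<n^{2}/(n^{2}-1)$ entering exactly as you say) and to $\Psi_{B_{R}}^{-}$ at exponent $d$, and the same H\"older/Jensen step producing the $\bigl(\int_{B_{R}}G(|Dv|)\,dx\bigr)^{1/n}$ factor in Case 1. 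No gaps.
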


\begin{proof}
The above theorem widely covers the results of \cite[Theorem 4.2]{BBO1}, \cite[Theorem 32.]{BBO2} and also the results of \cite[Theorem 1.6]{CM1}, which is a special case when $G(t)=t^{p}$, $H(t)=t^{q}$, $\omega_{a}(t)=t^{\alpha}$ and $\omega_{b}(\cdot)\equiv 0$ for some constants $1<p\leqslant q$ and $\alpha\in (0,1]$. Then using the continuity of the coefficient functions $a(\cdot)$ and $b(\cdot)$, we find
\begin{align}
		\label{sp:3}
		\begin{split}
		I:= \left(\FI_{B_{R}}\left[\Psi\left(x,\left|\frac{v-(v)_{B_{R}}}{R}\right|\right)\right]^{d}\,dx \right)^{\frac{1}{d}}
		&\leqslant
		 18[a]_{\omega_{a}}\omega_{a}(R)\left(\FI_{B_{R}} \left[H_{a}\left(\left|\frac{v-(v)_{B_{R}}}{R}\right|\right)\right]^{d}\,dx \right)^{\frac{1}{d}}
		 \\&
		 \qquad
		 + 18[b]_{\omega_{b}}\omega_{b}(R)\left(\FI_{B_{R}}\left[H_{b}\left(\left|\frac{v-(v)_{B_{R}}}{R}\right|\right)\right]^{d}\,dx \right)^{\frac{1}{d}}
		\\&
		\qquad
		+ 
		9\left(\FI_{B_{R}}\left[\Psi_{B_{R}}^{-}\left(\left|\frac{v-(v)_{B_{R}}}{R}\right|\right)\right]^{d}\,dx\right)^{\frac{1}{d}}
		\\&
		 =: 18[a]_{\omega_{a}}I_{1} + 18[b]_{\omega_{b}}I_{2} + 9I_{3},
		\end{split}
	\end{align}
where we have used the following elementary inequality
	\begin{align*}
		(t_1+t_2+t_3)^{d} \leqslant 3^{d}\left( t_1^{d} + t_2^{d} + t_3^{d} \right)\quad (\forall t_1,t_2,t_3\geqslant 0).
	\end{align*}
We now estimate the terms $I_{i}$ with $i\in \{1,2,3\}$ in \eqref{sp:3} depending on which one of $\eqref{ma:1}_{2}$, $\eqref{ma:2}_2$ and $\eqref{ma:3}_{2}$ is under consideration. In turn, using $\eqref{ma:1}_{2}$ and \eqref{concave2}, we see 

\begin{align}
		\label{sp:4}
		\begin{split}
	I_{1} &= \displaystyle  \omega_{a}(R) \left(\FI_{B_{R}} \left[\frac{\left(H_{a}\circ G^{-1}\right)\left(G\left(\left|\frac{v-(v)_{B_{R}}}{R}\right|\right)\right)}{G\left(\left|\frac{v-(v)_{B_{R}}}{R}\right|\right)} G\left(\left|\frac{v-(v)_{B_{R}}}{R}\right|\right)\right]^{d}\,dx\right)^{\frac{1}{d}}
	\\&
	\leqslant
	\lambda_{1}\omega_{a}(R)
	\left(\FI_{B_{R}}\left[ \left(1+ \left[ \omega_{a}\left(  \left[ G\left(\left|\frac{v-(v)_{B_{R}}}{R}\right|\right) \right]^{-\frac{1}{n}} \right) \right]^{-1} \right)G\left(\left|\frac{v-(v)_{B_{R}}}{R}\right|\right)\right]^{d}\,dx\right)^{\frac{1}{d}}
	\\&
	\leqslant
	\lambda_{1}\omega_{a}(R)
	\left(\FI_{B_{R}}\left[ \left(1+ \left[ \frac{1}{\omega_{a}(R)} + \frac{R}{\omega_{a}(R)}\left( G\left(\left|\frac{v-(v)_{B_{R}}}{R}\right|\right) \right)^{\frac{1}{n}} \right] \right)G\left(\left|\frac{v-(v)_{B_{R}}}{R}\right|\right)\right]^{d}\,dx\right)^{\frac{1}{d}}
	\\&
	\leqslant
	9\lambda_{1}(1+\omega_{a}(1)) \left(\FI_{B_{R}}\left[G\left(\left|\frac{v-(v)_{B_{R}}}{R}\right|\right)\right]^{d}\,dx\right)^{\frac{1}{d}} 
	\\&
	\quad
	+ 9\lambda_{1} R \left( \FI_{B_{R}}\left[ G\left(\left|\frac{v-(v)_{B_{R}}}{R}\right|\right) \right]^{\left(1+\frac{1}{n}\right)d}\,dx\right)^{\frac{1}{d}},
	\end{split}
	\end{align}
where we have used also that $\omega_{a}(\cdot)$ is non decreasing and $R\leqslant 1$. In the same way, we have 
\begin{align}
	\label{sp:5}
	\begin{split}
	I_{2} &\leqslant 9\lambda_{1}(1+\omega_{b}(1)) \left(\FI_{B_{R}}\left[G\left(\left|\frac{v-(v)_{B_{R}}}{R}\right|\right)\right]^{d}\,dx\right)^{\frac{1}{d}} 
	\\&
	\quad
	+ 9\lambda_{1} R \left( \FI_{B_{R}}\left[ G\left(\left|\frac{v-(v)_{B_{R}}}{R}\right|\right) \right]^{\left(1+\frac{1}{n}\right)d}\,dx\right)^{\frac{1}{d}}.
	\end{split}
\end{align}
Adding the estimates coming from the last two displays and applying Lemma \ref{lem:os} with $\Phi\equiv G$ for $d_{0}\equiv d$ and $d_{0}\equiv \left(1+\frac{1}{n}\right)d < \frac{n}{n-1}$, there exists $\theta_{1}\equiv \theta_{1}(n,s(G),d)\in (0,1)$ such that
\begin{align}
	\label{sp:6}
	\begin{split}
		I_1 + I_2 &\leqslant 
		c\lambda_{1}\left(\FI_{B_{R}}\left[G\left(\left|\frac{v-(v)_{B_{R}}}{R}\right|\right)\right]^{d}\,dx\right)^{\frac{1}{d}}  + c\lambda_{1} R \left( \FI_{B_{R}}\left[ G\left(\left|\frac{v-(v)_{B_{R}}}{R}\right|\right) \right]^{\left(1+\frac{1}{n}\right)d}\,dx\right)^{\frac{1}{d}}
		\\&
		\leqslant
		c\lambda_{1}\left[\FI_{B_{R}}[G(|Dv|)]^{\theta_1}\,dx\right]^{\frac{1}{\theta_1}} + c\lambda_{1} R \left[\FI_{B_{R}}[G(|Dv|)]^{\theta_1}\,dx\right]^{\left(1+\frac{1}{n} \right)\frac{1}{\theta_1}}
		\\&
		\leqslant
		c\lambda_{1}\left[ 1+ \left( \I_{B_{R}}G(|Dv|)\,dx \right)^{\frac{1}{n}}  \right]\left[\FI_{B_{R}}G^{\theta_1}(|Dv|)\,dx\right]^{\frac{1}{\theta_1}}
	\end{split}
\end{align}
for some constant $c\equiv c(n,s(G),\omega_{a}(1),\omega_{b}(1),d)$, where in the last inequality of the above display we have used H\"older's inequality. Since $\Psi_{B_{R}}^{-}\in \N$ with an index $s(\Psi)=s(G)+s(H_{a})+s(H_{b})$ by Remark \ref{rmk:nf2}, we are able to apply Lemma \ref{lem:os} with $\Phi\equiv \Psi_{B_{R}}^{-}$ for $d_{0}\equiv d$. In turn, there exists $\theta_2\equiv \theta_2(n,s(\Psi),d)$ such that 
\begin{align}
	\label{sp:7}
	I_3 \leqslant c \left[\FI_{B_{R}}\left[\Psi_{B_{R}}^{-}(|Dv|)\right]^{\theta_2}\,dx\right]^{\frac{1}{\theta_{2}}}
\end{align}
with some constant $c\equiv c(n,s(\Psi),d)$. Inserting the estimates obtained in \eqref{sp:6}-\eqref{sp:7} into \eqref{sp:3}, recalling the very definition of $\Psi_{B_{R}}^{-}$ in \eqref{ispsi} and setting $\theta := \max\{\theta_1,\theta_2\}$, we arrive at \eqref{sp:2_1}. Now we turn our attention to proving \eqref{sp:2_2}. For this, we estimate the terms $I_{i}$ for $i\in \{1,2,3\}$ for $v\in L^{\infty}(B_{R})$ under the assumption $\eqref{ma:2}_{2}$. In turn, using \eqref{concave2}  and the assumption $\eqref{ma:2}_{2}$, we see
\begin{align}
	\label{sp:8}
	\begin{split}
	I_{1} &= \displaystyle  \omega_{a}(R) \left(\FI_{B_{R}} \left[\frac{H_{a}\left(\left|\frac{v-(v)_{B_{R}}}{R}\right|\right)}{G\left(\left|\frac{v-(v)_{B_{R}}}{R}\right|\right)} G\left(\left|\frac{v-(v)_{B_{R}}}{R}\right|\right)\right]^{d}\,dx\right)^{\frac{1}{d}}
	\\&
	\leqslant
	\lambda_{2}\omega_{a}(R)\left(
	\FI_{B_{R}} \left[\left(1+ \left[ \omega_{a}\left( \left(\left|\frac{v-(v)_{B_{R}}}{R}\right|\right)^{-1} \right)  \right]^{-1} \right)G\left(\left|\frac{v-(v)_{B_{R}}}{R}\right|\right)\right]^{d}\,dx\right)^{\frac{1}{d}}
	\\&
	\leqslant
	\lambda_{2}\omega_{a}(R)
	\left(\FI_{B_{R}}\left[ \left(1+ \left[ \frac{1}{\omega_{a}(R)} + \frac{\left|v-(v)_{B_{R}}\right|}{\omega_{a}(R)} \right] \right)G\left(\left|\frac{v-(v)_{B_{R}}}{R}\right|\right)\right]^{d}\,dx\right)^{\frac{1}{d}}
	\\&
	\leqslant
	2\lambda_{2}\left(1+\omega_{a}(1) + \norm{v}_{L^{\infty}(B_{R})} \right)
	\left(\FI_{B_{R}}\left[G\left(\left|\frac{v-(v)_{B_{R}}}{R}\right|\right)\right]^{d}\,dx\right)^{\frac{1}{d}}.
	\end{split}
\end{align}
In a similar way, one can see 
\begin{align}
	\label{sp:9}
	I_2 \leqslant 2\lambda_{2}\left(1+\omega_{b}(1) + \norm{v}_{L^{\infty}(B_{R})} \right)
	\left(\FI_{B_{R}}\left[G\left(\left|\frac{v-(v)_{B_{R}}}{R}\right|\right)\right]^{d}\,dx\right)^{\frac{1}{d}}.
\end{align}
Adding the estimates in \eqref{sp:8}-\eqref{sp:9} and applying Lemma \ref{lem:os} with $\Phi\equiv G$ for $d_{0}\equiv d$, there exists an exponent $\theta_1\equiv \theta_1(n,s(G),d)\in (0,1)$ such that 
\begin{align}
	\label{sp:10}
	I_1 + I_2 \leqslant
	c \lambda_{2}\left( 1+\norm{v}_{L^{\infty}(B_{R})}\right)
	\left[\FI_{B_{R}}[G(|Dv|)]^{\theta_1}\,dx\right]^{\frac{1}{\theta_1}}
\end{align}
for some constant $c\equiv c(n,s(G),\omega_{a}(1),\omega_{b}(1),d)$. This estimate together with \eqref{sp:7} and the very definition of $\Psi_{B_{R}}^{-}$ in \eqref{ispsi}, we find \eqref{sp:2_2}. It remains to prove \eqref{sp:2_3}. Essentially, it can proved in a similar manner we have shown in \eqref{sp:8}-\eqref{sp:9}. So using the assumption $\eqref{ma:3}_{2}$ and again \eqref{concave2}, we see 
\begin{align}
	\label{sp:11}
	\begin{split}
	I_{1} &= \displaystyle  \omega_{a}(R) \left(\FI_{B_{R}} \left[\frac{H_{a}\left(\left|\frac{v-(v)_{B_{R}}}{R}\right|\right)}{G\left(\left|\frac{v-(v)_{B_{R}}}{R}\right|\right)} G\left(\left|\frac{v-(v)_{B_{R}}}{R}\right|\right)\right]^{d}\,dx\right)^{\frac{1}{d}}
	\\&
	\leqslant
	 \lambda_{3}\omega_{a}(R)
	\left(\FI_{B_{R}} \left[\left(1+ \left[ \omega_{a}\left( \left(\left|\frac{v-(v)_{B_{R}}}{R}\right|\right)^{-\frac{1}{1-\gamma}} \right)  \right]^{-1} \right)G\left(\left|\frac{v-(v)_{B_{R}}}{R}\right|\right)\right]^{d}\,dx\right)^{\frac{1}{d}}
	\\&
	\leqslant
	\lambda_{3}\omega_{a}(R)
	\left(\FI_{B_{R}} \left[\left(1+ \left[ \frac{1}{\omega_{a}(R)} + \frac{R^{\frac{-\gamma}{1-\gamma}}}{\omega_{a}(R)}\left|v-(v)_{B_{R}}\right|^{\frac{1}{1-\gamma}} \right] \right)G\left(\left|\frac{v-(v)_{B_{R}}}{R}\right|\right)\right]^{d}\,dx\right)^{\frac{1}{d}}
	\\&
	\leqslant
	2 \lambda_{3}\left(1+\omega_{a}(1) + \left[ R^{-\gamma}\osc\limits_{B_{R}} v \right]^{\frac{1}{1-\gamma}} \right)
	\left(\FI_{B_{R}}\left[G\left(\left|\frac{v-(v)_{B_{R}}}{R}\right|\right)\right]^{d}\,dx\right)^{\frac{1}{d}}.
	\end{split}
\end{align}
By arguing in the same way, we see 
\begin{align}
	\label{sp:12}
	I_1 + I_2 \leqslant
	c \lambda_{3} \left(1+ \left[ R^{-\gamma}\osc\limits_{B_{R}} v \right]^{\frac{1}{1-\gamma}} \right)
	\left(\FI_{B_{R}}\left[G\left(\left|\frac{v-(v)_{B_{R}}}{R}\right|\right)\right]^{d}\,dx\right)^{\frac{1}{d}}.
\end{align}
for some constant $c\equiv c(s(G), \omega_a(1),\omega_b(1))$. Finally, this estimate together with \eqref{sp:7} leads to \eqref{sp:2_3}. The proof is complete.
\end{proof}

\begin{rmk}
	\label{rmk:sp} We here remark that choosing $d\equiv 1$ in a Sobolev-Poincar\'e type inequality of Theorem \ref{thm:sp}, there exist an exponent  $\theta\equiv \theta(n,s(G),s(H_a),s(H_{b}))$  such that 
	\begin{align}
		\label{rmk:sp:1}
		\FI_{B_{R}}\Psi\left(x,\left|\frac{v-(v)_{B_{R}}}{R}\right|\right)\,dx \leqslant
		c\lambda_{sp}\left[ \FI_{B_{R}} [\Psi(x,|Dv|)]^{\theta}\,dx \right]^{\frac{1}{\theta}},
	\end{align}
	holds for some constant $c\equiv c(n,s(G),s(H_{a}),s(H_{b}),\omega_a(1),\omega_b(1))$	 where
	 
\begin{subequations}
	 \begin{empheq}[left={\lambda_{sp}=\empheqlbrace}]{align}
        \qquad & 1+ ([a]_{\omega_{a}}+ [b]_{\omega_{b}})\left(\lambda_{1} +  \lambda_{1}\left( \I_{B_{R}}G(|Dv|)\,dx \right)^{\frac{1}{n}}\right)  &\text{if }& v\in W^{1,\Psi}(B_{R}) \text{ with } \eqref{ma:1}_{2} \label{rmk:sp:2}.\\
        & 1+ ([a]_{\omega_{a}}+ [b]_{\omega_{b}})\left(\lambda_{2} + \lambda_{2} \norm{v}_{L^{\infty}(B_{R})}\right) &\text{if }& v\in  L^{\infty}(B_{R}) \text{ with } \eqref{ma:2}_{2}.\label{rmk:sp:3}\\
       &1+ ([a]_{\omega_{a}}+ [b]_{\omega_{b}})\left(\lambda_{3} + \lambda_{3}\left[ R^{-\gamma}\osc\limits_{B_{R}} v \right]^{\frac{1}{1-\gamma}}\right)
         &\text{if }& v\in C^{0,\gamma}(B_{R}) \text{ with } \eqref{ma:3}_{2}.\label{rmk:sp:4}
      \end{empheq}
	\end{subequations}
\end{rmk}


 \section{Basic regularity results}
 \label{sec:5}
 We start this section by stating the following Caccioppoli inequality which is a fundamental result for the further investigations. In what follows let $Q= L/\nu$ for the convenience in the future but in general it could be any number larger than one.
 \begin{lem}[Caccioppoli Inequality]
    \label{lem:cacc}
    Let $u\in W^{1,\Psi}(\O)$ be a local $Q$-minimizer of the functional $\mathcal{P}$ defined in \eqref{ifunct} with $0\leqslant a(\cdot),b(\cdot)\in L^{\infty}(\O)$. Then there exists a positive constant $c\equiv c(s(G),s(H_{a}),s(H_{b}),Q)$ such that the following Caccioppoli inequality 
    \begin{align}
        \label{cacc:1}
        \I_{B_{\rho}} \Psi(x,|D(u-k)_{\pm}|)\,dx \leqslant c\I_{B_{R}} \Psi\left(x,\frac{(u-k)_{\pm}}{R-\rho} \right)\,dx
    \end{align}
    holds, whenever $B_{\rho}\Subset B_{R}\subset \O$ are concentric balls and $k\in\R$.
 
 \end{lem}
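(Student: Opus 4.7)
The plan is to follow a standard cutoff--competitor approach, adapted to the Orlicz multi-phase setting via the $\Delta_2$-type estimates encoded in Lemma \ref{lem:nf1} and Remark \ref{rmk:nf1}. Because $u$ is only assumed to be a $Q$-minimizer and the functional need not admit an Euler--Lagrange equation under these structure assumptions, \eqref{cacc:1} will be extracted directly from the minimality condition applied to a suitable perturbation of $u$.

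For the $(u-k)_+$ part, I would fix $\rho\leq r<s\leq R$ and pick a standard cutoff $\eta\in C_0^\infty(B_s)$ with $\eta\equiv 1$ on $B_r$, $0\leq\eta\leq 1$ and $|D\eta|\leq 2/(s-r)$, and then set the competitor $w:=u-\eta(u-k)_+$. Since $\mathrm{Supp}(u-w)\Subset B_s$, Definition \ref{def_min} gives
\begin{equation*}
\int_{B_s\cap\{u>k\}}\Psi(x,|Du|)\,dx\leq Q\int_{B_s\cap\{u>k\}}\Psi(x,|Dw|)\,dx.
\end{equation*}
On $\{u>k\}$ one has $Dw=(1-\eta)Du-D\eta\,(u-k)_+$, which vanishes identically on $B_r$, so the right-hand side effectively integrates only over $(B_s\setminus B_r)\cap\{u>k\}$. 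Combining \eqref{growth1} with Lemma \ref{lem:nf1} and the identity $s(\Psi)=s(G)+s(H_a)+s(H_b)$ from Remark \ref{rmk:nf2} then yields the pointwise bound
\begin{equation*}
\Psi(x,|Dw|)\leq c\,\Psi(x,|Du|)+c\,\Psi\!\left(x,\tfrac{(u-k)_+}{s-r}\right)
\end{equation*}
with $c\equiv c(s(\Psi))$, where the first term only contributes on $B_s\setminus B_r$.

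Setting $h(t):=\int_{B_t\cap\{u>k\}}\Psi(x,|Du|)\,dx$, the previous displays combine into a hole-filling inequality $h(s)\leq C[h(s)-h(r)]+C\int_{B_R}\Psi(x,(u-k)_+/(s-r))\,dx$ with $C\equiv C(s(\Psi),Q)$, which rearranges to
\begin{equation*}
h(r)\leq \tfrac{C-1}{C}\,h(s)+c\int_{B_R}\Psi\!\left(x,\tfrac{(u-k)_+}{s-r}\right)dx.
\end{equation*}
I would then invoke property $1$ of Lemma \ref{lem:nf1} with $\Lambda_0:=(R-\rho)/(s-r)\geq 1$ to recast the integrand as $A_0/(s-r)^{s(\Psi)+1}$ with $A_0$ proportional to $(R-\rho)^{s(\Psi)+1}\int_{B_R}\Psi(x,(u-k)_+/(R-\rho))\,dx$, placing the estimate in the exact form required by Lemma \ref{lem:t0}. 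That iteration lemma then delivers $h(\rho)\leq c\int_{B_R}\Psi(x,(u-k)_+/(R-\rho))\,dx$, which is \eqref{cacc:1}. The symmetric case $(u-k)_-$ is handled by the competitor $w:=u+\eta(u-k)_-$ with the same argument.

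The main technical delicacy I anticipate is the absorption in the hole-filling step: since $\Psi$ is not homogeneous, the arithmetic trick familiar from the $p$-growth Caccioppoli has to be replaced by the uniform $\Delta_2$-estimate of Lemma \ref{lem:nf1}, and the exponent $s(\Psi)+1$ it produces must be threaded carefully through the rescaling from $(s-r)$ to $(R-\rho)$ before Lemma \ref{lem:t0} can be applied; this is precisely where the dependence of the constant on $s(G),s(H_a),s(H_b)$ and $Q$ enters the final estimate.
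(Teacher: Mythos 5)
Your proof is correct and is exactly the standard cutoff--competitor, hole-filling and iteration argument (via Lemma \ref{lem:t0} and the $\Delta_2$-type bounds of Lemma \ref{lem:nf1}) that the paper itself does not write out but invokes by citing \cite{BO3}, noting only that the extra phase is harmless because $H_b\in\mathcal{N}$. The rescaling from $(s-r)$ to $(R-\rho)$ with exponent $s(\Psi)+1$ and the resulting constant dependence on $s(G),s(H_a),s(H_b),Q$ are handled as intended.
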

 
 \begin{proof}
 	The proof is elementary as done for \cite[Lemma 4.6]{BO3}. The only difference lies in that we have an additional one phase. But the inequality \eqref{cacc:1} is still valid since $H_{b}\in \N$ with an index $s(H_{b})\geqslant 1$.
 \end{proof}
\begin{rmk}
	\label{rmk:cacc}
	As a direct consequence of Lemma \ref{lem:cacc}, with $u\in W^{1,\Psi}(B_{R})$ being a local $Q$-minimizer of the functional $\P$ defined in \eqref{ifunct} under the assumptions of Lemma \ref{lem:cacc}, there exists a positive constant $c\equiv c(n,s(G),s(H_{a}),s(H_{b}),Q)$ such that 
	\begin{align*}
		\FI_{B_{R/2}}\Psi(x,|Du|)\,dx \leqslant c\FI_{B_{R}}\Psi\left(x, \left|\frac{u-(u)_{B_{R}}}{R} \right| \right)\,dx
	\end{align*}
	holds, whenever $B_{R}\subset\O$ is a ball.	 
\end{rmk}

\subsection{Local boundedness}
\label{subsec:5.1}
Now we focus on local boundedness of a local $Q$-minimizer $u$ of the functional $\P$ defined in \eqref{ifunct} with obtaining precise estimates under the assumption $\eqref{ma:1}_{2}$.
\begin{thm}
	\label{thm:lb}
	Let $u\in W^{1,\Psi}(\O)$ be a local $Q$-minimizer of the functional $\P$ defined in \eqref{ifunct} under the assumption \eqref{ma:1}. Then there exists a constant $c\equiv c(\data)$ such that 
	\begin{align}
		\label{lb:1}
		\norm{\Psi_{B_{R}}^{-}\left(\left|\frac{(u-(u)_{B_{R}})_{\pm}}{R} \right| \right)}_{L^{\infty}(B_{R/2})}
		\leqslant 
		c \FI_{B_{R}}\Psi\left(x,\left|\frac{(u-(u)_{B_{R}})_{\pm}}{R} \right| \right)\,dx
	\end{align}
	and 
	\begin{align}
		\label{lb:1_1}
		\Psi_{B_{R}}^{-}\left(\left|\frac{u(x_1)-u(x_2)}{R} \right| \right)
		\leqslant 
		c \FI_{B_{R}}\Psi\left(x,\left|Du\right| \right)\,dx
		\quad\text{for a.e}\quad x_1,x_2\in B_{R/2},
	\end{align}
 whenever $B_{R}\equiv B_{R}(x_0)\subset\O$ is a ball with $R\leqslant 1$. In particular, $u\in L^{\infty}_{\loc}(\O)$.	
\end{thm}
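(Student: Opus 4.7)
I would use a De~Giorgi iteration adapted to the Orlicz multi-phase scale. Fix the ball $B_R = B_R(x_0) \subset \Omega$ with $R \leq 1$ and treat only the $+$ case (the $-$ case is symmetric). Set $w := u - (u)_{B_R}$ and, for a parameter $k_0 > 0$ to be chosen at the end, define
\begin{equation*}
k_i := k_0(2 - 2^{-i}), \quad \rho_i := \tfrac{R}{2} + \tfrac{R}{2^{i+1}}, \quad A_i := \{x \in B_{\rho_i} : w > k_i\}, \quad Y_i := \int_{B_{\rho_i}} \Psi\bigl(x, (w-k_i)_+/R\bigr)\,dx,
\end{equation*}
so that $k_i \nearrow 2k_0$, $\rho_i \searrow R/2$, and $Y_i$ is monotone nonincreasing in $i$. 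The goal is to prove a recursion $Y_{i+1} \leq C\,b^i\,Y_i^{1+\tau}$ and conclude via Lemma \ref{lem:t1}.

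\textbf{Caccioppoli and Sobolev--Poincar\'e.} Applying Lemma \ref{lem:cacc} to $(w-k_i)_+$ on the pair $(B_{\rho_{i+1}}, B_{\rho_i})$ yields
\begin{equation*}
\int_{B_{\rho_{i+1}}} \Psi(x, |D(w-k_i)_+|)\,dx \leq c\,2^{\sigma i}\,Y_i
\end{equation*}
for some exponent $\sigma$ depending on $s(G), s(H_a), s(H_b)$. Picking a cutoff $\eta \in C_0^\infty(B_{\rho_i})$ with $\eta \equiv 1$ on $B_{\rho_{i+1}}$ and $|D\eta| \lesssim 2^i/R$, the function $\eta(w-k_i)_+$ lies in $W_0^{1,\Psi}(B_{\rho_i})$, and the zero-boundary-value version of Theorem \ref{thm:sp}, case \eqref{sp:2_1}, applied with any fixed $d \in \bigl(1,\, n^2/(n^2-1)\bigr)$, produces $\theta \in (0,1)$ and a factor $\lambda_{sp}$ which I would argue stays bounded by a $\data$-constant uniformly in~$i$. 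The reason is that the $W^{1,G}$ energy of $\eta(w-k_i)_+$ is dominated, via the product rule, by $\int_\Omega \Psi(x,|Du|) + \int_{B_R} G(|w/R|)$; the first term belongs to $\data$, while the second is controlled by a single application of Theorem \ref{thm:sp} to $w$ on $B_R$ together with the elementary inequality $(\fint f^\theta)^{1/\theta} \leq \fint f$ valid for $\theta \leq 1$ on a probability space.

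\textbf{Recursion and conclusion.} Using the level-set inclusion $(w-k_i)_+ \geq (k_{i+1}-k_i)\,\chi_{A_{i+1}} = 2^{-i-1}k_0\,\chi_{A_{i+1}}$, the growth of $\Psi$ from Lemma \ref{lem:nf1}, and the jump in integrability from $\theta$ to $d$ supplied by Theorem \ref{thm:sp}, one extracts a small-measure gain and reaches $Y_{i+1} \leq C(\data)\,b^i\,Y_i^{1+\tau}$ with $\tau > 0$ depending only on $n, s(G), s(H_a), s(H_b)$ and $d$. Choosing $k_0$ large enough, as a suitable $\data$-multiple of $\fint_{B_R}\Psi(x,(w)_+/R)\,dx$, drives $Y_0$ below the threshold of Lemma \ref{lem:t1}; hence $Y_i \to 0$, so $w \leq 2k_0$ a.e.\ on $B_{R/2}$, and \eqref{lb:1} follows by the monotonicity properties of $\Psi_{B_R}^-$. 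Estimate \eqref{lb:1_1} is then obtained by combining \eqref{lb:1} applied to both $(\cdot)_\pm$ with the $d=1$ Sobolev--Poincar\'e inequality \eqref{rmk:sp:1} and the inequality $(\fint f^\theta)^{1/\theta} \leq \fint f$, which converts $\fint_{B_R} \Psi(x,|w/R|)\,dx$ into $c\fint_{B_R}\Psi(x,|Du|)\,dx$. Local boundedness of $u$ is then immediate.

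\textbf{Main obstacle.} The delicate point is verifying that the factor $\lambda_{sp}$ in Theorem \ref{thm:sp}, case \eqref{sp:2_1}, remains uniformly controlled along the iteration: it contains $\bigl(\int_{B_R} G(|Dv|)\bigr)^{1/n}$ evaluated on the test function $v = \eta(w-k_i)_+$, which a priori might grow with~$i$. The structural observation that resolves this is that the integral is absolute (not averaged), and is hence dominated throughout the iteration by $\norm{\Psi(\cdot,|Du|)}_{L^1(\Omega)} \in \data$, so that $\lambda_{sp} \leq c(\data)$ for every $i$; this is precisely where the global Musielak--Orlicz integrability $u \in W^{1,\Psi}(\Omega)$ is essential and cannot be replaced by a local bound.
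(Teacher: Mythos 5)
Your strategy coincides with the paper's: a De~Giorgi iteration built from the Caccioppoli inequality of Lemma \ref{lem:cacc}, the zero-boundary Sobolev--Poincar\'e inequality of Section \ref{sec:4} applied to $\eta(w-k_i)_+$, and Lemma \ref{lem:t1}. (The paper first rescales to $B_1$ and iterates the quantity $M_i=\frac{1}{\bar\Psi_{B_1}^-(l_0)}\int_{\bar A(k_i,\rho_i)}\bar\Psi(x,\bar u-k_i)\,dx$, but that is cosmetic.) However, your resolution of the point you yourself flag as the main obstacle is not correct as stated. You claim the Sobolev--Poincar\'e factor $\lambda_{sp}$ stays bounded by a $\data$-constant uniformly in $i$ because $\int_{B_R}G(|Dv|)$ with $v=\eta(w-k_i)_+$ is ``dominated throughout the iteration by $\norm{\Psi(\cdot,|Du|)}_{L^1(\O)}$''. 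It is not: the product rule gives $|Dv|\leqslant |Dw|+|D\eta|(w-k_i)_+$ with $|D\eta|\lesssim 2^i/R$, so by Lemma \ref{lem:nf1} the cutoff contribution is of order $2^{i(s(G)+1)}\int_{B_R}G(|w|/R)\,dx$, and hence $\lambda_{sp}$ grows geometrically in $i$ (like $2^{i(s(G)+1)/n}$). This does not sink the argument — geometric growth is exactly what the factor $b^i$ in Lemma \ref{lem:t1} is designed to absorb, and this is precisely how the paper handles it in \eqref{lb:11_1}--\eqref{lb:13}, where $\bar\kappa_{sp}\leqslant c(s-\rho)^{-(s(G)+1)}[1+(\int_{B_R}G(|Du|)\,dx)^{1/n}]$ — but you must track this growth rather than assert uniform boundedness, since the assertion as written is false.

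A second, smaller slip: choosing $k_0$ as ``a suitable $\data$-multiple of $\FI_{B_R}\Psi(x,(w)_+/R)\,dx$'' is dimensionally off. To land on \eqref{lb:1} you need $\Psi_{B_R}^-(2k_0/R)\leqslant c\FI_{B_R}\Psi(x,|w|/R)\,dx$, i.e.\ $k_0$ must be $R$ times $(\Psi_{B_R}^-)^{-1}$ of a $\data$-multiple of the averaged energy (compare the paper's choice $\bar\Psi_{B_1}^-(l_0)=c\int_{B_1}\bar\Psi(x,\bar u_+)\,dx$). With these two corrections your argument matches the paper's; the treatment of \eqref{lb:1_1} via \eqref{growth1} and Theorem \ref{thm:sp} is the same as Step 3 of the paper's proof.
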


\begin{proof}
	Let us consider the following scaling: 
	\begin{align}
		\label{lb:2}
		\begin{split}
			&\bar{u}(x):= \frac{u(x_0 + Rx)-(u)_{B_{R}}}{R},\quad
			\bar{a}(x):= a(x_0+Rx),\quad 
			\bar{b}(x):= b(x_0+Rx),
			\\&
			\bar{\Psi}(x,t):= G(t) + \bar{a}(x)H(t) + \bar{b}(x)H(t),
			\\&
			\bar{A}(k,s):= B_{s}(0)\cap\{\bar{u}>k\}
			\quad\text{and}\quad
			\bar{B}(k,s):= B_{s}(0)\cap\{\bar{u}<k\}
		\end{split}
	\end{align}
	for every $x\in B_{1}(0)$, $t\geqslant 0$, $s\in (0,1)$ and $k\in\R$. The rest of the proof falls in 3 steps.
	
\textbf{Step 1: Sobolev-Poincar\'e inequality under the scaling.}
	 Before going on further, let us consider a Sobolev-Poincar\'e type inequality under the new scaling introduced in \eqref{lb:2}. So we prove that there exists a positive exponent $\theta\equiv \theta(n,s(G),s(H_{a}),s(H_{b}))\in (0,1)$ such that 
	\begin{align}
		\label{lb:3}
		\I_{B_{1}}\bar{\Psi}(x,|f|)\,dx \leqslant c\bar{k}_{sp} \left(\I_{B_{1}}[\bar{\Psi}(x,|Df|)]^{\theta}\,dx\right)^{\frac{1}{\theta}}
	\end{align}
	for some constant $c\equiv c(n,s(G),s(H_{a}),s(H_{b}),\omega_a(1),\omega_{b}(1))$, whenever $f\in W^{1,\bar{\Psi}}_{0}(B_{1})$, where
	\begin{align}
		\label{lb:4}
		\bar{\kappa}_{sp} = 1+([a]_{\omega_{a}} + [b]_{\omega_{b}})\left(\lambda_1 + \lambda_1 R\left(\I_{B_{1}}G(|Df|) \,dx\right)^{\frac{1}{n}}\right).
	\end{align}
Essentially, the proof of the inequality \eqref{lb:3} comes from a careful revealing of the arguments used in \eqref{sp:4}-\eqref{sp:6}. So using continuity properties of $\bar{a}(\cdot)$ and $\bar{b}(\cdot)$, we see 
\begin{align}
	\label{lb:5}
	\begin{split}
	I:&= \I_{B_{1}}\bar{\Psi}(x,|f|)\,dx \leq 
	2[a]_{\omega_{a}}\omega_a(R)\I_{B_{1}}H_{a}(|f|)\,dx + 2[b]_{\omega_{b}}\omega_{b}(R)\I_{B_{1}}H_{b}(|f|)\,dx + \I_{B_{1}}\bar{\Psi}_{B_1}^{-}(|f|)\,dx
	\\&
	: 2[a]_{\omega_{a}}I_1 + 2[b]_{\omega_{b}}I_2 +I_3,
	\end{split}
\end{align}
where 
\begin{align}
	\label{lb:6}
	\bar{\Psi}_{B_{1}}^{-}(t):= G(t) + \inf\limits_{x\in B_{1}}\bar{a}(x)H_{a}(t) + \inf\limits_{x\in B_{1}}\bar{b}(x)H_{b}(t)
	\quad\text{for every}\quad
	t\geqslant 0.
\end{align}
Now we estimate the terms $I_{i}$ for $i\in \{1,2,3\}$ similarly as in the proof of Theorem \ref{thm:sp}. In turn, using the assumption $\eqref{ma:1}_{2}$ and \eqref{concave2}, we have 
\begin{align}
	\label{lb:7}
	\begin{split}
		I_1 &= \omega_{a}(R)\I_{B_{1}}\frac{H_{a}(|f|)}{G(|f|)}G(|f|)\,dx
		\\&
	\leqslant
	\lambda_{1}\omega_{a}(R)
	\I_{B_{1}} \left(1+ \left[ \omega_{a}\left(  \left[ G\left(\left|f\right|\right)\right]^{-\frac{1}{n}}  \right) \right]^{-1} \right)G\left(\left|f\right|\right)\,dx
	\\&
	\leqslant
	\lambda_{1}\omega_{a}(R)
	\I_{B_{1}} \left(1+ \left[ \frac{1}{\omega_{a}(R)} + \frac{R}{\omega_{a}(R)}\left[ G\left(\left|f\right|\right) \right]^{\frac{1}{n}} \right] \right)G\left(\left|f\right|\right)\,dx
	\\&
	\leqslant
	\lambda_{1}(1+\omega_{a}(1)) \I_{B_{1}}G\left(\left|f\right|\right)\,dx + 2\lambda_{1} R \I_{B_{1}}\left[ G\left(\left|f\right|\right) \right]^{1+\frac{1}{n}}\,dx.
	\end{split}
\end{align}
In a similar manner, we find 
\begin{align}
	\label{lb:8}
	I_{2} \leqslant \lambda_{1}(1+\omega_{b}(1)) \I_{B_{1}}G\left(\left|f\right|\right)\,dx + \lambda_{1} R \I_{B_{1}}\left[ G\left(\left|f\right|\right) \right]^{1+\frac{1}{n}}\,dx.
\end{align}
Inserting the estimates \eqref{lb:7}-\eqref{lb:8} into \eqref{lb:6}, the inequality \eqref{lb:3} follows from the similar arguments used in \eqref{sp:6}-\eqref{sp:7} and Lemma \ref{lem:os}.

\textbf{Step 2. Proof of \eqref{lb:1}.}
	 Since $u-(u)_{B_{R}}$ is a local $Q$-minimizer of the functional $\P$, we use a Caccioppoli inequality of Lemma \ref{lem:cacc} to see that 
	\begin{align}
		\label{lb:9}
		\I_{B_{t}} \bar{\Psi}(x,|D(\bar{u}-k)_{\pm}|)\,dx 
		\leqslant 
		c\I_{B_{s}} \bar{\Psi}\left(x,\frac{(\bar{u}-k)_{\pm}}{s-t} \right)\,dx
	\end{align}
	holds for some constant $c\equiv c(s(G),s(H_{a}),s(H_{b}),Q)$, whenever $0<t<s \leqslant 1$ and $k\in\R$. Let us now consider the concentric balls $B_{\rho}\Subset B_{t} \Subset B_{s} $ with $1/2 \leqslant \rho < s \leqslant 1$ and $t:= (\rho+s)/2$. Let $\eta\in C_{0}^{\infty}(B_{t})$ be a standard cut-off function such that $\chi_{B_{\rho}} \leqslant \eta \leqslant \chi_{B_{t}}$ and $|D\eta| \leqslant \frac{2}{t-\rho} = \frac{4}{s-\rho}$. Now we apply inequality \eqref{lb:3} from Step 1 above in order to have a positive exponent $\theta\equiv \theta(n,s(G),s(H_{a}),s(H_{b}))$ such that
	\begin{align}
		\label{lb:10}
		\I_{\bar{A}(k,\rho)}\bar{\Psi}(x,\bar{u}-k)\,dx
		\leqslant
		\I_{B_{1}}\bar{\Psi}(x,\eta(\bar{u}-k)_{+})\,dx
		\leqslant
		c\bar{k}_{sp}\left(\I_{B_{1}} \left[\bar{\Psi}(x,|D(\eta(\bar{u}-k)_{+})|)\right]^{\theta}\,dx \right)^{\frac{1}{\theta}}
	\end{align}
for some constant $c\equiv c(n,s(G),s(H_{a}),s(H_{b}),\omega_{a}(1),\omega_{b}(1))$, where
	\begin{align}
		\label{lb:11}
		\bar{k}_{sp}= 1+ ([a]_{\omega_{a}} + [b]_{\omega_{b}})\left(\lambda_{1} + \lambda_{1}R\left(\I_{B_{1}}G(|D(\eta(\bar{u}-k)_{+})|) \,dx\right)^{\frac{1}{n}}\right).
	\end{align}
By scaling back and using Lemma \ref{lem:nf1}, for any $k\geqslant 0$, we have 
\begin{align}
	\label{lb:11_1}
	\begin{split}
	\bar{\kappa}_{sp} &\leqslant c\left[1+ R \left(\FI_{B_{R}}G(|Du|) \,dx\right)^{\frac{1}{n}} + \frac{R}{(s-\rho)^{\frac{s(G)+1}{n}}} \left(\FI_{B_{R}}G\left(\left|\frac{u-(u)_{B_{R}}}{R}\right|\right) \,dx\right)^{\frac{1}{n}} \right]	
	\\&
	\leqslant
	\frac{c}{(s-\rho)^{s(G)+1}}\left[ 1 + \left(\I_{B_{R}}G(|Du|) \,dx\right)^{\frac{1}{n}} \right]
	\end{split}
\end{align}
with a constant $c\equiv c(n,\lambda_{1}, [a]_{\omega_{a}} + [b]_{\omega_{b}})$, where we have also used Lemma \ref{lem:os} to $\Phi\equiv G$ for $d_{0}\equiv 1$. 

Then, inserting the last estimate into \eqref{lb:10} and applying H\"older inequality together with \eqref{lb:9} yield that
\begin{align}
    \label{lb:12}
    \begin{split}
    \I_{\bar{A}(k,\rho)} \bar{\Psi}(x,\bar{u}-k)\,dx 
    &\leqslant
 	c\frac{1}{(s-\rho)^{1+s(G)}}|\bar{A}(k,t)|^{\frac{1-\theta}{\theta}}\I_{\bar{A}(k,t)}\left( \bar{\Psi}(x,|D\bar{u}|) + \bar{\Psi}
 	\left(x,\frac{\bar{u}-k}{s-\rho} \right) \right)\,dx
 	\\&
 	\leqslant
 	 c\frac{1}{(s-\rho)^{1+s(G)}}|\bar{A}(k,s)|^{\frac{1-\theta}{\theta}}\I_{\bar{A}(k,s)} \bar{\Psi}
 	\left(x,\frac{\bar{u}-k}{s-\rho} \right)\,dx
 	\end{split}
 \end{align}
 holds with some constant $c\equiv c(\data)$,
 where in the last display we have also used \eqref{growth1}. By the very definition of $\bar{A}$ in \eqref{lb:2}, we observe 
 \begin{align*}
 	|\bar{A}(k,s)| \leqslant \I_{\bar{A}(h,s)} \frac{\bar{\Psi}(x,\bar{u}-h)}{\bar{\Psi}(x,k-h)}\,dx
 	\leqslant
 	\frac{1}{\bar{\Psi}_{B_{1}}^{-}(k-h)} \I_{\bar{A}(h,s)}\bar{\Psi}(x,\bar{u}-h)\,dx
 \end{align*}
 and 
 \begin{align*}
 	\I_{\bar{A}(k,s)}\bar{\Psi}(x,\bar{u}-k)\,dx \leqslant
 	\I_{\bar{A}(h,s)}\bar{\Psi}(x,\bar{u}-h)\,dx
 \end{align*}
 for any $h<k$. Putting the last two inequalities into \eqref{lb:12} and applying Lemma \ref{lem:nf1}, we have the following inequality: 
 \begin{align}
 	\label{lb:13}
 	\I_{\tilde{A}(k,\rho)}\bar{\Psi}(x,\bar{u}-k)\,dx 
 	\leqslant 	
 	\frac{c}{[\bar{\Psi}_{B_{1}}^{-}(k-h)]^{\frac{1-\theta}{\theta}}(s-\rho)^{2(\max\{s(G),s(H_{a}),s(H_{b})\}+1)}}\left( \I_{\bar{A}(h,s)}\bar{\Psi}(x,\bar{u}-h)\,dx \right)^{\frac{1}{\theta}}.
 \end{align}
 Now we set sequences of numbers as follows: 
 \begin{align*}
 	\rho_{i} := \frac{1}{2}\left( 1+\frac{1}{2^{i}} \right),\qquad 
 	k_i := 2l_{0}\left(1-\frac{1}{2^{i+1}} \right)\,\,
 	\text{and}\,\,
 	M_{i}:= \frac{1}{\bar{\Psi}_{B_{1}}^{-}(l_0)}\I_{\bar{A}(k_i,\rho_i)}\bar{\Psi}(x,\bar{u}-k_{i})\,dx 
 \end{align*}
 for any integer $i\geqslant 0$ and some number $l_{0}>0$ to be chosen in a few lines.  Then applying \eqref{lb:13} with the choices $k \equiv k_{i+1}$, $h\equiv k_{i} $, $\rho\equiv  \rho_{i+1}$ and $s\equiv\rho_{i}$, we have, for every $i\geqslant 0$,
 \begin{align*}
 	\begin{split}
 	M_{i+1} 
 	&\leqslant
 	 \frac{c}{\left[\bar{\Psi}_{B_{1}}^{-}\left(\frac{l_0}{2^{i+1}} \right)\right]^{\frac{1-\theta}{\theta}}\left( \frac{1}{4^{i+2}}\right)^{\max\{s(G),s(H_{a}),s(H_{b})\}+1}}\left[\bar{\Psi}_{B_{1}}^{-}(l_0)\right]^{\frac{1-\theta}{\theta}}M_{i}^{\frac{1}{\theta}}
 	 \\&
 	 \leqslant
 	 c_{0} \left[ 4^{(\max\{s(G),s(H_{a}),s(H_{b})\} +1)\frac{1}{\theta}}\right]^{i} M_{i}^{1+\frac{1-\theta}{\theta}}
 	\end{split}
 \end{align*}
 with $c_{0}\equiv c_{0}(\data)$, where in the last inequality of the last display we have used again Lemma \ref{lem:nf1}. Now it's turn to apply a standard iteration of Lemma \ref{lem:t1}, which means that if 
 \begin{align*}
 	\frac{1}{\bar{\Psi}_{B_{1}}^{-}(l_0)}\I_{\bar{A}(l_0,1)} \bar{\Psi}(x,\bar{u}-l_0)\,dx=M_{0} \leqslant c_{0}^{-\frac{\theta}{1-\theta}} 4^{-(\max\{s(G),s(H_{a}),s(H_{b})\} +1)\frac{\theta}{(1-\theta)^2}},
 \end{align*}
 then we obtain 
 \begin{align*}
 	\norm{\bar{u}_{+}}_{L^{\infty}(B_{1/2})} \leqslant 2l_0.
 \end{align*}
 Consequently, choosing $l_0>0$ in such a way that 
 \begin{align*}
 	\bar{\Psi}_{B_{1}}^{-}(l_0) =  c_{0}^{\frac{\theta}{1-\theta}} 4^{(\max\{s(G),s(H_{a}),s(H_{b})\} +1)\frac{\theta}{(1-\theta)^2}} \I_{B_{1}} \bar{\Psi}(x,\bar{u}_{+})\,dx,
\end{align*}  
we have 
\begin{align*}
	\norm{\bar{\Psi}_{B_{1}}^{-}\left( \bar{u}_{+} \right)}_{L^{\infty}(B_{1/2})} \leqslant c \I_{B_{1}} \bar{\Psi}(x,\bar{u}_{+})\,dx,
\end{align*}
 which implies that
 \begin{align*}
 	\norm{\Psi_{B_{R}}^{-}\left( \frac{(u-(u)_{B_{R}})_{+}}{R} \right)}_{L^{\infty}(B_{R/2})} \leqslant c \FI_{B_{R}} \Psi\left(x, \frac{(u-(u)_{B_{R}})_{+}}{R}\right)\,dx
 \end{align*}
 holds with $c\equiv c(\data)$.
 Repeating the same argument for $-u$, which is also a local $Q$-minimizer of the functional $\mathcal{P}$ defined in \eqref{functional}, the last inequality holds with $(u-(u)_{B_{R}})_{+}$ replaced by $(u-(u)_{B_{R}})_{-}$.
 
 \textbf{Step 3. Proof of \eqref{lb:1_1}.} Using \eqref{lb:1} and \eqref{growth1}, for a.e $x_1,x_2\in B_{R/2}$, we have 
\begin{align*}
     \begin{split}
         \Psi_{B_{R}}^{-}\left(\left| \frac{u(x_1)-u(x_2)}{R}\right| \right) 
         &
         \leqslant
     c \Psi_{B_{R}}^{-}\left( \left| \frac{u(x_1)-(u)_{B_{R}}}{R}\right| \right)
     + c \Psi_{B_{R}}^{-}\left( \left| \frac{u(x_2)-(u)_{B_{R}}}{R}\right| \right)
     \\&
     \leqslant
     c \FI_{B_{R}} \Psi\left(x, \left|\frac{u-(u)_{B_{R}}}{R}\right|\right)\,dx 
     \leqslant
     c \FI_{B_{R}} \Psi\left(x, \left|Du\right|\right)\,dx
     \end{split}
 \end{align*}
 for some constant $c\equiv c(\data)$, where in the last inequality of the above display we have used a Sobolev-Poincar\'e type inequality of Theorem \ref{thm:sp}. Clearly, the last display implies $u\in L^{\infty}_{\loc}(\O)$. The proof is complete.
\end{proof}


\subsection{Almost standard Caccioppoli inequality}
\label{subsec:5.2}

Now we present the primary results, the so-called almost standard Caccioppoli type inequality, for proving H\"older continuity of a local $Q$-minimizer of the functional $\P$.

\begin{lem}[Almost standard Caccioppoli inequality]
	\label{lem:cacct}
	Let $u\in W^{1,\Psi}(\O)$ be a local $Q$-minimizer of the functional $\P$ defined in \eqref{ifunct} under one of the assumptions \eqref{ma:1}, \eqref{ma:2} and \eqref{ma:3}. Let $B_{2R}\equiv B_{2R}(x_0)\subset\O$ be a ball with $R\leqslant 1$.  Then there exists a constant $c\equiv c(\data)$ such that
	\begin{align}
		\label{cacct:1}
		\begin{split}
		\I_{B_{R_1}}\Psi_{B_{R}}^{-}\left( |D(u-k)_{\pm}| \right)\,dx 
		&\leqslant
		\I_{B_{R_1}}\Psi\left(x,|D(u-k)_{\pm}| \right)\,dx
		\\&
		\leqslant
		c\left(\frac{R}{R_2-R_1} \right)^{s(\Psi)+1} \I_{B_{R_{2}}}\Psi_{B_{R}}^{-}\left(\frac{(u-k)_{\pm}}{R} \right)\,dx
		\end{split}
	\end{align}
	holds, whenever $B_{R_{1}} \Subset B_{R_{2}} \subset B_{R}(x_0)$ are concentric balls and $k\in \R$.
\end{lem}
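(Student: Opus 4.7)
The plan is to upgrade the standard Caccioppoli bound of Lemma \ref{lem:cacc} to the \emph{almost standard} form \eqref{cacct:1} by trading the $x$-dependent $\Psi$ on the right-hand side for the $x$-independent $\Psi_{B_R}^-$. The first inequality in \eqref{cacct:1} is immediate from definition \eqref{ispsi}, which gives $\Psi_{B_R}^-(t)\leqslant\Psi(x,t)$ for every $x\in B_R$; all the work is in the second inequality.

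Starting from Lemma \ref{lem:cacc} applied to the pair $B_{R_1}\Subset B_{R_2}$, I would split
\begin{align*}
\Psi(x,t)=\Psi_{B_R}^-(t)+(a(x)-a^-(B_R))H_a(t)+(b(x)-b^-(B_R))H_b(t)
\end{align*}
and bound the oscillation of the coefficients by $|a(x)-a^-(B_R)|\leqslant [a]_{\omega_a}\omega_a(2R)\leqslant 2[a]_{\omega_a}\omega_a(R)$ (using \eqref{concave1}) and its analogue for $b$. One application of the upper estimate in Lemma \ref{lem:nf1}, with $\Lambda_0=R/(R_2-R_1)\geqslant 1$, to each of $\Psi_{B_R}^-$, $H_a$, $H_b$ shifts the argument from $(R_2-R_1)^{-1}$ to $R^{-1}$ at the cost of a factor $(R/(R_2-R_1))^{s(\Psi)+1}$, which matches the exponent demanded in \eqref{cacct:1} (recall $s(\Psi)=s(G)+s(H_a)+s(H_b)$ by Remark \ref{rmk:nf2}). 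After rescaling, the problem reduces to the pointwise comparisons
\begin{align*}
\omega_a(R)H_a(\tau)\leqslant c\,G(\tau),\qquad \omega_b(R)H_b(\tau)\leqslant c\,G(\tau),\qquad \tau:=(u-k)_\pm/R,
\end{align*}
valid a.e.\ on $B_{R_2}$, since $\Psi_{B_R}^-\geqslant G$.

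To secure these pointwise bounds I would first control the size of $(u-k)_\pm$ on $B_{R_2}$ for $k$ inside the oscillation range of $u$ (outside that range the left-hand side of \eqref{cacct:1} is either zero or reduces to a shifted case, since $D(u-k)_\pm=Du\,\mathbf{1}_{\{\pm(u-k)>0\}}$). Under \eqref{ma:2} one has $(u-k)_\pm\leqslant 2\|u\|_{L^\infty}$; under \eqref{ma:3}, $(u-k)_\pm\leqslant 2[u]_{0,\gamma}R^\gamma$; and under \eqref{ma:1}, Theorem \ref{thm:lb} applied on a ball contained in $B_{2R}\subset\Omega$ and containing $B_{R_2}$ yields a data-dependent bound which, via Lemma \ref{lem:nf1}, furnishes $R^nG(\tau)\leqslant c(\data)$. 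The respective $\Lambda$-parametrizations $\Lambda(1/\tau,\tau)\leqslant\lambda_2$, $\Lambda(\tau^{-1/(1-\gamma)},\tau)\leqslant\lambda_3$, $\Lambda(G(\tau)^{-1/n},\tau)\leqslant\lambda_1$ unfold as
\begin{align*}
\omega_a(s_\star(\tau))H_a(\tau)\leqslant \lambda_\star\bigl(1+\omega_a(s_\star(\tau))\bigr)G(\tau),
\end{align*}
with $s_\star(\tau)$ equal to $1/\tau$, $\tau^{-1/(1-\gamma)}$, or $G(\tau)^{-1/n}$, respectively. Combined with the concavity ratio $\omega_a(R)/\omega_a(s_\star(\tau))\leqslant\max\{1,R/s_\star(\tau)\}$ from \eqref{concave1}, the pointwise control on $(u-k)_\pm$ gives $R/s_\star(\tau)\leqslant c(\data)$ in every case ($R\tau\leqslant c$, $R\tau^{1/(1-\gamma)}\leqslant c$, $RG(\tau)^{1/n}\leqslant c$, respectively), yielding $\omega_a(R)H_a(\tau)\leqslant c(\data)G(\tau)$; the argument for $b$ is identical.

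The main obstacle is the uniform-in-$k$ handling: the pointwise bound on $(u-k)_\pm$ only holds inside the oscillation range of $u$, and a short separate monotonicity-in-$k$ argument on both sides of \eqref{cacct:1} is needed to cover $k$ outside this range (where $(u-k)_\pm$ can be arbitrarily large while $Du$ is unaffected). A secondary delicate point is in case \eqref{ma:1}: Theorem \ref{thm:lb} has to be applied on a ball slightly larger than $B_{R_2}$ that still sits inside $B_{2R}\subset\Omega$, and one has to extract from its $L^\infty$ conclusion the bound $R^nG(\tau)\leqslant c(\data)$ that feeds into the final step of the $\omega_a$/$\omega_b$ comparison.
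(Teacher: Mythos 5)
Your proposal is correct and follows essentially the same route as the paper's proof: split $\Psi(x,\cdot)$ into $\Psi_{B_R}^-$ plus coefficient-oscillation terms controlled by $\omega_a(R),\omega_b(R)$, rescale with Lemma \ref{lem:nf1}, unfold the $\Lambda$-assumptions via \eqref{concave2} together with the a priori bounds on $(u-k)_\pm$ (Theorem \ref{thm:lb} on the larger ball in case \eqref{ma:1}), and finish with the monotonicity-in-$k$ argument for $k$ outside the oscillation range. The two points you flag as delicate are exactly the ones the paper addresses, and your handling of them is sound.
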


\begin{proof}
	First we prove the inequality \eqref{cacct:1} for the values of $k\in \R$ with $\inf\limits_{B_{R}} u \leqslant k \leqslant \sup\limits_{B_{R}} u$, depending on which one of the assumptions \eqref{ma:1}-\eqref{ma:3} is in force. Firstly by the very definition of $\Psi_{B_{R}}^{-}$ in \eqref{ispsi} and Lemma \ref{lem:cacc}, we see 
	\begin{align}
		\label{cacct:2}
		\begin{split}
		I&:=\I_{B_{R_1}}\Psi_{B_{R}}^{-}\left( |D(u-k)_{\pm}| \right)\,dx
		\leqslant
		\I_{B_{R_1}}\Psi\left(x, |D(u-k)_{\pm}| \right)\,dx
		\leqslant 
		c_{*}\I_{B_{R_2}} \Psi\left(x,\frac{(u-k)_{\pm}}{R_2-R_1} \right)\,dx
		\\&
		\leqslant
		c_{*} \omega_{a}(R)\I_{B_{R_2}} H_{a}\left(\frac{(u-k)_{\pm}}{R_2-R_1} \right)\,dx
		+
		c_{*} \omega_{b}(R)\I_{B_{R_2}} H_{b}\left(\frac{(u-k)_{\pm}}{R_2-R_1} \right)\,dx
		\\&
		\quad
		+
		c_{*}\I_{B_{R_{2}}}\Psi_{B_{R}}^{-}\left(\frac{(u-k)_{\pm}}{R_2-R_1} \right)\,dx
		=: c_{*}\left( I_1 + I_2 + I_3 \right)
		\end{split}
	\end{align}
	for some constant $c_{*}\equiv c_{*}(n,s(G),s(H_{a}),s(H_{b}),Q, [a]_{\omega_{a}}, [b]_{\omega_{b}})$.
	Now we shall estimate each term $I_{i}$ for $i\in\{1,2,3\}$ in the above display. Then using Lemma \ref{lem:nf1}, the assumption $\eqref{ma:1}_{2}$, \eqref{concave2} and \eqref{lb:1_1} of Lemma \ref{thm:lb}, we see
	\begin{align}
		\label{cacct:3}
		\begin{split}
		I_1 &= \omega_{a}(R) \I_{B_{R_2}} \frac{H_{a}\left(\frac{(u-k)_{\pm}}{R_2-R_1} \right)}{G\left(\frac{(u-k)_{\pm}}{R_2-R_1}\right)} G\left(\frac{(u-k)_{\pm}}{R_2-R_1}\right)\,dx
		\\&
		\leqslant
		\omega_{a}(R)\left( \frac{R}{R_2-R_1} \right)^{s(H_{a})+1}
		\I_{B_{R_2}} \frac{\left(H_{a}\circ G^{-1}\right)\left(G\left(\frac{(u-k)_{\pm}}{R} \right)\right)}{G\left(\frac{(u-k)_{\pm}}{R}\right)} G\left(\frac{(u-k)_{\pm}}{R_2-R_1}\right)\,dx
		\\&
		\leqslant
		\lambda_{1}\omega_{a}(R)\left( \frac{R}{R_2-R_1} \right)^{s(H_{a})+1}
	\I_{B_{R_{2}}} \left(1+ \left[ \omega_{a}\left(  \left[ G\left(\frac{(u-k)_{\pm}}{R}\right) \right]^{-\frac{1}{n}} \right) \right]^{-1} \right)G\left(\frac{(u-k)_{\pm}}{R_2-R_1}\right)\,dx
	\\&
	\leqslant
	c\omega_{a}(R)\left( \frac{R}{R_2-R_1} \right)^{s(H_{a})+1}
	\I_{B_{R_{2}}} \left[1+  \frac{1}{\omega_{a}(R)} + \frac{R}{\omega_{a}(R)}\left( G\left(\frac{\osc\limits_{B_{R}}u}{R}\right) \right)^{\frac{1}{n}} \right]G\left(\frac{(u-k)_{\pm}}{R_2-R_1}\right)\,dx
	\\&
	\leqslant
	c\left[1 + \left(\I_{B_{2R}}\Psi(x,|Du|)\,dx\right)^{\frac{1}{n}} \right]\left( \frac{R}{R_2-R_1} \right)^{s(H_{a})+1}
	\I_{B_{R_{2}}} G\left(\frac{(u-k)_{\pm}}{R_2-R_1}\right)\,dx
	\\&
	\leqslant
	c \left( \frac{R}{R_2-R_1} \right)^{s(\Psi)+1}
	\I_{B_{R_{2}}} G\left(\frac{(u-k)_{\pm}}{R}\right)\,dx
		\end{split}
	\end{align}
	for some constant $c\equiv c(\data)$. In a totally similar way, it can be shown that 
	\begin{align}
		\label{cacct:4}
		I_{2} \leqslant c \left( \frac{R}{R_2-R_1} \right)^{s(\Psi)+1}
	\I_{B_{R_{2}}} G\left(\frac{(u-k)_{\pm}}{R}\right)\,dx
	\end{align}
	with a constant $c\equiv c(\data)$. Clearly, recalling Remark \ref{rmk:nf2} and using Lemma \ref{lem:nf1}, we have 
	\begin{align}
		\label{cacct:5}
		I_3  \leqslant  \left( \frac{R}{R_2-R_1} \right)^{s(\Psi)+1}
	\I_{B_{R_{2}}} \Psi_{B_{R}}^{-}\left(\frac{(u-k)_{\pm}}{R}\right)\,dx 
	\end{align}
 Inserting the estimates obtained in \eqref{cacct:3}-\eqref{cacct:5} into \eqref{cacct:2} and recalling the very definition of $\Psi_{B_{R}}^{-}$ in \eqref{ispsi}, we arrive at \eqref{cacct:1} under the assumption \eqref{ma:1}. The second part of the proof is to show \eqref{cacct:1} under the assumption \eqref{ma:2}. For this, we again estimate the terms $I_{i}$ with $i\in \{1,2,3\}$ in \eqref{cacct:2}. Applying Lemma \ref{lem:nf1}, the assumption \eqref{ma:2} and \eqref{concave2}, we see 
 \begin{align}
 	\label{cacct:6}
 	\begin{split}
 		I_1 &= \omega_{a}(R) \I_{B_{R_2}} \left(\frac{H_{a}}{G}\right)\left(\frac{(u-k)_{\pm}}{R_2-R_1} \right) G\left(\frac{(u-k)_{\pm}}{R_2-R_1}\right)\,dx
 		\\&
 		\leqslant
 		\omega_{a}(R)\left(\frac{R}{R_2-R_1} \right)^{s(H_{a})+1} \I_{B_{R_2}} \left(\frac{H_{a}}{G}\right)\left(\frac{(u-k)_{\pm}}{R} \right) G\left(\frac{(u-k)_{\pm}}{R_2-R_1}\right)\,dx
 		\\&
 		\leqslant
 		2 \lambda_{2}\omega_{a}(R)\left(\frac{R}{R_2-R_1} \right)^{s(H_{a})+1}
	\I_{B_{R_{2}}} \left(1+ \left[ \omega_{a}\left( \frac{R}{(u-k)_{\pm}} \right)  \right]^{-1} \right)G\left(\frac{(u-k)_{\pm}}{R_2-R_1}\right)\,dx
	\\&
	\leqslant
	c\lambda_{2}\omega_{a}(R)\left(\frac{R}{R_2-R_1} \right)^{s(H_{a})+1}
	\I_{B_{R_{2}}} \left(1+ \left[ \frac{1}{\omega_{a}(R)} + \frac{\norm{u}_{L^{\infty}(B_{R})}}{\omega_{a}(R)} \right] \right)G\left(\frac{(u-k)_{\pm}}{R_2-R_1}\right)\,dx
	\\&
	\leqslant
	c \left(\frac{R}{R_2-R_1} \right)^{s(\Psi)+1}
	\I_{B_{R_2}}G\left(\frac{(u-k)_{\pm}}{R}\right)\,dx
 	\end{split}
\end{align}
for some constant $c\equiv c(\data)$. Arguing similarly, we have 
\begin{align}
	\label{cacct:7}
	I_{2} \leqslant
	c \left(\frac{R}{R_2-R_1} \right)^{s(\Psi)+1}
	\I_{B_{R_{2}}}G\left(\frac{(u-k)_{\pm}}{R}\right)\,dx
\end{align}
 with a constant $c\equiv c(\data)$. Plugging the estimates \eqref{cacct:5}-\eqref{cacct:7} into \eqref{cacct:2}, we conclude with \eqref{cacct:1} under the assumption \eqref{ma:2}. Finally, the remaining part of the proof is to obtain the inequality \eqref{cacct:1} under the assumption \eqref{ma:3}. In fact, we continue to estimate the terms $I_{i}$ with $i\in \{1,2,3\}$ in \eqref{cacct:2}. Therefore, using the assumption \eqref{ma:3} and \eqref{concave2}, we find 
 
  \begin{align}
 	\label{cacct:8}
 	\begin{split}
 		I_1 &= \omega_{a}(R) \I_{B_{R_2}} \left(\frac{H_{a}}{G}\right)\left(\frac{(u-k)_{\pm}}{R_2-R_1} \right) G\left(\frac{(u-k)_{\pm}}{R_2-R_1}\right)\,dx
 		\\&
 		\leqslant
 		\omega_{a}(R)\left(\frac{R}{R_2-R_1} \right)^{s(H_{a})+1} \I_{B_{R_2}} \left(\frac{H_{a}}{G}\right)\left(\frac{(u-k)_{\pm}}{R} \right) G\left(\frac{(u-k)_{\pm}}{R_2-R_1}\right)\,dx
 		\\&
 		\leqslant
 		2 \lambda_{3}\omega_{a}(R)\left(\frac{R}{R_2-R_1} \right)^{s(H_{a})+1}
	\I_{B_{R_{2}}} \left(1+ \left[ \omega_{a}\left( \left[\frac{R}{(u-k)_{\pm}}\right]^{\frac{1}{1-\gamma}} \right)  \right]^{-1} \right)G\left(\frac{(u-k)_{\pm}}{R_2-R_1}\right)\,dx
	\\&
	\leqslant
	c\lambda_{3}\omega_{a}(R)\left(\frac{R}{R_2-R_1} \right)^{s(H_{a})+1}
	\I_{B_{R_{2}}} \left(1+ \frac{1}{\omega_{a}(R)} \right)G\left(\frac{(u-k)_{\pm}}{R_2-R_1}\right)\,dx
	\\&
	\leqslant
	c \left(\frac{R}{R_2-R_1} \right)^{s(\Psi)+1}
	\I_{B_{R_2}}G\left(\frac{(u-k)_{\pm}}{R}\right)\,dx
 	\end{split}
\end{align}
for some constant $c\equiv c(\data)$, where we have used Lemma \ref{lem:nf1} several times. Using the same argument as above, we have 
\begin{align}
	\label{cacct:9}
	I_{2} \leqslant
	c \left(\frac{R}{R_2-R_1} \right)^{s(\Psi)+1}
	\I_{B_{R_{2}}}G\left(\frac{(u-k)_{\pm}}{R}\right)\,dx
\end{align}
 with a constant $c\equiv c(\data)$. Inserting the estimates \eqref{cacct:5}, \eqref{cacct:8}-\eqref{cacct:9} into \eqref{cacct:2}, we arrive at \eqref{cacct:1} under the assumption \eqref{ma:3}. So we have proved the inequality \eqref{cacct:1} for the values of $k\in\R$ such that $\inf\limits_{B_{R}} u \leqslant k \leqslant \sup\limits_{B_{R}} u$. Now we consider the remaining cases. Suppose $k< \inf\limits_{B_{R}} u$. In this case, using \eqref{cacct:1} with $k\equiv \inf\limits_{B_{R}}u$, we have 
 
\begin{align}
		\label{cacct:10}
		\begin{split}
		\I_{B_{R_1}}\Psi_{B_{R}}^{-}\left( |D(u-k)_{+}| \right)\,dx 
		&= 
		\I_{B_{R_1}}\Psi_{B_{R}}^{-}\left( \left|D(u-\inf\limits_{B_{R}}u)_{+}\right| \right)\,dx 
		\leqslant
		\I_{B_{R_1}}\Psi\left(x,\left|D(u-\inf\limits_{B_{R}}u)_{+}\right| \right)\,dx
		\\&
		\leqslant
		c\left(\frac{R}{R_2-R_1} \right)^{s(\Psi)+1} \I_{B_{R_{2}}}\Psi_{B_{R}}^{-}\left(\frac{\left(u-\inf\limits_{B_{R}}u\right)_{+}}{R} \right)\,dx
		\\&
		\leqslant
		c\left(\frac{R}{R_2-R_1} \right)^{s(\Psi)+1} \I_{B_{R_{2}}}\Psi_{B_{R}}^{-}\left(\frac{(u-k)_{+}}{R} \right)\,dx
		\end{split}
	\end{align}
for some constant $c\equiv c(\data)$. Similarly, it can seen that \eqref{cacct:10} is valid for the values of $k> \sup\limits_{B_{R}}u$. Since $-u$ is also the local $Q$-minimizer of the functional $\P$ in \eqref{ifunct}, the inequality \eqref{cacct:1} is valued for all $k\in \R$. The proof is complete.
 \end{proof}

From now on also in the rest of paper, for a fixed ball $B_{R}\subset \O$, we say that

\begin{subequations}
	 \begin{empheq}[left={\empheqlbrace}]{align}
        \quad & G-\text{phase occurs in } B_{R}  &\text{if }& a^{-}(B_{R}) \leqslant 4[a]_{\omega_{a}}\omega_{a}(R)\text{ and } b^{-}(B_{R}) \leqslant 4[b]_{\omega_{b}}\omega_{b}(R).\label{G-phase}\\
        & (G,H_{a})-\text{phase occurs in } B_{R}  &\text{if }& a^{-}(B_{R}) > 4[a]_{\omega_{a}}\omega_{a}(R)\text{ and } b^{-}(B_{R}) \leqslant 4[b]_{\omega_{b}}\omega_{b}(R).\label{G-a phase}\\
       & (G,H_{b})-\text{phase occurs in } B_{R}  &\text{if }& a^{-}(B_{R}) \leqslant 4[a]_{\omega_{a}}\omega_{a}(R)\text{ and } b^{-}(B_{R}) > 4[b]_{\omega_{b}}\omega_{b}(R).\label{G-b phase} \\
       & (G,H_{a},H_{b})-\text{phase occurs in } B_{R}  &\text{if }& a^{-}(B_{R}) > 4[a]_{\omega_{a}}\omega_{a}(R)\text{ and } b^{-}(B_{R}) > 4[b]_{\omega_{b}}\omega_{b}(R).\label{G-ab phase}
      \end{empheq}
	\end{subequations}
	
Then we have the following lemma which will be applied later, see Section \ref{sec:8}.

 \begin{lem}
 	\label{lem:1cacct}
 	Let $u\in W^{1,\Psi}(\O)$ be a local $Q$-minimizer of the functional $\P$ defined in \eqref{ifunct} under one of the assumptions \eqref{ma:1}, \eqref{ma:2} and \eqref{ma:3}. Let $B_{2R}\equiv B_{2R}(x_0)\subset\O$ be a ball with $R\leqslant 1$.  Then there exists a constant $c\equiv c(\data)$ such that
	\begin{align}
		\label{1cacct:1}
		\begin{split}
		\I_{B_{R_1}}\Psi_{B_{R}}^{-}\left( |D(u-k)_{\pm}| \right)\,dx 
		&\leqslant
		\I_{B_{R_1}}\Psi\left(x,|D(u-k)_{\pm}| \right)\,dx
		\\&
		\leqslant
		c\left(\frac{R}{R_2-R_1} \right)^{s(\Psi)+1} \I_{B_{R_{2}}}\Phi\left(\frac{(u-k)_{\pm}}{R} \right)\,dx
		\end{split}
	\end{align}
	holds, whenever $B_{R_{1}} \Subset B_{R_{2}} \subset B_{R}(x_0)$ are concentric balls and $k\in \R$, where 
	\begin{subequations}
	 \begin{empheq}[left={\Phi(t)=\empheqlbrace}]{align}
        \qquad & G(t)  &\text{if }& \eqref{G-phase} \text{ is satisfied in } B_{R},\label{1cacct:2}\\
        & G(t) + a^{-}(B_{R})H_{a}(t)  &\text{if }& \eqref{G-a phase} \text{ is satisfied in } B_{R},\label{1cacct:3}\\
       & G(t) + b^{-}(B_{R})H_{b}(t)  &\text{if }& \eqref{G-b phase} \text{ is satisfied in } B_{R},\label{1cacct:4} \\
       & \Psi_{B_{R}}^{-}(t) &\text{if }& \eqref{G-ab phase} \text{ is satisfied in } B_{R},\label{1cacct:5}
      \end{empheq}
	\end{subequations}
	for every $t\geqslant 0$.
 \end{lem}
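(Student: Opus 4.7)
The plan is to start from the conclusion \eqref{cacct:1} of Lemma~\ref{lem:cacct} and, in each of the three non-trivial phases \eqref{G-phase}, \eqref{G-a phase}, \eqref{G-b phase}, to absorb the contributions of $a^{-}(B_{R})H_{a}$ and/or $b^{-}(B_{R})H_{b}$ appearing in $\Psi_{B_{R}}^{-}\!\left((u-k)_{\pm}/R\right)$ into the $G$-term of the right-hand side. In the $(G,H_{a},H_{b})$-phase \eqref{G-ab phase}, no refinement is needed: \eqref{1cacct:1} with $\Phi = \Psi_{B_{R}}^{-}$ is identical to Lemma~\ref{lem:cacct}.

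The key observation is that the phase conditions supply precisely the scaling factors $\omega_{a}(R)$ and $\omega_{b}(R)$ that, in the proof of Lemma~\ref{lem:cacct}, were produced by the continuity of $a$ and $b$. Indeed, whenever $a^{-}(B_{R})\leqslant 4[a]_{\omega_{a}}\omega_{a}(R)$, one has
\begin{align*}
a^{-}(B_{R})\, H_{a}\!\left(\frac{(u-k)_{\pm}}{R}\right)
\leqslant 4[a]_{\omega_{a}}\omega_{a}(R)\, H_{a}\!\left(\frac{(u-k)_{\pm}}{R}\right),
\end{align*}
and the right-hand side is of the same form as the term $I_{1}$ arising in the proof of Lemma~\ref{lem:cacct}. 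Repeating the absorption calculations \eqref{cacct:3}--\eqref{cacct:4} under \eqref{ma:1}, \eqref{cacct:6}--\eqref{cacct:7} under \eqref{ma:2}, and \eqref{cacct:8}--\eqref{cacct:9} under \eqref{ma:3} -- each of which combines Lemma~\ref{lem:nf1}, the ratio hypotheses $\eqref{ma:1}_{2}$--$\eqref{ma:3}_{2}$, the inequality \eqref{concave2}, and, in the setting of \eqref{ma:1}, the local $L^{\infty}$ bound of Theorem~\ref{thm:lb} applied on $B_{2R}$ (whence the assumption $B_{2R}\subset\O$) -- one obtains the bound
\begin{align*}
\I_{B_{R_{2}}} a^{-}(B_{R})\, H_{a}\!\left(\frac{(u-k)_{\pm}}{R}\right)\,dx
\leqslant c\I_{B_{R_{2}}} G\!\left(\frac{(u-k)_{\pm}}{R}\right)\,dx,
\end{align*}
with $c\equiv c(\data)$, together with the symmetric statement for $b^{-}(B_{R})H_{b}$ whenever $b^{-}(B_{R})\leqslant 4[b]_{\omega_{b}}\omega_{b}(R)$.

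Feeding these absorption estimates into the decomposition $\Psi_{B_{R}}^{-}(t) = G(t) + a^{-}(B_{R})H_{a}(t) + b^{-}(B_{R})H_{b}(t)$ produces \eqref{1cacct:2} in the $G$-phase (both $H_{a}$ and $H_{b}$ absorbed), \eqref{1cacct:3} in the $(G,H_{a})$-phase (only the $H_{b}$ contribution absorbed, while the $a^{-}(B_{R})H_{a}$ term is retained since \eqref{G-a phase} forbids its control by $G$), and \eqref{1cacct:4} in the $(G,H_{b})$-phase (symmetric). The extension of \eqref{1cacct:1} from $k\in[\inf_{B_{R}}u,\sup_{B_{R}}u]$ to all $k\in\R$ is handled by the same reduction used at the end of the proof of Lemma~\ref{lem:cacct}, cf.\ \eqref{cacct:10}: for $k<\inf_{B_{R}}u$ one applies the already established case with $k$ replaced by $\inf_{B_{R}}u$ and exploits the monotonicity of $\Phi$, while the case $k>\sup_{B_{R}}u$ is reduced to the preceding one by passing to $-u$, which is also a local $Q$-minimizer of $\P$. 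The main technical point requiring care is keeping track, phase by phase, of which coefficient is controlled by its modulus of continuity; beyond this bookkeeping, the argument reuses the calculations of Lemma~\ref{lem:cacct} essentially verbatim.
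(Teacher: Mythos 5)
Your proposal is correct and follows essentially the same route as the paper: the paper's (very terse) proof likewise reduces each phase to the absorption computations \eqref{cacct:3}--\eqref{cacct:9}, the only cosmetic difference being that it reruns the argument of Lemma \ref{lem:cacct} starting from the pointwise bounds $a(x)\leqslant 2a^{-}(B_{R})$ (resp.\ $a(x)\leqslant 6[a]_{\omega_{a}}\omega_{a}(R)$) on the coefficient, whereas you invoke Lemma \ref{lem:cacct} as a black box and then absorb $a^{-}(B_{R})H_{a}$ and $b^{-}(B_{R})H_{b}$ on the right-hand side. Your handling of the restriction to $k\in[\inf_{B_{R}}u,\sup_{B_{R}}u]$ and the subsequent extension to all $k$ via \eqref{cacct:10} matches the paper as well.
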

\begin{proof}
	First we observe that if $a^{-}(B_{R}) > 4[a]_{\omega_{a}}\omega_{a}(R)$, then using the continuity of the function $a(\cdot)$, we have 
	\begin{align}
		\label{1cacct:6}
		a^{-}(B_{R}) \leqslant a(x) = a(x)- a^{-}(B_{R}) + a^{-}(B_{R}) \leqslant
		2[a]_{\omega_{a}}\omega_{a}(R) + a^{-}(B_{R})
		\leqslant 2 a^{-}(B_{R})
	\end{align}
	for every $x\in B_{R}$. On the other hand, if $a^{-}(B_{R}) \leqslant 4[a]_{\omega_{a}}\omega_{a}(R)$, then using again the continuity of $a(\cdot)$, we see 
	\begin{align*}
		a(x) = a(x)- a^{-}(B_{R}) + a^{-}(B_{R}) \leqslant 6[a]_{\omega_{a}}\omega_{a}(R)
	\end{align*}	  
	for every $x\in B_{R}$. Clearly, analogous estimates to the last two displays are valid for the function $b(\cdot)$ in $B_{R}$. After those observations, we argue similarly as in the proof of Lemma \ref{lem:cacct} depending on which case of \eqref{G-a phase}-\eqref{G-ab phase} occurs in the ball $B_{R}$. 
\end{proof}
 
\subsection{H\"older continuity}
\label{subsec:5.3}

 In this subsection we prove  some local boundedness and H\"older continuity  assertions of a local $Q$-minimizer of the functional $\P$ in \eqref{ifunct} with various constants having the precise dependencies.
 
 \begin{thm}
 	\label{thm:hc}
 	Let $u\in W^{1,\Psi}(\O)$ be a local $Q$-minimizer of the functional $\P$ defined in \eqref{ifunct} under the coefficient functions $a(\cdot)\in C^{\omega_a}(\O)$ and $b(\cdot)\in C^{\omega_{b}}(\O)$ for $\omega_{a}, \omega_{b}$ being non-negative concave functions vanishing at the origin.
 	\begin{enumerate}
 		\item[1.] If the assumption $\eqref{ma:1}$ is satisfied, then for every open subset $\O_{0}\Subset\O$, there exists a H\"older continuity exponent
 		$\gamma\equiv \gamma(\data(\O_{0}))\in (0,1)$ such that
 		\begin{align}
 		\label{hc:1}
 		\norm{u}_{L^{\infty}(\O_{0})} + [u]_{0,\gamma;\O_{0}} \leqslant c(\data(\O_{0}))
 	\end{align}
 	and the oscillation estimate
 	\begin{align}
 		\label{hc:2}
 		\osc\limits_{B_{\rho}} u \leqslant c\left(\frac{\rho}{R} \right)^{\gamma}\osc\limits_{B_{R}} u
 	\end{align}
 	holds for some $c\equiv c(\data(\O_0))$ and all concentric balls
 	$B_{\rho}\Subset B_{R}\Subset \O_{0}\Subset \O$ with $R\leqslant 1$.
 		\item[2.] If the assumption $\eqref{ma:2}$ is satisfied, then there exists a H\"older continuity exponent
 		$\gamma\equiv \gamma(\data)\in (0,1)$ such that  
 		\begin{align}
 		\label{hc:3}
 		 [u]_{0,\gamma;\O_{0}} \leqslant c(\data(\O_0))
 	\end{align}
 	and the oscillation estimate
 	\begin{align}
 		\label{hc:4}
 		\osc\limits_{B_{\rho}} u \leqslant c\left(\frac{\rho}{R} \right)^{\gamma}\osc\limits_{B_{R}} u
 	\end{align}
 	holds for some $c\equiv c(\data)$ and all concentric balls
 	$B_{\rho}\Subset B_{R}\subset \O$ with $R\leqslant 1$.
 	\end{enumerate}
 \end{thm}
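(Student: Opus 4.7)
The plan is to establish H\"older continuity via the classical De Giorgi iteration scheme, using the almost standard Caccioppoli inequality of Lemma~\ref{lem:cacct} as the starting point. A crucial simplification is that by Remark~\ref{rmk:nf2}, the Young function $\Psi_{B_R}^{-}$ appearing on both sides of \eqref{cacct:1} belongs to $\mathcal{N}$ with an index depending only on $s(G),s(H_a),s(H_b)$. Thus, up to fixing the ball $B_R$, the inequality \eqref{cacct:1} has exactly the structure of a Caccioppoli inequality for a scalar minimizer in the Orlicz class $W^{1,\Psi_{B_R}^{-}}$, and we may run the whole De Giorgi machinery for this ``frozen'' $N$-function. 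For part 1, local boundedness is already provided by Theorem~\ref{thm:lb} via \eqref{lb:1_1} together with Lemma~\ref{lem:nf1}; this yields the $L^\infty$ bound in \eqref{hc:1}. For part 2, local boundedness is built into the assumption $u\in L^\infty(\Omega)$.

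Fixing a ball $B_R\Subset\Omega_0$ with $R\leqslant 1$, set $M:=\sup_{B_R} u$, $m:=\inf_{B_R} u$. Given the classical dichotomy at level $k=(M+m)/2$, assume (after possibly replacing $u$ by $-u$) that $|B_{R/2}\cap\{u>k\}|\leqslant \tfrac{1}{2}|B_{R/2}|$. One sets up a De Giorgi sequence $k_j = k + (M-k)(1-2^{-j})$, $\rho_j = \tfrac{R}{2}(1+2^{-j})$, and applies Lemma~\ref{lem:cacct} to $(u-k_j)_+$ on the pair $B_{\rho_{j+1}}\Subset B_{\rho_j}$. Coupling this with the Sobolev-Poincar\'e estimate of Lemma~\ref{lem:os}, applied with $\Phi\equiv \Psi_{B_R}^{-}\in\mathcal{N}$ on the function $\eta(u-k_j)_+$ for a standard cut-off $\eta$, and using the truncation measure comparison
\[
\Psi_{B_R}^{-}(k_{j+1}-k_j)\,|A(k_{j+1},\rho_{j+1})|\ \leqslant\ \int_{A(k_j,\rho_j)}\Psi_{B_R}^{-}(u-k_j)\,dx,
\]
the standard chain of estimates produces a recursive inequality
\[
Y_{j+1}\ \leqslant\ C\,b^{j}\,Y_{j}^{1+\tau}, \qquad Y_j := \frac{|B_{\rho_j}\cap\{u>k_j\}|}{|B_{\rho_j}|},
\]
with $C,b,\tau>0$ depending only on $\data$. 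Lemma~\ref{lem:t1} then gives $Y_j\to 0$ provided $Y_0$ is small enough, which translates to $u\leqslant (3M+m)/4$ on $B_{R/4}$ and hence $\osc_{B_{R/4}}u\leqslant \tfrac{3}{4}\osc_{B_R}u$.

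If instead the initial smallness $Y_0\leqslant Y_*$ fails, one invokes the second De Giorgi alternative: Lemma~\ref{lem:t2} applied to intermediate level sets $\{k<u\leqslant l\}$ produces, after a finite-step logarithmic-type iteration based on \eqref{cacct:1}, a level $k_\star\in(k,M)$ at which the smallness hypothesis of the first alternative is satisfied. Either way one obtains $\osc_{B_{R/4}}u\leqslant\lambda\,\osc_{B_R}u$ with $\lambda\equiv\lambda(\data)<1$. A standard dyadic iteration then upgrades this into the decay estimates \eqref{hc:2}, \eqref{hc:4} and the H\"older seminorm bounds in \eqref{hc:1}, \eqref{hc:3}. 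The argument for part 2 is identical, with $\norm{u}_{L^\infty(\Omega)}$ replacing the role played in part 1 by the $L^\infty$ control coming from Theorem~\ref{thm:lb}.

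The main obstacle is keeping all constants in $\data$. Two points deserve care. First, the Sobolev--Poincar\'e constant $\lambda_{sp}$ of Remark~\ref{rmk:sp} brings in either $\bigl(\int_{B_R}G(|Du|)\,dx\bigr)^{1/n}$ (case $\eqref{ma:1}$) or $\norm{u}_{L^\infty}$ (case $\eqref{ma:2}$), both of which are $\data$-controlled. Second, $\Psi_{B_R}^{-}$ may degenerate to $G$ when both $a^{-}(B_R)$ and $b^{-}(B_R)$ are small; but $\Psi_{B_R}^{-}\geqslant G\in\mathcal{N}$ with fixed index $s(G)$, so all the quantitative estimates in the De Giorgi iteration are uniform in $R$. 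This uniformity, together with the translation invariance of the argument, is precisely what makes the H\"older exponent depend only on $\data(\Omega_0)$.
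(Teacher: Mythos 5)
Your proposal is correct and follows essentially the same route as the paper: De Giorgi's method built on the almost standard Caccioppoli inequality of Lemma \ref{lem:cacct}, exploiting that $\Psi_{B_R}^{-}\in\mathcal{N}$ with index controlled by $s(G)+s(H_a)+s(H_b)$ so that the iteration constants are uniform in $R$, with the two classical alternatives (measure-decay over a chain of levels, then the $Y_{j+1}\leqslant Cb^{j}Y_j^{1+\tau}$ iteration via Lemma \ref{lem:t1}) and the $L^\infty$ input from Theorem \ref{thm:lb} in case \eqref{ma:1}. The paper merely presents the two alternatives in the opposite order and implements the measure-decay step via the Sobolev inequality applied to $\Psi_{B_R}^{-}(w_i/R)$ rather than via Lemma \ref{lem:t2}, which is an inessential difference.
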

 
 \begin{proof}
 Basically, we shall use De Giorgi's methods to prove the local H\"older continuity of $u$ based on arguments employed in \cite{BO3,CM1}. For the convenience of the reader, we give a detailed proof. Note that, for any given ball $B_{R}\Subset \O$, either 
 \begin{align}
 	\label{hc:5}
 	\left|\left\{x\in B_{R/2} : u(x) > \sup\limits_{B_{R}}u-\frac{1}{2}\osc_{B_{R}}u  \right\} \right|
 		\leqslant
 		\frac{1}{2} |B_{R/2}|
 \end{align}
 or 
 \begin{align}
 	\label{hc:6}
 	\left|\left\{x\in B_{R/2} : (-u(x)) > \sup\limits_{B_{R}}(-u)-\frac{1}{2}\osc_{B_{R}}u  \right\} \right|\leqslant \frac{1}{2} |B_{R/2}|
 \end{align}
 holds true. It is enough to deal with only the case of \eqref{hc:5} is valid since $-u$ is a local $Q$-minimizer of the functional $\mathcal{P}$. The proof falls in three steps. In what follows, let $B_{2R}\equiv B_{2R}(x_0)\subset\O_0\Subset\O$ be a fixed ball such that $R\leqslant 1$. Let us also denote by 
 \begin{align}
 	\label{hc:6_1}
 	A(k,\rho):= \{x\in B_{\rho} : u(x)> k\}
 	\quad\text{and}\quad 
 	B(k,\rho):= \{x\in B_{\rho} : u(x)< k\}
 \end{align}
 for every concentric ball $B_{\rho}\subset B_{2R}$ and $k\in\R$.
 
 \textbf{Step 1. } We suppose that \eqref{hc:5} is satisfied. Then in this step we prove that, for any $\varepsilon\in (0,1)$, there exists a natural number $m\equiv m(\data(\O_0),\varepsilon)\geqslant 3$ if \eqref{ma:1} is assumed, and $m\equiv m(\data,\varepsilon)\geqslant 3$ if \eqref{ma:2} is assumed,  such that 
 	\begin{align}
 		\label{hc:7}
 		\left|\left\{x\in B_{R/2} : u(x) > \sup\limits_{B_{R}}u-\frac{1}{2^{m}}\osc_{B_{R}}u  \right\} \right|\leqslant
 		\varepsilon |B_{R/2}|.
 	\end{align}
Let $ m\geqslant 3$ be a natural number to be determined in a few lines. For every $i\in \{1,2,\ldots,m\}$, we set
\begin{align*}
	k_i:= \sup\limits_{B_{R}} u - \frac{1}{2^{i}}\osc\limits_{B_{R}} u,\quad 
	\mathcal{D}_{i}:= A(k_i,R/2)\setminus A(k_{i+1},R/2)
\end{align*}
and 
\begin{align*}
	 w_i(x)
	 := \left\{\begin{array}{lll}
        k_{i+1}-k_{i} & \text{if }& u(x)> k_{i+1},\\
        u(x)-k_{i}  & \text{if }& k_{i} < u(x) \leqslant k_{i+1},\\
        0  & \text{if }& u(x) \leqslant k_i.
        \end{array}\right.
\end{align*}
Clearly $w_i\in W^{1,\Psi}(B_{R/2})$ with $w_i\equiv 0$ in $B_{R/2}\setminus A(k_1, R/2)$ for all $i\in \{1,\ldots, m\}$, and also $|B_{R/2}\setminus A(k_1, R/2)|\geqslant 1/2|B_{R/2}|$. Then applying H\"older's inequality, Sobolev's inequality and Lemma \ref{lem:nf3}, for every $\tau\in (0,1)$, we have 
\begin{align}
	\label{hc:8}
	\begin{split}
		\left| A(k_{i+1}, R/2) \right|\Psi_{B_{R}}^{-}\left(\frac{k_{i+1}-k_{i}}{R} \right)
		&\leqslant
		c\I_{A(k_i,R/2)}\Psi_{B_{R}}^{-}\left(\frac{w_i}{R} \right)\,dx
		\\&
		\leqslant
		\left|A(k_i,R/2) \right|^{\frac{1}{n}} \left( \I_{A(k_i,R/2)}\left[\Psi_{B_{R}}^{-}\left(\frac{w_i}{R} \right)\right]^{\frac{n}{n-1}}\,dx \right)^{\frac{n-1}{n}}
		\\&
		\leqslant
		c R \left( \I_{A(k_i,R/2)}\left[\Psi_{B_{R}}^{-}\left(\frac{w_i}{R} \right)\right]^{\frac{n}{n-1}}\,dx \right)^{\frac{n-1}{n}}
		\\&
		\leqslant
		c\I_{\mathcal{D}_{i}}\partial_{t}\Psi_{B_{R}}^{-}\left(\frac{u-k_{i}}{R} \right)|Du|\,dx
		\\&
		\leqslant
		\tau\I_{\mathcal{D}_{i}}\Psi_{B_{R}}^{-}(|Du|)\,dx + \frac{c}{\tau^{s(\Psi)}}\I_{\mathcal{D}_{i}}\Psi_{B_{R}}^{-}\left(\frac{u-k_i}{R} \right)\,dx.
	\end{split}
\end{align}
Now we use a Caccioppoli type inequality of Lemma \ref{lem:cacct} in order to have 
\begin{align*}
	\begin{split}
	\I_{\mathcal{D}_{i}}\Psi_{B_{R}}^{-}(|Du|)\,dx 
	&\leqslant
	c\I_{A(k_i,R)}\Psi_{B_{R}}^{-}\left(\left|\frac{u-k_i}{R}\right| \right)\,dx
	\leqslant
	c\I_{A(k_i,R)}\Psi_{B_{R}}^{-}\left(\frac{\osc\limits_{B_{R}}u}{2^{i}R} \right)\,dx
	\\&
	\leqslant
	c\Psi_{B_{R}}^{-}\left(\frac{k_{i+1}-k_{i}}{R} \right)|A(k_i,R)|
	\leqslant
	c \Psi_{B_{R}}^{-}\left(\frac{k_{i+1}-k_{i}}{R} \right) R^{n}.
	\end{split}
\end{align*}

One can see that 
\begin{align*}
	\I_{\mathcal{D}_{i}}\Psi_{B_{R}}^{-}\left(\frac{u-k_i}{R} \right)\,dx
	\leqslant
	\I_{\mathcal{D}_{i}}\Psi_{B_{R}}^{-}\left(\frac{k_{i+1}-k_i}{R} \right)\,dx
	=
	\Psi_{B_{R}}^{-}\left(\frac{k_{i+1}-k_i}{R}\right) |\mathcal{D}_{i}|.
\end{align*}
Using the estimates coming from the last two displays in \eqref{hc:8}, for every $i\in \{1,\ldots,m-1\}$, we see
\begin{align*}
	A(k_{m-1},R/2) \leqslant A(k_{i+1},R/2) \leqslant
	c\tau R^{n} + \frac{c}{\tau^{s(\Psi)}}|\mathcal{D}_{i}|.
\end{align*}
Summing for $i\in \{1,\ldots,m-1\}$, it yields that 
\begin{align*}
	|A(k_{m-1},R/2)| \leqslant \left(c\tau + \frac{c}{(m-1)\tau^{s(\Psi)}}\right)R^{n}.
\end{align*}
Now taking small enough $\tau\equiv \tau(\data(\O_0),\varepsilon)$ and large enough $m\equiv m(\data(\O_0),\varepsilon)$, we arrive at \eqref{hc:7} when \eqref{ma:1} is assumed. But in the case that \eqref{ma:2} is assumed, we choose small enough $\tau\equiv \tau(\data,\varepsilon)$ and large enough $m\equiv m(\data,\varepsilon)$ to conclude \eqref{hc:7}.

\textbf{Step 2.}
 In this step, we prove that there exists a small positive $\varepsilon_0\equiv \varepsilon_0(\data(\O_0))\in (0,1/2^{n+1})$ such that
 if 
 \begin{align}
 	\label{hc:11}
 	0< \nu_0 < \frac{1}{2}\osc\limits_{B_{R}}u
 	\quad\text{and}\quad
 	\left| \{ x\in B_{R/2} : u(x) > \sup\limits_{B_{R}}u-\nu_0 \}\right| \leqslant \varepsilon_0 |B_{R/2}|,
\end{align} then we have 
	\begin{align}
		\label{hc:12}
		\sup\limits_{B_{R/4}} u \leqslant \sup\limits_{B_{R}} u - \nu_0/2.
	\end{align}
Now we set the sequences by 
\begin{align*}
	\rho_{i}:= \frac{R}{4}\left( 1+\frac{1}{2^{i}} \right)
	\quad\text{and}\quad
	k_i:= \sup\limits_{B_{R}} u - \left(\frac{1}{2} + \frac{1}{2^{i+1}} \right)\nu_0
	\quad\text{for every } i=0,1,2,\ldots,
\end{align*}
and we define
\begin{align*}
\mathcal{D}_{i+1}:= A(k_i,\rho_{i+1})\setminus A(k_{i+1},\rho_{i+1})
\quad\text{and}\quad
Y_{i}:= \frac{|A(k_i,\rho_i)|}{|B_{R/2}|}.
\end{align*}
Applying Lemma \ref{lem:cacct} together with \eqref{hc:11}, we discover 
\begin{align*}
	\begin{split}
		\I_{A(k_i,\rho_{i+1})}\Psi_{B_{R}}^{-}(|Du|)\,dx
		&\leqslant
		c2^{(i+3)(s(\Psi)+1)}\I_{A(k_i,\rho_i)}\Psi_{B_{R}}^{-}\left(\frac{(u-k_i)_{+}}{R} \right)\,dx
		\\&
		\leqslant
		c2^{i(s(\Psi)+1)}\Psi_{B_{R}}^{-}\left(\frac{\nu_0}{R} \right)|A(k_i,\rho_i)|,
	\end{split}
\end{align*}
where we have also used the very definition of $k_i$ and that $(u-k_i)_{+} \leqslant \nu_0 \leqslant \norm{u}_{L^{\infty}(B_{R})}$. The last display and the convexity of $\Psi_{B_{R}}^{-}$ imply that 
\begin{align*}
	\begin{split}
	\Psi_{B_{R}}^{-}\left(\FI_{\mathcal{D}_{i+1}}|Du|\,dx \right)
	&\leqslant
	\FI_{\mathcal{D}_{i+1}} \Psi_{B_{R}}^{-}(|Du|)\,dx
	\leqslant
	c2^{i(s(\Psi)+1)}\frac{|A(k_i,\rho_i)|}{|D_{i+1}|}\Psi_{B_{R}}^{-}\left(\frac{\nu_0}{R} \right)
	\\&
	\leqslant
	\Psi_{B_{R}}^{-}\left( c2^{i(s(\Psi)+1)}\frac{|A(k_i,\rho_i)|}{|D_{i+1}|} \frac{\nu_0}{R} \right).
	\end{split}
\end{align*}
Therefore, we have 
\begin{align*}
	\FI_{\mathcal{D}_{i+1}}|Du|\,dx
	\leqslant
	c2^{i(s(\Psi)+1)}\frac{|A(k_i,\rho_i)|}{|D_{i+1}|} \frac{\nu_0}{R}.
\end{align*}

On the other hand, applying Lemma \ref{lem:t2} together with $\varepsilon_{0}\in (0,1/2^{n+1})$, we discover 
\begin{align*}
	\begin{split}
		\I_{\mathcal{D}_{i+1}}|Du|\,dx 
		&\geqslant 
		c(k_{i+1}-k_{i})|A(k_{i+1},\rho_{i+1})|^{1-\frac{1}{n}}|B_{\rho_{i+1}}\setminus A(k_i,\rho_{i+1})|\rho_{i+1}^{-n}
		\\&
		\geqslant
		c2^{-i}\nu_0 |A(k_{i+1},\rho_{i+1})|^{1-\frac{1}{n}} \left( |B_{R/4}|-\varepsilon_{0} |B_{R/2}|\right)R^{-n}
		\\&
		\geqslant
		c2^{-i}\nu_0 |A(k_{i+1},\rho_{i+1})|^{1-\frac{1}{n}}
		\\&
		\geqslant
		c2^{-i}\nu_0 R^{n-1} Y_{i+1}^{1-\frac{1}{n}}
	\end{split}
\end{align*}
for some constant $c\equiv c(\data(\O_0))$. Combining last two displays, we conclude
\begin{align*}
	Y_{i+1} \leqslant c_{*} \left(2^{\frac{n(s(\Psi)+2)}{n-1}}\right)^{i} Y_{i}^{1+\frac{1}{n-1}}
\end{align*}
for some constant $c_{*}\equiv c_{*}(\data(\O_0))$. Now we apply Lemma \ref{lem:t1} in order to have $Y_{i}\to 0$ as $i\to \infty$, provided 
\begin{align*}
	Y_0 = \frac{|A(k_0,R/2)|}{|B_{R/2}|} \leqslant \varepsilon_0 \leqslant c_{*}^{-(n-1)} 2^{-n(n-1)(s(\Psi)+2)}.
\end{align*}
Therefore, \eqref{hc:12} is satisfied since 
\begin{align*}
	\left|A\left(\sup\limits_{B_{R}}u-\frac{\nu_0}{2},R/4\right)\right| = 0.
\end{align*}
\textbf{Step 3: Proof of H\"older continuity.} Finally, we are now ready to prove a local H\"older continuity of $u$.  For this, let $m\geqslant 3$ be the natural number satisfying \eqref{hc:7} for the choice $\varepsilon\equiv \varepsilon_0\in (0,1/2^{n+1})$, where $\varepsilon_0$ is determined via \eqref{hc:11}. Then we have 
	\begin{align*}
		\osc\limits_{B_{R/4}} u \leqslant
		\left(1-\frac{1}{2^{m+1}} \right) \osc_{B_{R}}u
	\end{align*}
	with $m\equiv m(\data(\O_0))$, whenever $B_{2R}\subset\O_0$ is a ball with $R\leqslant 1$. Clearly, the above display implies that there exists a positive exponent  $\gamma\equiv \gamma(\data(\O_0))\in (0,1)$
 such that, for any fixed ball $B_{8R_0}\subset \O_0$ with $8R_0\leqslant 1$, the following oscillation 
 \begin{align*}
 	\osc\limits_{B_{R}} u \leqslant
 	c\left(\frac{R}{R_0} \right)^{\gamma} \osc\limits_{B_{R_0}}u
 \end{align*}
 holds with some constant $c\equiv c(\data(\O_0))$ for every $R\in (0,R_0]$. Here we note that in the case that the assumption \eqref{ma:2} is in force, the constants appearing in the above lemma depend only on $\data$, but otherwise are independent of the subset $\O_0$. Finally, we have shown that 
 \begin{align*}
     u \in C_{\loc}^{0,\gamma}(\O_{0})
 \end{align*}
if either the assumption \eqref{ma:1} or \eqref{ma:2} is satisfied.
 Therefore by a standard covering argument, the estimates \eqref{hc:1} and \eqref{hc:2} are satisfied. Clearly, if \eqref{ma:2} is assumed instead of \eqref{ma:1}, $\gamma$ in \eqref{hc:3} depends only on $\data$ since $\norm{u}_{L^{\infty}(\O_0)} \leqslant \norm{u}_{L^{\infty}(\O)}$. The proof is complete.
 \end{proof}
 
 \subsection{The Harnack inequality}
 In this subsection we prove the Harnack inequality for a local $Q$-minimizer $u$ of the functional $\P$ in \eqref{ifunct} under one of the assumptions $\eqref{ma:1}$, $\eqref{ma:2}$ and $\eqref{ma:3}$. The analysis similar to the one in the Step 1 of the proof of Theorem \ref{thm:hc} gives the following lemma.
 
 \begin{lem}
 	\label{lem:1har}
 	Let $u\in W^{1,\Psi}(\O)$ be a non-negative local $Q$-minimizer of the functional $\P$ in \eqref{ifunct} under the coefficient functions $a(\cdot)\in C^{\omega_a}(\O)$ and $b(\cdot)\in C^{\omega_{b}}(\O)$ for $\omega_{a}, \omega_{b}$ being non-negative concave functions vanishing at the origin. Suppose that one of the assumptions \eqref{ma:1}, \eqref{ma:2} and \eqref{ma:3} is satisfied. Let $B_{6R}\subset\O_{0}\Subset\O$ be a ball with $6R\leqslant 1$. Then for any $\tau_{1},\tau_{2} \in (0,1)$, there exists a large number $m$ depending on $\data$ and $\tau_{1},\tau_{2}$ such that for any $0< k \leqslant \norm{u}_{L^{\infty}(B_{3R})}$, if 
 	\begin{align}
 		\label{lem:1har:1}
 		\left|\left\{x\in B_{R} : u(x) \geqslant k \right\} \right| \geqslant
 		\tau_{1} |B_{R}|
 	\end{align}
 	holds, then 
 	\begin{align}
 		\label{lem:lhar:2}
 		\left|\{x\in B_{2R} : u(x) \leqslant 2^{-m}k\} \right| \leqslant \tau_{2}\left|B_{2R}\right|.
 	\end{align}
 \end{lem}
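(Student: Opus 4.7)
The plan is to carry out a De Giorgi--style measure shrinking argument on sub-level sets of $u$, paralleling Step 1 of the proof of Theorem \ref{thm:hc} but with the roles of $\sup$ and $\inf$ interchanged. For an integer $m$ to be chosen at the end, I will set $k_i := k/2^{i}$ and introduce $A_i := \{x \in B_{2R} : u(x) < k_i\}$. Since $k_i \leqslant k$ and $|B_R| = 2^{-n}|B_{2R}|$, the hypothesis \eqref{lem:1har:1} immediately yields the non-degeneracy of the complement
\begin{align*}
|B_{2R} \setminus A_i| \;\geqslant\; |\{u \geqslant k\} \cap B_R| \;\geqslant\; \tau_{1} 2^{-n}\, |B_{2R}|
\end{align*}
for every $i = 0, \ldots, m$, which is the quantitative Poincar\'e input on $B_{2R}$.

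The next step is to extract two building blocks. First, I apply Lemma \ref{lem:t2} to $v = -u$ on $B_{2R}$ with thresholds $-k_i < -k_{i+1}$; inserting the mass bound above turns this into the isoperimetric-type inequality
\begin{align*}
\frac{k}{2^{i+1}}\, |A_{i+1}|^{1-1/n} \;\leqslant\; \frac{c\, 2^{n}}{\tau_{1}}\, \int_{A_i \setminus A_{i+1}} |Du|\, dx.
\end{align*}
Second, I apply the almost standard Caccioppoli inequality of Lemma \ref{lem:cacct} to the truncation $(u-k_i)_{-}$ on the nested balls $B_{2R} \Subset B_{3R} \subset B_{6R} \subset \Omega$. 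Since $(u-k_i)_{-} \leqslant k_i$ and $D(u-k_i)_{-}$ is supported in $A_i$, this yields, uniformly in $i$,
\begin{align*}
\int_{A_i \cap B_{2R}} \Psi_{B_{3R}}^{-}(|Du|)\, dx \;\leqslant\; c\, \Psi_{B_{3R}}^{-}(k_i/R)\, |A_i \cap B_{3R}|.
\end{align*}

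I will then couple the two ingredients via Jensen's inequality applied to the convex function $\Psi_{B_{3R}}^{-}$ on the annular region $A_i \setminus A_{i+1}$, using the two-sided power-type scaling of Lemma \ref{lem:nf1} to cancel the dependence on the level $k_i/R$. This produces a recursive inequality of the form
\begin{align*}
|A_{i+1}|^{1-1/n} \;\leqslant\; \frac{C}{\tau_{1}}\, |A_i \setminus A_{i+1}|^{\theta}\, |A_i \cap B_{3R}|^{1-\theta},
\end{align*}
with $C \equiv C(\data)$ and $\theta \equiv \theta(s(\Psi)) \in (0,1)$ independent of $i, k, m$. Raising to a suitable power, summing over $i = 0, \ldots, m-1$, and exploiting the monotonicity $A_{i+1} \subset A_i$ together with the telescoping $\sum_{i} |A_i \setminus A_{i+1}| \leqslant |B_{3R}|$, I obtain a decay of the form $|A_m|/|B_{2R}| \leqslant c\, m^{-\kappa}$ for some $\kappa > 0$ and a constant depending only on $\data$ and $\tau_1$. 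Choosing $m \equiv m(\data, \tau_1, \tau_2)$ large enough then delivers \eqref{lem:lhar:2}.

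The main technical obstacle I anticipate is controlling the Orlicz (in fact Musielak--Orlicz) constants uniformly in $i$: the composition of Jensen's inequality with the inverse of $\Psi_{B_{3R}}^{-}$ must produce multiplicative factors independent of the thresholds $k_i$, and this hinges on carefully exploiting the power-type bounds of Lemma \ref{lem:nf1} to absorb all $k_i/R$ factors into the $2^{-i}$ coming from $k_i - k_{i+1}$. I note that the argument works uniformly under any of \eqref{ma:1}--\eqref{ma:3}, because Lemma \ref{lem:cacct} is already set up in that unified way.
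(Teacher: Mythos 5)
Your plan is sound and, after filling in the computations, does prove the lemma --- but it is not the route the paper takes. The paper also works with the dyadic levels $k_i=k/2^i$ and the Caccioppoli inequality of Lemma \ref{lem:cacct}, but its intermediate inequality comes from applying the Sobolev embedding $W^{1,1}\hookrightarrow L^{n/(n-1)}$ to the \emph{composed} function $\Psi_{3R}^{-}(w_i/(3R))$ (with $w_i$ the two-sided truncation of $u$ between $k_{i+1}$ and $k_i$) and then Young's inequality for Orlicz functions (Lemma \ref{lem:nf3}) to split $\partial_t\Psi_{3R}^{-}\cdot|Du|$ into $\varepsilon\,\Psi_{3R}^{-}(|Du|)+\varepsilon^{-s(\Psi)}\Psi_{3R}^{-}(\cdot)$; summing over $i$ then gives $|B(k_m,2R)|\leqslant (c_*\varepsilon + c_*\varepsilon^{-s(\Psi)}/m)|B_{2R}|$ and a two-parameter optimization in $(\varepsilon,m)$ closes the proof. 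You instead invoke the De Giorgi isoperimetric lemma (Lemma \ref{lem:t2}) applied to $-u$, and convert the Caccioppoli energy bound into an $L^1$ gradient bound on the annulus $A_i\setminus A_{i+1}$ via Jensen plus the power scaling of $(\Psi_{B_{3R}}^{-})^{-1}$ from Lemma \ref{lem:nf1}(3); this is legitimate and yields your recursion with $\theta=1/(1+s(\Psi))$, and the dimensional bookkeeping does close (the omitted $R$-powers combine to $R^{(n-1)(1+s(\Psi))}$ on the right, matching $|A_m|^{(1-1/n)(1+s(\Psi))}$ on the left), giving the explicit decay $|A_m|\leqslant c(\data,\tau_1)\,m^{-n/((n-1)(1+s(\Psi)))}|B_{2R}|$. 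Your route is the more classical De Giorgi scheme (closer in spirit to the paper's own Lemma \ref{lem:2har}) and produces a quantitative power rate in $m$, at the price of handling the inverse Orlicz function on each annulus; the paper's route avoids inverting $\Psi_{B_{3R}}^{-}$ but needs the chain rule for the composition $\Psi_{3R}^{-}(w_i)$ and the extra free parameter $\varepsilon$. Two minor points to tidy up when you write it out: dispose of the degenerate case $|A_i\setminus A_{i+1}|=0$ (then Lemma \ref{lem:t2} gives $|A_{i+1}|=0$ directly), and note that the nonnegativity of $u$ is what lets you bound $(u-k_i)_{-}\leqslant k_i$ in the Caccioppoli step.
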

 \begin{proof}
 	Let $m\geqslant 3$ be a large number to be determined later.  We set, for $i=0,1,\ldots,m$,
\begin{align*}
 	k_i := \frac{k}{2^{i}},\quad \mathcal{D}_{i}:= B(k_i,2R)\setminus B(k_{i+1},2R)	
\end{align*} 	
and 
\begin{align*}
	 w_i(x)
	 := \left\{\begin{array}{lll}
        k_{i}-k_{i+1} & \text{if }& u(x)< k_{i+1},\\
        u(x)-k_{i+1}  & \text{if }& k_{i+1} \leqslant u(x) < k_{i},\\
        0  & \text{if }& u(x) \geqslant k_{i}.
        \end{array}\right.
\end{align*} 
We observe that $\Psi_{3R}^{-}(w_i)\in W^{1,1}(B_{2R})$	and $\Psi_{3R}^{-}(w_i) \equiv 0 $ on $B_{2R}\setminus 
B(k_0,2R)$ for every $i\in \{0,1,\ldots,m\}$ and $|B_{2R}\setminus B(k_0,2R)|\geqslant \tau_{1} |B_{R}| $. Then using H\"older's inequality, Sobolev's inequality and Lemma \ref{lem:nf3}, we have that
\begin{align}
	\label{1har:1}
	\begin{split}	
	B(k_{i+1},2R)\Psi_{3R}^{-}\left(\frac{k_{i}-k_{i+1}}{3R} \right)
	&\leqslant
	\I_{B(k_i,2R)}\Psi_{3R}^{-}\left(\frac{w_i}{3R} \right)\,dx	
	\\&
	\leqslant
	|B(k_i,2R)|^{\frac{1}{n}}\left( \I_{B(k_i,2R)}\left[\Psi_{3R}^{-}\left(\frac{w_i}{3R} \right) \right]^{\frac{n}{n-1}}\,dx \right)^{\frac{n-1}{n}}
	\\&
	\leqslant
	c R \left( \I_{B(k_i,2R)}\left[\Psi_{3R}^{-}\left(\frac{w_i}{3R} \right) \right]^{\frac{n}{n-1}}\,dx \right)^{\frac{n-1}{n}}
	\\&
	\leqslant
	c\I_{\mathcal{D}_{i}}\left(\Psi_{3R}^{-} \right)'\left(\frac{u-k_{i+1}}{3R} \right)|Du|\,dx
	\\&
	\leqslant
	\varepsilon \I_{\mathcal{D}_{i}}\Psi_{B_{3R}}^{-}(|Du|)\,dx + \frac{c}{\varepsilon^{s(\Psi)}}\I_{\mathcal{D}_{i}}\Psi_{3R}^{-}\left(\frac{u-k_{i+1}}{3R} \right)\,dx
	\end{split}
\end{align}
 for every $\varepsilon\in (0,1)$ and some constant $c\equiv c(\data,\tau_1)$, where we have used Remark \ref{rmk:nf2} that $\Psi_{B_{3R}}^{-}\in \mathcal{N}$ with an index $s(\Psi)= s(G) + s(H_{a}) + s(H_{b})$. It follows from the almost standard Caccioppoli inequality of Lemma \ref{lem:cacct} that 
 \begin{align}
 	\label{1har:2}
 	\begin{split}
 	\I_{\mathcal{D}_{i}}\Psi_{B_{3R}}^{-}(|Du|)\,dx
 	&\leqslant
 	c\I_{B(k_i,2R)}\Psi_{3R}^{-}\left(\frac{k_{i}-u}{3R} \right)\,dx	
 	\leqslant
 	c\I_{B(k_i,2R)}\Psi_{3R}^{-}\left(\left|\frac{2(k_{i}-k_{i+1})}{3R}\right| \right)\,dx
 	\\&
 	\leqslant
 	c|B(k_i,2R)|\Psi_{3R}^{-}\left(\left|\frac{2(k_{i}-k_{i+1})}{3R}\right| \right) \leqslant
 	cR^{n} \Psi_{3R}^{-}\left(\left|\frac{k_{i}-k_{i+1}}{3R}\right| \right),
 	\end{split}	
 \end{align}
 where we have also used the assumption that $u$ is non-negative. Clearly, by the very definition of $\mathcal{D}_{i}$, one can see that 
 \begin{align}
 	\label{1har:3}
 	\I_{\mathcal{D}_{i}}\Psi_{3R}^{-}\left(\frac{u-k_{i+1}}{3R} \right)\,dx
 	\leqslant
 	\I_{\mathcal{D}_{i}}\Psi_{3R}^{-}\left(\frac{k_{i}-k_{i+1}}{3R} \right)\,dx
 	\leqslant
 	c |\mathcal{D}_{i}| \I_{\mathcal{D}_{i}}\Psi_{3R}^{-}\left(\frac{k_{i}-k_{i+1}}{3R} \right)\,dx.
 \end{align}
Combining the estimates obtained in \eqref{1har:1}-\eqref{1har:3}, we find that
\begin{align*}
	|B(k_{m},2R)| \leqslant |B(k_{i+1},2R)| \leqslant c\varepsilon R^{n} + \frac{c}{\varepsilon^{s(\Psi)}}|\mathcal{D}_{i}|
\end{align*}
holds for some constant $c\equiv c(\data,\tau_1)$, whenever $\varepsilon\in (0,1)$ and $i\in\{0,1,\ldots,m-1\}$. Summing the last inequality above over the index $i$ from $0$ to $m-1$ implies 
\begin{align*}
\begin{split}
	|B(k_{m},2R)| 
	&\leqslant
	 c\varepsilon R^{n}  + \frac{c}{\varepsilon^{s(\Psi)}m}|B(k_0,2R)| 
	\leqslant
	\left( c_{*}  \varepsilon + \frac{c_{*}}{\varepsilon^{s(\Psi)}m} \right)|B_{2R}|
	\end{split}
\end{align*}
for some constant $c_{*}\equiv c_{*}(\data,\tau_{1})$. Now choosing small enough $\varepsilon\equiv (\data,\tau_1,\tau_2)$ and sufficiently large $m\equiv m(\data,\tau_1,\tau_2)$ such that 
\begin{align*}
	c_{*}  \varepsilon + \frac{c_{*}}{\varepsilon^{s(\Psi)}m} \leqslant \tau_{2},
\end{align*}
we arrive at the desired estimate \eqref{lem:lhar:2}.

 \end{proof}
 
 \begin{lem}
 	\label{lem:2har}
 	Under the assumptions of Lemma \ref{lem:1har}, let $u\in W^{1,\Psi}(\O)$ be a non-negative $Q$-minimizer of the functional $\P$ in \eqref{ifunct}. Suppose that one of the assumptions \eqref{ma:1}, \eqref{ma:2} and \eqref{ma:3} is satisfied. Then for any $\tau\in (0,1)$, there exists a small $\delta_1\equiv \delta_1(\data(\O_0))$ such that for any $0<k \leqslant \norm{u}_{L^{\infty}(B_{3R})}$, if 
 	\begin{align}
 		\label{lem:2har:1}
 		\left|\{ x\in B_{R} : u(x)\geqslant k \} \right| \geqslant \tau |B_{R}| 		
 	\end{align}
 	holds, then
 	\begin{align}
 		\label{lem:2har:2}
 		\inf\limits_{B_{R}} u \geqslant \delta_{1}k.	
 	\end{align}
 \end{lem}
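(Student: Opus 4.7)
The plan is to combine the measure-decreasing statement of Lemma \ref{lem:1har} with a De Giorgi-type iteration mirroring Step 2 of the proof of Theorem \ref{thm:hc}, but set up for a lower bound instead of an upper bound. First, I fix a small threshold $\varepsilon_0 \equiv \varepsilon_0(\data(\O_0)) \in (0,1)$ to be determined by the iteration below, and apply Lemma \ref{lem:1har} with $\tau_1 \equiv \tau$ and $\tau_2 \equiv \varepsilon_0$. This yields a natural number $m \equiv m(\data(\O_0),\tau)$ such that
\begin{align*}
|\{x \in B_{2R} : u(x) \leqslant 2^{-m}k\}| \leqslant \varepsilon_0 |B_{2R}|.
\end{align*}
Setting $h_0 := 2^{-m}k$, it remains to upgrade this measure-level information to the pointwise bound $u \geqslant h_0/2$ on $B_R$.

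For the iteration I introduce the decreasing sequences of radii $\rho_i := R(1 + 2^{-i})$ (so $\rho_0 = 2R$, $\rho_i \searrow R$) and of levels $h_i := h_0(1/2 + 1/2^{i+1})$ (so $h_i \searrow h_0/2$), and define $Y_i := |B_{\rho_i} \cap \{u < h_i\}|/|B_{2R}|$. Since the integrand in $\P$ depends on $Du$ only through $|Du|$, the function $-u$ is also a local $Q$-minimizer; hence Lemma \ref{lem:cacct} applied to $-u$ provides the Caccioppoli-type estimate
\begin{align*}
\I_{B_{\rho_{i+1}}} \Psi_{B_{2R}}^{-}(|D(u-h_i)_-|)\,dx \leqslant c\,2^{i(s(\Psi)+1)}\I_{B_{\rho_i}}\Psi_{B_{2R}}^{-}\left(\frac{(h_i-u)_+}{R}\right)dx,
\end{align*}
with $c \equiv c(\data)$. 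Combining this with Sobolev's inequality applied to $\Psi_{B_{2R}}^{-}(w_i)$ (where $w_i := \min\{(h_i - u)_+, h_i - h_{i+1}\}$), the isoperimetric-type estimate of Lemma \ref{lem:t2}, and Young's inequality via Lemma \ref{lem:nf3} — exactly in the pattern leading to \eqref{hc:8} in Step 2 of Theorem \ref{thm:hc} — one reaches a recursive inequality of the form
\begin{align*}
Y_{i+1} \leqslant C b^i Y_i^{1+\frac{1}{n-1}}
\end{align*}
with constants $C, b \equiv C,b(\data(\O_0))$ and $b > 1$.

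Having fixed $\varepsilon_0 \equiv \varepsilon_0(\data(\O_0))$ to be less than $C^{-(n-1)}b^{-(n-1)^2}$, the starting datum $Y_0 \leqslant \varepsilon_0$ triggers Lemma \ref{lem:t1}, which forces $Y_i \to 0$. Consequently $|B_R \cap \{u < h_0/2\}| = 0$, i.e.\ $u \geqslant 2^{-m-1}k$ almost everywhere on $B_R$, and \eqref{lem:2har:2} holds with $\delta_1 := 2^{-m-1}$. The main technical delicacy is ensuring that the Caccioppoli estimate from Lemma \ref{lem:cacct}, which is stated in the non-homogeneous form with $\Psi_{B_{2R}}^{-}$, interfaces cleanly with Sobolev's inequality to give the polynomial recursion — this is handled precisely as in \eqref{hc:8}, exploiting that $\Psi_{B_{2R}}^{-} \in \mathcal{N}$ with an index depending only on $s(G), s(H_a), s(H_b)$ by Remark \ref{rmk:nf2}, so all constants entering the iteration are controlled purely by $\data(\O_0)$ and $\tau$.
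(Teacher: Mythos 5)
Your proposal is correct and follows essentially the same route as the paper: an initial application of Lemma \ref{lem:1har} to make the sublevel set small, followed by a De Giorgi iteration on shrinking radii $\rho_i = R(1+2^{-i})$ and levels decreasing to half the threshold, using the Caccioppoli inequality of Lemma \ref{lem:cacct}, the isoperimetric estimate of Lemma \ref{lem:t2}, and the fast-geometric-convergence Lemma \ref{lem:t1}, yielding $\delta_1 = 2^{-(m+1)}$. The only cosmetic difference is that the paper converts the Caccioppoli energy bound into an $L^1$ gradient bound via Jensen's inequality and the convexity of $\Psi_{B_{2R}}^{-}$ (as in Step 2 of Theorem \ref{thm:hc}) rather than via the Sobolev-plus-Lemma-\ref{lem:nf3} pattern of \eqref{hc:8}, but both produce the same polynomial recursion.
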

 
 \begin{proof}
 	It's enough to prove the lemma for $\tau\in \left(0, 2^{-(n+1)} \right)$. Let us fix $m_0\in \mathbb{N}$, and consider the sequences defined by 
 	\begin{align}
 		\label{2har:1}
 		\rho_{i}:= R\left(1+\frac{1}{2^{i}} \right)
 		\quad\text{and}\quad
 		k_i:= \left(\frac{1}{2} + \frac{1}{2^{i}} \right)2^{-m_0}k
 		\quad \left( i=0,1,2,\ldots \right).
 	\end{align}
 Next we also define 
 \begin{align}
 	\label{2har:2}
 	\mathcal{D}_{i+1}^{-}:= B(k_i,\rho_{i+1})\setminus B(k_{i+1},\rho_{i+1})
 	\quad\text{and}\quad 
 	Y_{i}:= \frac{|B(k_i,\rho_i)|}{|B_{\rho_i}|},
 \end{align}
 where the definition of $B(k_i,\rho_i)$ has been introduced in \eqref{hc:6_1}. By using the assumption that $u$ is non-negative, we observe $(u-k_{i})_{-} \leqslant 2^{-m_0}k$. Then by applying Lemma \ref{lem:cacct}, we see 
 \begin{align*}
 	\begin{split}
 		\I_{B(k_i,\rho_{i+1})}\Psi_{B_{2R}}^{-}(|Du|)\,dx
 		&\leqslant
 		c 2^{(i+3)(s(\Psi)+1)}\I_{B(k_i,\rho_i)}\Psi_{B_{2R}}^{-}\left(\frac{(u-k_i)_{-}}{2R} \right)\,dx
 		\\&
 		\leqslant
 		c2^{(i+3)(s(\Psi)+1)}\Psi_{B_{2R}}^{-}\left( \frac{2^{-m_0}k}{R} \right)|B(k_i,\rho_i)|
 	\end{split}
 \end{align*}
 for some constant $c\equiv c(\data)$. This estimate together with the convexity of $\Psi_{B_{2R}}^{-}$ implies
 \begin{align*}
 	\begin{split}
 	\Psi_{B_{2R}}^{-}\left(\FI_{\mathcal{D}_{i+1}^{-}}|Du|\,dx \right)
 	\leqslant
 	\FI_{\mathcal{D}_{i+1}^{-}}\Psi_{B_{2R}}^{-}(|Du|)\,dx
 	&\leqslant
 	c2^{i(s(\Psi)+1)}\frac{|B(k_i,\rho_i)|}{|\mathcal{D}_{i+1}^{-}|}\Psi_{B_{2R}}^{-}\left(\frac{2^{-m_0}k}{R} \right)
 	\\&
 	\leqslant
 	c\Psi_{B_{2R}}^{-}\left( 2^{i(s(\Psi)+1)}\frac{|B(k_i,\rho_i)|}{|\mathcal{D}_{i+1}^{-}|}\frac{2^{-m_0}k}{R}   \right)
 	\end{split}
 \end{align*}
 for some constant $c\equiv c(\data(\O_0))$. Therefore, using the fact that the function $\Psi_{B_{2R}}^{-}$ is increasing and Lemma \ref{lem:nf1}, we have 
 \begin{align*}
 	\FI_{\mathcal{D}_{i+1}^{-}}|Du|\,dx \leqslant c  2^{i(s(\Psi)+1)}\frac{|B(k_i,\rho_i)|}{|\mathcal{D}_{i+1}^{-}|}\frac{2^{-m_0}k}{R}.
 \end{align*}
 Now applying Lemma \ref{lem:t2} together with the fact that $\tau\in \left(0,2^{-(n+1)}\right)$, we see 
 \begin{align*}
 	\begin{split}
 	\I_{\mathcal{D}_{i+1}^{-}} |Du|\,dx 
 	&\geqslant
 	c(k_i-k_{i+1})|B(k_{i+1},\rho_{i+1})|^{1-\frac{1}{n}} \left|B_{\rho_{i+1}}\setminus B(k_i,\rho_{i+1}) \right|\rho_{i+1}^{n}
 	\\&
 	\geqslant
 	c2^{-i}2^{-m_0}k |B(k_{i+1},\rho_{i+1})|^{1-\frac{1}{n}}\left( |B_{2R}|-\tau |B_{R}| \right)R^{-n}
 	\\&
 	\geqslant
 	c2^{-i}2^{-m_0}k |B(k_{i+1},\rho_{i+1})|^{1-\frac{1}{n}}
 	\\&
 	\leqslant
 	c2^{-i}2^{-m_0}k R^{n-1} Y_{i+1}^{1-\frac{1}{n}}.
 	\end{split}
 \end{align*}
 The combination of the last two displays yields
 \begin{align*}
 	Y_{i+1}^{1-\frac{1}{n}} \leqslant c 2^{i(s(\Psi)+1)}R^{-n}|B(k_{i},\rho_{i})|
 	\leqslant
 	c 2^{i(s(\Psi)+1)}Y_{i}
 \end{align*}
 and then we conclude 
 \begin{align*}
 	Y_{i+1} \leqslant  c_{*} 2^{i\frac{(s(\Psi)+1)n}{n-1}} Y_{i}^{1 + \frac{1}{n-1}}
 \end{align*}
 for some constant $c\equiv c(\data(\O_0))$. Now applying Lemma \ref{lem:1har}, we find a large natural number $m_0\equiv m_0(\data(\O_0))$ such that 
 \begin{align*}
 	\left|\{ x\in B_{2R} : u(x) \leqslant 2^{-m_0}k \} \right| \leqslant c_{*}^{-(n-1)} 2^{-n(n-1)(s(\Psi)+1)}.
 \end{align*}
With keeping the above choice of $m_0$, we observe that 
\begin{align*}
	Y_{0} = \frac{|B(k_0,2R)|}{|B_{2R}|} = \frac{|x\in B_{2R} : u(x) \leqslant 2^{-m_0}k|}{|B_{2R}|}
	\leqslant
	c_{*}^{-(n-1)} 2^{-n(n-1)(s(\Psi)+1)}.
\end{align*}
 Now we are at stage in applying Lemma \ref{lem:t1} to obtain that $Y_{i}\rightarrow 0$ as $i\rightarrow \infty$, which is equivalent to 
 \begin{align*}
 	|B(2^{-(m_0+1)}k,R)| = 0.
 \end{align*}
 The last display implies the validity of \eqref{lem:2har:2} with the choice of $\delta_1\equiv 2^{-(m_0+1)}$.
 \end{proof}
 
 From Lemma \ref{lem:2har} and the covering arguments in \cite[Section 7]{KS1}, we obtain the following weak Harnack inequality for a local $Q$-minimizers of the functional $\P$ defined in \eqref{ifunct}. We also refer to \cite{BCM1,BO3,HKKMP} for the proof. 

\begin{thm}[The weak Harnack inequality]
	\label{thm:whar}
	Let $W^{1,\Psi}(\O)$ be a local non-negative $Q$-minimizer of the functional $\P$ defined in \eqref{ifunct} with the coefficient functions $a(\cdot)\in C^{\omega_a}(\O)$ and $b(\cdot)\in C^{\omega_{b}}(\O)$ satisfying $\eqref{ma:1}_{2}$ for functions $\omega_{a}, \omega_{b}$ being concave which vanish at 0. Suppose one of the assumptions \eqref{ma:1}, \eqref{ma:2} and \eqref{ma:3} is satisfied. Let $B_{9R}\equiv B_{9R}(x_0)\subset\O_0\Subset\O$ be a ball with $9R\leqslant 1$. Then there exist $q_{-}>0$ and a constant $c$ depending on $\data(\O_0)$ such that 
	\begin{align}
		\label{thm:whar:1}
		\inf\limits_{x\in B_{R}} u(x) \geqslant \frac{1}{c} \left( \FI_{B_{2R}} u^{q_{-}}\,dx \right)^{\frac{1}{q_{-}}}.
	\end{align}
\end{thm}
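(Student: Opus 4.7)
The plan is to derive \eqref{thm:whar:1} from the measure-theoretic density bound of Lemma \ref{lem:2har} via the Krylov--Safonov type covering scheme developed in [KS1] for generalized Orlicz problems. The basic idea is that Lemma \ref{lem:2har} already plays the role of the classical \emph{expansion of positivity} lemma: it converts a quantitative density lower bound for a superlevel set into a pointwise lower bound for $u$ on a concentric ball. Once this is in hand, the weak Harnack inequality reduces to a distribution function estimate obtained by iteration.

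First I would fix the threshold $\tau \in (0, 2^{-(n+1)})$ and record the constant $\delta_1 \equiv \delta_1(\data(\O_0)) \in (0,1)$ from Lemma \ref{lem:2har}. Rewriting the contrapositive of that lemma, for every $0<k\leqslant \|u\|_{L^\infty(B_{3R})}$ one has the implication
\begin{equation*}
\inf_{B_R} u < \delta_1 k \quad \Longrightarrow \quad \bigl|\{x\in B_R : u(x)<k\}\bigr| > (1-\tau)|B_R|.
\end{equation*}
Note that $\|u\|_{L^\infty(B_{3R})}$ is finite and controlled by $\data(\O_0)$ thanks to Theorem \ref{thm:lb} (or Theorem \ref{thm:hc} in cases \eqref{ma:2}, \eqref{ma:3}), which keeps the range of admissible $k$ large enough to reach any level that matters.

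Next I would perform a Krylov--Safonov / Calder\'on--Zygmund style covering of $B_{2R}$, applied to the super-level sets $E_t := \{x\in B_{2R} : u(x) > t\}$: for each Lebesgue point $x$ of $E_t$ where the density of $E_t$ exceeds $\tau$, Lemma \ref{lem:2har} transfers this density condition into a lower bound for $u$ on a small concentric ball; covering the exceptional set by five times the resulting collection of balls produces the geometric decay
\begin{equation*}
|E_{\delta_1^{j} t_0}\cap B_{2R}| \leqslant \eta^{j}\, |B_{2R}|,\qquad j=0,1,2,\ldots,
\end{equation*}
for some $\eta\equiv \eta(\data(\O_0))\in (0,1)$, once $t_0$ is chosen so that $t_0 \geqslant c_0 \inf_{B_R} u$. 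Finally I would pick the exponent $q_->0$ so small that $\delta_1^{-q_-}\eta<1$ and integrate this geometric decay by the layer-cake formula
\begin{equation*}
\FI_{B_{2R}} u^{q_-}\,dx = \frac{q_-}{|B_{2R}|}\I_{0}^{\infty}t^{q_--1}|E_t|\,dt,
\end{equation*}
which, summing the resulting geometric series in $j$, yields $\bigl(\fint_{B_{2R}} u^{q_-}\bigr)^{1/q_-}\leqslant c\,\inf_{B_R} u$, exactly \eqref{thm:whar:1}.

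The main obstacle is not in any one inequality but in executing the covering argument in a way that respects the non-standard growth: the balls produced by the covering must all sit in a region where the almost-standard Caccioppoli inequality of Lemma \ref{lem:cacct} and the local boundedness of Theorem \ref{thm:lb} are available with constants only depending on $\data(\O_0)$. This forces the geometric nesting $B_R \subset B_{2R} \subset B_{3R} \subset B_{9R} \subset \O_0$ used in the statement and requires that at each iteration step the dilated ball remains inside $B_{9R}$; handling this bookkeeping uniformly across the three regularity regimes \eqref{ma:1}--\eqref{ma:3} is where the care from [KS1] is really needed, since in each regime the scaling parameter $\lambda_i$ and the a priori bound on $u$ are controlled differently.
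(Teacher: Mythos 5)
Your proposal is correct and follows essentially the same route as the paper, which proves Theorem \ref{thm:whar} exactly by combining Lemma \ref{lem:2har} with the covering arguments of \cite[Section 7]{KS1} (see also \cite{BCM1,BO3,HKKMP}). The paper gives no further detail beyond this citation, and your sketch of the Krylov--Safonov covering, the geometric decay of the distribution function, and the layer-cake integration with a small exponent $q_-$ is the standard way those cited arguments are carried out.
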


To conclude the result of Theorem \ref{thm:har} below, we need to obtain a local sup-estimates for local quasiminizers of $\P$.

\begin{lem}
	\label{lem:3har}
	Under the assumptions of Lemma \ref{lem:1har}, let $u\in W^{1,\Psi}(\O)$ be a non-negative local $Q$-minimizer of the functional $\P$ in \eqref{ifunct} with the coefficient functions $a(\cdot)\in C^{\omega_a}(\O)$ and $b(\cdot)\in C^{\omega_{b}}(\O)$ satisfying $\eqref{ma:1}_{2}$ for functions $\omega_{a}, \omega_{b}$ being concave which vanish at 0. Suppose that one of the assumptions \eqref{ma:1}, \eqref{ma:2} and \eqref{ma:3} is satisfied. Let $B_{9R}\equiv B_{9R}(x_0)\subset\O_0\Subset\O$ be a ball with $9R\leqslant 1$. Then for any $q_{+}>0$, the local estimate holds
\begin{align}
	\label{lem:3har:1}
	\sup\limits_{B_{R}}u \leqslant c\left( \FI_{B_{2R}} |u|^{q_{+}}\,dx \right)^{\frac{1}{q_{+}}}
\end{align}
	 for some constant $c\equiv c(\data(\O_0))$.
\end{lem}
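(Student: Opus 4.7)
The proof follows the classical De Giorgi--Moser scheme, adapted to the Orlicz setting via the almost standard Caccioppoli inequality (Lemma \ref{lem:cacct}) combined with the Sobolev embedding of Lemma \ref{lem:os}. The starting point is to apply Lemma \ref{lem:cacct} to the truncation $(u-k)_{+}$ (admissible since $u\geqslant 0$) on concentric balls $B_{\rho}\Subset B_{s}\subset B_{2R}$, obtaining
\begin{equation*}
\I_{B_{\rho}}\Psi_{B_{R}}^{-}(|D(u-k)_{+}|)\,dx \leqslant c\left(\frac{R}{s-\rho}\right)^{s(\Psi)+1}\I_{B_{s}}\Psi_{B_{R}}^{-}\!\left(\frac{(u-k)_{+}}{R}\right)\,dx.
\end{equation*}
Since $\Psi_{B_{R}}^{-}\in\mathcal{N}$ with index $s(\Psi)=s(G)+s(H_{a})+s(H_{b})$ by Remark \ref{rmk:nf2}, Lemma \ref{lem:os} applied to $\Phi\equiv\Psi_{B_{R}}^{-}$ (with an appropriate cutoff times $(u-k)_{+}$) provides a small gain of integrability, from which one extracts a reverse H\"older-type estimate on each pair of nested balls.

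Set up the De Giorgi iteration with
\begin{equation*}
\rho_{i}:=R\bigl(1+2^{-i}\bigr),\qquad k_{i}:=k_{0}\bigl(2-2^{-i}\bigr),\qquad Y_{i}:=\FI_{B_{\rho_{i}}}\Psi_{B_{R}}^{-}\!\left(\frac{(u-k_{i})_{+}}{R}\right)dx,
\end{equation*}
where $k_{0}>0$ is a parameter to be determined. Using $(u-k_{i+1})_{+}\leqslant (u-k_{i})_{+}$, the Chebyshev-type estimate $|A(k_{i+1},\rho_{i+1})|\leqslant [\Psi_{B_{R}}^{-}((k_{i+1}-k_{i})/R)]^{-1}\I_{A(k_{i},\rho_{i})}\Psi_{B_{R}}^{-}((u-k_{i})_{+}/R)\,dx$, and Lemma \ref{lem:nf1}, one derives a recursive inequality of the form $Y_{i+1}\leqslant Cb^{i}Y_{i}^{1+\tau}$ for constants $C,b>1$ and $\tau>0$ depending only on $\data(\O_{0})$. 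Lemma \ref{lem:t1} then forces $Y_{i}\to 0$ whenever $Y_{0}$ is small enough; equivalently, choosing $k_{0}$ proportional to $\bigl(\FI_{B_{2R}}u^{q_{0}}\,dx\bigr)^{1/q_{0}}$ for a suitable $q_{0}>0$ yields a first bound $\sup_{B_{R}}u\leqslant c\bigl(\FI_{B_{2R}}u^{q_{0}}\,dx\bigr)^{1/q_{0}}$.

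The passage to an arbitrary exponent $q_{+}>0$ is by the standard interpolation argument. First upgrade the estimate to the scaled form
\begin{equation*}
\sup_{B_{\rho}}u\leqslant c(s-\rho)^{-\alpha}\left(\FI_{B_{s}}u^{q_{0}}\,dx\right)^{1/q_{0}}\qquad\text{for all concentric } B_{\rho}\subset B_{s}\subset B_{2R},
\end{equation*}
by running the iteration with radii chosen inside $[s,\rho]$ rather than $[R,2R]$. If $q_{+}\geqslant q_{0}$, H\"older's inequality gives the claim directly. If $0<q_{+}<q_{0}$, write
\begin{equation*}
\left(\FI_{B_{s}}u^{q_{0}}\right)^{1/q_{0}}\leqslant\Bigl(\sup_{B_{s}}u\Bigr)^{1-q_{+}/q_{0}}\left(\FI_{B_{s}}u^{q_{+}}\right)^{1/q_{0}},
\end{equation*}
apply Young's inequality to split off the $\sup$-term with a small constant, and absorb it using Lemma \ref{lem:t0}.

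The main technical difficulty is the non-homogeneous behaviour of $\Psi$: a direct Moser iteration with the full integrand is obstructed by the position-dependent coefficients $a(\cdot),b(\cdot)$. This is bypassed by Lemma \ref{lem:cacct}, which replaces $\Psi(x,\cdot)$ on the gradient side by the constant-coefficient $N$-function $\Psi_{B_{R}}^{-}$ with controlled index; from then on the iteration proceeds essentially as in the classical scalar theory for a $p$-Laplacian with $p$ replaced by $s(\Psi)+1$, and the additional data-dependence is controlled through the almost standard form of Caccioppoli, which is exactly why the three cases $\eqref{ma:1}$, $\eqref{ma:2}$, $\eqref{ma:3}$ can be treated simultaneously.
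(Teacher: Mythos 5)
Your proposal follows essentially the same route as the paper: the scaled De Giorgi iteration based on Lemma \ref{lem:cacct}, the Sobolev embedding, Lemma \ref{lem:t1}, a first sup-bound in terms of an $L^{q_0}$-average, and then interpolation plus Lemma \ref{lem:t0} to reach arbitrary $q_{+}>0$. The only step you leave implicit is the identification of the "suitable" exponent $q_{0}=s(\Psi)+1$: the iteration naturally bounds $\Psi_{B_{2R}}^{-}\bigl(\sup_{B_t}\bar u_{+}\bigr)$ by the modular average of $\Psi_{B_{2R}}^{-}(\bar u_{+})$, and the paper converts this into a genuine $L^{s(\Psi)+1}$-average by applying Jensen's inequality to the concave function $t\mapsto \Psi_{B_{2R}}^{-}\bigl(t^{1/(s(\Psi)+1)}\bigr)$ from Lemma \ref{lem:nf2_1}.
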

\begin{proof}
The proof consists of two steps. For the convenience, let us consider the scaled functions
\begin{align}
	\label{3har:1}
	\bar{u}(x):= \frac{u(x_0+Rx)}{R}
	\quad\text{for every}\quad
	x\in B_{4}.
\end{align}
Then the almost standard Caccioppoli inequality \eqref{cacct:1} of Lemma \ref{lem:cacct} can be written in the view of $\bar{u}$ as follows:
\begin{align}
	\label{3har:2}
	\I_{B_{r_1}}\Psi_{B_{2R}}^{-}(|D(\bar{u}-k)_{\pm}|)\,dx
	\leqslant
	\frac{c}{(r_2-r_1)^{s(\Psi)+1}}\I_{B_{r_2}}\Psi_{B_{2R}}^{-}((\bar{u}-k)_{\pm})\,dx
\end{align}
with some constant $c\equiv c(\data)$, whenever $B_{r_1}\Subset B_{r_2}\subset B_{2}(0)$ are concentric balls and $k\in\R$. Next for $1\leqslant t \leqslant s \leqslant 2$, we set sequences by 
	\begin{align}
		\label{3har:3}
		\rho_{i}:= \left( t + \frac{s-t}{2^{i}} \right)
		\quad\text{and}\quad
		k_{i}:= 2l_{0}\left(1-\frac{1}{2^{i+1}} \right)
	\end{align}
for some constant $d_0>0$ to be determined later. We also define
\begin{align}
	\label{3har:4}
	\bar{\rho}_{i}:= \frac{\rho_i + \rho_{i+1}}{2}
	\quad\text{and}\quad
	Y_{i}:= \frac{1}{\Psi_{B_{2R}}^{-}(l_0)}\I_{\bar{A}(k_i,\rho_i)} \Psi_{B_{2R}}^{-}((u-k_i)_{+})\,dx, 
\end{align}
where
\begin{align}
	\label{3har:5}
	\bar{A}(k,\rho):= \{x\in B_{\rho} : \bar{u}>k\}.
\end{align} 

Let $\eta_{i} \in C^{\infty}_{0}(B_{\bar{\rho}_{i}})$ be a cut-off function such that $0\leqslant \eta_{i} \leqslant 1$, $\eta_{i}\equiv 1$ on $B_{\rho_{i+1}}$ and $|D\eta_{i}| \leqslant \frac{c(n)2^{i}}{(s-t)}$. 

Then using H\"older's inequality, Sobolev's inequality and Lemma \ref{lem:nf3}, we have

\begin{align*}
	\begin{split}
	\Psi_{B_{2R}}^{-}(l_0)Y_{i+1}
	&\leqslant
	\I_{B_{\bar{\rho}_{i}}}\Psi_{B_{2R}}^{-}\left( (\bar{u}-k_{i+1})\eta_{i} \right)\,dx
	\\&
	\leqslant
	|\bar{A}(k_{i+1},\rho_{i})|^{\frac{1}{n}} \left(\I_{B_{\bar{\rho}_{i}}}\left[\Psi_{B_{2R}}^{-}\left( (\bar{u}-k_{i+1})_{+}\eta_{i} \right)\right]^{\frac{n}{n-1}}\,dx\right)^{\frac{n-1}{n}}
		\\&
		\leqslant
		c|\bar{A}(k_{i+1},\rho_{i})|^{\frac{1}{n}} \I_{B_{\bar{\rho}_i}} \left(\Psi_{B_{2R}}^{-}\right)'\left( (\bar{u}-k_{i+1})_{+}\eta_{i} \right)\left[ |D(\bar{u}-k_{i+1})_{+}|\eta_{i} + (\bar{u}-k_{i+1})_{+}|D\eta_{i}| \right]\,dx
		\\&
		\leqslant
		c|\bar{A}(k_{i+1},\rho_{i})|^{\frac{1}{n}} \I_{B_{\bar{\rho}_i}} \left(\Psi_{B_{2R}}^{-}\right)'\left( (\bar{u}-k_{i+1})_{+} \right)|D(\bar{u}-k_{i+1})_{+}|\,dx
		\\&
		\qquad
		+	
		c|\bar{A}(k_{i+1},\rho_{i})|^{\frac{1}{n}}\frac{2^{i}}{s-t}
		\I_{B_{\bar{\rho}_i}} \left(\Psi_{B_{2R}}^{-}\right)'\left( (\bar{u}-k_{i+1})_{+} \right)(\bar{u}-k_{i+1})_{+}\,dx
		\\&
		\leqslant
		c |\bar{A}(k_{i+1},\rho_{i})|^{\frac{1}{n}}\left( \I_{B_{\bar{\rho}_i}}\Psi_{B_{2R}}^{-}(|D(\bar{u}-k_{i+1})_{+}|)\,dx  + \frac{2^{i}}{s-t} \I_{B_{\bar{\rho}_i}}\Psi_{B_{2R}}^{-}((\bar{u}-k_{i+1})_{+})\,dx  \right)
		\\&
		\leqslant
		c |\bar{A}(k_{i+1},\rho_{i})|^{\frac{1}{n}}
		\left(\frac{2^{i}}{s-t} \right)^{s(\Psi)+1}
		\I_{B_{\rho_{i}}}\Psi_{B_{2R}}^{-}((\bar{u}-k_{i+1})_{+})\,dx
	\end{split}
\end{align*}
for some constant $c\equiv c(\data)$, where in the last inequality of the above display we also have used \eqref{3har:2} and \eqref{3har:5}. Now applying Lemma \ref{lem:nf1}, we see that
\begin{align*}
	\begin{split}
	|\bar{A}(k_{i+1},\rho_{i})| 
	&\leqslant
	\frac{1}{\Psi_{B_{2R}}^{-}\left(k_{i+1}-k_{i}\right)}
	\I_{\bar{A}(k_{i+1},\rho_{i})}\Psi_{B_{2R}}^{-}\left( \bar{u}-k_{i} \right)\,dx
	\\&
	\leqslant
	\frac{1}{\Psi_{B_{2R}}^{-}\left( l_0/2^{i+1} \right)}
	\I_{\bar{A}(k_{i+1},\rho_{i})}\Psi_{B_{2R}}^{-}\left( \bar{u}-k_{i} \right)\,dx
	\\&
	\leqslant
	\frac{\Psi_{B_{2R}}^{-}\left( l_0\right)}{\Psi_{B_{2R}}^{-}\left( l_0/2^{i+1} \right)} Y_{i}
	\leqslant
	2^{(i+1)(s(\Psi)+1)}Y_{i} \leqslant
	c\left(\frac{2^{i}}{s-t} \right)^{s(\Psi)+1}Y_{i}.
	\end{split}
\end{align*}
and 
\begin{align*}
	\begin{split}
	\I_{B_{\rho_{i}}}\Psi_{B_{2R}}^{-}\left( (\bar{u}-k_{i+1})_{+} \right)\,dx
	&= \I_{\bar{A}(k_{i+1},\rho_i)}\Psi_{B_{2R}}^{-}\left( \bar{u}- k_{i+1} \right)\,dx
	\\&
	\leqslant
	\I_{\bar{A}(k_{i},\rho_i)}\Psi_{B_{2R}}^{-}\left( \bar{u}- k_{i} \right)\,dx = 
	\Psi_{B_{2R}}^{-}\left( l_0 \right)Y_{i}.
	\end{split}
\end{align*}
Combining the last three displays, we conclude with the following recursive inequality:
\begin{align*}
	Y_{i+1} \leqslant c_0 \frac{2^{i\left(1+\frac{1}{n} \right)(s(\Psi)+1)}}{(s-t)^{\left(1+\frac{1}{n} \right)(s(\Psi)+1)}}Y_{i}^{1+\frac{1}{n}}
\end{align*}
for some constant $c_0\equiv c_0(\data)$. Now we are at the stage to apply Lemma \ref{lem:t1}. In turn, we have $Y_{i}\rightarrow 0$ as $i\rightarrow \infty$, provided 
\begin{align*}
	Y_0 = \frac{1}{\Psi_{B_{2R}}^{-}\left( l_0 \right)} \I_{\bar{A}(l_0,s)}\Psi_{B_{2R}}^{-}\left(  \bar{u}-l_0 \right)\,dx
	\leqslant
	\left[\frac{c_0}{(s-t)^{\left(1+\frac{1}{n} \right)(s(\Psi)+1)}} \right]^{-n}
	2^{-n\left(1+n \right)(s(\Psi)+1)}.
\end{align*}
The inequality in the last display is satisfied if we choose $l_0>0$ in the following way 
\begin{align*}
	\Psi_{B_{2R}}^{-}\left( l_0 \right) = \frac{c_0^{n}2^{n\left(1+n \right)(s(\Psi)+1)}}{(s-t)^{\left(1+n \right)(s(\Psi)+1)}}\I_{B_{s}}\Psi_{B_{2R}}^{-}\left( (\bar{u})_{+} \right)\,dx.
\end{align*}
Therefore, we obtain $\bar{u} \leqslant 2l_0$ in $B_{t}$. This estimate together with the last display yields
\begin{align}
	\label{3har:12}
	\Psi_{B_{2R}}^{-}\left( \sup\limits_{B_{t}} (\bar{u})_{+} \right)
	\leqslant
	\frac{c}{(s-t)^{\left(1+n \right)(s(\Psi)+1)}}\FI_{B_{s}} \Psi_{B_{2R}}^{-}\left( (\bar{u})_{+} \right)\,dx.
\end{align}

Recalling $\Psi_{B_{2R}}^{-}\in \mathcal{N}$ with an index $s(\Psi)$ and applying Lemma \ref{lem:nf2_1} for $\Psi_{B_{2R}}^{-}$, one can see that $t\mapsto \Psi_{B_{2R}}^{-}\left(t^{\frac{1}{s(\Psi)+1}} \right)$ is a concave function. Using this one together with Jensen's inequality in \eqref{3har:12}, we see 
\begin{align*}
	\begin{split}
	\Psi_{B_{2R}}^{-}\left( \sup\limits_{B_{t}} (\bar{u})_{+} \right)
	&\leqslant
	\frac{c}{(s-t)^{\left(1+n \right)(s(\Psi)+1)}}\FI_{B_{s}} \Psi_{B_{2R}}^{-}\left( (\bar{u})_{+} \right)\,dx
	\\&
	= 
	\frac{c}{(s-t)^{\left(1+n \right)(s(\Psi)+1)}}\FI_{B_{s}} \Psi_{B_{2R}}^{-}\left( \left[(\bar{u})_{+}^{s(\Psi)+1}\right]^{\frac{1}{s(\Psi)+1}} \right)\,dx
	\\&
	\leqslant
	\frac{c}{(s-t)^{\left(1+n \right)(s(\Psi)+1)}}\Psi_{B_{2R}}^{-}\left(\left[\FI_{B_{s}}  (\bar{u})_{+}^{s(\Psi)+1} \,dx \right]^{\frac{1}{s(\Psi)+1}} \right)
	\\&
	\leqslant
	\Psi_{B_{2R}}^{-}\left( \frac{c}{(s-t)^{\left(1+n \right)(s(\Psi)+1)}} \left[\FI_{B_{s}}  (\bar{u})_{+}^{s(\Psi)+1} \,dx \right]^{\frac{1}{s(\Psi)+1}} \right).
	\end{split}
\end{align*}
Since $\Psi_{B_{2R}}^{-}$ is the increasing function, the last display implies
\begin{align*}
	\sup\limits_{B_{t}} (\bar{u})_{+} 
	\leqslant
	 \frac{c}{(s-t)^{\left(1+n \right)(s(\Psi)+1)}} \left[\FI_{B_{s}}  (\bar{u})_{+}^{s(\Psi)+1} \,dx \right]^{\frac{1}{s(\Psi)+1}}.
\end{align*}
Since $-u$ is a local $Q$-minimizer of the functional $\P$, we find 
\begin{align*}
	\sup\limits_{B_{t}} |\bar{u}| 
	\leqslant
	 \frac{c}{(s-t)^{\left(1+n \right)(s(\Psi)+1)}} \left[\FI_{B_{s}}  |\bar{u}|^{s(\Psi)+1} \,dx \right]^{\frac{1}{s(\Psi)+1}}.
\end{align*}
Therefore, for $0<q_{+} < s(\Psi)+1$, we discover from Young's inequality that 
\begin{align*}
	\begin{split}
	\sup\limits_{B_{t}} |\bar{u}| 
	&\leqslant
	 \frac{c}{(s-t)^{\left(1+n \right)(s(\Psi)+1)}}\left( \sup\limits_{B_{s}} |\bar{u}| \right)^{1-\frac{q_{+}}{s(\Psi)+1}} \left[\FI_{B_{s}}  |\bar{u}|^{q_{+}} \,dx \right]^{\frac{1}{s(\Psi)+1}}
	 \\& 
	 \leqslant
	 \frac{1}{2} \sup\limits_{B_{s}} |\bar{u}|
	 + \frac{c}{(s-t)^{\frac{\left(1+n \right)(s(\Psi)+1)^{2}}{q_{+}}}} \left[\FI_{B_{2}}  |\bar{u}|^{q_{+}} \,dx \right]^{\frac{1}{q_{+}}}
	 \end{split}
\end{align*}
holds for every $1\leqslant t < s \leqslant 2$. Then we apply Lemma \ref{lem:t0} for $h(t)=\sup\limits_{B_{t}} |\bar{u}|$ in order to have 
\begin{align}
	\label{3har:17}
	\sup\limits_{B_{1}} |\bar{u}| \leqslant c\left[\FI_{B_{2}}  |\bar{u}|^{q_{+}} \,dx \right]^{\frac{1}{q_{+}}}
\end{align}
for $c\equiv c(\data,q_{+})$. On the other hand, for $q_{+}\geqslant s(\Psi)+1$, the inequality \eqref{3har:17} is still valid by using H\"older's inequality. Scaling back as we introduced in \eqref{3har:1}, we arrive at the desired estimate \eqref{lem:3har:1}. 
\end{proof}

Finally, the main result of the this section is the following:

\begin{thm}
	\label{thm:har}
	Let $u\in W^{1,\Psi}(\O)$ be a non-negative local $Q$-minimizer $u$ of the functional $\P$ defined in \eqref{ifunct} under the coefficient functions $a(\cdot)\in C^{\omega_a}(\O)$ and $b(\cdot)\in C^{\omega_{b}}(\O)$ for $\omega_{a}, \omega_{b}$ being non-negative concave functions vanishing at the origin. Suppose that one of the assumptions \eqref{ma:1}, \eqref{ma:2} and \eqref{ma:3} is satisfied. For every ball $B_{R}$ with $B_{9R}\subset\O_0$ with $\O_{0}\Subset \O$ being an open subset, there exists a positive constant $c\equiv c(\data(\O_0))$ such that 
	\begin{align}
		\label{thm:har:1}
		\sup\limits_{B_{R}} u \leqslant c \inf\limits_{B_{R}}u
\end{align}	 
holds.
\end{thm}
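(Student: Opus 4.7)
The plan is to prove Theorem \ref{thm:har} by directly combining two results that have already been established in this section: the weak Harnack inequality in Theorem \ref{thm:whar} and the local sup-estimate in Lemma \ref{lem:3har}. Since both hold under identical hypotheses (non-negative local $Q$-minimizer $u$, ball $B_{9R}(x_0)\subset \O_0$ with $9R\leqslant 1$, and one of the assumptions \eqref{ma:1}-\eqref{ma:3}), and since both produce estimates on the common ball $B_{2R}$, the argument reduces to matching integrability exponents.

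First I would invoke Theorem \ref{thm:whar} to obtain some $q_{-}>0$ and a constant $c_1\equiv c_1(\data(\O_0))$ such that
\begin{equation*}
\left(\FI_{B_{2R}} u^{q_{-}}\,dx\right)^{\frac{1}{q_{-}}} \leqslant c_1 \inf_{B_R} u.
\end{equation*}
Since the exponent $q_{-}$ produced by the weak Harnack inequality is determined purely by $\data(\O_0)$, it is a fixed admissible choice in Lemma \ref{lem:3har}, which allows any $q_{+}>0$.

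Next, I would apply Lemma \ref{lem:3har} with the specific choice $q_{+}=q_{-}$. Using the standing hypothesis $u\geqslant 0$, so that $|u|^{q_{+}}=u^{q_{-}}$, the lemma yields
\begin{equation*}
\sup_{B_R} u \leqslant c_2 \left(\FI_{B_{2R}} u^{q_{-}}\,dx\right)^{\frac{1}{q_{-}}}
\end{equation*}
for some constant $c_2\equiv c_2(\data(\O_0))$ (the dependence on $q_{+}$ becomes dependence on $\data(\O_0)$ once $q_{+}=q_{-}$ is fixed). Chaining the two displays then gives $\sup_{B_R} u \leqslant c_1 c_2 \inf_{B_R} u$, which is precisely \eqref{thm:har:1} with $c = c_1 c_2$.

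There is no real obstacle to overcome at this stage: the difficult analytic work has already been absorbed into Theorem \ref{thm:whar} (which itself relied on the De Giorgi-type level-set estimates of Lemma \ref{lem:1har} and Lemma \ref{lem:2har}, together with the Krylov-Safonov covering machinery of \cite{KS1}) and into the Moser-type iteration of Lemma \ref{lem:3har}. The final step is simply an exponent-matching argument, standard in the quasilinear Harnack literature, and the consistency of the geometric setup (both estimates use the pair of radii $R$ and $2R$) means no additional covering or chaining argument is required.
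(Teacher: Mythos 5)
Your proposal is correct and is exactly the argument the paper gives: Theorem \ref{thm:whar} and Lemma \ref{lem:3har} are chained with the choice $q_{+}=q_{-}$ on the common ball $B_{2R}$. No further comment is needed.
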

\begin{proof}
	The proof is essentially based on the results we have obtained so far. In fact, applying Theorem \ref{thm:whar} and Lemma \ref{lem:3har} with $q_{-} = q_{+}$, we obtain \eqref{thm:har:1}. 
\end{proof}

\begin{rmk}
	\label{rmk:thm:har}
	The results of the above theorem refine the results of \cite[Theorem 1.3]{HHT} without any extra term in \eqref{thm:har:1} under our multi-phase settings when the assumptions \eqref{ma:1} and \eqref{ma:2} come into play, and see also \cite{HHL1,HKKMP}.
\end{rmk}


 \subsection{Higher integrability results}
 \label{subsec:5.4}
    
 Next, we provide a higher integrability result for a local minimizer of the functional $\mathcal{F}$ defined in \eqref{functional}.
 
 \begin{thm}[Higher Integrability]
 	\label{thm:hi}
 	Let $u\in W^{1,\Psi}(\O)$ be a local minimizer of the functional $\mathcal{F}$ defined in \eqref{functional} under the assumption \eqref{sa:1}. Assume that one of the assumptions \eqref{ma:1}, \eqref{ma:2} and \eqref{ma:3} is satisfied.
 	Then there exists a higher integrability exponent $\delta\equiv\delta(\data)\in (0,1)$ such that the following reverse type H\"older inequality 
 	\begin{align}
 		\label{hi:1}
 		\left( \FI_{B_{R/2}}[\Psi(x,|Du|)]^{1+\delta}\,dx \right)^{\frac{1}{1+\delta}} \leqslant
 		c\FI_{B_{R}} \Psi(x,|Du|)\,dx
 	\end{align}
 	holds for a constant $c\equiv c(\data)$, where $\data$ is clarified in \eqref{data}, whenever $B_{R}\Subset\O$ is a ball with $R\leqslant 1$. In particular, for any open subset $\O_{0}\Subset \O$, it holds that
 	\begin{align}
 		\label{hi:2}
 		\norm{\Psi(x,|Du|)}_{L^{1+\delta}(\O_0)} \leqslant c(\data(\O_0)).
 	\end{align}
 	 \end{thm}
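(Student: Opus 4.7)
The plan is to combine a Caccioppoli inequality with the Sobolev--Poincar\'e inequality already established to produce a reverse H\"older inequality on scales $B_{R/2} \subset B_R$, and then invoke a standard Gehring-type self-improving lemma to upgrade integrability. First, since a local minimizer of $\F$ is in particular a $Q$-minimizer of $\P$ with $Q = L/\nu$ (by the two-sided bound \eqref{sa:1}), I would apply Remark \ref{rmk:cacc} to obtain
\begin{align*}
	\FI_{B_{R/2}} \Psi(x,|Du|)\,dx \leqslant c \FI_{B_R} \Psi\!\left(x, \left|\frac{u-(u)_{B_R}}{R}\right|\right)\,dx
\end{align*}
with $c \equiv c(n,s(G),s(H_a),s(H_b),Q)$ for every ball $B_R \Subset \Omega$ with $R \leqslant 1$.

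Next, I would apply the Sobolev--Poincar\'e inequality in the form of Remark \ref{rmk:sp} (with $d=1$) to the right-hand side above. This yields an exponent $\theta \equiv \theta(n,s(G),s(H_a),s(H_b)) \in (0,1)$ and a constant $c \equiv c(\data)$ such that
\begin{align*}
	\FI_{B_{R/2}} \Psi(x,|Du|)\,dx \leqslant c\,\lambda_{sp}\left(\FI_{B_R}[\Psi(x,|Du|)]^{\theta}\,dx\right)^{\!1/\theta}\!.
\end{align*}
The key point at this stage is to check, in each of the three cases \eqref{ma:1}--\eqref{ma:3}, that the factor $\lambda_{sp}$ appearing in \eqref{rmk:sp:2}--\eqref{rmk:sp:4} is controlled by the data set defined in \eqref{data}: under \eqref{ma:1} the term $\bigl(\int_{B_R} G(|Du|)\,dx\bigr)^{1/n}$ is bounded by $\|\Psi(x,|Du|)\|_{L^1(\Omega)}^{1/n}$; under \eqref{ma:2} the term $\|u\|_{L^\infty(B_R)}$ is bounded by $\|u\|_{L^\infty(\Omega)}$; and under \eqref{ma:3} the term $[R^{-\gamma}\osc_{B_R} u]^{1/(1-\gamma)}$ is bounded by $[u]_{0,\gamma}^{1/(1-\gamma)}$ since $R \leqslant 1$. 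In each situation, $\lambda_{sp}$ therefore depends only on $\data$.

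Having absorbed $\lambda_{sp}$ into the constant, the resulting inequality
\begin{align*}
	\left(\FI_{B_{R/2}}\left[[\Psi(x,|Du|)]^{\theta}\right]^{\!1/\theta}\,dx\right)^{\!\theta} \leqslant c \FI_{B_R}[\Psi(x,|Du|)]^{\theta}\,dx
\end{align*}
is a reverse H\"older inequality with exponent $1/\theta > 1$ for the function $f := [\Psi(x,|Du|)]^{\theta} \in L^{1/\theta}_{\loc}(\Omega)$. I would then apply the Gehring--Giaquinta--Modica lemma (see e.g.~\cite{Gi1}) to produce an $\varepsilon \equiv \varepsilon(\data) > 0$ such that $f \in L^{1/\theta + \varepsilon}_{\loc}(\Omega)$ with the quantitative bound
\begin{align*}
	\left(\FI_{B_{R/2}} f^{1/\theta + \varepsilon}\,dx\right)^{\!1/(1/\theta+\varepsilon)} \leqslant c \left(\FI_{B_R} f^{1/\theta}\,dx\right)^{\!\theta}\!.
\end{align*}
Setting $\delta := \theta\varepsilon/(1+\theta\varepsilon)\cdot(1/\theta)$, i.e., choosing $\delta$ so that $(1+\delta) = \theta(1/\theta + \varepsilon)$, and rewriting in terms of $\Psi(x,|Du|)$ yields precisely \eqref{hi:1}. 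A standard covering argument, together with the fact that $\Psi(x,|Du|) \in L^1(\Omega)$ is part of $\data$ (or is controlled via a further Caccioppoli estimate under \eqref{ma:2}--\eqref{ma:3}), then delivers \eqref{hi:2}. The main obstacle I anticipate is verifying in the $\eqref{ma:3}$ case that $\lambda_{sp}$ is truly controlled by $\data(\Omega_0)$ on small balls uniformly; this is where the restriction $R \leqslant 1$ is essential, and where the concavity property \eqref{concave2} used in the proof of Theorem \ref{thm:sp} must be invoked carefully.
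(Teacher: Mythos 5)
Your proposal is correct and follows essentially the same route as the paper: Caccioppoli (Remark \ref{rmk:cacc}) combined with the Sobolev--Poincar\'e inequality of Remark \ref{rmk:sp} to get the reverse H\"older inequality \eqref{hi:4}, followed by Gehring's lemma. The extra care you take in checking that $\lambda_{sp}$ is controlled by $\data$ in each of the three cases is implicit in the paper's appeal to Remark \ref{rmk:sp} and the definition of $\data$ in \eqref{data}.
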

 	 
 \begin{proof}
 	Let $B_{R}\Subset\O$ be a ball with $R\leqslant 1$ as in the statement. Since $u$ is a local $Q:=L/\nu$-minimizer of the functional $\mathcal{P}$ in \eqref{ifunct}, we are able to apply Lemma \ref{lem:cacc} with the choices $\rho \equiv R/2$, $r \equiv R$ and $k\equiv (u)_{B_{R}}$ in order to get 
 	\begin{align}
 		\label{hi:3}
 		\FI_{B_{R/2}}\Psi(x,|Du|)\,dx \leqslant c\FI_{B_{R}}\Psi\left(x,\left|\frac{u-(u)_{B_{R}}}{R}\right| \right)\,dx
 	\end{align}
 	with some constant $c\equiv c(n,s(G),s(H_{a}), s(H_{b}),\nu,L)$. Then, applying Remark  \ref{rmk:sp} depending on which one of the assumptions \eqref{ma:1}, \eqref{ma:2} and \eqref{ma:3} is assumed, we obtain the following reverse H\"older inequality:
 	\begin{align}
 		\label{hi:4}
 		\FI_{B_{R/2}} \Psi(x,|Du|)\,dx \leqslant c\left( \FI_{B_{R}}[\Psi(x,|Du|)]^{\theta}\,dx \right)^{\frac{1}{\theta}},
 	\end{align}
 	where $c\equiv c(\data)$, and $\theta\in (0,1)$ is the same appearing in Remark \ref{rmk:sp}.
 	At this point \eqref{hi:1} follows using a variant of Gehring's lemma on reverse H\"older inequalities, see for instance	\cite[Theorem 6.6]{Gi1}.
 \end{proof}


\section{Harmonic type approximation}
\label{sec:6}

Here we discuss some important regularity results for the solution to the following Dirichlet boundary value problem: 
\begin{align}
	\label{cp:cp1}
	\begin{cases}
        -\Div A_0(Dh) = 0 \text{ in } B_{R}  \\
        h\in \u + W_{0}^{1,\Psi_0}(B_{R}),
     \end{cases}
\end{align}
where $B_{R}\subset\R^n$ is a given ball with $n\geqslant 2$, $\u\in W^{1,\Psi_0}(B_{R})$ is a given function, and $A_{0}:\R^n\rightarrow \R^n$ is a vector field belonging to $C^{0}(\R^n)\cap C^{1}(\R^n\setminus \{0\})$ and satisfies the following ellipticity and coercivity assumptions:
\begin{align}
	\label{cp:cp2}
	\begin{cases}
		|A_{0}(z)||z| + |D_{z}A_{0}(z)||z|^2 \leqslant L\Psi_0(|z|) \\
		\nu \dfrac{\Psi_0(|z|)}{|z|^2}|\xi|^2 
		\leqslant \inner{D_{z}A_0(z)\xi}{\xi}
	\end{cases}
\end{align}
for fixed constants $0<\nu\leqslant L$, whenever $z\in\R^{n}\setminus\{0\}$ and $\xi\in\R^n$, in which the function $\Psi_{0}$ is given by 
\begin{align}
	\label{cp:cp3}
	\Psi_0(t):= G(t) + a_0H_{a}(t) + b_0 H_{b}(t)
\end{align}
with fixed constants $a_{0},b_{0}\geqslant 0$ for every $t\geqslant 0$. By Lemma $\ref{lem:nf2}_{1}$, we get the following  
\begin{align}
	\label{cp:cp4}
	\frac{1}{s(G)+s(H_{a}) + s(H_{b})} \leqslant \frac{\Psi_0^{''}(t)t}{\Psi_0^{'}(t)} \leqslant s(G)+ s(H_{a}) + s(H_{b})
\end{align}
for every $t>0$, which means that $\Psi_0\in \mathcal{N}$ with an index $s(\Psi_0)=s(G)+s(H_{a}) + s(H_{b})$. Therefore, we note that the following monotonicity property that
\begin{align}
	\label{cp:cp5}
	\begin{split}
	|V_{G}(z_1)-V_{G}(z_2)|^2 + a_0|V_{H_{a}}(z_1)-V_{H_{a}}(z_2)|^2
	+ b_0|V_{H_{b}}(z_1)-V_{H_{b}}(z_2)|^2
	&\approx 
	|V_{\Psi_0}(z_1)-V_{\Psi_0}(z_2)|^2
	\\&
	\leqslant
	c\inner{A_0(z_1)-A_0(z_2)}{z_1-z_2}	
	\end{split}
\end{align}
holds with some constant $c\equiv c(n,s(\Psi_0),\nu)$, whenever $z_1,z_2\in\R^n\setminus\{0\}$, where the map $V_{\Phi}$ for a function $\Phi\in\mathcal{N}$ has been defined in \eqref{defV}.

\begin{thm}
\label{hta:thm_cz}
 Let $h\in W^{1,\Psi_0}(B_{R})$ be the weak solution to \eqref{cp:cp1}  under the assumption \eqref{cp:cp2}. Suppose that there exists a higher integrability exponent $\delta_1>0$ such that 
 \begin{align}
 	\label{cp:cp6}
 	\Psi_0(|D\u|)\in L^{1+\delta_1}(B_{R})\quad
 	\text{ and }\quad
 	\norm{\Psi_0(|D\u|)}_{L^{1}(B_{R})} \leqslant L_0
 \end{align}
 for some constant $L_0\geqslant 0$. Then there exists a positive exponent $\delta_0\leqslant \delta_1$ depending on $n,s(\Psi_0),\nu,L$ and $\delta_1$  such that the following inequality
 \begin{align}
 	\label{cp:cp7}
 	\left( \FI_{B_{R}}[\Psi_0(|Dh|)]^{1+\delta_0}\,dx\right)^{\frac{1}{1+\delta_0}} \leqslant c\left(\FI_{B_{R}}[\Psi_0(|D\u|)]^{1+\delta_0}\,dx\right)^{\frac{1}{1+\delta_0}}
 \end{align}
 holds for some constant $c\equiv c(n,s(\Psi_0),\nu,L,L_0,\delta_1)$. 
\end{thm}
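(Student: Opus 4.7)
The proof plan is to combine a Caccioppoli-type estimate with a Sobolev--Poincar\'e inequality (Lemma \ref{lem:os}) to obtain a reverse H\"older inequality on sub-balls, and then invoke a Gehring-type lemma valid up to the boundary. Since $\Psi_0\in\mathcal{N}$ with index $s(\Psi_0)=s(G)+s(H_a)+s(H_b)$ (by Lemma \ref{lem:nf2} together with \eqref{cp:cp4}), all the Orlicz machinery of Section \ref{sec:2} is available.

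\textbf{Step 1 (basic energy estimate).} Since $h-\u\in W^{1,\Psi_0}_0(B_R)$ is admissible, testing the weak form of \eqref{cp:cp1} with $h-\u$ gives $\int_{B_R}\inner{A_0(Dh)}{Dh}\,dx=\int_{B_R}\inner{A_0(Dh)}{D\u}\,dx$. Using the coercivity coming from \eqref{cp:cp2} (which yields $\inner{A_0(z)}{z}\gtrsim\Psi_0(|z|)$), the growth $|A_0(z)||z|\leqslant L\Psi_0(|z|)$, and Young's inequality of Lemma \ref{lem:nf3}, I obtain
\begin{align*}
\I_{B_R}\Psi_0(|Dh|)\,dx \leqslant c\I_{B_R}\Psi_0(|D\u|)\,dx.
\end{align*}

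\textbf{Step 2 (reverse H\"older on sub-balls).} For an interior ball $B_{2\rho}(y)\Subset B_R$, I test with $\varphi=\eta(h-(h)_{B_{2\rho}(y)})$ for a standard cutoff $\eta$ supported in $B_{2\rho}(y)$; the monotonicity \eqref{cp:cp5}, the pointwise growth of $A_0$, and Lemma \ref{lem:nf3} produce the Caccioppoli-type inequality
\begin{align*}
\FI_{B_\rho(y)}\Psi_0(|Dh|)\,dx
\leqslant c\FI_{B_{2\rho}(y)}\Psi_0\!\left(\frac{|h-(h)_{B_{2\rho}(y)}|}{\rho}\right)dx.
\end{align*}
For a boundary ball $B_{2\rho}(y)\cap B_R\neq\emptyset$, $B_{2\rho}(y)\not\subset B_R$, the admissible test function is instead $\varphi=\eta(h-\u)\in W^{1,\Psi_0}_0(B_R)$, which yields the boundary analogue with the extra term $c\fint_{B_{2\rho}(y)\cap B_R}\Psi_0(|D\u|)\,dx$ and with $h-\u$ (extended by zero) in place of $h-(h)_{B_{2\rho}}$. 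Applying Lemma \ref{lem:os} to $\Phi\equiv\Psi_0$ (interior version, and its standard $W^{1,\Psi_0}_0$ counterpart for boundary balls) gives an exponent $\theta\equiv\theta(n,s(\Psi_0))\in(0,1)$ such that, in both cases,
\begin{align*}
\FI_{B_\rho(y)\cap B_R}\Psi_0(|Dh|)\,dx
\leqslant c\left(\FI_{B_{2\rho}(y)\cap B_R}[\Psi_0(|Dh|)]^\theta\,dx\right)^{\!1/\theta}
+c\FI_{B_{2\rho}(y)\cap B_R}\Psi_0(|D\u|)\,dx.
\end{align*}

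\textbf{Step 3 (Gehring's lemma).} The reverse H\"older inequality from Step 2, valid on every ball intersecting $B_R$, is exactly the hypothesis of a Gehring-type self-improvement lemma up to the boundary (cf.\ \cite[Theorem 6.6]{Gi1} and its boundary version). Combined with the a priori higher integrability \eqref{cp:cp6} of $\Psi_0(|D\u|)$, it produces a small $\delta_0\in(0,\delta_1]$, depending on $n,s(\Psi_0),\nu,L$ and $\delta_1$, for which
\begin{align*}
\left(\FI_{B_R}[\Psi_0(|Dh|)]^{1+\delta_0}\,dx\right)^{\!\frac{1}{1+\delta_0}}
\leqslant c\FI_{B_R}\Psi_0(|Dh|)\,dx
+c\left(\FI_{B_R}[\Psi_0(|D\u|)]^{1+\delta_0}\,dx\right)^{\!\frac{1}{1+\delta_0}}.
\end{align*}
Plugging the global bound from Step 1 into the first term on the right yields \eqref{cp:cp7}, with $c$ depending in addition on $L_0$ through the Gehring constants.

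\textbf{Main obstacle.} The delicate point is the up-to-boundary reverse H\"older step: one must use the right admissible test function $\eta(h-\u)$ for boundary-crossing balls and apply the zero-boundary Sobolev--Poincar\'e inequality to $h-\u$ (extended by zero), rather than the mean-zero version used for interior balls. Matching the two estimates into a single form suitable for the Gehring iteration, while keeping the dependence on $L_0$ quantitative and the exponent $\delta_0$ controlled by $\delta_1$, is the only non-routine part of the argument.
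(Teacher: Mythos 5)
Your proposal is correct and follows essentially the same route as the paper: global energy estimate, interior and boundary Caccioppoli inequalities combined with Lemma \ref{lem:os} (with the zero-boundary version for balls crossing $\partial B_R$ after extending $h-\u$ by zero), and then a Gehring-type self-improvement with a covering argument. The only cosmetic difference is the choice of cutoff power ($\eta$ versus $\eta^{s(\Psi_0)+1}$), which does not affect the argument.
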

\begin{proof}
	First the standard energy estimate implies that 
	\begin{align}
		\label{cp:cp8}
		\I_{B_{R}}\Psi_0(|Dh|)\,dx \leqslant c\I_{B_{R}}\Psi_0(|D\u|)\,dx \leqslant cL_0
	\end{align}
	holds with some constant $c\equiv c(n,s(\Psi_0),\nu,L)$. For a fixed ball $B_{2\rho}\subset B_{R}$, let $\eta\in C_{0}^1(B_{2\rho})$ be a standard cut-off function satisfying $\chi_{B_{\rho}} \leqslant \eta \leqslant \chi_{B_{2\rho}}$ and $|D\eta| \leqslant 4/\rho$. Let us take the function $\varphi= \eta^{s(\Psi_0)+1}\left( h-(h)_{B_{2\rho}} \right)$ as a test function in the equation \eqref{cp:cp1}. Then using the monotonicity property of $A_0(\cdot)$ and Lemma \ref{lem:nf3} with $\Psi_0$, we have 
	\begin{align}
		\label{cp:cp9}
		\begin{split}
		\I_{B_{2\rho}}&\eta^{s(\Psi_0)+1}\Psi_0(|Dh|)\,dx 
		\leqslant
		c\I_{B_{2\rho}} \eta^{s(\Psi_0)}\Psi_{0}^{'}(|Dh|)\left| \frac{h-(h)_{B_{2\rho}}}{\rho} \right|\,dx			\\&
		\leqslant
		c\I_{B_{2\rho}}\eta^{s(\Psi_0)}\left( (\varepsilon\eta)\Psi_0(|Dh|) + \frac{1}{(\varepsilon\eta)^{s(\Psi_0)}}\Psi_0\left( \left| \frac{h-(h)_{B_{2\rho}}}{\rho} \right| \right) \right)\,dx.
		\end{split}
	\end{align}
Choosing $\varepsilon$ sufficiently small in the last display, we conclude that 
\begin{align}
	\label{cp:cp10}
	\FI_{B_{\rho}}\Psi_0(|Dh|)\,dx \leqslant c\FI_{B_{2\rho}} \Psi_0\left( \left| \frac{h-(h)_{B_{2\rho}}}{\rho} \right| \right)\,dx
\end{align}
for a constant $c\equiv c(n,s(\Psi_0),\nu,L)$. By applying Lemma \ref{lem:os} to $\Phi\equiv \Psi_0$ with $d_{0}\equiv 1$, there exists $\theta_0\equiv \theta_0(n,s(\Psi_0))\in (0,1)$ such that 
\begin{align}
	\label{cp:cp11}
	\FI_{B_{\rho}}\Psi_0(|Dh|)\,dx
	\leqslant
	c\FI_{B_{2\rho}} \Psi_0\left( \left| \frac{h-(h)_{B_{2\rho}}}{\rho} \right| \right)\,dx
	\leqslant
	c\left(\FI_{B_{2\rho}}[\Psi_0(|Dh|)]^{\theta_0}\,dx\right)^{\frac{1}{\theta_0}}
\end{align}
holds for some constant $c\equiv c(n,s(\Psi_0),\nu,L)$, whenever $B_{2\rho}\subset B_{R}$ is a ball. Now we prove a version of the last inequality near the boundary of $B_{R}$. For this, let $B_{2\rho}(y)\subset\R^n$ be a ball such that $y\in B_{R}$ 
and $\frac{1}{10} < \frac{|B_{2\rho}(y)\setminus B_{R}|}{|B_{2\rho}(y)|}$. We take a test function by $\varphi \equiv \eta^{s(\Psi_0)+1}(h-\u)$, where $\eta\in C_{0}^1(B_{2\rho})$ is a standard cut-off function as before so that $\chi_{B_{\rho}} \leqslant \eta \leqslant \chi_{B_{2\rho}}$ and $|D\eta|\leqslant 4/\rho$. This choice of $\varphi$ is admissible since $\Supp \varphi \Subset B_{R}\cap B_{2\rho}(y)$. Arguing similarly as we have done above, we have
\begin{align}
	\label{cp:cp12}
	\begin{split}
		\I_{B_{R}\cap B_{2\rho}(y)}&\eta^{s(\Psi_0)+1}\Psi_0(|Dh|)\,dx 
		\leqslant
		c\I_{B_{R}\cap B_{2\rho}(y)} \eta^{s(\Psi_0)}\Psi_{0}^{'}(|Dh|)\left| \frac{h-\u}{\rho} \right|\,dx			\\&
		+
		c\I_{B_{R}\cap B_{2\rho}(y)} \eta^{s(\Psi_0)}\Psi_{0}^{'}(|Dh|)|D\u|\,dx	
		\\&
		\leqslant
		c\I_{B_{R}\cap B_{2\rho}}\eta^{s(\Psi_0)}\left( (\varepsilon\eta)\Psi_0(|Dh|) + \frac{1}{(\varepsilon\eta)^{s(\Psi_0)}}\Psi_0\left( \left| \frac{h-\u}{\rho} \right| \right) \right)\,dx
		\\&
		\qquad
		+
		c\I_{B_{R}\cap B_{2\rho}}\eta^{s(\Psi_0)}\left( (\varepsilon\eta)\Psi_0(|Dh|) + \frac{1}{(\varepsilon\eta)^{s(\Psi_0)}}\Psi_0\left( |D\u| \right) \right)\,dx.
		\end{split}
\end{align}
Again choosing $\varepsilon$ small enough and reabsorbing the terms, we find that
\begin{align*}
	\begin{split}
		\FI_{B_{R}\cap B_{2\rho}(y)}&\eta^{s(\Psi_0)+1}\Psi_0(|Dh|)\,dx 
		\leqslant
		c\FI_{B_{R}\cap B_{2\rho}(y)}\Psi_0\left(\left|\frac{h-\u}{\rho} \right| \right)\,dx
		+
		c\FI_{B_{R}\cap B_{2\rho}(y)}\Psi_0\left(\left|D\u \right| \right)\,dx
	\end{split}
\end{align*}
for some constant $c\equiv c(n,s(\Psi_0),\nu,L)$. Redefining $h-\u \equiv 0$ on $B_{2\rho}(y)\setminus B_{R}$, we are able to apply Lemma \ref{lem:os} to $\Phi\equiv \Psi_0$ with $d_{0}\equiv 1$. In turn, there exists $\theta_0\equiv \theta_0(n,s(\Psi_0))\in (0,1)$ as appearing in \eqref{cp:cp11}  such that
\begin{align*}
	\begin{split}
		\FI_{B_{R}\cap B_{2\rho}(y)}\Psi_0\left(\left|\frac{h-\u}{\rho} \right| \right)\,dx
		&\leqslant
		\left(\FI_{B_{R}\cap B_{2\rho}(y)}[\Psi_0(|Dh-D\u|)]^{\theta_0}\,dx\right)^{\frac{1}{\theta_0}}
		\\&
		\leqslant
		\left(\FI_{B_{R}\cap B_{2\rho}(y)}[\Psi_0(|Dh|)]^{\theta_0}\,dx\right)^{\frac{1}{\theta_0}}
		+
		c\FI_{B_{R}\cap B_{2\rho}(y)}[\Psi_0(|D\u|)]\,dx
	\end{split}
\end{align*}
for some constant $c\equiv c(n,s(\Psi_0))$, where for the last inequality we have used \eqref{growth1} and H\"{o}lder's inequality. Combining the last two displays and \eqref{cp:cp11}, we have 
\begin{align}
	\label{cp:cp15}
	\begin{split}
		\FI_{B_{\rho}(y)} [V(x)]^{\frac{1}{\theta_0}}\,dx \leqslant
		c\left( \FI_{B_{2\rho}(y)}V(x)\,dx \right)^{\frac{1}{\theta_0}} + c\FI_{B_{2\rho}(y)} U(x)\,dx
	\end{split}
\end{align}
for some $c\equiv c(n,s(\Psi_0),\nu,L)$, where 
\begin{align*}
V(x):= [\Psi_0(|Dh|)]^{\theta_0}\chi_{B_{2\rho}(y)}(x)\quad\text{and}\quad U(x):= \Psi_0(|D\u|)\chi_{B_{2\rho}(y)}(x)
\end{align*}
for every ball $B_{2\rho}(y)\subset\R^n$ satisfying either $B_{2\rho}(y)\subset B_{R}$ or $\frac{1}{10} < \frac{|B_{2\rho}(y)\setminus B_{R}|}{|B_{2\rho}(y)|}$ with $y\in B_{R}$. Applying a variant of Gehring's lemma and a standard covering argument, we arrive at the desired estimate \eqref{cp:cp7}.
\end{proof}

 Before going on further, we recall a classical truncation lemma due to \cite{AFu1}. The statement involves the Hardy-Littlewood maximal operator, defined as
 \begin{align}
     \label{hta:eq4}
    M(f)(x) := \sup_{B_{r}(x)\subset\R^n}\FI_{B_{r}(x)} |f(y)|\,dy,\qquad x\in\R^n,
 \end{align}
 whenever $f\in L_{\loc}^{1}(\R^n)$. 
 \begin{thm}[\cite{AFu1}]
    \label{hta:thm_truncation}
    Let $B_{R}\subset\R^n$ be a ball and $f\in W^{1,1}_{0}(B_{R})$. Then, for every $\lambda > 0$, there exists $f_{\lambda}\in W^{1,\infty}_{0}(B_{R})$ such that 
    \begin{align}
        \label{hta:truncation1}
        \norm{Df_{\lambda}}_{L^{\infty}(B_{R})} \leqslant c\lambda
    \end{align}
    for some constant $c$ depending only on $n$. Moreover, it holds that 
    \begin{align}
        \label{hta:truncation2}
        \{x\in B_{R} : f_{\lambda}(x)\neq f(x)\}\subset  \{x\in B_{R} : M(|Df(x)|) > \lambda \} \cup \text{ negligible set. }
    \end{align}
 \end{thm}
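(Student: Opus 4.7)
\smallskip

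The plan is to follow the classical Acerbi-Fusco argument based on the pointwise control of a Sobolev function by the Hardy-Littlewood maximal function of its gradient, combined with a Lipschitz extension theorem. First I would extend $f$ by zero outside $B_{R}$, which is legitimate since $f\in W^{1,1}_{0}(B_{R})$, so that $f\in W^{1,1}(\R^n)$ with $Df\equiv 0$ on $\R^n\setminus B_{R}$. For each fixed $\lambda>0$, I would then consider the "good set"
\[
E_{\lambda}:=\{x\in\R^n : x \text{ is a Lebesgue point of }f \text{ and } M(|Df|)(x)\leqslant \lambda\},
\]
noting that $\R^n\setminus E_{\lambda}$ has measure bounded by that of $\{M(|Df|)>\lambda\}$ plus a null set of non-Lebesgue points.

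The core analytic step is the pointwise Lipschitz estimate on $E_{\lambda}$: for every $x,y\in E_{\lambda}$,
\[
|f(x)-f(y)|\leqslant c(n)\,\lambda\,|x-y|.
\]
This follows from the classical representation $|f(x)-(f)_{B_{r}(x)}|\leqslant c(n)\int_{B_{r}(x)}|Df(z)|/|x-z|^{n-1}\,dz$ (Lemma 7.14 of Gilbarg–Trudinger, essentially the same inequality used in the proof of Lemma 4.1 above), which via a standard dyadic decomposition of $B_{r}(x)$ gives $|f(x)-(f)_{B_{r}(x)}|\leqslant c(n)\,r\,M(|Df|)(x)$. Applying this twice, with $r=2|x-y|$ on balls containing both points, and adding and subtracting the common ball average, yields the desired bound.

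Once the pointwise Lipschitz estimate is in hand, I would apply McShane's extension theorem to $f|_{E_{\lambda}}$ to produce a function $\tilde{f}_{\lambda}:\R^n\to\R$ with $\tilde{f}_{\lambda}=f$ on $E_{\lambda}$ and $\mathrm{Lip}(\tilde{f}_{\lambda})\leqslant c(n)\lambda$; explicitly one may take $\tilde{f}_{\lambda}(x)=\inf_{y\in E_{\lambda}}\{f(y)+c(n)\lambda|x-y|\}$. Rademacher's theorem then gives $\tilde{f}_{\lambda}\in W^{1,\infty}_{\mathrm{loc}}(\R^n)$ with $\|D\tilde{f}_{\lambda}\|_{L^{\infty}}\leqslant c(n)\lambda$, and the non-coincidence set $\{\tilde{f}_{\lambda}\neq f\}$ is contained in $\R^n\setminus E_{\lambda}$ up to a negligible set, which is precisely \eqref{hta:truncation2}.

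The main obstacle is ensuring $f_{\lambda}\in W^{1,\infty}_{0}(B_{R})$, i.e.\ that the extension vanishes on $\partial B_{R}$. Since $f\equiv 0$ on $\R^n\setminus B_{R}$ and almost every such point lies in $E_{\lambda}$ (because $M(|Df|)$ tends to zero at infinity and is finite a.e.), the McShane extension already satisfies $\tilde{f}_{\lambda}=0$ on a set of full measure in $\R^n\setminus B_{R}$, and therefore by Lipschitz continuity $\tilde{f}_{\lambda}=0$ on $\R^n\setminus B_{R}$. I would then define $f_{\lambda}:=\tilde{f}_{\lambda}\chi_{\overline{B_{R}}}$; its Lipschitz constant is still bounded by $c(n)\lambda$ on $B_R$, the trace on $\partial B_{R}$ vanishes, and the non-coincidence property \eqref{hta:truncation2} is preserved on $B_{R}$. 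This gives the required $f_{\lambda}\in W^{1,\infty}_{0}(B_{R})$.
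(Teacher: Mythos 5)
The paper does not actually prove this statement --- it is quoted from Acerbi--Fusco --- so your reconstruction can only be judged on its own terms. The interior part of your argument is the standard route and is correct: extension by zero, the pointwise bound $|f(x)-f(y)|\leqslant c(n)\lambda|x-y|$ on the good set $E_{\lambda}$ obtained from the dyadic-chaining estimate $|f(x)-(f)_{B_{r}(x)}|\leqslant c(n)\,r\,M(|Df|)(x)$, McShane extension, Rademacher's theorem, and the inclusion of the non-coincidence set in $\{M(|Df|)>\lambda\}$ up to a null set.

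The gap is in the last step, the one you yourself flag as the main obstacle. Your claim that almost every point of $\R^{n}\setminus B_{R}$ belongs to $E_{\lambda}$ is not justified by ``$M(|Df|)$ tends to zero at infinity and is finite a.e.'': the open set $\{M(|Df|)>\lambda\}$ will in general protrude into $\R^{n}\setminus B_{R}$ with positive measure whenever $|Df|$ carries mass near $\partial B_{R}$ (already in dimension one, a spike of $f$ near the endpoint produces this). On that protrusion the McShane extension built from $E_{\lambda}$ has no reason to vanish, so $\tilde{f}_{\lambda}$ need not vanish on $\partial B_{R}$, and multiplying by $\chi_{\overline{B_{R}}}$ then either creates a jump (destroying $W^{1,\infty}$) or leaves a nonzero trace (so the result is not in $W^{1,\infty}_{0}(B_{R})$). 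The standard repair is to prove the additional estimate $|f(x)|\leqslant c(n)\lambda\,\dist(x,\R^{n}\setminus B_{R})$ for every $x\in E_{\lambda}\cap B_{R}$: with $d=\dist(x,\R^{n}\setminus B_{R})$, the ball $B_{2d}(x)$ contains a fixed proportion of points where $f=0$, so $|(f)_{B_{2d}(x)}|\leqslant c(n)\fint_{B_{2d}(x)}|f-(f)_{B_{2d}(x)}|\,dz\leqslant c(n)\,d\,M(|Df|)(x)$ by the Poincar\'e inequality, and combined with $|f(x)-(f)_{B_{2d}(x)}|\leqslant c(n)\,d\,M(|Df|)(x)$ this gives the claim. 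One then performs the McShane extension from the larger set $\left(E_{\lambda}\cap B_{R}\right)\cup\left(\R^{n}\setminus B_{R}\right)$, on which $f$ (set equal to $0$ outside $B_{R}$) is genuinely $c(n)\lambda$-Lipschitz thanks to this distance estimate; the resulting extension vanishes identically outside $B_{R}$, hence on $\partial B_{R}$, and its restriction to $B_{R}$ is the desired $f_{\lambda}\in W^{1,\infty}_{0}(B_{R})$.
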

 We notice that in this theorem we may assume that $f$ is defined on $\R^n$ by redefining $f\equiv 0$ on $\R^{n}\setminus B_{R}$. We are now ready to state the main result of this section. 
 
 \begin{lem}[Harmonic type approximation]
     \label{hta: lemma_hta}
     Let $B_{R}\subset \R^n$ be a ball with $R\leqslant 1$, $\sigma\in (0,1)$ and $\u\in W^{1,\Psi_{0}}(B_{2R})$ be a function satisfying 
     \begin{align}
        \label{hta:hta1}
         \FI_{B_{2R}}\Psi_{0}(|D\u|)\,dx \leqslant c_0
     \end{align}
     and 
     \begin{align}
        \label{hta:hta2}
         \FI_{B_{R}} [\Psi_0(|D\u|)]^{1+\delta_1}\,dx \leqslant c_1
     \end{align}
     for some constants $c_0, c_1 \geqslant 1$ and $\delta_1 >0$. Suppose that $\Psi_0(1)\geqslant 1$. We further assume that
     \begin{align}
        \label{hta:hta3}
         \left|\FI_{B_{R}} \inner{A_{0}(D\u)}{D\varphi}\,dx \right| \leqslant \sigma
         \norm{D\varphi}_{L^{\infty}(B_{R})} \text{ holds for } \varphi\in C^{\infty}_{0}(B_{R}).
     \end{align}
     Then there exists $h\in \u + W^{1,\Psi_0}_{0}(B_{R})$ such that
     \begin{align}
        \label{hta:hta4}
         \FI_{B_{R}} \inner{A_0(Dh)}{D\varphi}\,dx = 0 \text{ for all } \varphi\in C^{\infty}_{0}(B_{R}),
     \end{align}
     \begin{align}
        \label{hta:hta5}
        \begin{split}
         \FI_{B_{R}} [\Psi_0(|Dh|)]^{1+\delta_0}\,dx &\leqslant c(n,s(\Psi_0),\nu,L,\delta_1,c_0,c_1)
         \\&
         \text{ for some positive } \delta_0\equiv \delta_0(n,s(\Psi_0),\nu,L,\delta_1),
        \end{split}
     \end{align}
     \begin{align}
        \label{hta:hta6}
         \FI_{B_{R}} \left( |V_{G}(D\u)-V_{G}(Dh)|^2 + a_0 |V_{H_{a}}(D\u)-V_{H_{a}}(Dh)|^2 + b_0 |V_{H_{b}}(D\u)-V_{H_{b}}(Dh)|^2 \right)\,dx \leqslant \bar{c}\sigma^{s_1},
     \end{align}
     and 
     \begin{align}
        \label{hta:hta7}
         \FI_{B_{R}}  \Psi_{0}\left( \left|\frac{\u-h}{R} \right|\right) \,dx \leqslant 
         \bar{c}\sigma^{s_0}
     \end{align}
     for some constants with dependence as  $s_1 \equiv s_1(n,s(\Psi_0),\delta_1,c_0) > 0$, $s_0 \equiv s_0(n,s(\Psi_0),\delta_1,c_0)>0$
     and $\bar{c}\equiv \bar{c}(n,s(\Psi_0),\nu,L,\delta_1,c_0,c_1) \geqslant 1$.
 \end{lem}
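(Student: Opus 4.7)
First I would produce $h$ by solving the Dirichlet problem $-\Div A_0(Dh)=0$ in $B_R$ with $h-\u\in W_0^{1,\Psi_0}(B_R)$. Since $\Psi_0\in\mathcal{N}$ satisfies $\Delta_2\cap\nabla_2$ thanks to the index condition \eqref{cp:cp4}, existence and uniqueness follow from standard monotone operator theory on reflexive Orlicz--Sobolev spaces, using \eqref{cp:cp2} together with the strict monotonicity \eqref{cp:cp5}. Testing by $h-\u$ and applying Young's inequality for $\Psi_0$ yields $\int_{B_R}\Psi_0(|Dh|)\,dx\leqslant c\int_{B_R}\Psi_0(|D\u|)\,dx\leqslant cc_0|B_R|$; Theorem~\ref{hta:thm_cz} then upgrades this to the reverse H\"older bound \eqref{hta:hta5} with some $\delta_0\in (0,\delta_1]$ depending only on $n,s(\Psi_0),\nu,L,\delta_1$.

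The core of the proof is \eqref{hta:hta6}. Using the monotonicity \eqref{cp:cp5} and \eqref{hta:hta4} tested against $h-\u\in W_0^{1,\Psi_0}(B_R)$ (admissible by density of $C_0^{\infty}$), the left hand side $\mathcal{V}$ of \eqref{hta:hta6} satisfies
\begin{equation*}
\mathcal{V}\leqslant \frac{c}{|B_R|}\int_{B_R}\inner{A_0(D\u)-A_0(Dh)}{D\u-Dh}\,dx=\frac{c}{|B_R|}\int_{B_R}\inner{A_0(D\u)}{D(\u-h)}\,dx.
\end{equation*}
To exploit the almost harmonicity \eqref{hta:hta3}, which only admits Lipschitz test functions, I would apply the truncation of Theorem~\ref{hta:thm_truncation} to $w:=\u-h$, extended by zero outside $B_R$: for each $\lambda>0$ this produces $w_\lambda\in W_0^{1,\infty}(B_R)$ with $\norm{Dw_\lambda}_{L^{\infty}(B_R)}\leqslant c\lambda$ and $\{w\neq w_\lambda\}\subset E_\lambda:=\{M(|Dw|)>\lambda\}$ up to a null set.

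Splitting the last integral as $\int_{B_R}\inner{A_0(D\u)}{Dw_\lambda}\,dx+\int_{E_\lambda}\inner{A_0(D\u)}{D(w-w_\lambda)}\,dx$, the first part is bounded by $c\sigma\lambda|B_R|$ via \eqref{hta:hta3}, after mollifying $w_\lambda$ into admissible $C_0^{\infty}(B_R)$ test functions. For the second, the growth $|A_0(z)|\leqslant L\Psi_0(|z|)/|z|$ together with \eqref{cp:cp4} gives the duality bound $\Psi_0^{*}(|A_0(D\u)|)\leqslant c\Psi_0(|D\u|)$, so Young's inequality in the pair $(\Psi_0,\Psi_0^{*})$ controls it by
\begin{equation*}
c\int_{E_\lambda}\Psi_0(|D\u|)\,dx+c\int_{E_\lambda}\Psi_0(|Dh|)\,dx+c|E_\lambda|\Psi_0(\lambda).
\end{equation*}
The higher integrability of $\Psi_0(|D\u|)$ and $\Psi_0(|Dh|)$ from \eqref{hta:hta2} and \eqref{hta:hta5} produces via H\"older a factor $|E_\lambda|^{\delta_0/(1+\delta_0)}$, while the $L^q$-strong type of $M$ for some $q>1$ (available because $\Psi_0(t)\geqslant ct^{1+1/s(\Psi_0)}$ forces $|Dw|\in L^{q}(B_R)$) gives $|E_\lambda|\leqslant c\lambda^{-q}$. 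Balancing the good part $c\sigma\lambda|B_R|$ against these bad-part contributions by choosing $\lambda=\sigma^{-\kappa}$ with a suitable $\kappa\in(0,1)$ produces \eqref{hta:hta6} with an explicit exponent $s_1\equiv s_1(n,s(\Psi_0),\delta_1,c_0)>0$.

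Finally, \eqref{hta:hta7} follows from \eqref{hta:hta6} through Sobolev--Poincar\'e. Since $w\in W_0^{1,\Psi_0}(B_R)$, Lemma~\ref{lem:os} with $\Phi\equiv\Psi_0$ and $d_0\equiv 1$ produces some $\theta\in(0,1)$ such that $\FI_{B_R}\Psi_0(|w|/R)\,dx\leqslant c\left(\FI_{B_R}[\Psi_0(|Dw|)]^{\theta}\,dx\right)^{1/\theta}$; combining the pointwise comparisons \eqref{defV2}--\eqref{defV3} with H\"older's inequality, and again invoking the higher integrability of $\Psi_0(|D\u|)$ and $\Psi_0(|Dh|)$, dominates $\int_{B_R}[\Psi_0(|Dw|)]^{\theta}\,dx$ by a positive power of $\mathcal{V}$. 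The main obstacle is the truncation step: maintaining compatibility between the Orlicz Young inequality, the higher integrability exponents, and the choice of $\lambda$ requires careful bookkeeping, and this balancing is exactly what determines the exponents $s_0,s_1$ appearing in the statement.
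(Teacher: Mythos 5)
Your overall architecture matches the paper's: you construct $h$ by solving the Dirichlet problem, obtain the energy bound and \eqref{hta:hta5} from Theorem \ref{hta:thm_cz}, use the Lipschitz truncation of Theorem \ref{hta:thm_truncation} to exploit the almost-harmonicity \eqref{hta:hta3}, and deduce \eqref{hta:hta7} from \eqref{hta:hta6} via Lemma \ref{lem:os} and H\"older's inequality, exactly as in Step 3 of the paper. Your route to \eqref{hta:hta6} is a legitimate variant: you bound the full monotonicity quantity by $\FI_{B_{R}}\inner{A_0(D\u)}{D(\u-h)}\,dx$ and then split the test function as $Dw=Dw_\lambda+D(w-w_\lambda)$, whereas the paper tests the equation with $f_\lambda$ directly, obtains the monotonicity quantity only on the coincidence set $\{f=f_\lambda\}$, and recovers the full integral through a H\"older step at an exponent $\theta<1$ followed by an interpolation using the higher integrability at exponent $1+\delta_0$. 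Carried out correctly, your version would be marginally more direct, since it avoids that interpolation.

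There is, however, a genuine gap in your control of the bad set $E_\lambda=\{M(|Dw|)>\lambda\}$. You propose $|E_\lambda|\leqslant c\lambda^{-q}$ from the strong $(q,q)$ maximal inequality with $q$ determined by $\Psi_0(t)\gtrsim t^{1+1/s(\Psi_0)}$, i.e. $q=1+1/s(\Psi_0)$. But the critical bad-part term is $|E_\lambda|\,\Psi_0(\lambda)$, and since $\Psi_0(\lambda)$ may grow like $\lambda^{1+s(\Psi_0)}$ (Lemma \ref{lem:nf1}), your bound gives $|E_\lambda|\Psi_0(\lambda)\lesssim\lambda^{\,s(\Psi_0)-1/s(\Psi_0)}$, which blows up as $\lambda\to\infty$ whenever $s(\Psi_0)>1$; even upgrading to $|E_\lambda|\lesssim\lambda^{-q(1+\delta_0)}$ via higher integrability only helps if $\delta_0>s(\Psi_0)-1$, which is not available since $\delta_0$ is small. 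The estimate that actually works — and the one the paper uses in \eqref{hta:hta10} — is the Orlicz-level Chebyshev bound $|E_\lambda|/|B_R|\leqslant c[\Psi_0(\lambda)]^{-(1+\delta_0)}$, obtained by applying Chebyshev to $[\Psi_0(M(|Dw|))]^{1+\delta_0}$ and invoking the boundedness of the maximal operator on the Orlicz space generated by $\Psi_0^{1+\delta_0}$ (see \cite{FS1}), combined with the higher integrability of both $\Psi_0(|D\u|)$ and $\Psi_0(|Dh|)$. With that bound $|E_\lambda|\Psi_0(\lambda)/|B_R|\leqslant c[\Psi_0(\lambda)]^{-\delta_0}$ is genuinely small, the choice $\lambda=\sigma^{-1/2}$ closes the balancing, and the exponents $s_0,s_1$ come out with the stated dependence. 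Everything else in your proposal is sound once this measure estimate replaces the power-type one.
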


 \begin{proof}
       By the standard approximation argument, if \eqref{hta:hta3} holds for all functions $\varphi \in C^{\infty}_{0}(B_{R})$, then it also holds for all functions $\varphi\in W^{1,\infty}_{0}(B_{R})$.
       The proof falls in three steps.
       
\textbf{Step 1: Truncation.} The standard energy estimate and \eqref{hta:hta1} give us
\begin{align}
    \label{hta:hta8}
    \FI_{B_{R}} \Psi_0(|Dh|)\,dx \leqslant \FI_{B_{R}} \Psi_0(|D\u|)\,dx \leqslant
    c(n,s(\Psi_0),\nu,L)c_0.
\end{align}
By applying Theorem \ref{hta:thm_cz}, there exists a positive exponent $\delta_0\equiv \delta_0(n,s(\Psi_0),\nu,L,\delta_1)$ satisfying
\begin{align}
    \label{hta:hta9}
    \FI_{B_{R}} [\Psi_0(|Dh|)]^{1+\delta_0}\,dx \leqslant c\FI_{B_{R}} [\Psi_0(|D\u|)]^{1+\delta_0}\,dx \leqslant
    c(n,s(\Psi_0),\nu,L,\delta_1,c_0,c_1),
\end{align}
which is \eqref{hta:hta5}. We now set $f:= \u-h\in W^{1,\Psi_0}_{0}(B_{R})$ and let $\lambda \geqslant 1$ to be chosen later. We consider $f_{\lambda} \in W^{1,\infty}_{0}(B_{R})$ provided by Theorem \ref{hta:thm_truncation}, which satisfies \eqref{hta:truncation1} and \eqref{hta:truncation2}. By these properties, Chebyshev's inequality and then the maximal function theorem for Orlicz spaces (see for instance \cite[Proposition 1.2]{FS1}), we have

\begin{align}
    \label{hta:hta10}
    \begin{split}
        \frac{|\{ f\neq f_{\lambda} \}|}{|B_{R}|} 
        &
        \leqslant 
        \frac{| B_{R}\cap \{ M(|Df|) > \lambda \} |}{|B_{R}|} \leqslant
        \frac{1}{[\Psi_0(\lambda)]^{1+\delta_0}}\FI_{B_{R}} [\Psi_0(M(|Df|))]^{1+\delta_0}\,dx
        \\&
        \leqslant
        \frac{c}{[\Psi_0(\lambda)]^{1+\delta_0}}\FI_{B_{R}} [\Psi_{0}(|Df|)]^{1+\delta_0}\,dx
        \\&
        \leqslant
        \frac{c}{[\Psi_0(\lambda)]^{1+\delta_0}}\left[\FI_{B_{R}} [\Psi_0(|D\u|)]^{1+\delta_0}\,dx +  \FI_{B_{R}}[\Psi_0(|Dh|)]^{1+\delta_0}\,dx  \right]
        \leqslant
        \frac{c}{[\Psi_0(\lambda)]^{1+\delta_0}}
    \end{split}
\end{align}
with $c\equiv c(n,s(\Psi_0), \nu, L, \delta_1,c_0,c_1)$, where we have used 
\eqref{hta:hta5} and \eqref{hta:hta9}. Now we test the equation \eqref{cp:cp1} against $f_{\lambda}$ to obtain 
\begin{align}
    \label{hta:hta12}
    \begin{split}
        \Gamma_{1} &:= \FI_{B_{R}} \inner{A_{0}(D\u)-A_{0}(Dh)}{Df_{\lambda}}\chi_{\{f=f_{\lambda}\}}\,dx
    \\& =
    \FI_{B_{R}}\inner{A_0(D\u)}{Df_{\lambda}}\,dx- \FI_{B_{R}} \inner{A_{0}(D\u)-A_{0}(Dh)}{Df_{\lambda}}\chi_{\{f\neq f_{\lambda}\}}\,dx =: \Gamma_{2} + \Gamma_{3}. 
    \end{split} 
\end{align}

Next we estimate each term appearing in the last equality. By using \eqref{cp:cp5}, we have 
\begin{align*}
    \Gamma_{1} \geqslant c\FI_{B_{R}} \left[ |V_{G}(D\u)-V_{G}(Dh)|^2 + a_{0} |V_{H_{a}}(D\u)-V_{H_{a}}(Dh)|^2 + b_{0} |V_{H_{b}}(D\u)-V_{H_{b}}(Dh)|^2 \right]\chi_{\{f=f_{\lambda}\}}\,dx
\end{align*}
with $c\equiv c(n,s(\Psi_0))$. Using \eqref{hta:hta3}, and then \eqref{hta:truncation1}, we get 
\begin{align*}
    |\Gamma_{2}| \leqslant \sigma \norm{Df_{\lambda}}_{L^{\infty}(B_{R})} \leqslant c(n)\sigma\lambda.
\end{align*}
For $\Gamma_{3}$, we fix $\varepsilon\in (0,1)$ to be chosen later and we estimate
\begin{align*}
    \begin{split}
        |\Gamma_{3}| & \leqslant \FI_{B_{R}} \left( |A_{0}(Dh)| + |A_0(D\u)|  \right) |Df_{\lambda}| \chi_{\{ f\neq f_{\lambda} \}}\,dx
        \\&
        \stackrel{\eqref{cp:cp2}}{\leqslant}
        L\norm{Df_{\lambda}}_{L^{\infty}(B_{R})}\FI_{B_{R}} \left[ \frac{\Psi_0(|D\u|)}{|D\u|} + \frac{\Psi_0(|Dh|)}{|Dh|} \right]\chi_{\{f\neq f_{\lambda}\}}\,dx
        \\&
        \leqslant
        \varepsilon\FI_{B_{R}}[\Psi_0(|D\u|) + \Psi_0(|Dh|)]\,dx + \frac{c}{\varepsilon^{s(\Psi_0)}}\Psi_{0}(\norm{Df_{\lambda}}_{L^{\infty}(B_{R})})\frac{|\{f\neq f_{\lambda}\}|}{|B_{R}|}
        \\&
        \leqslant
        c\left(\varepsilon + \frac{1}{ [\Psi_0(\lambda)]^{\delta_0}\varepsilon^{s(\Psi_0)}}\right)
    \end{split}
\end{align*}
with some $c\equiv c(n,s(\Psi_0),\nu,L,\delta_1,c_0,c_1)$, where in the last two inequalities we have used Lemma \ref{lem:nf1} together with \eqref{hta:truncation1} and \eqref{hta:hta8}. Merging the estimates for $\Gamma_{1},\Gamma_{2}$ and $\Gamma_{3}$ with \eqref{hta:hta12}, we deduce that 
\begin{align}
    \label{hta:hta13}
    \begin{split}
        &\FI_{B_{R}} \left( |V_{G}(D\u)-V_{G}(Dh)|^{2} + a_{0} |V_{H_{a}}(D\u)-V_{H_{a}}(Dh)|^2 +b_{0} |V_{H_{b}}(D\u)-V_{H_{b}}(Dh)|^2 \right)
        \chi_{\{f = f_{\lambda}\}}\,dx
        \\&
        \leqslant
        c_{*}\left( \sigma\lambda + \varepsilon + \frac{1}{[\Psi_0(\lambda)]^{\delta_0}\varepsilon^{s(\Psi_0)}} \right) 
        =: S(\sigma,\lambda,\varepsilon)
    \end{split}
\end{align}
for some constant $c_{*}\equiv c_{*}(n,s(\Psi_0),\nu,L,\delta_1,c_0,c_1)$, where $\varepsilon\in (0,1)$ is still to be chosen later. Now let us use a short notation for the simplicity
\begin{align}
    \label{hta:hta14}
    Z^2 : = |V_{G}(D\u)-V_{G}(Dh)|^2 +  a_{0} |V_{H_{a}}(D\u)-V_{H_{a}}(Dh)|^2 + b_0 |V_{H_{b}}(D\u)-V_{H_{b}}(Dh)|^2 
\end{align}
and fix $\theta\in (0,1)$, again to be chosen later. H\"older's inequality and \eqref{hta:hta13} imply 
\begin{align}
    \label{hta:hta15}
    \left( \FI_{B_{R}} Z^{2\theta}\chi_{\{f=f_{\lambda}\}}\,dx \right)^{\frac{1}{\theta}}
    \leqslant  S(\sigma,\lambda,\varepsilon).
\end{align}
Again using H\"older's inequality, we get 
\begin{align}
    \label{hta:hta16}
    \begin{split}
   & \left( \FI_{B_{R}} Z^{2\theta}\chi_{\{f\neq f_{\lambda}\}}\,dx \right)^{\frac{1}{\theta}}
    \leqslant
    \left( \frac{|\{f\neq f_{\lambda}\}|}{|B_{R}|} \right)^{\frac{1-\theta}{\theta}}
    \FI_{B_{R}} Z^2\,dx
    \\&
    \stackrel{\eqref{hta:hta10}}{\leqslant}
    c[\Psi_{0}(\lambda)]^{-\frac{(1-\theta)(1+\delta_0)}{\theta}}\FI_{B_{R}} [\Psi_0(|D\u|) + \Psi_0(|Dh|)]\,dx 
    \stackrel{\eqref{hta:hta8}}{\leqslant}
    c[\Psi_{0}(\lambda)]^{-\frac{(1-\theta)(1+\delta_0)}{\theta}}
    \end{split}
\end{align}
for some constant $c\equiv c(n,s(\Psi_0),\nu,L,\delta_1,c_0,c_1,\theta)$. Consequently, \eqref{hta:hta15} and \eqref{hta:hta16} yield that
\begin{align*}
    \left(\FI_{B_{R}} Z^{2\theta}\,dx \right)^{\frac{1}{\theta}}
    \leqslant c\left( S(\sigma,\lambda,\varepsilon) + [\Psi_0(\lambda)]^{-\frac{(1-\theta)(1+\delta_0)}{\theta}} \right)
\end{align*}
holds with again $c\equiv c(n,s(\Psi_0),\nu,L,\delta_1,c_0,c_1,\theta)$. Recalling  $S(\sigma,\lambda,\varepsilon)$ in \eqref{hta:hta13} and using Lemma \ref{lem:nf1}, we find
\begin{align*}
    \left(\FI_{B_{R}} Z^{2\theta}\,dx \right)^{\frac{1}{\theta}}\,dx \leqslant
    c\left( \sigma\lambda + \varepsilon + \lambda^{-\delta_0\left(\frac{1}{s(\Psi_0)} + 1\right)}\varepsilon^{-s(\Psi_0)} + \lambda^{-\left( \frac{1}{s(\Psi_0)} + 1  \right)\frac{(1-\theta)(1+\delta_0)}{\theta}} \right),
\end{align*}
where at this moment we have used the assumption that $\Psi_0(1)\geqslant 1$. Choosing $\lambda = \sigma^{-\frac{1}{2}}$ and $\varepsilon = \sigma^{s}$ with
$ s = \frac{\delta_0}{4s(\Psi_0)}\left(\frac{1}{s(\Psi_0)} + 1 \right)$, we obtain
\begin{align}
    \label{hta:hta17}
    \left(\FI_{B_{R}} \left( |V_{G}(D\u)-V_{G}(Dh)|^{2} + a_{0} |V_{H_{a}}(D\u)-V_{H_{a}}(Dh)|^2 + b_{0} |V_{H_{b}}(D\u)-V_{H_{b}}(Dh)|^2 \right)^{\theta}
        \,dx\right)^{\frac{1}{\theta}} \leqslant c\sigma^{m_0}
\end{align}
with constants $m_0 = \min\{\frac{1}{2}, \frac{\delta_0}{4s(\Psi_0)}\left(\frac{1}{s(\Psi_0)} + 1 \right), \left( \frac{1}{s(\Psi_0)} + 1  \right)\frac{(1-\theta)(1+\delta_0)}{2\theta}\}$ and \\ $c\equiv c(n,s(\Psi_0),\nu,L,\delta_1,c_0,c_1,\theta)$. Recall that $\theta$ is yet to be chosen.

\textbf{Step 2: Proof of \eqref{hta:hta6}.}
 By taking $\theta$ properly, we can deduce \eqref{hta:hta6} from \eqref{hta:hta17}. H\"older's inequality with conjugate exponents $\left(\frac{2(1+\delta_0)}{1+2\delta_0},2(1+\delta_0)\right)$ yields
 \begin{align}
     \label{hta:hta18}
     \FI_{B_{R}} Z^{2}\,dx = \FI_{B_{R}} Z\cdot Z\,dx  \leqslant \left( \FI_{B_{R}} Z^{\frac{2(1+\delta_0)}{1+2\delta_0}} \,dx\right)^{\frac{1+2\delta_0}{2(1+\delta_0)}} \left( \FI_{B_{R}} Z^{2(1+\delta_0)}\,dx \right)^{\frac{1}{2(1+\delta_0)}}.
 \end{align}
We now choose $\theta := \frac{1+\delta_0}{1+2\delta_0}\in (0,1)$ in \eqref{hta:hta17} in order to find that
\begin{align}
	\label{hta:hta18_1}
    \left( \FI_{B_{R}} Z^{\frac{2(1+\delta_0)}{1+2\delta_0}} \,dx\right)^{\frac{1+2\delta_0}{2(1+\delta_0)}} \leqslant c\sigma^{\frac{m_0}{2}}.
\end{align}

On the other hand, recalling \eqref{hta:hta14}
and \eqref{hta:hta9}, we have
\begin{align}
	\label{hta:hta18_2}
    \begin{split}
        &\FI_{B_{R}} Z^{2(1+\delta_0)}\,dx 
        \\&
        = \FI_{B_{R}}\left( |V_{G}(D\u)-V_{G}(Dh)|^2 +  a_{0} |V_{H_{a}}(D\u)-V_{H_{a}}(Dh)|^2 +  b_{0} |V_{H_{b}}(D\u)-V_{H_{b}}(Dh)|^2 \right)^{1+\delta_0}\,dx
        \\&\leqslant
        c\FI_{B_{R}} [\Psi_0(|D\u|)]^{1+\delta_0}\,dx + 
        c\FI_{B_{R}} [\Psi_0(|Dh|)]^{1+\delta_0}\,dx
        \leqslant
        c(n,s(\Psi_0),\nu,L,\delta_1,c_0,c_1).
    \end{split}
\end{align}
We combine  the estimates \eqref{hta:hta18}-\eqref{hta:hta18_2} to discover
\begin{align}
    \label{hta:hta19}
    \FI_{B_{R}}\left(|V_{G}(D\u)-V_{G}(Dh)|^2 +  a_{0} |V_{H_{a}}(D\u)-V_{H_{a}}(Dh)|^2 + b_{0} |V_{H_{b}}(D\u)-V_{H_{b}}(Dh)|^2\right)\,dx \leqslant c\sigma^{s_1},
\end{align}
where
\begin{align*}
    s_1 = \frac{1}{2}\min\left\{\frac{1}{2}, \frac{\delta_0}{4s(\Psi_0)}\left(\frac{1}{s(\Psi_0)} + 1 \right), \left( \frac{1}{s(\Psi_0)} + 1  \right)\frac{\delta_0}{2}\right\}
    \text{ and }
    c\equiv c(n,s(\Psi_0),\nu,L,\delta_0,c_0,c_1).
\end{align*}
\textbf{Step 3: Proof of \eqref{hta:hta7}.}
By applying Lemma \ref{lem:os} to $\Phi\equiv \Psi_0$ with $d_{0}\equiv 1$, we see that there exists $\theta_0\equiv \theta_0(n,s(\Psi_0))\in (0,1)$ such that 
 \begin{align*}
     \begin{split}
         \FI_{B_{R}} & \Psi_{0}\left(\left|\frac{\u-h}{R} \right| \right)\,dx
         \leqslant
         c\left(\FI_{B_{R}} [\Psi_{0}(|D\u-Dh|)]^{\theta_0}\,dx \right)^{\frac{1}{\theta_0}}
        \\&
         \leqslant
         c\left(\FI_{B_{R}} \left(\left[\Psi_{0}(|D\u| + |Dh|)\right]^{\frac{1}{2}}\frac{|D\u-Dh|}{\left(|D\u|+|Dh|\right)}\right)^{\theta_0} \left[\Psi_{0}(|D\u| + |Dh|)\right]^{\frac{\theta_0}{2}}\,dx\right)^{\frac{1}{\theta_0}}
         \\&
         \leqslant
         c\left( \FI_{B_{R}} \Psi_0(|D\u| + |Dh|)\frac{|D\u-Dh|^2}{(|D\u| + |Dh|)^2}\,dx \right)^{\frac{1}{2}}
         \left(\FI_{B_{R}} \left[\Psi_{0}(|D\u| + |Dh|)\right]^{\frac{\theta_0}{2-\theta_0}}\,dx \right)^{\frac{2-\theta_0}{2\theta_0}}
         \\&
         \leqslant
         c\left(\FI_{B_{R}} Z^{2}\,dx\right)^{\frac{1}{2}}\left(\FI_{B_{R}} \Psi_0(|D\u| + |Dh|)\,dx\right)^{\frac{1}{2}}
         \\&
         \leqslant
         c\sigma^{\frac{s_1}{2}}
         =
         c\sigma^{s_0}
     \end{split}
 \end{align*}
for some $c\equiv c(n,s(\Psi_0),\nu,L,\delta_1,c_0,c_1)$, where in the last display we have applied H\"older's inequality with conjugate exponents $\left(\frac{2}{\theta_0}, \frac{2}{2-\theta_0}\right)$, and finally used \eqref{defV2} with \eqref{hta:hta19}. This proves \eqref{hta:hta7}. The proof is complete.
\end{proof}


\section{Comparison estimates}
\label{sec:7}

Throughout this section we fix a ball $B_{2R}\equiv B_{2R}(x_0)\subset\O_{0}\Subset\O$ with $R\leqslant 1$ and some open subset $\O_0\Subset\O$. We consider the functional defined by 
\begin{align}
    \label{0ce:1}
    W^{1,1}(B_{2R})\ni \u\mapsto \mathcal{F}_{B_{2R}}(\u):= \I_{B_{2R}} F(x,(u)_{B_{2R}},D\u)\,dx,
\end{align}
where $u$ is a local minimizer of the functional $\F$ in \eqref{functional}. Now we consider a function $w\in u + W^{1,\Psi}_{0}(B_{R})$ being the solution to the following variational Dirichlet problem:

\begin{align}
    \label{0ce:2}
    \displaystyle
\begin{cases}
    w \mapsto \min\limits_{\u} \mathcal{F}_{B_{2R}}(\u)
    \\
    \u \in u + W^{1,\Psi}_{0}(B_{2R}).
\end{cases}
\end{align}
In the following we shall deal with first comparison estimates in order to remove $u$-dependence in the original functional $\mathcal{F}$ in \eqref{functional}.

\begin{lem}
	\label{lem:0ce}
	Let $w\in W^{1,\Psi}(B_{2R})$ be the solution to the variational problem \eqref{0ce:2} under the assumptions \eqref{sa:1}, \eqref{sa:2} and \eqref{omega2}. Let the coefficient functions $a(\cdot)\in C^{\omega_a}(\O)$ and $b(\cdot)\in C^{\omega_{b}}(\O)$ for $\omega_{a}, \omega_{b}$ being non-negative concave functions vanishing at the origin. Assume that one of the assumptions \eqref{ma:1}, \eqref{ma:2} and \eqref{ma:3} is satisfied. Then there exists a constant $c\equiv c(\data(\O_0))$ such that 
	\begin{align}
		\label{0ce:3}
		\begin{split}
		\FI_{B_{2R}}& \left( |V_{G}(Du)-V_{G}(Dw)|^{2} + a(x)|V_{H_{a}}(Du)-V_{H_{a}}(Dw)|^{2} + b(x)|V_{H_{b}}(Du)-V_{H_{b}}(Dw)|^{2}  \right)\,dx
		\\&
		\leqslant
		c\omega(R^{\gamma})\FI_{B_{2R}}\Psi(x,|Du|)\,dx
		\end{split}
	\end{align}
	holds, where $\gamma\equiv \gamma(\data(\O_0))$ is the H\"older exponent determined via Theorem \ref{thm:hc}. Moreover, the following estimates hold true:
	\begin{align}
		\label{0ce:4}
		\FI_{B_{2R}}\Psi(x,|Dw|)\,dx \leqslant \frac{L}{\nu} \FI_{B_{2R}}\Psi(x,|Du|)\,dx,
	\end{align}
	\begin{align}
		\label{0ce:5}
		\norm{w}_{L^{\infty}(B_{2R})} \leqslant \norm{u}_{L^{\infty}(B_{2R})},
	\end{align}
	\begin{align}
		\label{0ce:6}
		\osc\limits_{B_{2R}}w \leqslant \osc\limits_{B_{2R}}u
	\end{align}
	and 
	\begin{align}
		\label{0ce:7}
		\FI_{B_{2R}}\Psi\left(x,\left|\frac{u-w}{R} \right| \right)\,dx
		\leqslant
		c [\omega(R^{\gamma})]^{\frac{1}{2}}\FI_{B_{2R}}\Psi(x,|Du|)\,dx
	\end{align}
	for some constant $c\equiv c(\data(\O_0))$, where in the case that \eqref{ma:3} is considered, $\gamma$ appearing in \eqref{0ce:3} and \eqref{0ce:7} is the same as in the assumption \eqref{ma:3}. 
\end{lem}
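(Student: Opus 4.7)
The plan is to exploit the joint minimality of $u$ (for the original $\mathcal{F}$) and $w$ (for the frozen functional $\mathcal{F}_{B_{2R}}$), using $\eqref{sa:2}_4$ to control the $y$-dependence of $F$ and the H\"older modulus of $u$ to convert this into an $R^{\gamma}$-smallness. First I would establish the weak Euler-Lagrange equation for $w$: since $F(x,y,\cdot)$ is $C^{2}$ away from the origin with the controlled growth $\eqref{sa:2}_{1,2}$, differentiating $t\mapsto \mathcal{F}_{B_{2R}}(w+t\varphi)$ at $t=0$ (justified by dominated convergence using $|D_{z}F|\,|z|\lesssim \Psi(x,|z|)$) yields
\begin{align*}
\int_{B_{2R}}\inner{D_{z}F(x,(u)_{B_{2R}},Dw)}{D\varphi}\,dx=0\qquad \forall\,\varphi\in W^{1,\Psi}_{0}(B_{2R}).
\end{align*}
Testing against $\varphi:=u-w$ kills the middle term when \eqref{defV1_2} is applied pointwise with $(y,z_{1},z_{2})=((u)_{B_{2R}},Dw,Du)$, so that after integration the left-hand side of \eqref{0ce:3} is bounded by $c\Delta$, where $\Delta:=\FI_{B_{2R}}\bigl[F(x,(u)_{B_{2R}},Du)-F(x,(u)_{B_{2R}},Dw)\bigr]\,dx$. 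Inserting $\pm F(x,u,Du)$ and $\pm F(x,u,Dw)$ and invoking the minimality of $u$ (which gives $\int F(x,u,Du)\,dx\le \int F(x,u,Dw)\,dx$), the quantity $\Delta$ collapses to a sum of two $y$-variation terms controlled by $\eqref{sa:2}_4$ (applied phase by phase as in the proof of \eqref{defV1_3}) by $L\omega(|u-(u)_{B_{2R}}|)\Psi(x,|Du|)$ and $L\omega(|u-(u)_{B_{2R}}|)\Psi(x,|Dw|)$. Finally, under each of \eqref{ma:1}-\eqref{ma:3}, Theorem \ref{thm:hc} (or the standing hypothesis in \eqref{ma:3}) furnishes $|u(x)-(u)_{B_{2R}}|\le cR^{\gamma}$ on $B_{2R}$; by concavity/monotonicity of $\omega$ and $R\leqslant 1$, $\omega(|u-(u)_{B_{2R}}|)\le c\omega(R^{\gamma})$, and \eqref{0ce:3} follows after absorbing the $\Psi(x,|Dw|)$ contribution via \eqref{0ce:4}.

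\textbf{Side estimates.} The energy bound \eqref{0ce:4} is immediate from the minimality of $w$ and the two-sided bound \eqref{sa:1}. For \eqref{0ce:5} and \eqref{0ce:6} I would use a truncation: setting $M:=\sup_{B_{2R}}u$, the competitor $\tilde w:=\min\{w,M\}$ lies in $u+W^{1,\Psi}_{0}(B_{2R})$ because $w=u\le M$ on $\partial B_{2R}$, and since $F(x,y,0)=0$ while $F\geqslant 0$ by \eqref{sa:1}, minimality forces $\int_{\{w>M\}}F(x,(u)_{B_{2R}},Dw)\,dx=0$, hence $Dw=0$ a.e.\ on $\{w>M\}$, which together with the boundary condition gives $w\le M$. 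The symmetric argument with $\inf_{B_{2R}}u$ yields the other side and, in particular, the oscillation estimate \eqref{0ce:6}.

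\textbf{Poincar\'e-type bound \eqref{0ce:7}.} Applying \eqref{defV3} phase by phase and Cauchy-Schwarz produces
\begin{align*}
\FI_{B_{2R}}\Psi(x,|Du-Dw|)\,dx\leqslant c\Bigl(\FI_{B_{2R}}\Psi(x,|Du|+|Dw|)\,dx\Bigr)^{\!1/2}\Bigl(\FI_{B_{2R}}\mathcal{V}\,dx\Bigr)^{\!1/2},
\end{align*}
where $\mathcal V$ is the pointwise $V$-difference quantity in \eqref{0ce:3}; plugging in \eqref{0ce:3} and \eqref{0ce:4} yields $\FI\Psi(x,|Du-Dw|)\,dx\le c[\omega(R^{\gamma})]^{1/2}\FI\Psi(x,|Du|)\,dx$. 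Since $u-w\in W^{1,\Psi}_{0}(B_{2R})$, Theorem \ref{thm:sp} with $d=1$ combined with Jensen's inequality (to remove the inner $\theta$-exponent, noting $1/\theta>1$) gives $\FI_{B_{2R}}\Psi(x,|u-w|/R)\,dx\le c\lambda_{sp}\FI_{B_{2R}}\Psi(x,|Du-Dw|)\,dx$. A case-by-case check shows $\lambda_{sp}$ is bounded by a constant in $\data(\O_{0})$: under \eqref{ma:1} the gradient energies are uniformly controlled by Theorem \ref{thm:hi}; under \eqref{ma:2} we use \eqref{0ce:5} to bound $\|u-w\|_{L^{\infty}}$; under \eqref{ma:3} the oscillation estimate \eqref{0ce:6} gives $R^{-\gamma}\osc_{B_{2R}}(u-w)\le c$. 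Chaining the two displays yields \eqref{0ce:7}.

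\textbf{Main obstacle.} The principal difficulty lies in the splitting used in the proof of \eqref{0ce:3}: one must arrange that the final bound on $\Delta$ is proportional to the energy $\FI\Psi(x,|Du|)\,dx$ rather than a mixture that would be circular through $\Psi(x,|Dw|)$. The key is that the $y$-modulus $\omega$ acts multiplicatively, so after using $\eqref{sa:2}_4$ and the H\"older oscillation of $u$, the $\Psi(x,|Dw|)$ part is harmless and can be absorbed through \eqref{0ce:4}; this is exactly what makes the scheme work uniformly across the three a priori regimes \eqref{ma:1}-\eqref{ma:3}, since the H\"older exponent $\gamma$ in each case is already provided by Theorem \ref{thm:hc} or by hypothesis.
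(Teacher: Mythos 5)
Your treatment of \eqref{0ce:4}--\eqref{0ce:7} and of the overall scheme for \eqref{0ce:3} is essentially the paper's argument, but the key step in your proof of \eqref{0ce:3} contains a genuine error. You decompose $\Delta$ by inserting $\pm F(x,u,Du)$ and $\pm F(x,u,Dw)$ and then claim that the minimality of $u$ yields $\int F(x,u,Du)\,dx\leqslant \int F(x,u,Dw)\,dx$. This is not what local minimality gives: by Definition \ref{def_min}, the competitor enters in \emph{both} the $y$-slot and the $z$-slot, so minimality of $u$ against the competitor $w$ reads $\int F(x,u,Du)\,dx\leqslant \int F(x,w,Dw)\,dx$. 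The functional $v\mapsto\int F(x,u,Dv)\,dx$ with $u$ frozen in the second argument is not one for which $u$ is assumed to be a minimizer, and since $F$ is merely Carath\'eodory/continuous in $y$ (no convexity or monotonicity in $y$ is assumed), there is no way to pass from one inequality to the other directly. As written, the middle term of your three-term splitting cannot be discarded.

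The repair is short and leads exactly to the paper's decomposition: insert $\pm F(x,u,Du)$ and $\pm F(x,w,Dw)$ instead, so that $\Delta$ splits into $\FI_{B_{2R}}[F(x,(u)_{B_{2R}},Du)-F(x,u,Du)]\,dx$, the genuinely nonpositive term $\FI_{B_{2R}}[F(x,u,Du)-F(x,w,Dw)]\,dx$, and the remaining $y$-variation $\FI_{B_{2R}}[F(x,w,Dw)-F(x,(u)_{B_{2R}},Dw)]\,dx$. The last term is controlled by $\eqref{sa:2}_{4}$ applied phase by phase with modulus $\omega(|w-(u)_{B_{2R}}|)$, and the maximum principle $\inf_{B_{2R}}u\leqslant w\leqslant\sup_{B_{2R}}u$ (which you already proved for \eqref{0ce:5}--\eqref{0ce:6}) gives $|w-(u)_{B_{2R}}|\leqslant\osc_{B_{2R}}u\leqslant cR^{\gamma}$; the resulting $\Psi(x,|Dw|)$ factor is then absorbed via \eqref{0ce:4} exactly as you describe. (The paper splits this last piece further through $(w)_{B_{2R}}$, but that is cosmetic.) With this correction the rest of your argument — the Euler--Lagrange identity, the application of \eqref{defV1_2}, the truncation argument for \eqref{0ce:5}--\eqref{0ce:6}, and the Cauchy--Schwarz plus Sobolev--Poincar\'e chain for \eqref{0ce:7} — goes through and coincides with the paper's proof.
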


\begin{proof}
	   The proof is very standard and we shall follow the structure of the proof of \cite[Lemma 4]{BCM3}. The Euler-Lagrange equation of the functional $\mathcal{F}_{B_{2R}}$, which is 
    \begin{align}
        \label{0ce:8}
        \FI_{B_{2R}} \inner{D_{z} F(x,(u)_{B_{2R}},Dw)}{D\varphi}\,dx = 0,
    \end{align}
    holds for any function $\varphi \in W^{1,\Psi}_{0}(B_{2R})$ (see for instance \cite[Lemma 5.2]{BBO1}). The minimality and growth condition \eqref{sa:1} imply that 
    \begin{align}
        \label{0ce:9}
        \begin{split}
            \FI_{B_{2R}}\Psi(x,|Dw|)\,dx 
            &\leqslant
            \frac{1}{\nu} \FI_{B_{2R}} F(x,(u)_{B_{2R}},Dw)\,dx
            \\&
            \leqslant
            \frac{1}{\nu} \FI_{B_{2R}} F(x,(u)_{B_{2R}},Du)\,dx
            \leqslant
            \frac{L}{\nu}\FI_{B_{2R}}\Psi(x,|Du|)\,dx,
      \end{split}
    \end{align}
    which proves \eqref{0ce:4}. Therefore, we conclude with
    \begin{align}
        \label{0ce:10}
        \FI_{B_{2R}} \inner{D_{z} F(x,(u)_{B_{2R}},Dw)}{Du-Dw}\,dx = 0.
    \end{align}
Letting $u_{B_{2R}}^{+}:= \sup\limits_{x\in B_{2R}} u(x)$ and $u_{B_{2R}}^{-}:= \inf\limits_{x\in B_{2R}}u(x)$, the minimality of $w$ yields
\begin{align*}
    \mathcal{F}_{B_{2R}}(w) \leqslant \mathcal{F}_{B_{2R}}\left(\min\{w,u_{B_{2R}}^{+}\}\right)\qquad
    \text{and} \qquad \mathcal{F}_{B_{2R}}(w) \leqslant \mathcal{F}_{B_{2R}}(\max\{w,u_{B_{2R}}^{-}\}). 
\end{align*}
Consequently, the last display together with \eqref{sa:1} gives us
\begin{align*}
    \I_{B_{2R}\cap \left\{w\geqslant u_{B_{2R}}^{+}\right\}}\Psi(x,|Dw|)\,dx = 0
    \qquad
    \text{and}
    \qquad
    \I_{B_{2R}\cap \left\{w\leqslant u_{B_{2R}}^{-}\right\}}\Psi(x,|Dw|)\,dx = 0.
\end{align*}
By coarea formula, we get that 
\begin{align}
    \label{0ce:11}
    \inf\limits_{x\in B_{2R}}u \equiv u_{B_{2R}}^{-}\leqslant w(x) \leqslant u_{B_{2R}}^{+}\equiv \sup\limits_{x\in B_{2R}}u(x)\qquad \text{a.e.  } x\in B_{2R}.
\end{align}
This proves \eqref{0ce:5} and \eqref{0ce:6}.
    Using \eqref{defV1_2} and \eqref{0ce:10} together with the minimality of $u$ and $w$, we have that 
    \begin{align}
        \label{0ce:12}
        \begin{split}
            \FI_{B_{2R}}& \left( |V_{G}(Du)-V_{G}(Dw)|^2  + a(x)|V_{H_{a}}(Du)-V_{H_{a}}(Dw)|^2
            + b(x)|V_{H_{b}}(Du)-V_{H_{b}}(Dw)|^2 \right)\,dx
            \\&
            \stackrel{\eqref{0ce:10}}{=}
            \FI_{B_{2R}} \left( |V_{G}(Du)-V_{G}(Dw)|^2  + a(x)|V_{H}(Du)-V_{H}(Dw)|^2 + b(x)|V_{H_{b}}(Du)-V_{H_{b}}(Dw)|^2 \right)\,dx
            \\ &
            \qquad + c_{*}\FI_{B_{2R}} \inner{D_{z} F(x,(u)_{B_{2R}},Dw)}{Du-Dw}\,dx
            \\&
            \leqslant
            c_{*}\FI_{B_{2R}} \left[ F(x,(u)_{B_{2R}},Du)-F(x,(u)_{B_{2R}},Dw) \right]\,dx
            \\&
            =
            c_{*}\FI_{B_{2R}} \left[ F(x,(u)_{B_{2R}},Du)-F(x,u,Du) \right]\,dx
             +
            c_{*}\FI_{B_{2R}} \left[ F(x,u,Du)-F(x,w,Dw) \right]\,dx
            \\&
            \,\, +
            c_{*}\FI_{B_{2R}} \left[F(x,w,Dw)-F(x,(w)_{B_{2R}},Dw) \right]\,dx
             +
            c_{*}\FI_{B_{2R}} \left[F(x,(w)_{B_{2R}},Dw)-F(x,(u)_{B_{2R}},Dw) \right]\,dx
            \\&
            =: c_{*}\sum\limits_{i=1}^{4} I_{i}
        \end{split}
    \end{align}
with $c_{*}\equiv c_{*}(n,s(G),s(H_{a}),s(H_{b}),\nu)$. Now we estimate each term $I_{i}$ for $i\in \{1,2,3,4\}$ in the last display. We have
\begin{align}
    \label{0ce:13}
    \begin{split}
         I_{1} &\stackrel{\eqref{sa:2}}{\leqslant}
         c\FI_{B_{2R}} \omega(|u-(u)_{B_{2R}}|)\Psi(x,|Du|)\,dx
         \\&
         \stackrel{\eqref{hc:1}, \eqref{hc:3}}{\leqslant}
         c\omega(2[u]_{0,\gamma;\O_0}R^{\gamma})\FI_{B_{2R}} \Psi(x,|Du|)\,dx
         \\&
         \stackrel{\eqref{concave1}}{\leqslant}
         c(\data(\O_0))\omega(R^{\gamma})\FI_{B_{2R}}\Psi(x,|Du|)\,dx,
    \end{split}
\end{align}
where in the last display we have also used the fact that $\omega(\cdot)$ is concave. The minimality of $u$ implies
\begin{align}
    \label{0ce:14}
    I_{2} \leqslant 0.
\end{align}
We have therefore
\begin{align}
    \label{0ce:15}
    \begin{split}
        I_{3} 
        &\stackrel{\eqref{sa:2}}{\leqslant}
        c\FI_{B_{2R}}\omega(|w-(w)_{B_{2R}}|)\Psi(x,|Dw|)\,dx
        \\&
        \leqslant
        c\FI_{B_{R}}\omega\left(\osc\limits_{B_{2R}}w\right)\Psi(x,|Dw|)\,dx
        \\&
        \stackrel{\eqref{0ce:6}}{\leqslant}
        c\omega\left( \osc\limits_{B_{2R}} u \right)\FI_{B_{2R}}\Psi(x,|Dw|)\,dx
        \\&
        \stackrel{\eqref{hc:1}, \eqref{hc:3}}{\leqslant}
        c\omega(2[u]_{0,\gamma;\O_0}R^{\gamma})\FI_{B_{2R}}\Psi(x,|Du|)\,dx
        \\&
        \stackrel{\eqref{concave1}}{\leqslant}
        c(\data(\O_{0}))\omega(R^{\gamma})\FI_{B_{2R}}\Psi(x,|Du|)\,dx.
    \end{split}
\end{align}
Observing that 
\begin{align}
    \label{0ce:16}
    |(w)_{B_{2R}}-(u)_{B_{2R}}| \stackrel{\eqref{0ce:11}}{\leqslant} \osc\limits_{B_{2R}} u, 
\end{align}
as in the estimate for $I_1$, we still have 
\begin{align}
    \label{0ce:17}
    I_{4} \leqslant c\omega(R^{\gamma})\FI_{B_{2R}}\Psi(x,|Du|)\,dx.
\end{align}
Inserting all the estimates obtained for $I_{i}$ with $i\in \{1,2,3,4\}$ into \eqref{0ce:12} completes the proof of \eqref{0ce:3}.

 Let us now prove \eqref{0ce:7}. By Theorem \ref{thm:sp} with $d \equiv 1$, there exists $\theta_1\equiv \theta_1(n,s(G),s(H_{a}),s(H_{b}))\in (0,1)$ such that 
\begin{align}
    \label{0ce:18}
    \begin{split}
       J & :=  \FI_{B_{2R}}\Psi\left(x,\left|\frac{u-w}{R} \right| \right)\,dx
        \leqslant
        c\left(\FI_{B_{2R}}[\Psi(x,|Du-Dw|)]^{\theta_1}\,dx \right)^{\frac{1}{\theta_1}}
        \\&
        \leqslant
        c \left(\FI_{B_{2R}} \left([\Psi(x,|Du|+|Dw|)]^{\frac{1}{2}}\frac{|Du-Dw|}{|Du|+|Dw|}\right)^{\theta_1}
        [\Psi(x,|Du|+|Dw|)]^{\frac{\theta_1}{2}}\,dx\right)^{\frac{1}{\theta_1}},
    \end{split}
\end{align}
where in the last inequality of the last display we have used \eqref{defV3} for $\Psi$. Applying H\"older's inequality with conjugate exponents $\left( \frac{2}{\theta_1},\frac{2}{2-\theta_1} \right)$ to the right hand side of the last display and \eqref{defV2}, we get 
\begin{align}
    \label{0ce:19}
    \begin{split}
       J
        &\leqslant
        c\left( \FI_{B_{2R}} \Psi(x,|Du|+|Dw|)\frac{|Du-Dw|^2}{(|Du|+|Dw|)^2}\,dx \right)^{\frac{1}{2}}
        \left(\FI_{B_{2R}} [\Psi(x,|Du|+|Dw|)]^{\frac{\theta_1}{2-\theta_1}}\,dx \right)^{\frac{2-\theta_1}{2\theta_1}}
        \\&
        \leqslant
        c\left( \FI_{B_{2R}} |V_{\Psi}(x,Du)-V_{\Psi}(x,Dw)|^2\,dx \right)^{\frac{1}{2}} \left(\FI_{B_{2R}} \Psi(x,|Du|+|Dw|)\,dx \right)^{\frac{1}{2}}
        \\&
        \leqslant
        c(\data(\O_0)) [\omega(R^{\gamma})]^{\frac{1}{2}} \FI_{B_{2R}} \Psi(x,|Du|)\,dx,
    \end{split}
\end{align}
where in the last inequality of the above display we have used \eqref{0ce:3}, and then \eqref{0ce:4}. Combining the last two displays we arrive at \eqref{0ce:7}.
\end{proof}


Next we consider the functional defined by 
\begin{align}
    \label{1ce:1}
    W^{1,1}(B_{R})\ni \u\mapsto \mathcal{F}_{c}(\u):= \I_{B_{R}} F_{c}(x,D\u)\,dx,
\end{align}
where  the density function is given by
\begin{align}
	\label{1ce:1_0}
	F_{c}(x,z):= F_{G}\left(x_{c}, (u)_{B_{2R}},z \right) + a(x)F_{H_{a}}\left(x_{c}, (u)_{B_{2R}},z \right) + b(x)F_{H_{b}}\left(x_{c}, (u)_{B_{2R}},z \right)
\end{align}
for some fixed point $x_c\in B_{R}$ and for every $x\in\O$ and $z\in\R^n$. Now we consider a function $w_{c}\in w + W^{1,\Psi}_{0}(B_{R})$ being the solution to the following variational Dirichlet problem:

\begin{align}
    \label{1ce:2}
\begin{cases}
    w_{c} \mapsto \min\limits_{v} \mathcal{F}_{c}(v)
    \\
    v \in w + W^{1,\Psi}_{0}(B_{R}),
\end{cases}
\end{align}
where $w\in W^{1,\Psi}(B_{2R})$ is the solution to the variational problem \eqref{0ce:2}.

\begin{lem}
	\label{lem:1ce}
	Let $w_{c}\in W^{1,\Psi}(B_{R})$ be the solution to the variational problem \eqref{1ce:2} under the assumptions \eqref{sa:1}, \eqref{sa:2} and \eqref{omega2}. Let  the coefficient functions $a(\cdot)\in C^{\omega_a}(\O)$ and $b(\cdot)\in C^{\omega_{b}}(\O)$ for $\omega_{a}, \omega_{b}$ being non-negative concave functions vanishing at the origin. Assume that one of the assumptions \eqref{ma:1}, \eqref{ma:2} and \eqref{ma:3} is satisfied. Then there exists a constant $c\equiv c(\data(\O_0))$ such that 
	\begin{align}
		\label{1ce:3}
		\begin{split}
		\FI_{B_{R}}& \left( |V_{G}(Dw)-V_{G}(Dw_{c})|^{2} + a(x)|V_{H_{a}}(Dw)-V_{H_{a}}(Dw_{c})|^{2} + b(x)|V_{H_{b}}(Dw)-V_{H_{b}}(Dw_{c})|^{2}  \right)\,dx
		\\&
		\leqslant
		c\omega(R)\FI_{B_{R}}\Psi(x,|Du|)\,dx.
		\end{split}
	\end{align}
	 Moreover, the following estimates hold true:
	\begin{align}
		\label{1ce:4}
		\FI_{B_{R}}\Psi(x,|Dw_{c}|)\,dx \leqslant \frac{L}{\nu} \FI_{B_{R}}\Psi(x,|Dw|)\,dx,
	\end{align}
	\begin{align}
		\label{1ce:5}
		\norm{w_{c}}_{L^{\infty}(B_{R})} \leqslant \norm{w}_{L^{\infty}(B_{R})},
	\end{align}
	\begin{align}
		\label{1ce:6}
		\osc\limits_{B_{R}}w_{c} \leqslant \osc\limits_{B_{R}}w
	\end{align}
	and 
	\begin{align}
		\label{1ce:7}
		\FI_{B_{R}}\Psi\left(x,\left|\frac{w-w_{c}}{R} \right| \right)\,dx
		\leqslant
		c [\omega(R)]^{\frac{1}{2}}\FI_{B_{R}}\Psi(x,|Dw|)\,dx
	\end{align}
	for some constant $c\equiv c(\data)$. Finally, there exists a higher integrability exponent $\delta_0\equiv \delta_0(\data)$ with $\delta_0\leqslant \delta$ with $\delta$ having been determined via Theorem \ref{thm:hi}, and a constant $c\equiv c(\data)$ such that 
	\begin{align}
		\label{1ce:8}
		\left(\FI_{B_{R/2}}[\Psi(x,|Dw_{c}|)]^{1+\delta_0}\,dx \right)^{\frac{1}{1+\delta_0}}
		\leqslant 
		c \FI_{B_{R}}\Psi(x,|Dw_{c}|)\,dx.
	\end{align}
\end{lem}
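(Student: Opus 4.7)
The strategy mirrors Lemma \ref{lem:0ce}, with the key difference that the freezing concerns the $x$-slot and the $y$-slot of $F_G,F_{H_a},F_{H_b}$ rather than the $y$-slot of $F$, while the coefficient functions $a(\cdot),b(\cdot)$ remain $x$-dependent. The energy bound \eqref{1ce:4} is immediate from the minimality of $w_c$ in \eqref{1ce:2} and the fact that $F_c$ satisfies the same two-sided bound \eqref{sa:1} (with constants $\nu,L$) because \eqref{sa:2}$_1$--\eqref{sa:2}$_2$ are uniform in $(x,y)$. The comparison principle estimates \eqref{1ce:5}--\eqref{1ce:6} follow by truncating $w_c$ against $\sup_{B_R} w$ and $\inf_{B_R} w$ and using minimality, exactly as for \eqref{0ce:5}--\eqref{0ce:6}.

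For \eqref{1ce:3}, first apply \eqref{defV1_2} (for the density $F_c$) and use the Euler--Lagrange equation for $w_c$ against $Dw-Dw_c$ (which is admissible since $w-w_c\in W^{1,\Psi}_0(B_R)$) to reduce matters to bounding $\int_{B_R}[F_c(x,Dw)-F_c(x,Dw_c)]\,dx$. Write this as a telescoping sum
\begin{align*}
F_c(x,Dw)-F_c(x,Dw_c)
&= \bigl[F_c(x,Dw)-F(x,(u)_{B_{2R}},Dw)\bigr] \\
&\quad+ \bigl[F(x,(u)_{B_{2R}},Dw)-F(x,(u)_{B_{2R}},Dw_c)\bigr] \\
&\quad+ \bigl[F(x,(u)_{B_{2R}},Dw_c)-F_c(x,Dw_c)\bigr].
\end{align*}
The middle term integrates to a nonpositive quantity because the extension of $w_c$ by $w$ to $B_{2R}\setminus B_R$ is admissible for $\mathcal{F}_{B_{2R}}$ and $w$ is its minimizer. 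For the outer terms, since $F_\Phi(x,y,0)=0$ and $|x-x_c|\leqslant 2R$, integration of \eqref{sa:2}$_3$ in $z$ yields $|F_\Phi(x_c,y,z)-F_\Phi(x,y,z)|\leqslant c\omega(2R)\Phi(|z|)$ for $\Phi\in\{G,H_a,H_b\}$; summing with the coefficients $1,a(x),b(x)$ produces the desired pointwise bound $c\omega(R)\Psi(x,|z|)$ (using concavity of $\omega$). Combining with \eqref{1ce:4} and \eqref{0ce:4} yields \eqref{1ce:3}.

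The estimate \eqref{1ce:7} follows by the same Sobolev--Poincar\'e/Young-function interpolation used for \eqref{0ce:7}: apply Theorem \ref{thm:sp} with $d=1$ to $w-w_c\in W^{1,\Psi}_0(B_R)$ to pass from $\Psi(x,|(w-w_c)/R|)$ to $[\Psi(x,|Dw-Dw_c|)]^{\theta_1}$, then use \eqref{defV2}--\eqref{defV3}, H\"older's inequality with exponents $(2/\theta_1,2/(2-\theta_1))$, the monotonicity bound \eqref{1ce:3}, and the energy bounds \eqref{1ce:4},\eqref{0ce:4}. The a priori information on $w_c$ needed to control $\lambda_{sp}$ (in either form \eqref{sp:2_1}, \eqref{sp:2_2}, or \eqref{sp:2_3}) is inherited from $u$ through \eqref{1ce:5}--\eqref{1ce:6} together with the H\"older estimates \eqref{hc:1}/\eqref{hc:3}, exactly as in Lemma \ref{lem:0ce}.

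Finally, for the reverse H\"older inequality \eqref{1ce:8}, observe that $w_c$ is a local minimizer on $B_R$ of the functional $v\mapsto \int F_c(x,Dv)\,dx$ whose density satisfies $\nu\Psi(x,|z|)\leqslant F_c(x,z)\leqslant L\Psi(x,|z|)$. The argument of Theorem \ref{thm:hi} then applies verbatim: Lemma \ref{lem:cacc} gives the Caccioppoli inequality on concentric sub-balls, the Sobolev--Poincar\'e estimate of Remark \ref{rmk:sp} converts it into a reverse H\"older inequality (with exponent $\theta\in(0,1)$), and Gehring's lemma upgrades it to \eqref{1ce:8} for some $\delta_0\leqslant\delta$. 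The dependence of the multiplicative constant in $\lambda_{sp}$ on $\norm{w_c}_{L^\infty(B_R)}$, $[w_c]_{0,\gamma}$, or $\int G(|Dw_c|)\,dx$ is controlled by the corresponding quantity for $u$ via \eqref{1ce:4}--\eqref{1ce:6}, so all constants depend only on $\data(\O_0)$. The main subtlety is the verification that the telescoping in the proof of \eqref{1ce:3} keeps the factor $\omega(R)$ (rather than $\omega(R^\gamma)$ as in \eqref{0ce:3}), which is possible precisely because the freezing is in the spatial variable, not in the $u$-variable, so no H\"older exponent is lost.
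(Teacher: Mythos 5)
Your proposal is correct, and for \eqref{1ce:4}--\eqref{1ce:8} it follows the paper's route exactly: minimality plus the two-sided bound for \eqref{1ce:4}, truncation for \eqref{1ce:5}--\eqref{1ce:6}, the Sobolev--Poincar\'e/H\"older interpolation of \eqref{0ce:18}--\eqref{0ce:19} for \eqref{1ce:7}, and Caccioppoli plus Remark \ref{rmk:sp} plus Gehring for \eqref{1ce:8}, with the $\lambda_{sp}$-factor controlled through \eqref{0ce:4}--\eqref{0ce:6} and \eqref{1ce:4}--\eqref{1ce:6} just as in the paper.

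For the core estimate \eqref{1ce:3} you take a genuinely different, but equally valid, route. The paper starts from the monotonicity inequality \eqref{defV1_1} applied to $F_c$, uses both Euler--Lagrange identities (the one for $w_c$ from \eqref{1ce:9} and the one for $w$ from \eqref{0ce:8}, the latter left implicit) to replace $D_zF_c(x,Dw_c)$ by $D_zF(x,(u)_{B_{2R}},Dw)$, and then bounds the resulting vector-field differences $|D_zF_{\Phi}(x_c,\cdot,Dw)-D_zF_{\Phi}(x,\cdot,Dw)|\,|Dw-Dw_c|$ pointwise via $\eqref{sa:2}_3$ and an absorption. You instead start from the convexity/minimality inequality \eqref{defV1_2}, kill the linear term with the Euler--Lagrange equation for $w_c$ alone, and then telescope the energies $F_c(x,Dw)-F_c(x,Dw_c)$ through $F(x,(u)_{B_{2R}},\cdot)$: the outer terms are handled by the integrated form of $\eqref{sa:2}_3$ (essentially \eqref{defV1_3} with the $y$-dependence and the coefficient terms suppressed, which is legitimate since $a(\cdot),b(\cdot)$ are not frozen in $F_c$), and the middle term is nonpositive because pasting $w_c$ into $w$ on $B_{2R}\setminus B_R$ produces an admissible competitor for \eqref{0ce:2}. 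This mirrors the structure of the paper's proof of \eqref{0ce:3} rather than its proof of \eqref{1ce:3}; it trades the second Euler--Lagrange identity for the gluing/minimality observation, and it reaches the same $\omega(R)$ factor (indeed, as you note, no H\"older exponent is lost because only the spatial slot is frozen). Both arguments then close with \eqref{1ce:4} and \eqref{0ce:4} in the same way.
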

\begin{proof}
	Essentially, the proof is similar to the proof of Lemma \ref{lem:0ce}. The estimates \eqref{1ce:4}-\eqref{1ce:6} can be obtained as for \eqref{0ce:4}-\eqref{0ce:6}. We now focus on proving \eqref{1ce:3}. The Euler-Lagrange equation arising from the functional $\F_{c}$ defined in \eqref{1ce:1}
	\begin{align}
		\label{1ce:9}
		\FI_{B_{R}}\inner{D_{z}F_{c}(x,Dw_c)}{D\varphi}\,dx = 0
	\end{align}
	is valid, whenever $\varphi\in W^{1,\Psi}_{0}(B_{R})$. Then using \eqref{sa:2}, we have 
	\begin{align}
		\label{1ce:10}
		\begin{split}
			\FI_{B_{R}}& \left( |V_{G}(Dw)-V_{G}(Dw_{c})|^{2} + a(x)|V_{H_{a}}(Dw)-V_{H_{a}}(Dw_{c})|^{2} + b(x)|V_{H_{b}}(Dw)-V_{H_{b}}(Dw_{c})|^{2}  \right)\,dx
			\\&
			\leqslant
			c\FI_{B_{R}}\inner{D_{z}F_{c}(x,Dw)-D_{z}F_{c}(x,Dw_c)}{Dw-Dw_c}\,dx
			\\&
			\leqslant
			c\FI_{B_{R}}|D_{z}F_{G}(x_c,(u)_{B_{2R}},Dw)-D_{z}F_{G}(x,(u)_{B_{2R}},Dw)||Dw-Dw_c|\,dx
			\\&
			\quad
			+
			c\FI_{B_{R}}a(x)|D_{z}F_{H_{a}}(x_c,(u)_{B_{2R}},Dw)-D_{z}F_{H_{a}}(x,(u)_{B_{2R}},Dw)||Dw-Dw_c|\,dx
			\\&
			\quad
			+
			c\FI_{B_{R}}b(x)|D_{z}F_{H_{b}}(x_c,(u)_{B_{2R}},Dw)-D_{z}F_{H_{b}}(x,(u)_{B_{2R}},Dw)||Dw-Dw_c|\,dx
			\\&
			\leqslant
			c\omega(R)\FI_{B_{R}}\Psi(x,|Dw|)\,dx
		\end{split}
	\end{align}
for some constant $c\equiv c(n,s(G),s(H_{a}),s(H_{b}),\nu,L)$. This proves \eqref{1ce:3}, and \eqref{1ce:7} follows from this estimate together with applying the arguments used in \eqref{0ce:18}-\eqref{0ce:19}.
Since $w_{c}$ is a $L/\nu$-minimizer of the functional $\F_{c}$ defined in \eqref{1ce:2}, we are able to apply Lemma \ref{lem:cacc} with the choices of $\u\equiv w_{c}$, $\rho\equiv R/2$, $r\equiv R$ and $k\equiv (w_{c})_{B_{R}}$. In turn, it gives us that
\begin{align}
    \label{1ce:11}
    \FI_{B_{R/2}}\Psi(x,|Dw_c|)\,dx 
    \leqslant
    c\FI_{B_{R}}\Psi\left(x, \left|\frac{w_{c}-(w_{c})_{B_{R}}}{R}\right| \right)\,dx
\end{align}
holds with $c\equiv c(n,s(G),s(H_{a}),s(H_{b}),L,\nu)$. Then applying Remark \ref{rmk:sp}, there exists a positive exponent $\theta\equiv \theta(n,s(G),s(H_{a}),s(H_{b}))\in (0,1)$ such that
\begin{align}
		\label{1ce:12}
		\FI_{B_{R/2}}\Psi\left(x,|Dw_{c}| \right)\,dx \leqslant
		c\bar{\kappa}_{sp}\left[ \FI_{B_{R}} [\Psi(x,|Dw_{c}|)]^{\theta}\,dx \right]^{\frac{1}{\theta}}
	\end{align}
	holds with some constant $c\equiv c(n,s(G),s(H_{a}),s(H_{b}),L,\nu, \omega_{a}(1),\omega_{b}(1))$, where
	\begin{subequations}
	 \begin{empheq}[left={\bar{\kappa}_{sp}=\empheqlbrace}]{align}
        \qquad & 1+\left([a]_{\omega_{a}} + [b]_{\omega_{b}} \right)\left(\lambda_{1} +  \lambda_{1}\left( \I_{B_{R}}G(|Dw_{c}|)\,dx \right)^{\frac{1}{n}}\right)   &\text{if }& \eqref{ma:1} \text{ is considered, } \label{1ce:13_1}\\
        & 1+ \left([a]_{\omega_{a}} + [b]_{\omega_{b}} \right)\left(\lambda_{2} + \lambda_{2} \norm{w_{c}}_{L^{\infty}(B_{R})}\right)  &\text{if }&  \eqref{ma:2} \text{ is considered, }  \label{1ce:13_2}\\
       &1+ \left([a]_{\omega_{a}} + [b]_{\omega_{b}} \right)\left(\lambda_{3} + \lambda_{3}\left[ R^{-\gamma}\osc\limits_{B_{R}} w_{c} \right]^{\frac{1}{1-\gamma}}\right)
         &\text{if }&  \eqref{ma:3} \text{ is considered. } \label{1ce:13_3}
      \end{empheq}
	\end{subequations}
Furthermore, taking into account \eqref{0ce:4}-\eqref{0ce:6} and \eqref{1ce:4}-\eqref{1ce:6} in the last display, we conclude that 
\begin{align}
    \label{1ce:14}
    \FI_{B_{R/2}}\Psi(x,|Dw_{c}|)\,dx 
    \leqslant
    c\left[\FI_{B_{R}}[\Psi\left(x, |Dw_{c}| \right)]^{\theta}\,dx\right]^{\frac{1}{\theta}}
\end{align}
holds for some constant $\theta\equiv\theta(n,s(G),s(H_{a}),s(H_{b}))\in (0,1)$ and $c\equiv c(\data)$. The estimate \eqref{1ce:8} follows from applying a variant of Gehring's lemma.
\end{proof}


To go further let us introduce the excess functional defined by 
\begin{align}
	\label{excess2}
	E(v,B_{r}):= \left( \Psi_{B_{2r}}^{-} \right)^{-1}\left(\FI_{B_{r}}\Psi_{B_{2r}}^{-}\left(\left|\frac{v-(v)_{B_{r}}}{2r}\right| \right)\,dx\right)
\end{align}
for any function $v\in L^{1}(B_{2r})$ and a ball $B_{2r}\subset\O$, where we note that   $\left( \Psi_{B_{2r}}^{-} \right)^{-1}$ is the inverse function of $\Psi_{B_{2r}}^{-}$. By the convexity of $\Psi_{B_{2r}}^{-}$ together with Lemma \ref{lem:nf1} and Remark \ref{rmk:nf2}, one can see that
\begin{align}
	\label{excess4}
	E(v,B_{r}) \leqslant c\left( \Psi_{B_{2r}}^{-} \right)^{-1}\left(\FI_{B_{r}}\Psi_{B_{2r}}^{-}\left(\left|\frac{v-v_0}{2r}\right| \right)\,dx\right)
\end{align}
for some constant $c\equiv c(s(G)+s(H_{a})+s(H_{b}))$, whenever $v_0\in\R$ is an arbitrary number.

\begin{lem}
	\label{lem:2ce}
	Let $u\in W^{1,\Psi}(\O)$ be a local minimizer of the functional $\F$ defined in \eqref{functional} under the assumptions \eqref{sa:1}, \eqref{sa:2} and \eqref{omega2}. Let $w_{c}\in W^{1,\Psi}(B_{R})$ be the solution to the variational problem \eqref{1ce:2}. 
	If one of the assumptions \eqref{mth:md:1}-\eqref{mth:md:5}
is satisfied, then for every $\varepsilon^{*}\in (0,1)$, there exists a positive radius 
	 \begin{align}
	 	\label{2ce:1}
	 	R^{*}\equiv R^{*}(\data(\O_0),\varepsilon^{*})
	 \end{align}
such that
	\begin{align}
		\label{2ce:2}
		\FI_{B_{\tau R}}\Psi_{B_{R}}^{-}\left(\left|\frac{w_{c}-(w_{c})_{B_{\tau R}}}{\tau R} \right| \right)\,dx
		\leqslant
		c\left(1+\tau^{-(n+s(\Psi)+1)}\varepsilon^{*} \right)\FI_{B_{R/2}}\Psi_{B_{R}}^{-}\left(\left|\frac{w_{c}-(w_{c})_{B_{R/2}}}{R} \right| \right)\,dx
	\end{align}
	for some constant $c\equiv c\left(\data(\O_0)\right)$, whenever $\tau\in (0,1/16)$ and $R\leqslant R^{*}$.
	
\end{lem}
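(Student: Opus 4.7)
The approach is harmonic-type approximation: I freeze the coefficients $a(\cdot), b(\cdot)$ at $a_0 := a^{-}(B_R)$ and $b_0 := b^{-}(B_R)$, compare $w_c$ on $B_{R/2}$ with the solution $h$ of the associated frozen, homogeneous Dirichlet problem via Lemma \ref{hta: lemma_hta}, and transfer Campanato-type decay from $h$ to $w_c$. Define the frozen vector field
\[
A_0(z) := D_z F_G(x_c, (u)_{B_{2R}}, z) + a_0\, D_z F_{H_a}(x_c, (u)_{B_{2R}}, z) + b_0\, D_z F_{H_b}(x_c, (u)_{B_{2R}}, z),
\]
which satisfies \eqref{cp:cp2} with $\Psi_0(t) := G(t) + a_0 H_a(t) + b_0 H_b(t) \approx \Psi_{B_R}^{-}(t)$.

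\textbf{Approximate harmonicity.} I test the Euler--Lagrange equation \eqref{1ce:9} against $\varphi \in C_0^\infty(B_{R/2})$ and subtract $A_0(Dw_c)$: using $|a(x) - a_0| \leqslant 2[a]_{\omega_a} \omega_a(R)$, the analogous bound for $b$, and $\eqref{sa:2}_1$, the defect is
\[
\left| \FI_{B_{R/2}} \langle A_0(Dw_c), D\varphi \rangle \, dx \right| \leqslant c \norm{D\varphi}_{L^\infty(B_{R/2})} \FI_{B_{R/2}} \left[ \omega_a(R) \frac{H_a(|Dw_c|)}{|Dw_c|} + \omega_b(R) \frac{H_b(|Dw_c|)}{|Dw_c|} \right] dx.
\]
Rewriting the integrand in terms of $\Psi_0(|Dw_c|)$ and the ratios $H_a/G$, $H_b/G$ that appear in \eqref{Lambda}, I bound the right-hand side by $\sigma(R) \FI_{B_{R/2}} \Psi_0(|Dw_c|)\, dx$, where $\sigma(R) \to 0^+$ as $R \to 0^+$ under any of \eqref{mth:md:1}--\eqref{mth:md:5}. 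The mechanism is that the a priori bound on $u$ (Sobolev/$L^\infty$/$C^{0,\gamma}$ depending on which case is considered) transfers to $w_c$ via \eqref{0ce:4}--\eqref{0ce:6} and \eqref{1ce:4}--\eqref{1ce:6}, and then the vanishing of $\limsup \Lambda$ (or the explicit power rate in \eqref{mth:md:4}--\eqref{mth:md:5}) forces smallness of the integral.

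\textbf{Harmonic approximation and decay.} After intrinsic rescaling normalizing the $\Psi_0$-energy on $B_{R/2}$, with higher integrability \eqref{hta:hta2} supplied by \eqref{1ce:8}, Lemma \ref{hta: lemma_hta} produces $h \in w_c + W^{1,\Psi_0}_0(B_{R/2})$ satisfying \eqref{hta:hta5} and
\[
\FI_{B_{R/2}} \Psi_0\!\left(\left|\frac{w_c - h}{R/2}\right|\right) dx \leqslant c\, \sigma(R)^{s_0} \FI_{B_{R/2}} \Psi_0(|Dw_c|)\, dx.
\]
Lipschitz regularity for such $A_0$-harmonic maps (the content of the forthcoming Lemma \ref{lem:3ce}, which follows by standard Uhlenbeck-type arguments since $\Psi_0 \in \mathcal{N}$) yields $\norm{\Psi_0(|Dh|)}_{L^\infty(B_{R/4})} \leqslant c \FI_{B_{R/2}} \Psi_0(|Dw_c|)\, dx$, hence the trivial decay
\[
\FI_{B_{\tau R}} \Psi_0\!\left(\left|\frac{h - (h)_{B_{\tau R}}}{\tau R}\right|\right) dx \leqslant c \FI_{B_{R/2}} \Psi_0(|Dw_c|)\, dx
\]
for every $\tau \in (0, 1/16)$.

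\textbf{Combining.} By the quasi-triangle inequality \eqref{excess4} and the doubling \eqref{growth1} of $\Psi_0 \approx \Psi_{B_R}^-$,
\[
\FI_{B_{\tau R}} \Psi_{B_R}^{-}\!\left(\left|\frac{w_c - (w_c)_{B_{\tau R}}}{\tau R}\right|\right) dx \leqslant c \FI_{B_{\tau R}} \Psi_0\!\left(\left|\frac{h - (h)_{B_{\tau R}}}{\tau R}\right|\right) dx + \frac{c}{\tau^{n+s(\Psi)+1}} \FI_{B_{R/2}} \Psi_0\!\left(\left|\frac{w_c - h}{R/2}\right|\right) dx,
\]
where $\tau^{-(n+s(\Psi)+1)}$ collects the volume ratio $|B_{R/2}|/|B_{\tau R}|$ and the scaling of $\Psi_0$ under Lemma \ref{lem:nf1} at ratio $1/(2\tau)$. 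A Caccioppoli-type replacement of $\FI_{B_{R/2}} \Psi_0(|Dw_c|)$ by the excess on $B_{R/2}$, combined with selecting $R^*$ so that $\sigma(R)^{s_0} \leqslant \varepsilon^*$ for all $R \leqslant R^*$, yields \eqref{2ce:2}. The principal obstacle is the approximate-harmonicity step: verifying $\sigma(R) \to 0$ uniformly across the three regimes requires pairing the precise form of $\Lambda$ in \eqref{Lambda} with the correct a priori bound on $u$ and showing, in each separate case, that the interpolation of $H_a(|Dw_c|)$ and $H_b(|Dw_c|)$ by $G(|Dw_c|)$ produces $\Lambda$ evaluated at a vanishing argument.
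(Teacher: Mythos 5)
Your proposal is correct and follows essentially the same route as the paper: freeze the coefficients at their infima over $B_R$, estimate the defect in the weak formulation of \eqref{1ce:9}, invoke the harmonic approximation Lemma \ref{hta: lemma_hta}, and transfer the Lipschitz decay of the frozen minimizer back to $w_c$. The one point your sketch glosses over is exactly the paper's key device for the "principal obstacle" you name: rather than manipulating $H_a(|Dw_c|)/G(|Dw_c|)$ pointwise (which cannot be made small uniformly in $|Dw_c|$), the paper rescales by the excess $E(w_c,B_{R/2})$ so that the coefficient oscillation carries the single factor $\omega_a(R)H_a(E(R))/\Psi_{B_R}^{-}(E(R))$, and $\Lambda$ is then evaluated at the number $E(R)$, which is controlled through the energy, $L^\infty$, or oscillation bounds on $w_c$ inherited from $u$ via \eqref{0ce:4}--\eqref{0ce:6} and \eqref{1ce:4}--\eqref{1ce:6}.
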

\begin{proof}
	We assume $E(w_{c},B_{R/2})>0$, otherwise \eqref{2ce:2} is trivial. For the sake of simplicity during the proof, we write 
	\begin{align}
		\label{2ce:E_1E_2}
		E(R):= E(w_{c},B_{R/2}) = \left(\Psi_{B_{R}}^{-} \right)^{-1}\left( \FI_{B_{R/2}}\Psi_{B_{R}}^{-}\left(\left| \frac{w_{c}-(w_{c})_{B_{R/2}}}{R}\right| \right)\,dx \right).
	\end{align}
	
	 The proof falls in several delicate steps.
	
	\textbf{Step 1: Initial settings on $w_{c}$.} Applying Lemma \ref{lem:cacct} to $B_{R/2}$ with $k\equiv (w_{c})_{B_{R/2}}$, we have 
	\begin{align}
		\label{2ce:3}
		\FI_{B_{R/4}}\Psi\left(x, |Dw_{c}| \right)\,dx
		\leqslant
		c \FI_{B_{R/2}}\Psi_{B_{R}}^{-}\left( \left|\frac{w_{c}-(w_{c})_{B_{R/2}}}{R}\right| \right)\,dx
	\end{align}
	for some constant $c\equiv c(\data)$. Moreover, it follows from Lemma \ref{lem:1ce} that there exists a higher integrability exponent $\delta_{0}\equiv \delta_{0}(\data)$ such that
	\begin{align}
		\label{2ce:4}
		\left(\FI_{B_{R/8}}\left[\Psi(x,|Dw_{c}|)\right]^{1+\delta_0}\,dx \right)^{\frac{1}{1+\delta_{0}}}
		\leqslant
		c\FI_{B_{R/4}}\Psi(x,|Dw_{c}|)\,dx
	\end{align}
	for a constant $c\equiv c(\data)$.
	
	\textbf{Step 2: Scaling.} We set the scaled functions of $w_{c}(\cdot)$, $a(\cdot)$ and $b(\cdot)$ in the ball $B_{1}$ by 
	\begin{subequations}
	 \begin{empheq}[left={\empheqlbrace}]{align}
          & \bar{w}_{c}(x):= \frac{w_{c}(x_0+Rx)-(w_{c})_{B_{R/2}}}{E(R)R}, \label{2ce:5_1}\\
         &\bar{a}(x):= a(x_0+Rx)\frac{H_{a}(E(R))}{\Psi_{B_{R}}^{-}\left( E(R) \right)}\quad\text{and}\quad 
         \bar{b}(x):= b(x_0+Rx)\frac{H_{b}(E(R))}{\Psi_{B_{R}}^{-}\left( E(R) \right)}\label{2ce:5_2}.
      \end{empheq}
	\end{subequations}
	for every $x\in B_{1}$. Now we define the control function and energy integrand associated to our scaling in \eqref{2ce:5_1}-\eqref{2ce:5_2} as
	\begin{subequations}
	 \begin{empheq}[left={\empheqlbrace}]{align}
          & \bar{\Psi}(x,|z|):= \bar{G}(|z|) + \bar{a}(x) \bar{H}_{a}(|z|) + \bar{b}(x) \bar{H}_{b}(|z|)  \label{2ce:6_1},\\
          & \bar{F}(x,z):= \bar{F}_{G}(z) + \bar{a}(x)\bar{F}_{H_{a}}(z) + \bar{b}(x)\bar{F}_{H_{b}}(z), \label{2ce:6_2} \\
         &\bar{F}_{G}(z):= \frac{F_{G}(x_c,(u)_{B_{2R}}, E(R)z)}{\Psi_{B_{R}}^{-}\left( E(R)\right)},\quad
         \bar{F}_{H_{a}}(z):= \frac{F_{H_{a}}(x_c,(u)_{B_{2R}}, E(R)z)}{H_{a}\left( E(R)\right)},\label{2ce:6_3}\\
        &\bar{F}_{H_{b}}(x,z):= \frac{F_{H_{b}}(x_c,(u)_{B_{2R}}, E(R)z)}{H_{b}\left( E(R)\right)}\quad\text{and}\quad
        \bar{A}(x,z):= D_{z}\bar{F}(x,z)\label{2ce:6_4}
      \end{empheq}
	\end{subequations}
for every $x\in B_{1}$ and $z\in\R^n$, where the point $x_{c}\in B_{R}$ has been fixed in \eqref{1ce:2} and
\begin{align}
	\label{2ce:7}
           \bar{G}(t):= \frac{G(E(R)t)}{\Psi_{B_{R}}^{-}\left( E(R) \right)},\quad
	\bar{H}_{a}(t):= \frac{H_{a}(E(R)t)}{H_{a}\left( E(R) \right)},\quad
	\bar{H}_{b}(t):= \frac{H_{b}(E(R)t)}{H_{b}\left( E(R) \right)}
\end{align}
	for every $t\geqslant 0$. Clearly, one can see that $\bar{G},\bar{H}_{a}, \bar{H}_{b}\in \mathcal{N}$ with indices $s(G),s(H_{a}),s(H_{b})$, respectively, and also that
	\begin{align}
		\label{2ce:8}
		\bar{G}(1) \leqslant 1,\quad
		\bar{H}_{a}(1)=1
		\quad\text{and}\quad
		\bar{H}_{b}(1)=1.
	\end{align}
Then one can check that the function $\bar{w}_{c}$ minimizes the following functional 
\begin{align}
	\label{2ce:9}
	W^{1,\bar{\Psi}}(B_{1})\ni v\mapsto \I_{B_{1}}\bar{F}(x,Dv)\,dx,
\end{align}
where the functions $\bar{\Psi}(\cdot)$ and $\bar{F}(\cdot)$ have been defined in \eqref{2ce:6_1} and \eqref{2ce:6_2}, respectively. The Euler-Lagrange equation associated to the functional in \eqref{2ce:9} becomes 
\begin{align}
	\label{2ce:10}
	\FI_{B_{1}}\inner{\bar{A}(x,D\bar{w}_{c})}{D\varphi}\,dx = \FI_{B_{1}}\inner{ D_{z}\bar{F}\left(x, D\bar{w}_{c}\right)}{D\varphi}\,dx = 0
\end{align}
for every $\varphi\in W^{1,\bar{\Psi}}_{0}(B_{1})$. By the assumptions \eqref{sa:1} and \eqref{sa:2} via elementary computations, we have the following structure condition in the scaled settings:

\begin{subequations}
	 \begin{empheq}[left={\empheqlbrace}]{align}
	 &\nu \bar{\Psi}(x,|z|) \leqslant \bar{F}(x,z) \leqslant L\bar{\Psi}(x,|z|),
	 	\label{2ce:11_1} \\
          & |\bar{A}(x,z)||z| + |D_{z}\bar{A}(x,z)||z|^2 \leqslant L\bar{\Psi}(x,|z|)  \label{2ce:11_2},\\
         & \nu \frac{\bar{\Psi}(x,|z|)}{|z|^2}|\xi|^{2} \leqslant
         \inner{D_{z}\bar{A}(x,z)\xi}{\xi}\label{2ce:11_3}
      \end{empheq}
	\end{subequations}
hold true for every $x\in B_{1}$ and $z\in\R^n\setminus \{0\}$.

\textbf{Step 3: Freezing.} Now we consider frozen functional and vector field associated to $\bar{F}(\cdot)$ and $\bar{A}(\cdot)$ defined in \eqref{2ce:6_2}-\eqref{2ce:6_4}. Let $\bar{x}_{a},\bar{x}_{b}\in \overline{B}_{1}$ be points such that $\bar{a}(\bar{x}_a)= \inf\limits_{x\in B_{1}}\bar{a}(x)$ and $\bar{b}(\bar{x}_b)= \inf\limits_{x\in B_{1}}\bar{b}(x)$. Then we denote by
\begin{align}
	\label{2ce:12_1}
	\bar{F}_{0}(z)&:= \bar{F}_{G}(z) + \bar{a}(\bar{x}_a)\bar{F}_{H_{a}}(z) 
	+
	\bar{b}(\bar{x}_b)\bar{F}_{H_{b}}(z),
\end{align}
\begin{align}
	\label{2ce:12_2}
	\bar{A}_{0}(z):= D_{z}\bar{F}_{0}(z)
\end{align}
and 
\begin{align}
	\label{2ce:13}
	\bar{\Psi}_{0}(t):= \bar{G}(t) + \bar{a}(\bar{x}_{a})\bar{H}_{a}(t) + \bar{b}(\bar{x}_{b})\bar{H}_{b}(t)
\end{align}
for every $x\in B_{1}$, $z\in\R^{n}$ and $t\geqslant 0$. By the very definition in \eqref{2ce:6_1}-\eqref{2ce:6_4}, straightforwardly one can see
\begin{align}
	\label{2ce:13_1}
	\bar{\Psi}_0(1) = 1.
\end{align}
 In our new scaled settings, we now consider the functional
\begin{align}
	\label{2ce:14}
	W^{1,\bar{\Psi}_{0}}\left( B_{1/8} \right)\ni v\mapsto \I_{B_{1/8}} \bar{F}_{0}(Dv)\,dx.
\end{align}
We observe that the newly defined integrand $\bar{F}_{0}(\cdot)$ and vector field $\bar{A}_{0}(\cdot)$ satisfy the growth and ellipticity conditions as 
\begin{subequations}
	 \begin{empheq}[left={\empheqlbrace}]{align}
	 &\nu \bar{\Psi}_{0}(|z|) \leqslant \bar{F}_{0}(z) \leqslant L\bar{\Psi}_{0}(|z|),
	 	\label{2ce:15_1} \\
          & |\bar{A}_{0}(z)||z| + |D_{z}\bar{A}_{0}(z)||z|^2 \leqslant L\bar{\Psi}_{0}(|z|)  \label{2ce:15_2},\\
         & \nu \frac{\bar{\Psi}_{0}(|z|)}{|z|^2}|\xi|^{2} \leqslant
         \inner{D_{z}\bar{A}_{0}(z)\xi}{\xi}\label{2ce:15_3}
      \end{empheq}
	\end{subequations}
for every $z\in\R^{n}\setminus\{0\}$ and $\xi\in\R^{n}$. Therefore, the estimates \eqref{2ce:3} and \eqref{2ce:4} are written in the view of $\bar{w}_{c}$ as
\begin{align}
	\label{2ce:15_4}
	\FI_{B_{1/4}}\bar{\Psi}(x,|D\bar{w}_{c}|)\,dx + \left(\FI_{B_{1/8}}[\bar{\Psi}(x,|D\bar{w}_{c}|)]^{1+\delta_0}\,dx\right)^{\frac{1}{1+\delta_0}} \leqslant c(\data).
\end{align}

\textbf{Step 4: Harmonic type approximation.} Let $\varphi\in W^{1,\infty}_{0}\left( B_{1/8} \right)$. Using \eqref{2ce:10}, we see 
\begin{align}
	\label{2ce:16}
	\begin{split}
	I_{0}&:=\left|\FI_{B_{1/8}}\inner{\bar{A}_{0}(D\bar{w}_{c})}{D\varphi}\,dx \right|
	=
	\left|\FI_{B_{1/8}}\inner{\bar{A}_{0}(D\bar{w}_{c})-\bar{A}(x,D\bar{w}_{c})}{D\varphi}\,dx \right|
	\\&
	\leqslant
	\FI_{B_{1/8}}|\bar{A}_{0}(D\bar{w}_{c})-\bar{A}(x,D\bar{w}_{c})|\,dx \norm{D\varphi}_{L^{\infty}(B_{1/8})} =:  I_{1} \norm{D\varphi}_{L^{\infty}(B_{1/8})}.
	\end{split}
\end{align}

Now using \eqref{sa:2}, we see
\begin{align}
	\label{2ce:17}
	\begin{split}
	I_{1} &\leqslant L\FI_{B_{1/8}} |\bar{a}(x)-\bar{a}(\bar{x}_{a})|\frac{\bar{H}_{a}(|D\bar{w}_{c}|)}{|D\bar{w}_{c}|}\,dx
	+
	L\FI_{B_{1/8}} |\bar{b}(x)-\bar{b}(\bar{x}_{b})|\frac{\bar{H}_{b}(|D\bar{w}_{c}|)}{|D\bar{w}_{c}|}\,dx
	 =:L\left( I_{11} + I_{12}\right).
	\end{split}
\end{align}
Now we estimate the terms appearing in the last display. In turn, using \eqref{growth2}, \eqref{2ce:8} and \eqref{2ce:15_4}, we have
\begin{align}
	\label{2ce:18}
	\begin{split}
	I_{11} &\leqslant 
	c\FI_{B_{1/8}}|\bar{a}(x)-\bar{a}(\bar{x}_{a})|\left( [\bar{H}_{a}(|D\bar{w}_{c}|)]^{\frac{1}{s(H_{a})+1}} + [\bar{H}_{a}(|D\bar{w}_{c}|)]^{\frac{s(H_{a})}{s(H_{a})+1}}\right)\,dx
	\\&
	\leqslant
	c\norm{\bar{a}-\bar{a}(\bar{x}_{a})}_{L^{\infty}(B_{1/8})}^{\frac{s(H_{a})}{s(H_{a})+1}}\left(\FI_{B_{1/8}} \bar{a}(x)\bar{H}_{a}(|D\bar{w}_{c}|)\,dx  \right)^{\frac{1}{s(H_{a})+1}}
	\\&
	\quad
	+
	c\norm{\bar{a}-\bar{a}(\bar{x}_{a})}_{L^{\infty}(B_{1/8})}^{\frac{1}{s(H_{a})+1}}\left(\FI_{B_{1/8}} \bar{a}(x)\bar{H}_{a}(|D\bar{w}_{c}|)\,dx  \right)^{\frac{s(H_{a})}{s(H_{a})+1}}
	\\&
	\leqslant
	c(\data)\left( \norm{\bar{a}-\bar{a}(\bar{x}_{a})}_{L^{\infty}(B_{1/8})}^{\frac{1}{s(H_{a})+1}} + \norm{\bar{a}-\bar{a}(\bar{x}_{a})}_{L^{\infty}(B_{1/8})}^{\frac{s(H_{a})}{s(H_{a})+1}} \right),
	\end{split}
\end{align}
where we have used also H\"older's inequality and the fact that $\bar{a}(\bar{x}_a) \leqslant \bar{a}(x)$ for every $x\in B_{1}$. In a similar way, we have 
\begin{align}
	\label{2ce:19}
	I_{12} \leqslant c(\data)\left( \norm{\bar{b}-\bar{b}(\bar{x}_{b})}_{L^{\infty}(B_{1/8})}^{\frac{1}{s(H_{b})+1}} + \norm{\bar{b}-\bar{b}(\bar{x}_{b})}_{L^{\infty}(B_{1/8})}^{\frac{s(H_{b})}{s(H_{b})+1}} \right).
\end{align}
Inserting those estimates into \eqref{2ce:17} and then \eqref{2ce:16}, we find 
\begin{align}
	\label{2ce:20}
	\begin{split}
	I_{0} &\leqslant c(\data)\left( \norm{\bar{a}-\bar{a}(\bar{x}_{a})}_{L^{\infty}(B_{1/8})}^{\frac{1}{s(H_{a})+1}} + \norm{\bar{a}-\bar{a}(\bar{x}_{a})}_{L^{\infty}(B_{1/8})}^{\frac{s(H_{a})}{s(H_{a})+1}}\right)
	\\&
	\quad +
	c(\data)\left( \norm{\bar{b}-\bar{b}(\bar{x}_{b})}_{L^{\infty}(B_{1/8})}^{\frac{1}{s(H_{b})+1}} + \norm{\bar{b}-\bar{b}(\bar{x}_{b})}_{L^{\infty}(B_{1/8})}^{\frac{s(H_{b})}{s(H_{b})+1}} \right).
	\end{split}
\end{align}

Now we estimate the terms $\norm{\bar{a}-\bar{a}(\bar{x}_{a})}_{L^{\infty}(B_{1/8})}$ and $\norm{\bar{b}-\bar{b}(\bar{x}_{b})}_{L^{\infty}(B_{1/8})}$ depending on which assumption of \eqref{mth:md:1}-\eqref{mth:md:5} comes into play. Recalling the definition of $\bar{a}(\cdot)$, $\bar{b}(\cdot)$ in \eqref{2ce:5_2} and the excess functional in \eqref{2ce:E_1E_2}, we have 
\begin{align}
	\label{2ce:22_1}
	I_{a}:=\norm{\bar{a}-\bar{a}(\bar{x}_{a})}_{L^{\infty}(B_{1/8})}
	\leqslant
	c \omega_{a}(R)\frac{H_{a}(E(R))}{\Psi_{B_{R}}^{-}\left( E(R) \right)}
\end{align}
and
\begin{align}
	\label{2ce:22_2}
	I_{b}:=\norm{\bar{b}-\bar{b}(\bar{x}_{b})}_{L^{\infty}(B_{1/8})}
	\leqslant
	c \omega_{b}(R)\frac{H_{b}(E(R))}{\Psi_{B_{R}}^{-}\left( E(R) \right)}.
\end{align}

\textbf{Case 1: Assumption \eqref{mth:md:1} is in force.} The assumption $\eqref{mth:md:1}_{2}$ implies that for any $\varepsilon\in (0,1)$ there exists $\mu_1 >0$ depending on $\varepsilon$ such that 
\begin{align}
	\label{2ce:21}
	\Lambda\left(t, G^{-1}\left(t^{-n} \right) \right) \leqslant \varepsilon
	\quad\text{for every}\quad t\in (0,\mu_1).
\end{align} 
Then using this one and \eqref{ma:1}, we continue to estimate $I_{a}$ in \eqref{2ce:22_1} as 
\begin{align}
	\label{2ce:23}
	\begin{split}
	I_{a}
	&\leqslant
	c\omega_{a}(R)\frac{\left(H_{a}\circ G^{-1}\right)\left(\Psi_{B_{R}}^{-}(E(R)) \right)}{\Psi_{B_{R}}^{-}(E(R))}
	\\&
	\leqslant
	c\omega_{a}(R)\varepsilon\left(1+\frac{1}{\omega_{a}\left([\Psi_{B_{R}}^{-}(E(R))]^{-\frac{1}{n}}\right)} \right)
	+
	c\omega_{a}(R)\left(1+\frac{1}{\omega_{a}\left(\mu_1\right)} \right)
	\end{split}
\end{align}
with $c\equiv c([a]_{\omega_{a}}, \lambda_{1})$, where we have used the fact that $\left(\Psi_{B_{R}}^{-} \right)^{-1}(t) \leqslant G^{-1}(t)$ for every $t\geqslant 0$.  Using \eqref{concave2} together with the energy estimates \eqref{1ce:4} and \eqref{0ce:4}, we observe that 
\begin{align}
	\label{2ce:24}
	\begin{split}
	\frac{1}{\omega_{a}\left(\left[\Psi_{B_{R}}^{-}(E(R))\right]^{-\frac{1}{n}}\right)}
	&\leqslant
	\frac{c}{\omega_{a}(R)} + \frac{c}{\omega_{a}(R)}\I_{B_{R/2}}\Psi_{B_{R}}^{-}\left(\left|\frac{w_c-(w_{c})_{B_{R/2}}}{R} \right|\right)\,dx
	\\&
	\leqslant
	\frac{c}{\omega_{a}(R)} + \frac{c}{\omega_{a}(R)}\I_{B_{2R}}\Psi\left(x,|Du|\right)\,dx
	\leqslant
	\frac{c(\data)}{\omega_{a}(R)}.
	\end{split}
\end{align}
Combining the last two displays, we conclude 
\begin{align}
	\label{2ce:25}
	I_{a} \leqslant c\left( \varepsilon + \omega_{a}(R)\left( 1+ \frac{1}{\omega_{a}(\mu_1)} \right) \right)
\end{align}
with some constant $c\equiv c(\data)$. In the same manner, we see 
\begin{align}
	\label{2ce:26}
	I_{b} \leqslant 
	c\left( \varepsilon + \omega_{b}(R)\left( 1+ \frac{1}{\omega_{b}(\mu_1)} \right) \right)
\end{align}
for some constant $c\equiv c(\data)$. Therefore, inserting the estimates in the last two displays into \eqref{2ce:20} and recalling \eqref{2ce:16}, we have 
\begin{align}
	\label{2ce:27}
	\left|\FI_{B_{1/8}}\inner{\bar{A}_{0}(D\bar{w}_{c})}{D\varphi}\,dx \right|
	\leqslant
	c(\data)p_{1}(\varepsilon,R)\norm{D\varphi}_{L^{\infty}(B_{1/8})},
\end{align}
where 
\begin{align}
	\label{2ce:28}
	\begin{split}
	p_1(\varepsilon,R)
	&:= \left[ \varepsilon + \omega_{a}(R)\left( 1+ \frac{1}{\omega_{a}(\mu_1)} \right) \right]^{\frac{1}{s(H_{a})+1}} + \left[ \varepsilon + \omega_{a}(R)\left( 1+ \frac{1}{\omega_{a}(\mu_1)} \right) \right]^{\frac{s(H_{a})}{s(H_{a})+1}}
	\\&
	\quad
	+
	\left[ \varepsilon + \omega_{b}(R)\left( 1+ \frac{1}{\omega_{b}(\mu_1)} \right) \right]^{\frac{1}{s(H_{b})+1}} + 
	\left[ \varepsilon + \omega_{b}(R)\left( 1+ \frac{1}{\omega_{b}(\mu_1)} \right) \right]^{\frac{s(H_{b})}{s(H_{b})+1}}.
	\end{split}
\end{align}

\textbf{Case 2: Assumption \eqref{mth:md:2} is in force.} From the assumption $\eqref{mth:md:2}_{2}$ it holds that for every $\varepsilon\in (0,1)$ there exists $\mu_{2}>0$ depending on $\varepsilon$ such that 
\begin{align}
	\label{2ce:29}
	\Lambda\left(t,\frac{1}{t}\right) \leqslant \varepsilon
	\quad\text{for every}\quad t\in (0,\mu_2).
\end{align}
Then by the very definition of $\Psi_{B_{R}}^{-}$ in \eqref{ispsi} together with \eqref{2ce:29} and \eqref{ma:2}, we have 
\begin{align}
	\label{2ce:30}
	\begin{split}
	I_{a}
	&\leqslant
	c\omega_{a}(R)\frac{H_{a}(E(R))}{G(E(R))}
	\\&
	\leqslant
	c\omega_{a}(R)\varepsilon\left(1+\frac{1}{\omega_{a}\left([E(R)]^{-1}\right)} \right)
	+
	c\omega_{a}(R)\left(1+\frac{1}{\omega_{a}\left(\mu_2\right)} \right).
	\end{split}
\end{align}
Again using \eqref{concave1} together with taking into account \eqref{1ce:5} and \eqref{0ce:5}, we see 
\begin{align}
	\label{2ce:31}
	\frac{1}{\omega_{a}\left([E(R)]^{-1}\right)} 
	\leqslant
	\frac{1}{\omega_{a}\left(\frac{R}{2\norm{w_{c}}_{L^{\infty}(B_{R})}} \right)}
	\leqslant
	\frac{c(\data)}{\omega_{a}(R)}.
\end{align}
Combining the last two displays, we find 
\begin{align}
	\label{2ce:32}
	I_{a} \leqslant c\left( \varepsilon + \omega_{a}(R)\left( 1+ \frac{1}{\omega_{a}(\mu_2)} \right) \right)
\end{align}
with some constant $c\equiv c(\data)$. Similarly, it holds that 
\begin{align}
	\label{2ce:33}
	I_{b} \leqslant c\left( \varepsilon + \omega_{b}(R)\left( 1+ \frac{1}{\omega_{a}(\mu_2)} \right) \right). 
	\end{align}
Then, plugging the estimates in the last two displays into \eqref{2ce:20} and recalling \eqref{2ce:16}, we have 
\begin{align}
	\label{2ce:34}
	\left|\FI_{B_{1/8}}\inner{\bar{A}_{0}(D\bar{w}_{c})}{D\varphi}\,dx \right|
	\leqslant
	c(\data)p_{2}(\varepsilon,R)\norm{D\varphi}_{L^{\infty}(B_{1/8})},
\end{align}
where 
\begin{align}
	\label{2ce:35}
	\begin{split}
	p_2(\varepsilon,R)
	&:= \left[ \varepsilon + \omega_{a}(R)\left( 1+ \frac{1}{\omega_{a}(\mu_2)} \right) \right]^{\frac{1}{s(H_{a})+1}} + \left[ \varepsilon + \omega_{a}(R)\left( 1+ \frac{1}{\omega_{a}(\mu_2)} \right) \right]^{\frac{s(H_{a})}{s(H_{a})+1}}
	\\&
	\quad
	+
	\left[ \varepsilon + \omega_{b}(R)\left( 1+ \frac{1}{\omega_{b}(\mu_2)} \right) \right]^{\frac{1}{s(H_{b})+1}} + 
	\left[ \varepsilon + \omega_{b}(R)\left( 1+ \frac{1}{\omega_{b}(\mu_2)} \right) \right]^{\frac{s(H_{b})}{s(H_{b})+1}}.
	\end{split}
\end{align}

\textbf{Case 3: Assumption \eqref{mth:md:3} is in force.} The assumption $\eqref{mth:md:3}_{2}$ implies that for any $\varepsilon\in (0,1)$ there exists $\mu_3>0$ depending on $\varepsilon$ such that 
\begin{align}
	\label{2ce:36}
	\Lambda\left(t^{\frac{1}{1-\gamma}}, \frac{1}{t} \right) \leqslant \varepsilon
	\quad\text{for every}\quad t\in (0,\mu_3).
\end{align}
This one together with recalling \eqref{2ce:22_1} and \eqref{ma:3}, we see 
\begin{align}
	\label{2ce:37}
	\begin{split}
	I_{a}
	&\leqslant
	c\omega(R)\frac{H_{a}(E(R))}{G(E(R))}
	\\&
	\leqslant
	c\omega_{a}(R)\varepsilon\left(1+\frac{1}{\omega_{a}\left([E(R)]^{-\frac{1}{1-\gamma}}\right)} \right)
	+
	c\omega_{a}(R)\left(1+\frac{1}{\omega_{a}\left(\mu_3^{\frac{1}{1-\gamma}}\right)} \right).
	\end{split}
\end{align}
Now using \eqref{1ce:6}, \eqref{0ce:6} and \eqref{ma:3}, we have 
\begin{align}
	\label{2ce:38}
	\frac{1}{\omega_{a}\left([E(R)]^{-\frac{1}{1-\gamma}}\right)}
	\leqslant
	\frac{1}{\omega_{a}\left(\left[\frac{\osc\limits_{B_{2R}}u}{R}\right]^{-\frac{1}{1-\gamma}}
	\right)}
	\leqslant
	\frac{c(\data)}{\omega_{a}(R)}.
\end{align}
Combining the last two displays, we find 
\begin{align}
	\label{2ce:39}
	I_{a} \leqslant c\left( \varepsilon + \omega_{a}(R)\left( 1+ \frac{1}{\omega_{a}\left(\mu_3^{\frac{1}{1-\gamma}}\right)} \right) \right)
\end{align}
for some constant $c\equiv c(\data)$. In the same way, we show 
\begin{align}
	\label{2ce:40}
	I_{b} \leqslant c\left( \varepsilon + \omega_{a}(R)\left( 1+ \frac{1}{\omega_{a}\left(\mu_3^{\frac{1}{1-\gamma}}\right)} \right) \right)
\end{align}
for some constant $c\equiv c(\data)$. Using the estimates \eqref{2ce:39}-\eqref{2ce:40} in \eqref{2ce:20}, we conclude 
\begin{align}
	\label{2ce:41}
	\left|\FI_{B_{1/8}}\inner{\bar{A}_{0}(D\bar{w}_{c})}{D\varphi}\,dx \right|
	\leqslant
	c(\data)p_{3}(\varepsilon,R)\norm{D\varphi}_{L^{\infty}(B_{1/8})},
\end{align}
where 
\begin{align}
	\label{2ce:42}
	\begin{split}
	p_3(\varepsilon,R)
	&:= \left[ \varepsilon + \omega_{a}(R)\left( 1+ \frac{1}{\omega_{a}\left(\mu_3^{\frac{1}{1-\gamma}}\right)} \right) \right]^{\frac{1}{s(H_{a})+1}} + \left[ \varepsilon + \omega_{a}(R)\left( 1+ \frac{1}{\omega_{a}\left(\mu_3^{\frac{1}{1-\gamma}}\right)} \right) \right]^{\frac{s(H_{a})}{s(H_{a})+1}}
	\\&
	\quad
	+
	\left[ \varepsilon + \omega_{b}(R)\left( 1+ \frac{1}{\omega_{b}\left(\mu_3^{\frac{1}{1-\gamma}}\right)} \right) \right]^{\frac{1}{s(H_{b})+1}} + 
	\left[ \varepsilon + \omega_{b}(R)\left( 1+ \frac{1}{\omega_{b}\left(\mu_3^{\frac{1}{1-\gamma}}\right)} \right) \right]^{\frac{s(H_{b})}{s(H_{b})+1}}.
	\end{split}
\end{align}

\textbf{Case 4. Assumption \eqref{mth:md:4} is in force.} We treat this case in a different way rather than the estimate used in \eqref{2ce:21}-\eqref{2ce:28}. In fact, we take an advantage that $w_a(\cdot)$ is a power function. Then recalling $I_{a}$ introduced in \eqref{2ce:22_1}, we see that 
\begin{align}
	\label{2ce:42_1}
	\begin{split}
	I_{a} &\leqslant c R^{\alpha} \frac{\left(H_{a}\circ G^{-1} \right)\left(\Psi_{B_{R}}^{-}(E(R))\right)}{\Psi_{B_{R}}^{-}(E(R))}
	\leqslant
	cR^{\alpha}\left( 1 + \left[\FI_{B_{R/2}}\Psi_{B_{R}}^{-}\left(\left|\frac{w_c-(w_c)_{B_{R/2}}}{R} \right| \right)\,dx \right]^{\frac{\alpha}{n}} \right)
	\\&
	\leqslant
	cR^{\alpha} + c\left(\I_{B_{R/2}}\Psi_{B_{R}}^{-}\left(\left|Dw_c \right| \right)\,dx \right)^{\frac{\alpha}{n}}
	\leqslant
	cR^{\alpha} + c\left(\I_{B_{2R}}\Psi\left(x,\left|Du \right| \right)\,dx \right)^{\frac{\alpha}{n}}
	\\&
	\leqslant
	cR^{\alpha} + cR^{\frac{\alpha\delta}{1+\delta}} \left(\I_{B_{2R}}[\Psi\left(x,\left|Du \right| \right)]^{1+\delta}\,dx \right)^{\frac{\alpha}{n(1+\delta)}}
	\\&
	\leqslant
	c(\data(\O_0))R^{\frac{\alpha\delta}{1+\delta}}
	\end{split}
\end{align}
for a higher integrability exponent $\delta$ coming from Theorem \ref{thm:hi}, where we have used \eqref{1ce:4}, \eqref{0ce:4} together with \eqref{hi:2}. By arguing similarly, we estimate $I_{b}$ in \eqref{2ce:22_2} as  
\begin{align}
	\label{2ce:42_2}
	I_{b} \leqslant c(\data(\O_0))R^{\frac{\beta\delta}{1+\delta}}.
\end{align}
Using estimates from the last two displays in \eqref{2ce:20} and recalling $R\leqslant 1$, we see 
\begin{align}
	\label{2ce:42_3}
	\left|\FI_{B_{1/8}}\inner{\bar{A}_{0}(D\bar{w}_{c})}{D\varphi}\,dx \right|
	\leqslant
	c(\data(\O_0))q_{1}(R)\norm{D\varphi}_{L^{\infty}(B_{1/8})},
\end{align}
where 
\begin{align}
	\label{2ce:42_4}
	q_{1}(R):= R^{\frac{\alpha\delta}{(1+\delta)(1+s(H_{a}))}} + R^{\frac{\beta\delta}{(1+\delta)(1+s(H_{b}))}}.
\end{align}

\textbf{Case 5: Assumption \eqref{mth:md:5} is in force.} Again we estimate $I_{a}$ and $I_{b}$ introduced in \eqref{2ce:22_1}-\eqref{2ce:22_2}. Using the assumption \eqref{ma:2}, \eqref{1ce:6} and \eqref{0ce:6}, we have 
\begin{align}
	\label{2ce:42_5}
	\begin{split}
	I_{a} &\leqslant
	cR^{\alpha}\frac{H_{a}(E(R))}{G(E(R))}
	\\&
	\leqslant
	 cR^{\alpha}\left( 1 + \left[ \left(\Psi_{B_{R}}^{-}\right)^{-1}\left(\FI_{B_{R/2}}\Psi_{B_{R}}^{-}\left(\left|\frac{w_c-(w_c)_{B_{R/2}}}{R} \right| \right)\,dx \right) \right]^{\alpha} \right)
	\\&
	\leqslant
	c\left(R^{\alpha}+ \left[\osc\limits_{B_{2R}}u\right]^{\alpha}\right)
	\leqslant
	c(\data(\O_0))R^{\gamma\alpha},
	\end{split}
\end{align}
where we have also used \eqref{hc:3} and the H\"older continuity exponent $\gamma$ came from Theorem \ref{thm:hc}. Similarly, we see 
\begin{align}
	\label{2ce:42_6}
	I_{b} \leqslant c(\data(\O_0))R^{\gamma\beta}.
\end{align}

Inserting the estimates from the last two displays into \eqref{2ce:20} and recalling $R\leqslant 1$, we see 
\begin{align}
	\label{2ce:42_7}
	\left|\FI_{B_{1/8}}\inner{\bar{A}_{0}(D\bar{w}_{c})}{D\varphi}\,dx \right|
	\leqslant
	c(\data(\O_0))q_{2}(R)\norm{D\varphi}_{L^{\infty}(B_{1/8})},
\end{align}
where 
\begin{align}
	\label{2ce:42_8}
	q_{2}(R):= R^{\frac{\alpha\gamma}{1+s(H_{a})}} + R^{\frac{\beta\gamma}{1+s(H_{b})}}
\end{align}

Collecting the estimates obtained in \eqref{2ce:27}, \eqref{2ce:34},\eqref{2ce:41}, \eqref{2ce:42_3} and \eqref{2ce:42_7}, we conclude with 
    \begin{align}
        \label{2ce:43}
        \left|\FI_{B_{1/8}} \inner{\bar{A}_{0}(D\bar{w}_{c})}{D\varphi}\,dx\right|
        \leqslant
        c_{h}d(\varepsilon,R) \norm{D\varphi}_{L^{\infty}(B_{1/8})}
    \end{align}
     for some constant $c_{h}\equiv c_{h}(\data(\O_0))$, whenever $\varphi\in W^{1,\infty}_{0}(B_{1/8})$,
    where 
    \begin{align}
	\label{2ce:44}
	 d(\varepsilon,R)
	 := \left\{\begin{array}{lr}
        p_1(\varepsilon,R) & \text{if } \eqref{mth:md:1} \text{ is assumed, }\\
        p_2(\varepsilon,R)  & \text{if } \eqref{mth:md:2} \text{ is assumed, } \\
        p_3(\varepsilon,R)  & \text{if } \eqref{mth:md:3} \text{ is assumed, } \\
        q_1(R)  & \text{if } \eqref{mth:md:4} \text{ is assumed, } \\
        q_2(R)  & \text{if } \eqref{mth:md:5} \text{ is assumed, }
        \end{array}\right.
\end{align}
in which $p_1, p_2$, $p_3$, $q_1$ and $q_2$ have been defined in \eqref{2ce:28}, \eqref{2ce:35}, \eqref{2ce:42}, \eqref{2ce:42_4} and \eqref{2ce:42_8}, respectively. By \eqref{2ce:13_1}, \eqref{2ce:15_1}-\eqref{2ce:15_4} and \eqref{2ce:43}, we are able to apply Lemma \ref{hta: lemma_hta} with $A_{0}(z)\equiv \bar{A}_{0}(z)$, $\Psi_{0}(t)\equiv \bar{\Psi}_{0}(t)$ with $a_{0}\equiv \bar{a}(\bar{x}_{a})$ and $b_{0}\equiv \bar{b}(\bar{x}_{b})$, to discover that there exists $\bar{h}\in \bar{w}_{c} + W_{0}^{1,\bar{\Psi}_{0}}(B_{1/8})$ such that 
    \begin{align}
        \label{2ce:45}
        \FI_{B_{1/8}}\inner{\bar{A}_0(D\bar{h})}{D\varphi}\,dx = 0\qquad \text{ for all }
        \qquad \varphi\in W^{1,\infty}_{0}(B_{1/8}),
    \end{align}
    \begin{align}
        \label{2ce:46}
        \FI_{B_{1/4}}\bar{\Psi}_{0}(|D\bar{h}|)\,dx + \FI_{B_{1/8}}[\bar{\Psi}_{0}(|D\bar{h}|)]^{1+\delta_1}\,dx \leqslant c
        \text{ for some } \delta_1 \leqslant \delta_0,
    \end{align}
    \begin{align}
        \label{2ce:47}
        \begin{split}
        \FI_{B_{1/8}}&\left( |V_{\bar{G}}(D\bar{w}_{c})-V_{\bar{G}}(D\bar{h})|^{2} + \bar{a}(\bar{x}_{a})|V_{\bar{H}_{a}}(D\bar{w}_{c})-V_{\bar{H}_{a}}(D\bar{h})|^{2} + \bar{b}(\bar{x}_{b})|V_{\bar{H}_{b}}(D\bar{w}_{c})-V_{\bar{H}_{b}}(D\bar{h})|^{2} \right)\,dx
        \\&
        \leqslant
        c[d(\varepsilon, R)]^{s_1}
        \end{split}
    \end{align}
    and finally
    \begin{align}
        \label{2ce:48}
        \FI_{B_{1/8}} \left(\bar{G}\left( |\bar{w}_{c}-\bar{h}|\right) + \bar{a}(\bar{x}_{a}) \bar{H_{a}}\left( |\bar{w}_{c}-\bar{h}| \right) + \bar{b}(\bar{x}_{b}) \bar{H_{b}}\left( |\bar{w}_{c}-\bar{h}| \right) \right)\,dx \leqslant c_{d}[d(\varepsilon, R)]^{s_{0}}
    \end{align}
    with some constants $c,c_d\equiv c,c_d(\data(\O_0))\geqslant 1$ and $s_0,s_1\equiv s_0,s_1(\data)\in (0,1)$, but they are all independent of $R$. Therefore, for a given $\varepsilon^{*}\in (0,1)$ as in the statement of our lemma, we choose small enough $\varepsilon$ and $R^{*}$ to satisfy
    \begin{align}
        \label{2ce:49}
        c_{d}\left[ d(\varepsilon,R^{*}) \right]^{s_0} \leqslant \varepsilon^{*}.
    \end{align}
    Since the constants $c_d$ and $ s_0$ only depend on $\data(\O_0)$ and $\data$, respectively, the last display gives us the dependence of $R^{*}$ as in \eqref{2ce:1}. Furthermore, by \eqref{2ce:48}, we conclude with 
    \begin{align}
        \label{2ce:50}
        \FI_{B_{1/8}} \left[\bar{G}\left( |\bar{w}_{c}-\bar{h}|\right) + \bar{a}(\bar{x}_{a}) \bar{H}_{a}\left( |\bar{w}_{c}-\bar{h}| \right) + \bar{b}(\bar{x}_{b}) \bar{H}_{b}\left( |\bar{w}_{c}-\bar{h}| \right) \right]\,dx \leqslant \varepsilon^{*}.
    \end{align}
    
    \textbf{Proof of \eqref{2ce:2}.} We observe that by a standard density argument, the relation in \eqref{2ce:45} still holds for every $\varphi\in W^{1,1}_{0}(B_{1/8})$ with $\bar{\Psi}_{0}(|D\varphi|)\in L^{1}(B_{1/8})$. Recalling \eqref{2ce:12_1} and \eqref{2ce:12_2}, we see that $\bar{h}$ is a local minimizer of the functional 
    \begin{align}
        \label{2ce:51}
        W^{1,\bar{\Psi}_{0}}(B_{1/8})\ni \u\mapsto \I_{B_{1/8}} \bar{F}_{0}(D\u)\,dx.
    \end{align}
    Since the conditions \eqref{2ce:15_1}-\eqref{2ce:15_3} are satisfied for the integrand $\bar{F}_{0}(\cdot)$, we are in a position to apply the results from \cite{L1} to obtain the following a priori Lipschitz estimate:
    \begin{align}
        \label{2ce:52}
        \sup\limits_{B_{1/16}} \bar{\Psi}_{0}(|D\bar{h}|) \leqslant
        c\FI_{B_{1/8}} \bar{\Psi}_{0}(|D\bar{h}|)\,dx
    \end{align}
    with some constant $c\equiv c(n,s(G),s(H_{a}),s(H_{b}),\nu,L)$. For any $\tau\in \left(0,1/16\right)$, we have that 
    \begin{align}
        \label{2ce:53}
        \begin{split}
            \FI_{B_{\tau}}\bar{\Psi}_{0}\left(\left|\frac{\bar{w}_{c}-(\bar{w}_{c})_{B_{\tau}}}{\tau}\right|\right)\,dx
            &\leqslant
            \FI_{B_{\tau}}\bar{\Psi}_{0}\left(\left|\frac{\bar{w}_{c}-(\bar{h})_{B_{\tau}}}{\tau}\right|\right)\,dx
            \\&
            \leqslant
            \FI_{B_{\tau}}\bar{\Psi}_{0}\left(\left|\frac{\bar{h}-(\bar{h})_{B_{\tau}}}{\tau}\right|\right)\,dx
            +
            \FI_{B_{\tau}}\bar{\Psi}_{0}\left(\left|\frac{\bar{w}_{c}-\bar{h}}{\tau}\right|\right)\,dx
            \\&
            \stackrel{\eqref{2ce:50}}{\leqslant}
            c \sup\limits_{B_{\tau}} \bar{\Psi}_{0}(|D\bar{h}|) + c\tau^{-(n+s(\Psi)+1)}\varepsilon^{*}
            \\&
            \stackrel{\eqref{2ce:52}}{\leqslant}
            c\FI_{B_{1/8}}\bar{\Psi}_{0}(|D\bar{h}|)\,dx + c\tau^{-(n+s(\Psi)+1)}\varepsilon^{*}
            \\&
            \stackrel{\eqref{2ce:46}}{\leqslant}
            c + c\tau^{-(n+s(\Psi)+1)}\varepsilon^{*}.
        \end{split}
    \end{align}
    By scaling back to $w_{c}$ as introduced in \eqref{2ce:5_1}-\eqref{2ce:5_2}, we obtain the desired estimate \eqref{2ce:2}. The proof is complete.
    \end{proof} 

\begin{lem}
	\label{lem:3ce}
	Under the assumptions and notations of Lemma \ref{lem:2ce}, let $w_{c}\in W^{1,\Psi}(B_{R})$ be the solution to the problem defined in \eqref{1ce:2}. If one of the assumptions \eqref{mth:md:1}-\eqref{mth:md:5} is satisfied, then there exists $h \in w_c + W^{1,\Psi_{B_{R}}^{-}}_{0}(B_{R/8})$ being  a local minimizer of the functional defined by 
	\begin{align}
		\label{3ce:1}
		W^{1,1}(B_{R/8})\ni v\mapsto \F_{0}(v):= \I_{B_{R/8}}F_{0}(Dv)\,dx, 
	\end{align}
	where the integrand function is given by
\begin{align}
	\label{3ce:1_1}
	F_{0}(z):= F_{G}\left(x_{c}, (u)_{B_{2R}},z \right) + a(x_{a})F_{H_{a}}\left(x_{c}, (u)_{B_{2R}},z \right) + b(x_{b})F_{H_{b}}\left(x_{c}, (u)_{B_{2R}},z \right)
\end{align}
for some fixed point $x_c\in B_{R}$ having been fixed in \eqref{1ce:2}  and  $x_{a}, x_{b}\in\overline{B}_{R}$ being points such that $a(x_{a}):= \inf\limits_{x\in B_{R}} a(x)$ and $b(x_{b}):= \inf\limits_{x\in B_{R}} b(x)$, whenever $z\in\R^n$, such that 
\begin{align}
	\label{3ce:2}
	\begin{split}
	\FI_{B_{R/8}}&\left[ |V_{G}(Du)-V_{G}(Dh)|^2 + a(x_a)|V_{H_{a}}(Du)-V_{H_{a}}(Dh)|^2 + b(x_b)|V_{H_{b}}(Du)-V_{H_{b}}(Dh)|^2  \right]\,dx
	\\&
	\leqslant
	c\left( \omega\left(R^{\gamma}\right) + [d(\varepsilon,R)]^{s_1}  \right)\FI_{B_{2R}}\Psi(x,|Du|)\,dx
	\end{split}
\end{align}
for some constant $c\equiv c(\data(\O_0))$, where $s_1$ and $d(\varepsilon,R)$ have been defined in \eqref{2ce:47} and \eqref{2ce:44}, respectively. Moreover, we have the energy estimate
\begin{align}
	\label{3ce:3}
	\FI_{B_{R/8}}\Psi_{B_{R}}^{-}(|Dh|)\,dx \leqslant c\FI_{B_{2R}}\Psi(x,|Du|)\,dx
\end{align}
for some constant $c\equiv c(n,\nu,L)$.
\end{lem}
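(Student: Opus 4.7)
The plan is to obtain $h$ by scaling back the harmonic-approximation function $\bar h \in \bar w_c + W^{1,\bar\Psi_0}_0(B_{1/8})$ furnished by \eqref{2ce:45} in the proof of Lemma \ref{lem:2ce}, and then to bootstrap the three comparisons $u\rightsquigarrow w\rightsquigarrow w_c \rightsquigarrow h$ supplied by Lemma \ref{lem:0ce}, Lemma \ref{lem:1ce} and a scaled-back version of \eqref{2ce:47}, respectively. Concretely, I would set
\[
h(y) := E(R)R\,\bar h\!\left(\tfrac{y-x_0}{R}\right) + (w_c)_{B_{R/2}}, \qquad y \in B_{R/8}(x_0).
\]
Using the identity $\bar\Psi_0(t) = \Psi_{B_R}^-(E(R)t)/\Psi_{B_R}^-(E(R))$, which follows from \eqref{2ce:5_2}, \eqref{2ce:7} and the matching $\bar a(\bar x_a) = a(x_a)H_a(E(R))/\Psi_{B_R}^-(E(R))$ (and analogously for $b$), one checks that $h - w_c \in W^{1,\Psi_{B_R}^-}_0(B_{R/8})$, that $h$ is a local minimizer of $\F_0$ on $B_{R/8}$, and that the points $x_0 + R\bar x_a$ and $x_0+R\bar x_b$ minimizing $\bar a,\bar b$ on $\overline{B}_1$ are precisely the points $x_a,x_b$ appearing in the statement.

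For the estimate \eqref{3ce:2}, I would apply $|V_\Phi(z_1)-V_\Phi(z_3)|^2 \leqslant 2|V_\Phi(z_1)-V_\Phi(z_2)|^2 + 2|V_\Phi(z_2)-V_\Phi(z_3)|^2$ pointwise, for $\Phi \in \{G, H_a, H_b\}$, first to $(Du, Dw, Dw_c)$ and then to $(Du, Dw_c, Dh)$, reducing the proof to three pairwise estimates. The $u\leftrightarrow w$ and $w \leftrightarrow w_c$ pieces follow directly from \eqref{0ce:3} and \eqref{1ce:3}; since $a(x_a) \leqslant a(x)$ and $b(x_b) \leqslant b(x)$ on $B_R$, the frozen-weight left-hand side of \eqref{3ce:2} is dominated by the unfrozen right-hand sides of those lemmas, and the $\omega(R)$ factor from \eqref{1ce:3} is absorbed into $\omega(R^\gamma)$ since $R\leqslant 1$ and $\gamma\in(0,1)$.

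The hard part will be the $w_c\leftrightarrow h$ step, which requires pulling \eqref{2ce:47} back from $B_{1/8}$ to $B_{R/8}$. A direct computation from \eqref{defV} and the scalings \eqref{2ce:5_1}-\eqref{2ce:7} gives the identity $V_{\bar G}(z) = V_G(E(R)z)/[\Psi_{B_R}^-(E(R))]^{1/2}$, and similarly $V_{\bar H_a}(z) = V_{H_a}(E(R)z)/[H_a(E(R))]^{1/2}$ and $V_{\bar H_b}(z) = V_{H_b}(E(R)z)/[H_b(E(R))]^{1/2}$. Combining these with $\bar a(\bar x_a) = a(x_a)H_a(E(R))/\Psi_{B_R}^-(E(R))$ and its $b$-analogue, the weighted sum of $V$-differences scales uniformly by $1/\Psi_{B_R}^-(E(R))$, so that a change of variables converts \eqref{2ce:47} into
\begin{align*}
\FI_{B_{R/8}}\!\bigl[&|V_G(Dw_c)-V_G(Dh)|^2 + a(x_a)|V_{H_a}(Dw_c)-V_{H_a}(Dh)|^2 \\
&+\, b(x_b)|V_{H_b}(Dw_c)-V_{H_b}(Dh)|^2\bigr]\,dx \leqslant c\,[d(\varepsilon,R)]^{s_1}\,\Psi_{B_R}^-(E(R)).
\end{align*}
The factor $\Psi_{B_R}^-(E(R))$ is then controlled by $c\FI_{B_{2R}}\Psi(x,|Du|)\,dx$ via Poincar\'e's inequality for $\Psi_{B_R}^- \in \mathcal{N}$ on $B_{R/2}$, the pointwise bound $\Psi_{B_R}^-(\cdot) \leqslant \Psi(x,\cdot)$ on $B_R$, and the energy chain \eqref{1ce:4}, \eqref{0ce:4}. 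Finally, \eqref{3ce:3} is a standard energy estimate: by minimality of $h$ against $w_c$ together with the unscaled form of \eqref{2ce:15_1} one has $\FI_{B_{R/8}}\Psi_{B_R}^-(|Dh|)\,dx \leqslant (L/\nu)\FI_{B_{R/8}}\Psi_{B_R}^-(|Dw_c|)\,dx$, which chains with \eqref{1ce:4}, \eqref{0ce:4} and $\Psi_{B_R}^- \leqslant \Psi(x,\cdot)$ to close the argument.
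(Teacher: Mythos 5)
Your proposal is correct and follows essentially the same route as the paper: scale back the harmonic-approximation map $\bar h$ from the proof of Lemma \ref{lem:2ce} to define $h$, transfer \eqref{2ce:47} to $B_{R/8}$ via the scaling identities for $V_{\bar G}, V_{\bar H_a}, V_{\bar H_b}$ (picking up the factor $\Psi_{B_R}^-(E(R))$, which is controlled by Poincar\'e and the energy chain \eqref{1ce:4}, \eqref{0ce:4}), and then chain with \eqref{0ce:3} and \eqref{1ce:3} using $a(x_a)\leqslant a(x)$, $b(x_b)\leqslant b(x)$ and $\omega(R)\leqslant\omega(R^\gamma)$. The only (harmless) deviation is that you add the constant $(w_c)_{B_{R/2}}$ back when defining $h$, which the paper omits in \eqref{3ce:4}; your version is in fact the one consistent with the claim $h\in w_c+W^{1,\Psi_{B_R}^-}_0(B_{R/8})$.
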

\begin{proof}
	We need to revisit the proof of Lemma \ref{lem:2ce}, specially Step 3 and Step 4. Under the settings of the proof of Lemma \ref{lem:2ce}, we consider a function $\bar{h}\in \bar{w}_{c} + W^{1,\bar{\Psi}_{0}}_{0}(B_{1/8})$ satisfying \eqref{2ce:45}-\eqref{2ce:48}. Let $h$ be the scaled back function of $\bar{h}$ in $B_{R/8}$ as
	\begin{align}
		\label{3ce:4}
		h(x):= E(w_c,B_{R/2})R\bar{h}\left(\frac{x-x_0}{R} \right)
		\quad\text{for every}\quad
		x\in B_{R/8}(x_0).
	\end{align}
Clearly, $h\in w_{c} + W^{1,\Psi_{B_{R}}^{-}}_{0}(B_{R/8})$ is a local minimizer of the functional $\F_0$ defined in \eqref{3ce:1} which means that 
\begin{align}
	\label{3ce:5}
	\F_{0}(h) = \I_{B_{R/8}}F_0(Dh)\,dx \leqslant \I_{B_{R/8}}F_0(Dh+D\varphi)\,dx
	=
	\F_{0}(h+\varphi)
\end{align}
holds for every $\varphi\in W^{1,\Psi_{B_{R}}^{-}}_{0}(B_{R/8})$. As we have shown in \eqref{0ce:9}, recalling \eqref{1ce:4} and \eqref{0ce:4}, we see 
\begin{align}
	\label{3ce:6}
	\begin{split}
	\FI_{B_{R/8}}\Psi_{B_{R}}^{-}(|Dh|)\,dx 
	&\leqslant
	\frac{L}{\nu}\FI_{B_{R/8}}\Psi_{B_{R}}^{-}(|Dw_{c}|)\,dx 
	\leqslant
	\frac{8^{n}L}{\nu} \FI_{B_{R}}\Psi(x,|Dw_c|)\,dx
	\\&
	\leqslant
	c(n,\nu,L) \FI_{B_{R}}\Psi(x,|Dw|)\,dx
	\leqslant
	c(n,\nu,L)  \FI_{B_{2R}}\Psi(x,|Du|)\,dx,
	\end{split}
\end{align}
which proves \eqref{3ce:3}. We write the inequality \eqref{2ce:47} in view of $G,H_{a}, H_{b}$, $w_{c}$ and $h$ in order to have 
\begin{align}
	\label{3ce:7}
	\begin{split}
		\FI_{B_{R/8}}&\left[ |V_{G}(Dw_c)-V_{G}(Dh)|^2 + a(x_a)|V_{H_{a}}(Dw_c)-V_{H_{a}}(Dh)|^2 + b(x_b)|V_{H_{b}}(Dw_c)-V_{H_{b}}(Dh)|^2  \right]\,dx
		\\&
		\leqslant
		c[d(\varepsilon,R)]^{s_1}\FI_{B_{R/2}}\Psi_{B_{R}}^{-}\left(\left|\frac{w_c-(w_c)_{B_{R/2}}}{R} \right| \right)\,dx
		\\&
		\leqslant
		c[d(\varepsilon,R)]^{s_1}\FI_{B_{R/2}}\Psi_{B_{R}}^{-}\left(\left|Dw_c \right| \right)\,dx
		\leqslant
		c[d(\varepsilon,R)]^{s_1}\FI_{B_{R/2}}\Psi\left(x,\left|Du \right| \right)\,dx
	\end{split}
\end{align}
for some constant $c\equiv c(\data(\O_0))$, where we have applied Sobolev-Poincar\'e inequality and \eqref{3ce:6}. Combining this estimate together with \eqref{0ce:3} and \eqref{1ce:3} via some elementary computations and recalling $R\leqslant 1$, we directly arrive at \eqref{3ce:2}. 
\end{proof}

We finally finish this section with a crucial decay estimate on a local minimizer $u$ of the functional $\F$.

\begin{lem}
    \label{lem:4ce}
    Under the assumptions and notations of Lemma \ref{lem:2ce}, if one of the conditions \eqref{mth:md:1}-\eqref{mth:md:5} is satisfied, then  for every $\varepsilon_{*}\in (0,1)$, there exists a positive radius $R_{*}$ with the dependence as 
    \begin{align}
        \label{4ce:1}
        R_{*}\equiv R_{*}(\data(\O_0),\varepsilon_{*})
    \end{align}
    such that if $R\leqslant R_{*}$, then there exists a constant $c\equiv c(\data(\O_0))$ such that
    \begin{align}
        \label{4ce:2}
        \I_{B_{\tau R}} \Psi_{B_{R}}^{-}\left(\left|\frac{u-(u)_{B_{\tau R}}}{\tau R}\right|\right)\,dx \leqslant
        c\left( \tau^{n} + \tau^{-(s(\Psi)+1)}\varepsilon_{*} \right)\I_{B_{2R}}\Psi(x,|Du|)\,dx
    \end{align}
    holds for every $\tau\in \left(0,1/16 \right)$.
\end{lem}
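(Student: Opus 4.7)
The plan is to introduce the comparison minimizer $w_c$ from \eqref{1ce:2}, split the oscillation of $u$ on $B_{\tau R}$ through $w_c$, apply the scale-decay \eqref{2ce:2} of Lemma \ref{lem:2ce} to the $w_c$-part, and control the remaining comparison error $|u-w_c|$ by the chain \eqref{0ce:7}, \eqref{1ce:7} for $R$ sufficiently small. Since $\Psi_{B_R}^-$ is an $x$-independent $N$-function of index $s(\Psi)$ (Remark \ref{rmk:nf2}), the doubling inequalities of Lemma \ref{lem:nf1} and the Sobolev--Poincar\'e inequality of Lemma \ref{lem:os} apply directly to it.

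The first step is a triangle inequality. By convexity of $\Psi_{B_R}^-$ together with Jensen applied to $|(u)_{B_{\tau R}}-(w_c)_{B_{\tau R}}|\leq \FI_{B_{\tau R}}|u-w_c|\,dx$, one obtains
\[
\I_{B_{\tau R}}\Psi_{B_R}^{-}\!\left(\frac{|u-(u)_{B_{\tau R}}|}{\tau R}\right)dx \leq c\,\mathrm{I}+c\,\mathrm{II},
\]
where $\mathrm{I}:=\I_{B_{\tau R}}\Psi_{B_R}^{-}(|w_c-(w_c)_{B_{\tau R}}|/(\tau R))\,dx$ and $\mathrm{II}:=\I_{B_{\tau R}}\Psi_{B_R}^{-}(|u-w_c|/(\tau R))\,dx$. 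Setting $\varepsilon^{*}=\varepsilon_{*}/2$ in Lemma \ref{lem:2ce} produces a radius $R^{*}\equiv R^{*}(\data(\O_0),\varepsilon_{*})$ such that, for every $R\leq R^{*}$,
\[
\mathrm{I}\leq c|B_{\tau R}|(1+\tau^{-(n+s(\Psi)+1)}\varepsilon_{*})\FI_{B_{R/2}}\Psi_{B_R}^{-}(|w_c-(w_c)_{B_{R/2}}|/R)\,dx.
\]
Applying Lemma \ref{lem:os} with $\Phi\equiv\Psi_{B_R}^-$ followed by the energy estimates \eqref{1ce:4} and \eqref{0ce:4} converts this right-hand side into $c(\tau^n+\tau^{-(s(\Psi)+1)}\varepsilon_{*})\I_{B_{2R}}\Psi(x,|Du|)\,dx$, after absorbing the volume ratio $|B_{\tau R}|/|B_{2R}|\approx\tau^n$.

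For the comparison error $\mathrm{II}$, Lemma \ref{lem:nf1} applied with $\Lambda_0=1/\tau\geq 1$, combined with the pointwise bound $\Psi_{B_R}^-(t)\leq \Psi(x,t)$, yields $\mathrm{II}\leq \tau^{-(s(\Psi)+1)}\I_{B_R}\Psi(x,|u-w_c|/R)\,dx$. Decomposing $u-w_c=(u-w)+(w-w_c)$, invoking the growth inequality \eqref{growth1} and the comparison bounds \eqref{0ce:7} and \eqref{1ce:7} (with \eqref{0ce:4} to pass from $|Dw|$ back to $|Du|$), one obtains
\[
\mathrm{II}\leq c\,\tau^{-(s(\Psi)+1)}\bigl([\omega(R^\gamma)]^{1/2}+[\omega(R)]^{1/2}\bigr)\I_{B_{2R}}\Psi(x,|Du|)\,dx.
\]
Since $\omega(\cdot)$ vanishes at the origin, shrinking $R^{*}$ to some $R_{*}\leq R^{*}$ depending only on $\data(\O_0)$ and $\varepsilon_{*}$ ensures $c\bigl([\omega(R_*^\gamma)]^{1/2}+[\omega(R_*)]^{1/2}\bigr)\leq \varepsilon_{*}/2$, and summing the bounds on $\mathrm{I}$ and $\mathrm{II}$ delivers \eqref{4ce:2}.

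The main subtlety is that \eqref{4ce:2} must hold uniformly in $\tau\in(0,1/16)$, while the doubling factor $\tau^{-(s(\Psi)+1)}$ generated in $\mathrm{II}$ blows up as $\tau\to 0$. This forces one to make the comparison error strictly smaller than $\varepsilon_{*}$ outright, not merely smaller than $\varepsilon_{*}\tau^{s(\Psi)+1}$, which is possible only because the smallness factors $[\omega(R^\gamma)]^{1/2}+[\omega(R)]^{1/2}$ from \eqref{0ce:7}, \eqref{1ce:7} are themselves $\tau$-independent. Thus the whole argument hinges on the $\tau$-independence of the comparison chain, and on choosing $R_{*}$ last, after both $\varepsilon_{*}$ and the need for $\tau$-uniform control have been imposed.
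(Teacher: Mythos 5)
Your proposal is correct and follows essentially the same route as the paper: split through $w_c$ via the triangle inequality, apply the decay estimate \eqref{2ce:2} of Lemma \ref{lem:2ce} to the $w_c$-part (followed by Sobolev--Poincar\'e and the energy bounds \eqref{1ce:4}, \eqref{0ce:4}), and absorb the $\tau$-independent comparison error from \eqref{0ce:7} and \eqref{1ce:7} into $\varepsilon_{*}$ by shrinking $R_{*}$ at the end. The only cosmetic difference is that the paper keeps the error as $c\tau^{-(s(\Psi)+1)}[\omega(R^{\gamma})]^{1/2}$ and then imposes $[\omega(R_{*}^{\gamma})]^{1/2}\leqslant \varepsilon_{*}/2$, which is exactly your mechanism.
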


\begin{proof}
    First we apply Lemma \ref{lem:2ce} with $\varepsilon^{*}\in (0,1)$ to be determined in a few lines, and we can use \eqref{2ce:2} provided
    \begin{align*}
        R\leqslant R^{*}\equiv R^{*}(\data(\O_0),\varepsilon^{*})
    \end{align*}
    is found via \eqref{2ce:1}.
    Therefore, using the convexity of $\Psi_{B_{R}}^{-}$, Lemma \ref{lem:2ce} and a Sobolev-Poincar\'e inequality of Lemma \ref{lem:os} via some elementary manipulations, for every $\tau\in (0,1/32)$, we have that 
    \begin{align}
        \label{4ce:3}
        \begin{split}
            \FI_{B_{\tau R}}& \Psi_{B_{R}}^{-}\left(\left|\frac{u-(u)_{B_{\tau R}}}{\tau R} \right|\right)\,dx
            \leqslant
            c\FI_{B_{\tau R}} \Psi_{B_{R}}^{-}\left(\left|\frac{u-(w_c)_{B_{\tau R}}}{\tau R} \right|\right)\,dx
            \\&
            \leqslant
            c\FI_{B_{\tau R}} \Psi_{B_{R}}^{-}\left(\left|\frac{w_{c}-(w_{c})_{B_{\tau R}}}{\tau R} \right|\right)\,dx
            + c\tau^{-(n+s(\Psi)+1)} \FI_{B_{R}} \Psi_{B_{R}}^{-}\left(\left|\frac{u-w_{c}}{R} \right|\right)\,dx
            \\&
            \leqslant
            c\left( 1+\tau^{-(n+s(\Psi)+1)}\varepsilon^{*} \right)\FI_{B_{R/2}} \Psi_{B_{R}}^{-}\left(\left|\frac{w_{c}-(w_{c})_{B_{R/2}}}{R} \right|\right)\,dx 
            \\&
            \qquad
            + c\tau^{-(n+s(\Psi)+1)} \FI_{B_{R}} \Psi_{B_{R}}^{-}\left(\left|\frac{u-w_{c}}{R} \right|\right)\,dx
            \\&
            \leqslant
            c\left( 1+\tau^{-(n+s(\Psi)+1)}\varepsilon^{*} \right)\FI_{B_{R}} \Psi_{B_{R}}^{-}\left(\left| Dw_c \right|\right)\,dx + c\tau^{-(n+s(\Psi)+1)} \FI_{B_{R}} \Psi_{B_{R}}^{-}\left(\left|\frac{u-w_{c}}{R} \right|\right)\,dx
        \end{split}
    \end{align}
    with some constant $c\equiv c(\data(\O_0))$, where throughout the last display we have repeatedly used \eqref{growth1} and \eqref{excess4}. The last display, \eqref{0ce:7} and \eqref{1ce:7} with some elementary manipulations  yield
    \begin{align*}
        \I_{B_{\tau R}}& \Psi_{B_{R}}^{-}\left(\left|\frac{u-(u)_{B_{\tau R}}}{\tau R} \right|\right)\,dx
        \leqslant
        c\left( \tau^{n} + \tau^{-(s(\Psi)+1)}\varepsilon^{*} + \tau^{-(s(\Psi)+1)}[\omega(R^{\gamma})]^{\frac{1}{2}} \right)\I_{B_{2R}}\Psi(x,|Du|)\,dx
    \end{align*}
     for every $\tau\in\left(0,1/16 \right)$ and some $c\equiv c(\data(\O_0))$. Then we choose $\varepsilon_{*} \equiv \varepsilon^{*}/2$ and $R_{*}\leqslant R^{*}$ in such a way that
    $[\omega(R_{*}^{\gamma})]^{\frac{1}{2}}\leqslant \varepsilon_{*}/2$. This choice gives us the dependence as described in \eqref{4ce:1} and yields \eqref{4ce:2}.
\end{proof}


\section{Proof of Theorem \ref{mth:md}}
\label{sec:8}
Now we are ready to provide the proof of Theorem \ref{mth:md}. In fact, it comes from the combination of Lemma \ref{lem:1cacct} and Lemma \ref{lem:4ce}. 

\textbf{Step 1: Different alternatives.}
Now we consider the different alternatives depending on which phase of \eqref{G-phase}-\eqref{G-ab phase}  occurs in some fixed ball $B_{R}\equiv B_{R}(x_0)\subset \O_0 \Subset \O$ with $R\leqslant R_{*}\equiv R_{*}(\data(\O_0),\varepsilon_*)$, which will be determined via Lemma \ref{lem:4ce} depending on $\varepsilon_{*}\in (0,1)$.

\textbf{Alternative 1.} Let $\tau_{ab}\in \left(0, 1/64 \right)$ to be chosen in a few lines. Assume that $G$-phase occurs in the ball $B_{\tau_{ab} R}$, which means that \eqref{G-phase} happens in $B_{\tau_{ab} R}$. In this situation, we have 
\begin{align}
	\label{nmd:1}
	a^{-}(B_{2\tau_{ab}R}) \leqslant 8[a]_{\omega_{a}}\omega_{a}(\tau_{ab}R)
	\quad\text{and}\quad 
	b^{-}(B_{2\tau_{ab}R}) \leqslant 8[b]_{\omega_{b}}\omega_{b}(\tau_{ab}R).
\end{align}

Then we are able to apply Lemma \ref{lem:1cacct} in the ball $B_{2\tau_{ab} R}$. In turn, this one together with applying Lemma \ref{lem:4ce} implies that 
\begin{align}
	\label{nmd:2}
	\begin{split}
	\I_{B_{\tau_{ab} R}}\Psi(x,|Du|)\,dx 
	&\leqslant
	c\I_{B_{2\tau_{ab} R}} G\left(\left|\frac{u-(u)_{B_{2\tau_{ab}R}}}{2\tau_{ab}R} \right| \right)\,dx
	\\&
	\leqslant
	c\I_{B_{2\tau_{ab} R}} \Psi_{B_{R}}^{-}\left(\left|\frac{u-(u)_{B_{2\tau_{ab}R}}}{2\tau_{ab}R} \right| \right)\,dx
	\\&
	\leqslant
	c\left( \tau_{ab}^{n} + \tau_{ab}^{-(s(\Psi)+1)}\varepsilon_{*}\right) \I_{B_{R}}\Psi(x,|Du|)\,dx
	\end{split}
\end{align}
for $c\equiv c(\data(\O_0))$, provided $R\leqslant R_{*}\left(\data(\O_0),\varepsilon_{*} \right)$. Then, for every $\sigma\in (0,n)$, we write down the last inequality in the following form
\begin{align*}
	\I_{B_{\tau_{ab} R}}\Psi(x,|Du|)\,dx \leqslant
	\tau_{ab}^{n-\sigma}\left( c_{ab}\tau_{ab}^{\sigma} + c_{ab}\tau_{ab}^{\sigma-(n+s(\Psi)+1)}\varepsilon_{*} \right)\I_{B_{R}}\Psi(x,|Du|)\,dx
\end{align*}
for some constant $c_{ab}\equiv c_{ab}(\data(\O_0))$. We select small enough $\tau_{ab}$, $\varepsilon_{*}$ depending on $\data(\O_0)$ and $\sigma$ in such a way that $c_{ab}\tau_{ab}^{\sigma} \leqslant 1/2$ and $c_{ab}\tau_{ab}^{\sigma-(n+s(\Psi)+1)}\varepsilon_{*} \leqslant 1/2$. Then we have 
\begin{align}
	\label{nmd:4}
	\I_{B_{\tau_{ab} R}}\Psi(x,|Du|)\,dx \leqslant
	\tau_{ab}^{n-\sigma} \I_{B_{R}}\Psi(x,|Du|)\,dx
\end{align}
for every $R\leqslant R_{ab}\equiv R_{ab}(\data(\O_0),\sigma)$.

\textbf{Alternative 2.} Let $\tau_{b}\in (0,1/64)$ also to be determined later. This time we assume that $(G,H_{a})$-phase occurs in $B_{R}$ (\eqref{G-a phase} happens in $B_{R}$) and that $b^{-}(B_{\tau_bR})\leqslant 4[b]_{\omega_{b}}\omega_{b}(\tau_{b}R)$. Then we have 
\begin{align}
	\label{nmd:5}
	b^{-}(B_{2\tau_bR})\leqslant 8[b]_{\omega_{b}}\omega_{b}(\tau_{b}R).
\end{align}
Also we can observe that
\begin{align}
	\label{nmd:5_1}
	a^{-}(B_{\tau_{b}R}) \geqslant a^{-}(B_{R}) > 4[a]_{\omega_{a}}\omega_{a}(R) \geqslant
	4[a]_{\omega_{a}}\omega_{a}(\tau_{b}R) 
\end{align}
and 
\begin{align}
	\label{nmd:5_2}
	a^{-}(B_{R}) \leqslant a(x) \leqslant 2[a]_{\omega_{a}}\omega_{a}(R) + a^{-}(B_{R})
	\leqslant
	2 a^{-}(B_{R})\quad (\forall x\in B_{R}).
\end{align}
Applying Lemma \ref{lem:1cacct} and then Lemma \ref{lem:4ce} together with recalling \eqref{nmd:5_2}, we have 
\begin{align}
	\label{nmd:6}
	\begin{split}
	\I_{B_{\tau_b R}}\Psi(x,|Du|)\,dx 
	&\leqslant
	c\I_{B_{2\tau_b R}}\left[ G\left(\left|\frac{u-(u)_{B_{2\tau_bR}}}{2\tau_bR} \right| \right) + a^{-}(B_{2\tau_{b}R})H_{a}\left(\left|\frac{u-(u)_{B_{2\tau_bR}}}{2\tau_bR} \right| \right) \right]\,dx 
	\\&
	\leqslant
	c\I_{B_{2\tau_b R}} \Psi_{B_{R}}^{-}\left(\left|\frac{u-(u)_{B_{2\tau_bR}}}{2\tau_bR} \right| \right)\,dx
	\\&
	\leqslant
	c\left( \tau_b^{n} + \tau_b^{-(s(\Psi)+1)}\varepsilon_{*}\right) \I_{B_{R}}\Psi(x,|Du|)\,dx
	\end{split}
\end{align}
for some constant $c\equiv c(\data(\O_0))$, provided $R\leqslant R_{*}(\data(\O_0),\varepsilon_{*})$. Then, for every $\sigma\in (0,n)$, we write down the last display as
\begin{align*}
	\I_{B_{\tau_b R}}\Psi(x,|Du|)\,dx \leqslant
	\tau_b^{n-\sigma}\left( c_{b}\tau_b^{\sigma} + c_{b}\tau_b^{\sigma-(n+s(\Psi)+1)}\varepsilon_{*} \right)\I_{B_{R}}\Psi(x,|Du|)\,dx
\end{align*}
for some constant $c_b\equiv c_b(\data(\O_0))$. We select small enough $\tau_b$, $\varepsilon_{*}$ depending on $\data(\O_0)$ and $\sigma$ in such a way that $c_b\tau_{b}^{\sigma} \leqslant 1/2$ and $c_{b}\tau_b^{\sigma-(n+s(\Psi)+1)}\varepsilon_{*} \leqslant 1/2$. Then we have 
\begin{align}
	\label{nmd:7}
	\I_{B_{\tau_b R}}\Psi(x,|Du|)\,dx \leqslant
	\tau_b^{n-\sigma} \I_{B_{R}}\Psi(x,|Du|)\,dx
\end{align}
for every $R\leqslant R_{b}\equiv R_{b}(\data(\O_0),\sigma)$.

\textbf{Alternative 3.} 
Let $\tau_{a}\in (0,1/64)$ to be fixed later. Assume that $(G,H_{b})$-phase occurs in $B_{R}$ (\eqref{G-b phase} happens in $B_{R}$) and $a^{-}(B_{\tau_aR})\leqslant 4[a]_{\omega_{a}}\omega_{a}(\tau_{a}R)$. Then we have 
\begin{align}
	\label{nmd:8}
	a^{-}(B_{2\tau_aR})\leqslant 8[a]_{\omega_{a}}\omega_{a}(\tau_{a}R).
\end{align}
Applying Lemma \ref{lem:1cacct} and then Lemma \ref{lem:4ce} together with recalling that 
$b^{-}(B_{R}) \leqslant b(x) \leqslant 2 b^{-}(B_{R})$ holds for every $x\in B_{R}$ if $b^{-}(B_{R})>4[b]_{\omega_{b}}\omega_{b}(R)$ likewise in \eqref{nmd:5_2}, we have 

\begin{align}
	\label{nmd:9}
	\begin{split}
	\I_{B_{\tau_{a} R}}\Psi(x,|Du|)\,dx 
	&\leqslant
	c\I_{B_{2\tau_{a} R}}\left[ G\left(\left|\frac{u-(u)_{B_{2\tau_{a}R}}}{2\tau_{a}R} \right| \right) + b^{-}(B_{2\tau_{a}R})H_{b}\left(\left|\frac{u-(u)_{B_{2\tau_{a}R}}}{2\tau_{a}R} \right| \right) \right]\,dx 
	\\&
	\leqslant
	c\I_{B_{2\tau_a R}} \Psi_{B_{R}}^{-}\left(\left|\frac{u-(u)_{B_{2\tau_{a}R}}}{2\tau_{a}R} \right| \right)\,dx
	\\&
	\leqslant
	c\left( \tau_{a}^{n} + \tau_{a}^{-(s(\Psi)+1)}\varepsilon_{*}\right) \I_{B_{R}}\Psi(x,|Du|)\,dx
	\end{split}
\end{align}
for some constant $c\equiv c(\data(\O_0))$, provided $R\leqslant R_{*}(\data(\O_0),\varepsilon_{*})$. Then, for every $\sigma\in (0,n)$, we write down the last display as
\begin{align*}
	\I_{B_{\tau_{a} R}}\Psi(x,|Du|)\,dx \leqslant
	\tau_{a}^{n-\sigma}\left( c_{a}\tau_{a}^{\sigma} + c_{a}\tau_{a}^{\sigma-(n+s(\Psi)+1)}\varepsilon_{*} \right)\I_{B_{R}}\Psi(x,|Du|)\,dx
\end{align*}
for some constant $c_a\equiv c_a(\data(\O_0))$. We select small enough $\tau_{a}$, $\varepsilon_{*}$ depending on $\data(\O_0)$ and $\sigma$ in such a way that $c_{a}\tau_{a}^{\sigma} \leqslant 1/2$ and $c_{a}\tau_{a}^{\sigma-(n+s(\Psi)+1)}\varepsilon_{*} \leqslant 1/2$. Then we have 
\begin{align}
	\label{nmd:10}
	\I_{B_{\tau_{a} R}}\Psi(x,|Du|)\,dx \leqslant
	\tau_{a}^{n-\sigma} \I_{B_{R}}\Psi(x,|Du|)\,dx
\end{align}
for every $R\leqslant R_{a}\equiv R_{a}(\data(\O_0),\sigma)$.

\textbf{Alternative 4.} Let $\tau_0\in \left(0,1/64 \right)$ to be chosen later. We assume that $(G,H_{a},H_{b})$-phase occurs in $B_{R}$, which means that \eqref{G-ab phase} happens in $B_{R}$. In this situation, from the observation in \eqref{nmd:5_2} we see that $a^{-}(B_{R}) \leqslant a(x) \leqslant 2 a^{-}(B_{R})$ and $b^{-}(B_{R}) \leqslant b(x) \leqslant 2 b^{-}(B_{R})$ for every $x\in B_{R}$. Then again applying Lemma \ref{lem:1cacct} and Lemma \ref{lem:4ce}, we find 

\begin{align}
	\label{nmd:11}
	\begin{split}
	\I_{B_{\tau_{0} R}}\Psi(x,|Du|)\,dx 
	&\leqslant
	c\I_{B_{2\tau_{a} R}}  \Psi_{B_{2\tau_{0}R}}^{-}\left(\left|\frac{u-(u)_{B_{2\tau_{0}R}}}{2\tau_{0}R} \right| \right)\,dx
	\\&
	\leqslant
	c\I_{B_{2\tau_{0} R}} \Psi_{B_{R}}^{-}\left(\left|\frac{u-(u)_{B_{2\tau_{0}R}}}{2\tau_{0}R} \right| \right)\,dx
	\\&
	\leqslant
	c\left( \tau_{0}^{n} + \tau_{0}^{-(s(\Psi)+1)}\varepsilon_{*}\right) \I_{B_{R}}\Psi(x,|Du|)\,dx
	\end{split}
\end{align}
for some constant $c\equiv c(\data(\O_0))$, provided $R\leqslant R_{*}(\data(\O_0),\varepsilon_{*})$. Then, for every $\sigma\in (0,n)$, we write down the last display as
\begin{align*}
	\I_{B_{\tau_{0} R}}\Psi(x,|Du|)\,dx \leqslant
	\tau_{0}^{n-\sigma}\left( c_{0}\tau_{0}^{\sigma} + c_{0}\tau_{0}^{\sigma-(n+s(\Psi)+1)}\varepsilon_{*} \right)\I_{B_{R}}\Psi(x,|Du|)\,dx
\end{align*}
for some constant $c_{0}\equiv c_{0}(\data(\O_0))$. Then we choose $\tau_{0}$, $\varepsilon_{*}$ depending on $\data(\O_0)$ and $\sigma$ in such a way that $c_{0}\tau_{0}^{\sigma} \leqslant 1/2$ and $c_{0}\tau_{0}^{\sigma-(n+s(\Psi)+1)}\varepsilon_{*} \leqslant 1/2$. Then we have 
\begin{align}
	\label{nmd:12}
	\I_{B_{\tau_{0} R}}\Psi(x,|Du|)\,dx \leqslant
	\tau_{0}^{n-\sigma} \I_{B_{R}}\Psi(x,|Du|)\,dx
\end{align}
for every $R\leqslant R_{0}\equiv R_{0}(\data(\O_0),\sigma)$. Next we consider the double nested exit time argument based on the proof of \cite[Theorem 2]{DO1}.

\textbf{Step 2: Double nested exit time and iteration.} Now we shall combine all the alternatives we have discussed with the estimates \eqref{nmd:4}, \eqref{nmd:7}, \eqref{nmd:10} and \eqref{nmd:12}. Take a ball $B_{R}\subset\O_{0} \Subset \O$ such that $R\leqslant R_{m} $, where $R_{m}= \min\{R_{ab}, R_{a}, R_{b}, R_{0}\}$ depends on $\data(\O_0)$ and $\sigma$. We consider $G$-phase in $B_{\tau_{ab}^{k+1}R}$ for every integer $k\geqslant 0$ and define the exit time index 
\begin{align}
	\label{nmd:13}
	t_{ab} = \min\{k\in \mathbb{N} : G-\text{phase in the ball } B_{\tau_{ab}^{k+1}R} \text{ does not occur}\}.
\end{align}
If there does not exist such $t_{ab}$, then for any $0<\rho <\tau_{ab}^{2}R < R \leqslant R_{m}$, there exists an integer $m\geqslant 1$ such that $\tau_{ab}^{m+2}R \leqslant \rho < \tau_{ab}^{m+1}R$. Using iterative \eqref{nmd:4}, we have 
\begin{align}
	\label{nmd:13_1}
	\begin{split}
	\I_{B_{\rho}}\Psi(x,|Du|)\,dx 
	&\leqslant \I_{B_{\tau_{ab}^{m+1}R}}\Psi(x,|Du|)\,dx
	\\&
	\leqslant
	\tau_{ab}^{(m-1)(n-\sigma)} \I_{\tau_{ab}^{2}R}\Psi(x,|Du|)\,dx
	=
	\tau_{ab}^{(m+2)(n-\sigma)} \tau_{ab}^{-3(n-\sigma)}\I_{\tau_{ab}^{2}R}\Psi(x,|Du|)\,dx
	\\&
	\leqslant
	c(\data(\O_0),\sigma)\left(\frac{\rho}{R} \right)^{n-\sigma}\I_{R}\Psi(x,|Du|)\,dx.
	\end{split}
\end{align}
Clearly, the above inequality holds true when $\tau_{ab}^{2}R \leqslant \rho \leqslant R \leqslant R_{m}$. So we consider the case of $t_{ab} < \infty$. For every $k\in \{1,\ldots,t_{ab}\}$, we apply \eqref{nmd:4} repeatedly in order to obtain 
\begin{align}
	\label{nmd:14}
	\I_{B_{\tau_{ab}^{k}R}}\Psi(x,|Du|)\,dx \leqslant
	\tau_{ab}^{k(n-\sigma)}\I_{B_{R}}\Psi(x,|Du|)\,dx.
\end{align}

By the very definition of $\tau_{ab}$ in \eqref{nmd:13}, we have three different scenarios: 
either $(G,H_{a})$-phase occurs in $B_{\tau_{ab}^{t_{ab}+1}R}$, $(G,H_{b})$-phase occurs in $B_{\tau_{ab}^{t_{ab}+1}R}$ or $(G,H_{a},H_{b})$-phase occurs in $B_{\tau_{ab}^{t_{ab}+1}R}$. Clearly, the last condition is stable for shrinking balls. Since the first two conditions can be considered similarly, we shall focus on the occurrence of $(G,H_{a})$-phase in the ball $B_{\tau_{ab}^{t_{ab}+1}R}$.  Let us define a second exit time index

\begin{align}
	\label{nmd:15}
	t_{b}:= \min\{k\in \mathbb{N} : (G,H_{a})-\text{phase in the ball } B_{\tau_{b}^{k+1}\tau_{ab}^{t_{ab}+1}R} \text{ does not occur}\}.
\end{align}

Arguing similarly as in \eqref{nmd:13_1} by using \eqref{nmd:7} if there is no such a finite number $t_{b}\in \mathbb{N}$, we are able to arrive at the inequality \eqref{nmd:23} below. So we only focus on the case of $t_{b}< \infty$. Iterating \eqref{nmd:7} with $B_{R}$ replaced by $B_{\tau_{ab}^{t_{ab}+1}R}$, we have 
\begin{align}
	\label{nmd:16}
	\I_{B_{\tau_{b}^{k}\tau_{ab}^{t_{ab}+1}R}} \Psi(x,|Du|)\,dx
	\leqslant
	\tau_{b}^{k(n-\sigma)} \I_{B_{\tau_{ab}^{t_{ab}+1}R}} \Psi(x,|Du|)\,dx
\end{align}
for every $k\in \{1,\ldots,t_{b}\}$. By again the very definition of $t_{b}$, there is only one chance that $(G,H_a,H_{b})$-phase occurs in the ball $B_{\tau_{b}^{t_{b}+1}\tau_{ab}^{t_{ab}+1}R}$. But as this condition is stable, we can iterate \eqref{nmd:12} for every $k\in \mathbb{N}$ in order to have 
\begin{align}
	\label{nmd:17}
	\I_{B_{\tau_{0}^{k}\tau_{b}^{t_{b}+1}\tau_{ab}^{t_{ab}+1}R}} \Psi(x,|Du|)\,dx
	\leqslant
	\tau_{0}^{k(n-\sigma)}
	\I_{B_{\tau_{b}^{t_{b}+1}\tau_{ab}^{t_{ab}+1}R}} \Psi(x,|Du|)\,dx.
\end{align}
Now we have all the needed estimates \eqref{nmd:14}, \eqref{nmd:16} and \eqref{nmd:17}. For $0<\rho <R \leqslant R_{m}$, we consider the following cases.

\textbf{Case 1: $R > \rho \geqslant \tau_{ab}^{t_{ab}+1}R$.} There exists $m\in \{0,1,\ldots,t_{ab}\}$ such that $\tau_{ab}^{m+1}R \leqslant \rho < \tau_{ab}^{m}R$. Then from \eqref{nmd:14}, we have 
\begin{align}
	\label{nmd:18}
	\begin{split}
	\I_{B_{\rho}}\Psi(x,|Du|)\,dx 
	&\leqslant \I_{B_{t_{ab}^{m}R}}\Psi(x,|Du|)\,dx
	\\&
	\leqslant
	\tau_{ab}^{m(n-\sigma)}\I_{B_{R}}\Psi(x,|Du|)\,dx 
	\\&
	\leqslant
	\tau_{ab}^{(m+1)(n-\sigma)}\tau_{ab}^{\sigma-n}\I_{B_{R}}\Psi(x,|Du|)\,dx 
	\\&
	\leqslant
	c(\data(\O_0),\sigma)\left(\frac{\rho}{R} \right)^{n-\sigma} \I_{B_{R}}\Psi(x,|Du|)\,dx ,
	\end{split}
\end{align}
where the last inequality is valid since $\tau_{ab}$ depends on $\data(\O_0)$ and $\sigma$. 

\textbf{Case 2: $\tau_{ab}^{t_{ab}+1}R > \rho \geqslant \tau_{b}\tau_{ab}^{t_{ab}+1} R$.} In this case, using \eqref{nmd:18}, we see 
\begin{align}
	\label{nmd:19}
	\begin{split}
	\I_{B_{\rho}}\Psi(x,|Du|)\,dx
	&\leqslant
	\I_{B_{\tau_{ab}^{t_{ab}+1}R}}\Psi(x,|Du|)\,dx
	\\&
	=
	\tau_{ab}^{(t_{ab}+1)(n-\sigma)}\I_{B_{R}}\Psi(x,|Du|)\,dx
	\\&
	\leqslant
	\left(\tau_{b}\tau_{ab}^{t_{ab}+1}\right)^{n-\sigma}\tau_{b}^{\sigma-n}\I_{B_{R}}\Psi(x,|Du|)\,dx
	\\&
	\leqslant
	 c(\data(\O_0),\sigma)\left(\frac{\rho}{R} \right)^{n-\sigma}\I_{B_{R}}\Psi(x,|Du|)\,dx,
	\end{split}
\end{align}
where again the last inequality is possible by the dependencies of $\tau_{b}$.

\textbf{Case 3: $\tau_{b}\tau_{ab}^{t_{ab}+1}R > \rho \geqslant \tau_{b}^{t_{b}+1}\tau_{ab}^{t_{ab}+1} R$.} Again there exists a natural number $m\in \{1,\ldots,t_{b}\}$ so that $\tau_{b}^{m}\tau_{ab}^{t_{ab}+1}R > \rho \geqslant \tau_{b}^{m+1}\tau_{ab}^{t_{ab}+1}R$. Therefore, using \eqref{nmd:16} and \eqref{nmd:18}, we have
\begin{align}
	\label{nmd:20}
	\begin{split}
	\I_{B_{\rho}}\Psi(x,|Du|)\,dx
	&\leqslant 
	\I_{B_{\tau_{b}^{m}\tau_{ab}^{t_{ab}+1}R}}\Psi(x,|Du|)\,dx
	\\&
	\leqslant
	\tau_{b}^{m(n-\sigma)} \I_{B_{\tau_{ab}^{t_{ab}+1}R}}\Psi(x,|Du|)\,dx
	\\&
	\leqslant
	\tau_{b}^{(m+1)(n-\sigma)}\tau_{b}^{\sigma-n} \tau_{ab}^{(t_{ab}+1)(n-\sigma)} \I_{B_{R}}\Psi(x,|Du|)\,dx
	\\&
	\leqslant
	c(\data(\O_0),\sigma)\left(\frac{\rho}{R} \right)^{n-\sigma}\I_{B_{R}}\Psi(x,|Du|)\,dx.
	\end{split}
\end{align}

\textbf{Case 4: $\tau_{b}^{t_{b}+1}\tau_{ab}^{t_{ab}+1}R > \rho \geqslant \tau_{b}^{t_{b}+1}\tau_{ab}^{t_{ab}+1}\tau_{0} R$.} Now by \eqref{nmd:20}, we find 
\begin{align}
	\label{nmd:21}
	\begin{split}
		\I_{B_{\rho}}\Psi(x,|Du|)\,dx 
		&\leqslant
		\I_{B_{\tau_{b}^{t_{b}+1}\tau_{ab}^{t_{ab}+1}R}}\Psi(x,|Du|)\,dx
		\\&
		\leqslant
		c\left(\tau_{b}^{t_{b}+1}\tau_{ab}^{t_{ab}+1} \right)^{n-\sigma} \I_{B_{R}}\Psi(x,|Du|)\,dx 
		\\&
		\leqslant
		c\tau_{0}^{\sigma-n}\left(\tau_{0}\tau_{b}^{t_{b}+1}\tau_{ab}^{t_{ab}+1} \right)^{n-\sigma} \I_{B_{R}}\Psi(x,|Du|)\,dx 
		\\&
		\leqslant
		c(\data(\O_0),\sigma)\left(\frac{\rho}{R} \right)^{n-\sigma}\I_{B_{R}}\Psi(x,|Du|)\,dx.
	\end{split}
\end{align}

\textbf{Case 5: $\tau_{b}^{t_{b}+1}\tau_{ab}^{t_{ab}+1}\tau_{0}R > \rho > 0$.} This condition implies that there exists a natural number $m\in \mathbb{N}$ such that 
$\tau_{0}^{m+1}\tau_{b}^{t_{b}+1}\tau_{ab}^{t_{ab}+1}R \leqslant \rho  < \tau_{0}^{m}\tau_{b}^{t_{b}+1}\tau_{ab}^{t_{ab}+1}R$. This time we apply \eqref{nmd:17} and \eqref{nmd:21} in order to have 
\begin{align}
	\label{nmd:22}
	\begin{split}
	\I_{B_{\rho}}\Psi(x,|Du|)\,dx 
	&\leqslant
	\I_{B_{\tau_{0}^{m}\tau_{b}^{t_{b}+1}\tau_{ab}^{t_{ab}+1}R}}\Psi(x,|Du|)\,dx 
	\\&
	\leqslant
	\tau_{0}^{m(n-\sigma)} \I_{B_{\tau_{b}^{t_{b}+1}\tau_{ab}^{t_{ab}+1}R}}\Psi(x,|Du|)\,dx
	\\&
	\leqslant
	 \tau_{0}^{m(n-\sigma)} \left( \tau_{b}^{t_{b}+1}\tau_{ab}^{t_{ab}+1} \right)^{n-\sigma} \I_{B_{R}}\Psi(x,|Du|)\,dx
	 \\&
	 \leqslant
	c \tau_0^{\sigma-n} \left(\frac{\rho}{R} \right)^{n-\sigma}\I_{B_{R}}\Psi(x,|Du|)\,dx
	=
	c(\data(\O_0),\sigma) \left(\frac{\rho}{R} \right)^{n-\sigma}\I_{B_{R}}\Psi(x,|Du|)\,dx.
	\end{split}
\end{align}
As we discussed earlier after \eqref{nmd:14}, we can proceed the same for the occurrence of $(G,H_{a})$-phase in the ball $B_{\tau_{ab}^{\tau_{ab}+1}R}$ instead of the occurrence of $(G,H_{b})$-phase in the ball $B_{\tau_{ab}^{\tau_{ab}+1}R}$. Then we can directly jump to the case that $(G,H_{a},H_{b})$-phase occurs in the ball $B_{\tau_{ab}^{\tau_{ab}+1}R}$, which is trivial by \eqref{nmd:12}. Moreover, if we start with the occurrence of $(G,H_{a},H_{b})$-phase in $B_{R}$, then the procedure will be much easier by \eqref{nmd:12}. Taking into account all the possible cases that we considered above, we can conclude that, for every $\sigma\in (0,1)$, there exists $c\equiv c(\data(\O_0),\sigma)$ such that 
\begin{align}
	\label{nmd:23}
	\I_{B_{\rho}}\Psi(x,|Du|)\,dx 
	\leqslant 
	c\left(\frac{\rho}{R} \right)^{n-\sigma}\I_{B_{R}}\Psi(x,|Du|)\,dx 
\end{align}
holds true, whenever $0<\rho <R \leqslant R_{m}$, where $R_{m}$ is some positive radius depending only on $\data(\O_0)$ and $\sigma$ in the beginning of the proof. In order to complete the proof, we need to consider the remaining cases. If $0<R_{m}\leqslant \rho < R \leqslant 1$, then we have 
\begin{align}
	\label{nmd:24}
	\begin{split}
	\I_{B_{\rho}}\Psi(x,|Du|)\,dx
	&\leqslant
	\left(\frac{\rho}{R} \right)^{n-\sigma} \left(\frac{R}{\rho} \right)^{n-\sigma} \I_{B_{R}}\Psi(x,|Du|)\,dx
	\\&
	\leqslant
	\left(\frac{\rho}{R} \right)^{n-\sigma} \left(\frac{R}{R_{m}} \right)^{n-\sigma} \I_{B_{R}}\Psi(x,|Du|)\,dx
	\\&
	\leqslant
	c(\data(\O_0),\sigma)\left(\frac{\rho}{R} \right)^{n-\sigma}\I_{B_{R}}\Psi(x,|Du|)\,dx,
	\end{split}
\end{align} 
where we have used the dependence of $R_{m}$. Finally, if $0<\rho < R_{m} \leqslant R \leqslant 1$, then by \eqref{nmd:23} and \eqref{nmd:24}, we see 
\begin{align}
	\label{nmd:25}
	\begin{split}
	\I_{B_{\rho}}\Psi(x,|Du|)\,dx
	&\leqslant
	c\left(\frac{\rho}{R_{m}} \right)^{n-\sigma}  \I_{B_{R_{m}}}\Psi(x,|Du|)\,dx
	\\&
	\leqslant
	c\left(\frac{\rho}{R_{m}} \right)^{n-\sigma} \left(\frac{R_{m}}{R} \right)^{n-\sigma} \I_{B_{R}}\Psi(x,|Du|)\,dx
	\\&
	=
	c(\data(\O_0),\sigma)\left(\frac{\rho}{R} \right)^{n-\sigma}\I_{B_{R}}\Psi(x,|Du|)\,dx,
	\end{split}.
\end{align}

All in all, collecting the estimates obtained in \eqref{nmd:23}-\eqref{nmd:25}, we arrive at the validity of the Morrey type inequality \eqref{mth:md:7}. The proof is complete.


Now we consider a crucial outcome of Theorem \ref{mth:md}, which plays a crucial role for proving Theorem \ref{mth:mr} afterwards. 
\begin{lem}
	\label{lem:5ce}
	Under the assumptions of Lemma \ref{lem:2ce}, let $w_{c}\in W^{1,\Psi}(B_{R})$ be the solution to the problem defined in \eqref{1ce:2}. Suppose that  \eqref{mth:md:3} is satisfied for $\omega_{a}(t)=t^{\alpha}$ and $\omega_{b}(t)=t^{\beta}$ with some $\alpha,\beta\in (0,1]$. Then there exists $h \in w_c + W^{1,\Psi_{B_{R}}^{-}}_{0}(B_{R/8})$ being  a local minimizer of the functional $\F_{0}$ defined in \eqref{3ce:1} such that 
\begin{align}
	\label{5ce:1}
	\begin{split}
	\FI_{B_{R/8}}&\left[ |V_{G}(Du)-V_{G}(Dh)|^2 + a(x_a)|V_{H_{a}}(Du)-V_{H_{a}}(Dh)|^2 + b(x_b)|V_{H_{b}}(Du)-V_{H_{b}}(Dh)|^2  \right]\,dx
	\\&
	\leqslant
	c\left( \omega\left(R^{\gamma}\right) + \left[R^{\frac{\alpha}{2(1+s(H_{a})})} + R^{\frac{\beta}{2(1+s(H_{b}))}}\right]^{s_1}  \right)\FI_{B_{2R}}\Psi(x,|Du|)\,dx
	\end{split}
\end{align}
for some constant $c\equiv c(\data(\O_0))$ and $s_1\equiv s_1(\data)$, respectively. Moreover, the energy estimate
\begin{align}
	\label{5ce:2}
	\FI_{B_{R/8}}\Psi_{B_{R}}^{-}(|Dh|)\,dx \leqslant c\FI_{B_{2R}}\Psi(x,|Du|)\,dx
\end{align}
holds for some constant $c\equiv c(n,\nu,L)$.
\end{lem}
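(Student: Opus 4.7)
\medskip

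\noindent\textbf{Proof proposal for Lemma \ref{lem:5ce}.} The plan is to follow the same architecture as the proofs of Lemma \ref{lem:2ce} and Lemma \ref{lem:3ce} but to replace the $\varepsilon$-dependent estimate \eqref{2ce:37}--\eqref{2ce:40} in Case 3 by a bound with an explicit polynomial rate in $R$. Concretely, I keep the scaling \eqref{2ce:5_1}--\eqref{2ce:7}, the frozen integrand \eqref{2ce:12_1}--\eqref{2ce:13}, and the harmonic-approximation step culminating in \eqref{2ce:45}--\eqref{2ce:48}; the only piece that must be redone is the quantitative bound on
\[
I_{a}=\|\bar a-\bar a(\bar x_{a})\|_{L^{\infty}(B_{1/8})}\quad\text{and}\quad I_{b}=\|\bar b-\bar b(\bar x_{b})\|_{L^{\infty}(B_{1/8})}.
\]

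\smallskip

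\noindent The key new input is that, once Theorem \ref{mth:md} has been established, the assumption \eqref{mth:md:3} already yields the upgraded H\"older regularity $u\in C^{0,\theta}_{\loc}(\O)$ for \emph{every} $\theta\in(0,1)$, not only for the original exponent $\gamma$ appearing in $\eqref{ma:3}$. Fixing $\theta_{0}:=(1+\gamma)/2\in(\gamma,1)$ and combining this with \eqref{0ce:6}--\eqref{1ce:6} to transfer oscillation bounds from $u$ to $w_{c}$, the excess functional \eqref{2ce:E_1E_2} is controlled by
\[
E(R)\;\le\;c\,\frac{\osc_{B_{R}}w_{c}}{R}\;\le\;c\,\frac{\osc_{B_{2R}}u}{R}\;\le\;c(\data(\O_{0}))\,R^{\theta_{0}-1}.
\]
Then, using \eqref{2ce:22_1}, the choice $\omega_{a}(t)=t^{\alpha}$, the bound $\Psi_{B_{R}}^{-}\ge G$, and the definition of $\lambda_{3}$ in \eqref{ma:3} I compute
\[
I_{a}\;\le\; c R^{\alpha}\frac{H_{a}(E(R))}{G(E(R))}\;\le\; c R^{\alpha}\bigl(1+E(R)^{\alpha/(1-\gamma)}\bigr)\;\le\; c\bigl(R^{\alpha}+R^{\alpha(\theta_{0}-\gamma)/(1-\gamma)}\bigr)\;\le\; c(\data(\O_{0}))\,R^{\alpha/2},
\]
and analogously $I_{b}\le c(\data(\O_{0}))R^{\beta/2}$. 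Plugging these into \eqref{2ce:20} and using $R\le 1$ to absorb the larger exponents into the smaller ones gives
\[
\Bigl|\FI_{B_{1/8}}\langle\bar A_{0}(D\bar w_{c}),D\varphi\rangle\,dx\Bigr|\;\le\; c(\data(\O_{0}))\,\sigma_{R}\,\|D\varphi\|_{L^{\infty}(B_{1/8})},\qquad \sigma_{R}:=R^{\frac{\alpha}{2(1+s(H_{a}))}}+R^{\frac{\beta}{2(1+s(H_{b}))}},
\]
which is the analogue of \eqref{2ce:43} with the explicit rate $\sigma_{R}$ in place of $d(\varepsilon,R)$.

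\smallskip

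\noindent With this improvement at hand, Lemma \ref{hta: lemma_hta} is applied in the scaled setting \eqref{2ce:11_1}--\eqref{2ce:15_4} exactly as in Step 4 of Lemma \ref{lem:2ce}, producing $\bar h\in \bar w_{c}+W^{1,\bar\Psi_{0}}_{0}(B_{1/8})$ obeying \eqref{2ce:45}--\eqref{2ce:47} with $d(\varepsilon,R)$ replaced by $\sigma_{R}$. Scaling back as in \eqref{3ce:4} yields $h\in w_{c}+W^{1,\Psi_{B_{R}}^{-}}_{0}(B_{R/8})$ that minimises $\F_{0}$, together with
\[
\FI_{B_{R/8}}\!\Bigl[|V_{G}(Dw_{c})-V_{G}(Dh)|^{2}+a(x_{a})|V_{H_{a}}(Dw_{c})-V_{H_{a}}(Dh)|^{2}+b(x_{b})|V_{H_{b}}(Dw_{c})-V_{H_{b}}(Dh)|^{2}\Bigr]\,dx\le c\,\sigma_{R}^{s_{1}}\FI_{B_{2R}}\!\Psi(x,|Du|)\,dx,
\]
precisely as in \eqref{3ce:7}, by invoking Sobolev--Poincar\'e and the energy estimates \eqref{0ce:4}, \eqref{1ce:4}. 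Combining this with the intermediate comparison estimates \eqref{0ce:3} and \eqref{1ce:3} through triangle-type manipulations on the $V$-fields (and recalling $R\le 1$ so that the contribution $\omega(R^{\gamma})$ from the first two comparison steps reappears) delivers \eqref{5ce:1}.

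\smallskip

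\noindent The energy estimate \eqref{5ce:2} is obtained exactly as in \eqref{3ce:6}: the minimality of $h$ together with \eqref{sa:1} gives $\FI_{B_{R/8}}\Psi_{B_{R}}^{-}(|Dh|)\,dx\le (L/\nu)\FI_{B_{R/8}}\Psi_{B_{R}}^{-}(|Dw_{c}|)\,dx$, and the right-hand side is controlled by $\FI_{B_{2R}}\Psi(x,|Du|)\,dx$ via \eqref{1ce:4} and \eqref{0ce:4}. The main subtle point in the whole argument is precisely the improvement of the H\"older exponent from $\gamma$ to $\theta_{0}>\gamma$, which converts the qualitative $\limsup=0$ hypothesis in $\eqref{mth:md:3}_{2}$ into the quantitative polynomial decay for $I_{a}$ and $I_{b}$; once this is in place, the rest is the routine machinery already developed in Sections \ref{sec:6}--\ref{sec:7}.
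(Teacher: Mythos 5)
Your proposal is correct and follows essentially the same route as the paper: the key step is exactly the paper's, namely invoking Theorem \ref{mth:md} to upgrade the H\"older exponent of $u$ to $\theta_{0}=(1+\gamma)/2>\gamma$, which via $E(R)\lesssim R^{\theta_{0}-1}$ turns the bound on $I_{a}$, $I_{b}$ into the quantitative rates $R^{\alpha/2}$, $R^{\beta/2}$ (your exponent $\alpha(\theta_{0}-\gamma)/(1-\gamma)=\alpha/2$ matches the paper's computation). The remaining steps — harmonic approximation with $d(\varepsilon,R)$ replaced by $q_{3}(R)=R^{\frac{\alpha}{2(1+s(H_{a}))}}+R^{\frac{\beta}{2(1+s(H_{b}))}}$, scaling back, the energy estimate via \eqref{1ce:4} and \eqref{0ce:4}, and the combination with \eqref{0ce:3} and \eqref{1ce:3} — are identical to the paper's argument.
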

\begin{proof}
	Essentially, the above lemma is a special case of Lemma \ref{lem:3ce} since we consider a particular case that $\omega_{a}(t)=t^{\alpha}$ and $\omega_{b}(t)=t^{\beta}$ for some $\alpha,\beta\in (0,1)$. But our purpose here is to obtain an estimate such as \eqref{5ce:1} with a different multiplier containing some power of $R$, which will be used for proving Theorem \ref{mth:mr}. Therefore, we are able to apply Theorem \ref{mth:md}. In turn, for every $\theta\in (0,1)$ and open sunset $\O_0\Subset \O$, there exists a constant $c\equiv c(\data(\O_0),\theta)$ such that
	\begin{align}
		\label{5ce:3}
		[u]_{0,\theta;\O_0} \leqslant c(\data(\O_0),\theta).
	\end{align}
In particular, we choose $\theta:= (\gamma+1)/2$. Now we need to revisit the proof of Lemma \ref{lem:2ce}. Under the settings of the proof of Lemma \ref{lem:2ce}, we turn our attention to estimating the terms $I_{a}$ and $I_{b}$ introduced in \eqref{2ce:19}-\eqref{2ce:20}. Using \eqref{ma:3}, \eqref{1ce:6} and \eqref{0ce:6}, we have 
\begin{align}
	\label{5ce:4}
	\begin{split}
	I_{a} &\leqslant cR^{\alpha}\left( 1 + \left[ \left(\Psi_{B_{R}}^{-} \right)^{-1}\left(\FI_{B_{R/2}}\Psi_{B_{R}}^{-}\left(\left|\frac{w_c-(w_c)_{B_{R/2}}}{R} \right| \right)\,dx \right) \right]^{\frac{\alpha}{1-\gamma}} \right)
	\\&
	\leqslant
	c\left(R^{\alpha}+ R^{-\frac{\alpha\gamma}{1-\gamma}}\left[\osc\limits_{B_{2R}}u\right]^{\frac{\alpha}{1-\gamma}}\right)
	\leqslant
	c(\data(\O_0))R^{\alpha/2},
	\end{split}
\end{align}
where we have used \eqref{5ce:3} with the choice of $\theta:= (1+\gamma)/2$ and $B_{2R}\subset\O_0$ with $R\leqslant 1$. In the same way, we show 
\begin{align}
	\label{5ce:5}
	I_{b} \leqslant c(\data(\O_0))R^{\beta/2}.
\end{align}

Inserting those estimates into \eqref{2ce:20}, we see that 

\begin{align}
	\label{5ce:6}
	\left|\FI_{B_{1/8}}\inner{\bar{A}_{0}(D\bar{w}_{c})}{D\varphi}\,dx \right|
	\leqslant
	c(\data(\O_0))q_{3}(R)\norm{D\varphi}_{L^{\infty}(B_{1/8})},
\end{align}
whenever $\varphi\in W^{1,\infty}_{0}(B_{1/8})$, where 
\begin{align}
	\label{5ce:7}
	q_{3}(R):= R^{\frac{\alpha}{2(1+s(H_{a}))}} + R^{\frac{\beta}{2(1+s(H_{b}))}}.
\end{align}
Note that the vector field  $\bar{A}_{0}$ has been defined in \eqref{2ce:12_2}.
We consider a function $\bar{h}\in \bar{w}_{c} + W^{1,\bar{\Psi}_{0}}_{0}(B_{1/8})$ satisfying \eqref{2ce:45}-\eqref{2ce:48} with the term $d(\varepsilon,R)$ replaced by $q_{3}(R)$. 
 Let $h$ be the scaled back function of $\bar{h}$ in $B_{R/8}$ as
	\begin{align}
		\label{5ce:8}
		h(x):= E(w_c,B_{R/2})R\bar{h}\left(\frac{x-x_0}{R} \right)
		\quad\text{for every}\quad
		x\in B_{R/8}(x_0).
	\end{align}
Clearly, $h\in w_{c} + W^{1,\Psi_{B_{R}}^{-}}_{0}(B_{R/8})$ is a local minimizer of the functional $\F_0$ defined in \eqref{3ce:1}, which means that 
\begin{align}
	\label{5ce:9}
	\F_{0}(h) = \I_{B_{R/8}}F_0(Dh)\,dx \leqslant \I_{B_{R/8}}F_0(Dh+D\varphi)\,dx
	=
	\F_{0}(h+\varphi)
\end{align}
holds for every $\varphi\in W^{1,\Psi_{B_{R}}^{-}}_{0}(B_{R/8})$. As we have shown in \eqref{0ce:9}, we recall \eqref{1ce:4} and \eqref{0ce:4} to see that
\begin{align}
	\label{5ce:10}
	\begin{split}
	\FI_{B_{R/8}}\Psi_{B_{R}}^{-}(|Dh|)\,dx 
	&\leqslant
	\frac{L}{\nu}\FI_{B_{R/8}}\Psi_{B_{R}}^{-}(|Dw_{c}|)\,dx 
	\leqslant
	\frac{8^{n}L}{\nu} \FI_{B_{R}}\Psi(x,|Dw_c|)\,dx
	\\&
	\leqslant
	c(n,\nu,L) \FI_{B_{R}}\Psi(x,|Dw|)\,dx
	\leqslant
	c(n,\nu,L)  \FI_{B_{2R}}\Psi(x,|Du|)\,dx,
	\end{split}
\end{align}
which proves \eqref{3ce:3}. We write the inequality \eqref{2ce:47} in view of $G,H_{a}, H_{b}$, $w_{c}$ and $h$ in order to have 
\begin{align}
	\label{5ce:11}
	\begin{split}
		\FI_{B_{R/8}}&\left[ |V_{G}(Dw_c)-V_{G}(Dh)|^2 + a(x_a)|V_{H_{a}}(Dw_c)-V_{H_{a}}(Dh)|^2 + b(x_b)|V_{H_{b}}(Dw_c)-V_{H_{b}}(Dh)|^2  \right]\,dx
		\\&
		\leqslant
		c[q_{3}(R)]^{s_1}\FI_{B_{R/2}}\Psi_{B_{R}}^{-}\left(\left|\frac{w_c-(w_c)_{B_{R/2}}}{R} \right| \right)\,dx
		\leqslant
		c[q_3(R)]^{s_1}\FI_{B_{R/2}}\Psi_{B_{R}}^{-}\left(\left|Dw_c \right| \right)\,dx
		\\&
		\leqslant
		c[q_{3}(R)]^{s_1}\FI_{B_{R/2}}\Psi\left(x,\left|Du \right| \right)\,dx
	\end{split}
\end{align}
for some constant $c\equiv c(\data(\O_0))$, where we have applied Sobolev-Poincar\'e inequality and \eqref{3ce:6}. Combining this estimate together with \eqref{0ce:3} and \eqref{1ce:3} alongside some elementary computations, we arrive at the desired estimate \eqref{5ce:1}. 
\end{proof}

 
\section{Proof of Theorem \ref{mth:mr}.}
 \label{sec:9}
Finally, we are ready to prove Theorem \ref{mth:mr}. First applying Theorem \ref{mth:md} and a standard covering argument, we find that for every open subset $\O_0\Subset \O$ and any number $k>0$, there exists a constant  $c\equiv c(\data(\O_0),k)$ such that 
\begin{align}
	\label{mr:1}
	\FI_{B_{2R}}\Psi(x,|Du|)\,dx \leqslant cR^{-k},
\end{align}
whenever $B_{2R}\subset\O_0$ is a ball with $R\leqslant 1$. Now we fix an open subset  $\O_0\Subset\O$ and a ball $B_{2R}\equiv B_{2R}(x_0)\subset \O_0$ with $R\leqslant 1$. Then applying Lemma \ref{lem:3ce} and Lemma \ref{lem:5ce}, 

\begin{align}
	\label{mr:2}
	\begin{split}
	\FI_{B_{R/8}} &\left( |V_{G}(Du)-V_{G}(Dh)|^2  + a(x_a)|V_{H_{a}}(Du)-V_{H_{a}}(Dh)|^2 + 
	b(x_b)|V_{H_{b}}(Du)-V_{H_{b}}(Dh)|^2 \right)\,dx
	\\&
	\leqslant
	c\left( R^{\mu\gamma} + [q(R)]^{s_1} \right)\FI_{B_{2R}}\Psi(x,|Du|)\,dx
	\end{split}
\end{align}
for some constant $c\equiv c(\data(\O_0))$ and $s_1\equiv s_1(\data)$, where 

\begin{align}
	\label{mr:3}
	 q(R)
	 := \left\{\begin{array}{lr}
        R^{\frac{\alpha\delta}{(1+\delta)(1+s(H_{a}))}} +  R^{\frac{\beta\delta}{(1+\delta)(1+s(H_{b}))}}  & \text{if } \eqref{mth:mr:1} \text{ is assumed, } \\
        R^{\frac{\alpha\gamma}{1+s(H_{a})}} + R^{\frac{\beta\gamma}{1+s(H_{b})}}  & \text{if } \eqref{mth:mr:2} \text{ is assumed, } \\
        R^{\frac{\alpha}{2(1+s(H_{a}))}} + R^{\frac{\beta}{2(1+s(H_{b}))}}  & \text{if } \eqref{mth:mr:3} \text{ is assumed, }
        \end{array}\right.
\end{align}
in which $\gamma$ is the H\"older continuity exponent determined via Theorem \ref{thm:hc} and $\delta$ is the higher integrability exponent coming from Theorem \ref{thm:hi}. We denote by
\begin{align}
	\label{mr:3_1}
	 d\equiv d(\data(\O_0))
	 := \left\{\begin{array}{lr}
        \min\left\{\mu\gamma, \frac{\alpha\delta s_1}{(1+\delta)(1+s(H_{a}))}, \frac{\beta\delta s_1}{(1+\delta)(1+s(H_{b}))} \right\} & \text{if } \eqref{mth:mr:1} \text{ is assumed, } \\
        \min\left\{\mu\gamma, \frac{\alpha\gamma s_1}{1+s(H_{a})}, \frac{\beta\gamma s_1}{1+s(H_{b})} \right\}  & \text{if } \eqref{mth:mr:2} \text{ is assumed, } \\
        \min\left\{\mu\gamma, \frac{\alpha s_1}{2(1+s(H_{a}))} , \frac{\beta s_1}{2(1+s(H_{b}))} \right\}  & \text{if } \eqref{mth:mr:3} \text{ is assumed, }
        \end{array}\right.
\end{align}
and $x_{a},x_{b}\in \overline{B_{R}}$ are points such that $a(x_a) = \inf\limits_{x\in B_{R}}a(x)$ and $b(x_b)=\inf\limits_{x\in B_{R}}b(x) $. Now choosing $k\equiv d/4$ in \eqref{mr:1}, the inequality \eqref{mr:2} can be written as 
\begin{align}
	\label{mr:4}
	\begin{split}
	\FI_{B_{R/8}} &\left( |V_{G}(Du)-V_{G}(Dh)|^2  + a(x_a)|V_{H_{a}}(Du)-V_{H_{a}}(Dh)|^2 + 
	b(x_b)|V_{H_{b}}(Du)-V_{H_{b}}(Dh)|^2 \right)\,dx
	\\&
	\leqslant
	cR^{3d/4}
	\end{split}
\end{align}
for some constant $c\equiv c(\data(\O_0))$, where we again recall that the function $h$ has been defined via Lemma \ref{lem:3ce} and Lemma \ref{lem:5ce}. Recalling, \eqref{3ce:3} and \eqref{5ce:2}, we have the energy estimate 
\begin{align}
	\label{mr:5}
	\FI_{B_{R/8}}\Psi_{B_{R}}^{-}(|Dh|)\,dx
	\leqslant
	c\FI_{B_{2R}}\Psi(x,|Du|)\,dx
\end{align} 
with a constant $c\equiv c(n,\nu,L)$. Now using repeatedly \eqref{defV2}, we have 

\begin{align}
    \label{mr:6}
     \begin{split}
         \FI_{B_{R/8}}& \Psi_{B_{R}}^{-}(|Du-Dh|)\,dx 
         \leqslant
         c\FI_{B_{R/8}} \left(\left[\Psi_{B_{R}}^{-}(|Du| + |Dh|)\right]^{\frac{1}{2}}\frac{|Du-Dh|}{|Du|+|Dh|}\right) \left[\Psi_{B_{R}}^{-}(|Du| + |Dh|)\right]^{\frac{1}{2}}\,dx
         \\&
         \leqslant
         c\left( \FI_{B_{R/8}} \Psi_{B_{R}}^{-}(|Du| + |Dh|)\frac{|Du-Dh|^2}{(|Du| + |Dh|)^2}\,dx \right)^{\frac{1}{2}}
         \left( \FI_{B_{R/8}} \Psi_{B_{R}}^{-}(|Du| + |Dh|)\,dx \right)^{\frac{1}{2}}
         \\&
         \stackrel{\eqref{defV2},\eqref{mr:4}}{\leqslant}
         cR^{3d/8}\left( \FI_{B_{R/8}} \Psi_{B_{R}}^{-}(|Du| + |Dh|)\,dx \right)^{\frac{1}{2}}
         \stackrel{\eqref{mr:5}, \eqref{mr:1}}{\leqslant}
         cR^{d/4}
     \end{split}
\end{align}
with $c\equiv c(\data(\O_0))$, where $d$ has been introduced in \eqref{mr:3}. Since $h$ is a minimizer of functional $\F_{0}$ defined in \eqref{3ce:1}, and this functional satisfies the growth and ellipticity conditions $\eqref{sa:2}_{1,2}$ with $a(x)\equiv a(x_a)$ and $b(x)\equiv b(x_b)$ , we are able to apply the theory in \cite{L1}, which provides the gradient H\"older regularity with the estimates
\begin{align}
    \label{mr:7}
    \begin{split}
        \FI_{B_{\rho}} \Psi_{B_{R}}^{-}(|Dh-(Dh)_{B_{\rho}}|) \,dx
        &\leqslant
        c\left( \frac{\rho}{R} \right)^{\beta_1} \FI_{B_{R/8}} \Psi_{B_{R}}^{-}(|Dh|)\,dx
        \\&
        \stackrel{\eqref{mr:5}}{\leqslant}
        c\left( \frac{\rho}{R} \right)^{\beta_1} \FI_{B_{2R}}\Psi(x,|Du|)\,dx,
    \end{split}
\end{align}
 whenever $0<\rho \leqslant R/8$, where the constants $c,\beta_1$ depend only on $n,s(G),s(H_{a}), s(H_{b}),\nu,L$, but are independent of the values $a(x_a)$ and $b(x_b)$. Therefore, for every $0 < \rho\leqslant R/8$, we have 
\begin{align}
    \label{mr:8}
    \begin{split}
        \FI_{B_{\rho}}& G(|Du-(Du)_{B_{\rho}}|)\,dx 
        \\&
        \leqslant
        c\FI_{B_{\rho}}G(|Dh-(Dh)_{B_{\rho}}|)\,dx + c \FI_{B_{\rho}}G(|Du-Dh|)\,dx
        \\&
        \stackrel{\eqref{mr:7}}{\leqslant}
        c\left( \frac{\rho}{R} \right)^{\beta_1}\FI_{B_{2R}}\Psi(x,|Du|)\,dx
        + c\left( \frac{R}{\rho} \right)^{n}\FI_{B_{R/8}}G(|Du-Dh|)\,dx
        \\&
        \stackrel{\eqref{mr:1},\eqref{mr:6}}{\leqslant}
        c\left(\frac{\rho}{R}\right)^{\beta_1}R^{-k} + c\left(\frac{R}{\rho}\right)^{n}R^{d/4}
    \end{split}
\end{align}
with $c\equiv c(\data(\O_0),k)$. Notice that $k\in (0,1)$ is still arbitrary and $d$ has been defined in \eqref{mr:3} depending only on $\data(\O_0)$.  Taking $k\equiv d\beta_1/(32n)$ and $\rho \equiv (R/8)^{1+d/(16n)}$ in the last display, after some elementary manipulations, we get 
\begin{align}
    \label{mr:9}
    \FI_{B_{\rho}}G(|Du-(Du)_{B_{\rho}}|)\,dx \leqslant c\rho^{\frac{d\beta_1}{64n}} 
\end{align}
 for every $\rho\in (0,1/8)$, provided $B_{8\rho}\Subset \O_0$. In particular, using Jensen's inequality and Lemma $\ref{lem:nf1}_{1}$, we have 
\begin{align}
    \label{mr:10}
    \FI_{B_{\rho}}|Du-(Du)_{B_{\rho}}|\,dx \leqslant c\rho^{\frac{d\beta_1}{64n}\left(1+\frac{1}{s(G)}\right)} 
\end{align}
 for every $\rho\in (0,1/8)$ with $B_{8\rho}\Subset \O_0$. By the integral characterization of H\"older continuity due to Campanato and Meyers and a standard covering argument alongside \eqref{mr:10}, $Du\in C^{0,\theta}_{\loc}(\O)$ for $\theta\equiv \frac{d\beta_1}{64n}\left(1+\frac{1}{s(G)}\right)$. This proves the local H\"older continuity of $Du$. But the proof is not finished yet, since $\theta$ should be independent of $\O_0$ as in the statement of Theorem \ref{mth:mr}. In order to obtain the full completeness, we apply some standard perturbation methods.
  Indeed, once we have that $Du$ is locally bounded, we shall revisit the proof of Lemma \ref{lem:3ce} and Lemma \ref{lem:5ce}. We also observe that the functional defined in \eqref{3ce:1} satisfies the bounded slope condition (see for instance \cite{BBr1}). Then there exists a constant 
$c\equiv c(n,s(G),s(H_{a}),s(H_{b}),\nu,L, \norm{Du}_{L^{\infty}(B_{R})})$ such that 
\begin{align*}
    \norm{Dh}_{L^{\infty}(B_{R})} \leqslant c.
\end{align*}
Since $Du$ is locally bounded, following the proof of Lemma \ref{lem:0ce}, Lemma \ref{lem:3ce} and Lemma \ref{lem:5ce}, specially the estimate in \eqref{0ce:3} can be modified with $\gamma\equiv 1$. Moreover, the estimates in \eqref{5ce:1} and \eqref{3ce:2} can be upgraded by 
\begin{align}
    \label{mr:11}
    \begin{split}
    \FI_{B_{R/8}}&\left( |V_{G}(Du)-V_{G}(Dh)|^2 + a(x_a)|V_{H_{a}}(Du)-V_{H_{a}}(Dh)|^2 + b(x_b)|V_{H_{b}}(Du)-V_{H_{b}}(Dh)|^2\right)\,dx 
    \\&
    \leqslant cR^{\min\{\mu,\alpha,\beta\}}
    \end{split}
\end{align}
with some constant $c\equiv c(n,s(G),s(H_{a}),s(H_{b}),\nu,L,\norm{a}_{L^{\infty}(\O_0)}, \norm{b}_{L^{\infty}(\O_0)},\norm{Du}_{L^{\infty}(B_{2R})})$. In particular, the last estimate via \eqref{mr:6} implies that 
\begin{align}
    \label{mr:12}
    \FI_{B_{R}}G(|Du-Dh|)\,dx \leqslant cR^{\min\{\mu,\alpha,\beta\}/4}.
\end{align}
Therefore, \eqref{mr:7} implies that
\begin{align}
    \label{mr:13}
    \FI_{B_{\rho}} G(|Dh-(Dh)_{B_{\rho}}|)\,dx \leqslant c\left( \frac{\rho}{R} \right)^{\beta_1},
\end{align}
where $\beta_1$ depends on $n,s(G),s(H_{a}), s(H_{b}),\nu,L$ while the constant $c$ depends only on $n,s(G),s(H_{a}),s(H_{b}),\nu,$ $L$, $ \norm{Du}_{L^{\infty}(\O_0)}$, $\norm{a}_{L^{\infty}(\O_0)}$ and $ \norm{b}_{L^{\infty}(\O_0)}$. Combining the last two estimates similarly as shown in \eqref{mr:8}, we deduce the gradient H\"older continuity with the exponent depending only on $n,s(G),s(H_{a}), s(H_{b}),\nu,L,\alpha, \beta$ and $\mu$, which is the desired dependence as described in the statement. The proof is finally complete.


\section{Orlicz double phase problems}
\label{sec:10}

Let us consider a general class of functionals with double phase growth, which is essentially the case when $b(\cdot)\equiv 0$ in \eqref{psi}. The functionals we shall deal with is of type 
\begin{align}
	\label{dfunctional}
	W^{1,1}(\O)\ni v\mapsto \F_{d}(v,\O):= \I_{\O}F_{d}(x,v,Dv)\,dx,
\end{align}
 where $F_{d} : \Omega\times \R \times \R^n\rightarrow \R$ is a Carathe\'odory function fulfilling the following double-sided growth  
 \begin{align}
 	\label{dsa:1}
 	\nu\Psi_{d}(x, |z|) \leqslant F_{d}(x,y,z) \leqslant 
 	L\Psi_{d}(x,|z|),
 \end{align}
whenever $x\in\O$, $y\in\R$ and $z\in\R^n$, in which here and in the rest of the paper we denote by 
\begin{align}
	\label{dpsi}
	\Psi_d(x,t):= G(t) + a(x)H_{a}(t)\quad (\forall x\in\O,\, t\geqslant 0).
\end{align}
As we introduced before we assume $G,H_{a}\in \mathcal{N}$ with indices $s(G), s(H_{a})\geqslant 1$ and $a(\cdot)\in C^{\omega_{a}}(\O)$ with  $\omega_{a} : [0,\infty) \rightarrow [0,\infty)$ being a concave function such that $\omega_{a}(0)=0$. We shall consider a local minimizer $u$ of the functional $\F_{d}$ in \eqref{dfunctional} under one of the assumptions \eqref{ma:1}, \eqref{ma:2} and \eqref{ma:3} with $\omega_{b}(\cdot)\equiv 0$. Since the double sided growth assumption \eqref{dpsi} is not enough for higher regularity properties of a local minimizer $u$ of the functional $\F_{d}$, we shall assume that  $F_{d}$ is a continuous integrand belonging to the space $C^{2}(\R^n\setminus \{0\})$ with respect to $z$-variable and having the the following structure assumptions:

\begin{align}
 	\label{dsa:2}
 	\begin{cases}
 		|D_{z}F_{d}(x,y,z)||z| + |D^2_{zz}F_{d}(x,y,z)||z|^2 \leqslant L\Psi_{d}(x,|z|), \\
 		\nu\dfrac{\Psi_{a}(x,|z|)}{|z|^2}|\xi|^2 \leqslant \inner{D_{zz}^{2}F_{d}(x,y,z)\xi}{\xi}, \\
 		\begin{aligned}
 		|D_{z}F_{d}(x_1,y,z)-D_{z}F_{d}(x_2,y,z)||z|\leqslant {}&
 		L\omega(|x_1-x_2|)[\Psi_{d}(x_1,|z|) 
 		+\Psi_{d}(x_2,|z|)] 
 		\\&
 		+ L|\Psi_{d}(x_1,|z|)-\Psi_{d}(x_2,|z|)|,
 		\end{aligned} \\
 		|F_{d}(x,y_1,z)-F_{d}(x,y_2,z)| \leqslant L\omega(|y_1-y_2|)\Psi_{d}(x,|z|),
 	\end{cases}
 \end{align}
  whenever  $x,x_1,x_2\in\O$, $y,y_1,y_2\in \R$, $z\in\R^n\setminus\{0\}$, $\xi\in\R^n$, where $0 <\nu\leqslant L$ are fixed constants, and the function $\omega$ is the same as defined in \eqref{omega1} or \eqref{omega2}.
 The structure conditions in \eqref{dsa:2} are satisfied for instance by the model functional 
 \begin{align}
 	\label{dcase:1}
 	W^{1,1}(\O)\ni\u\mapsto \I_{\O}f_{d}(x,\u)\Psi_{d}(x,|D\u|)\,dx,
 \end{align}
 where the continuous function $f_{d}(\cdot)$ satisfies $0<\nu_0 \leqslant f(\cdot,\cdot)\leqslant L_0$ for some constants $\nu_0, L_0$ and fulfills the following inequality
 \begin{align*}
 	|f_{d}(x_1,y_1)-f_{d}(x_2,y_2)|\leqslant L_{0}\omega(|x_1-x_2| +|y_1-y_2|),
\end{align*} 
whenever $x_1,x_2\in\R^n$ and $y_1,y_2\in\R$, in which $\omega$ is the same as defined in \eqref{omega1} or \eqref{omega2}. Another model case is given by 
 \begin{align}
 	\label{dcase:2}
 	W^{1,1}(\O)\ni \u\mapsto \I_{\O}[F_{G}(x,\u,D\u)+ a(x)F_{H_{a}}(x,\u,D\u)]\,dx,
 \end{align}
 where $F_{G}(\cdot)$ and $F_{H_{a}}(\cdot)$ have $G-$growth and $H_{a}-$growth respectively, and satisfy the following suitable structure assumptions that
 
 \begin{align*}
 	\begin{cases}
 		|D_{z}F_{\Phi}(x,y,z)||z| + |D^2_{zz}F_{\Phi}(x,y,z)||z|^2 \leqslant L_0\Phi(|z|), \\
 		\nu_0\dfrac{\Phi(|z|)}{|z|^2}|\xi|^2 \leqslant \inner{D_{zz}^{2}F_{\Phi}(x,y,z)\xi}{\xi}, \\
 		|D_{z}F_{\Phi}(x_1,y,z)-D_{z}F_{\Phi}(x_2,y,z)||z| \leqslant  L_0\omega(|x_1-x_2|)\Phi(|z|), \\
 		|F_{\Phi}(x,y_1,z)-F_{\Phi}(x,y_2,z)| \leqslant L_0\omega(|y_1-y_2|)\Phi(|z|)
 	\end{cases}
 \end{align*}
hold with $\Phi\in\{G,H_{a} \}$ for some positive constants $\nu_0,L_0$, where $\omega$ is as in \eqref{omega1} or \eqref{omega2}. The reason we consider the double phase case independently is that we have discussed the various regularity properties of the functional $\F$ in \eqref{functional} in the sense of multi-phase of the type defined in \eqref{type} together with the structure assumptions \eqref{sa:2}, but this one is a special case of \eqref{dfunctional} together with the structure assumptions \eqref{dsa:2} in the sense of the double phase structures. Now we restate and prove Lemma \ref{lem:nf4} in the double phase settings which will be applied later.

\begin{lem}
    \label{lem:dnf4}
    Let $F_{d}: \O\times\R\times\R^n\rightarrow \R$ be a function defined in \eqref{dfunctional} which satisfies \eqref{dsa:1} and \eqref{dsa:2}. There exist positive constants $c_1,c_2\equiv c_1,c_2(n,s(G),s(H_{a}),\nu)$ such that the following inequalities
     \begin{align}
	    \label{ddefV1_1}
	    \begin{split}
	    &|V_{G}(z_1)-V_{G}(z_2)|^2 + a(x)|V_{H_{a}}(z_1)-V_{H_{a}}(z_2)|^2 
	    \leqslant c_1\inner{D_{z}F_{d}(x,y,z_1)-D_{z}F_{d}(x,y,z_2)}{z_1-z_2},
	    \end{split}
    \end{align}
    \begin{align}
        \label{ddefV1_2}
        \begin{split}
            |V_{G}(z_1)-V_{G}(z_2)|^2 + a(x)|V_{H_{a}}(z_1)-V_{H_{a}}(z_2)|^2 
             + & c_2\inner{D_{z}F_{d}(x,y,z_1)}{z_2-z_1}
             \\&
            \leqslant
            c_2[F_{d}(x,y,z_2)-F_{d}(x,y,z_1)]
        \end{split}
    \end{align}
    and 
    \begin{align}
        \label{ddefV1_3}
        \begin{split}
        |F_{d}(x_1,y,z)-F_{d}(x_2,y,z)| 
        \leqslant
        L\omega(|x_1-x_2|)\left[ \Psi_{d}(x_1,|z|) + \Psi_{d}(x_2,|z|) \right]
         + L|a(x_1)-a(x_2)|H_{a}(|z|)
        \end{split}
    \end{align}
    hold true, whenever $z,z_1,z_2\in \R^n \setminus \{0\}$, $x,x_1,x_2\in\O$ and $y\in\R$.
\end{lem}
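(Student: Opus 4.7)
The proof will follow the same three-step structure as Lemma \ref{lem:nf4}, with the key difference being that $F_d$ is not assumed to have a split structure of the type \eqref{type}, so the third inequality must be derived directly from the joint assumption $\eqref{dsa:2}_3$ rather than phase-by-phase.

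First I would establish \eqref{ddefV1_1} by writing
\[
\langle D_z F_d(x,y,z_1) - D_z F_d(x,y,z_2), z_1-z_2\rangle = \int_0^1 \langle D_{zz}^2 F_d(x,y,\theta z_1+(1-\theta)z_2)(z_1-z_2), z_1-z_2\rangle\,d\theta,
\]
bounding the right-hand side from below via the ellipticity $\eqref{dsa:2}_2$ and recalling $\Psi_d(x,t)=G(t)+a(x)H_a(t)$. After splitting this into a $G$-part and an $a(x)H_a$-part, an application of \eqref{defV1}--\eqref{defV2} to each of $G$ and $H_a$ yields the desired estimate in terms of $|V_G(z_1)-V_G(z_2)|^2 + a(x)|V_{H_a}(z_1)-V_{H_a}(z_2)|^2$. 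This part is identical to the corresponding step in Lemma \ref{lem:nf4}.

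Next, for \eqref{ddefV1_2}, I would use the standard identity
\[
F_d(x,y,z_2)-F_d(x,y,z_1)-\langle D_z F_d(x,y,z_1), z_2-z_1\rangle = \int_0^1\!\frac{1}{\theta}\langle D_z F_d(x,y,\theta z_2+(1-\theta)z_1)-D_z F_d(x,y,z_1), \theta(z_2-z_1)\rangle\,d\theta,
\]
apply \eqref{ddefV1_1} from the previous step inside the integral, and conclude with \eqref{defV5} applied separately to $V_G$ and $V_{H_a}$ (with an extra factor $a(x)$ in the second term). This mirrors the argument in Lemma \ref{lem:nf4} verbatim.

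The main new step is \eqref{ddefV1_3}, which I expect to be the principal obstacle since the proof in Lemma \ref{lem:nf4} exploited the explicit splitting \eqref{type} of $F$. Here we have only the joint continuity $\eqref{dsa:2}_3$. Using $F_d(x,y,0)=0$ (which follows from the double-sided growth \eqref{dsa:1}), I would write
\[
F_d(x_1,y,z)-F_d(x_2,y,z) = \int_0^1 \langle D_z F_d(x_1,y,\theta z)-D_z F_d(x_2,y,\theta z), z\rangle\,d\theta,
\]
so that
\[
|F_d(x_1,y,z)-F_d(x_2,y,z)| \leqslant \int_0^1 |D_z F_d(x_1,y,\theta z)-D_z F_d(x_2,y,\theta z)|\,|z|\,d\theta.
\]
Applying $\eqref{dsa:2}_3$ pointwise at $\theta z$ and using $|\Psi_d(x_1,\theta|z|)-\Psi_d(x_2,\theta|z|)| = |a(x_1)-a(x_2)|H_a(\theta|z|)$, followed by the bounds $H_a(\theta|z|)\leqslant H_a(|z|)$ and $\Psi_d(x_i,\theta|z|)\leqslant \Psi_d(x_i,|z|)$ for $\theta\in[0,1]$ (from the monotonicity of $H_a$, $G$ and the fact that these are increasing), yields \eqref{ddefV1_3} after integration in $\theta$.
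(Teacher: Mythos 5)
Your plan for \eqref{ddefV1_1} and \eqref{ddefV1_2} coincides with the paper's, which simply observes that the arguments of Lemma \ref{lem:nf4} carry over verbatim, and your treatment of \eqref{ddefV1_3} via $F_d(x,y,0)=0$ and the fundamental theorem of calculus in $z$ is also exactly the paper's route. The only point that needs repair is your justification of the $\theta$-integration in the last step. Applying $\eqref{dsa:2}_{3}$ at the point $\theta z$ bounds $|D_zF_d(x_1,y,\theta z)-D_zF_d(x_2,y,\theta z)|\cdot\theta|z|$, whereas your integrand carries the factor $|z|$, so the pointwise bound you actually obtain is
\[
|D_zF_d(x_1,y,\theta z)-D_zF_d(x_2,y,\theta z)|\,|z|\leqslant \frac{L}{\theta}\Bigl(\omega(|x_1-x_2|)\bigl[\Psi_d(x_1,\theta|z|)+\Psi_d(x_2,\theta|z|)\bigr]+|a(x_1)-a(x_2)|H_a(\theta|z|)\Bigr).
\]
Mere monotonicity $\Psi_d(x_i,\theta|z|)\leqslant\Psi_d(x_i,|z|)$ then leaves the non-integrable factor $1/\theta$, and $\int_0^1\theta^{-1}\,d\theta=\infty$, so the step as you wrote it does not close. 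The correct (and standard) fix is to use convexity together with $\Psi_d(x,0)=H_a(0)=0$, which give $\Psi_d(x,\theta t)\leqslant\theta\,\Psi_d(x,t)$ and $H_a(\theta t)\leqslant\theta H_a(t)$ for $\theta\in[0,1]$; this cancels the $1/\theta$ and the $\theta$-integral contributes a factor $1$, yielding \eqref{ddefV1_3} with the constant $L$ as stated. With that one-line correction your argument is complete and matches the paper's.
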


\begin{proof}
	The arguments of the proof for \eqref{ddefV1_1} and \eqref{ddefV1_2} are essentially the same as done for Lemma \ref{lem:nf4}. Only difference lies in the one for \eqref{ddefV1_3}. 
	Since $F_{d}(x,y,0)=0$ for every $x\in\O$ and $y\in\R$, we have
    \begin{align*}
    \begin{split}
        &|F_{d}(x_1,y,z)-F_{d}(x_2,y,z)| = |(F_{d}(x_1,y,z)-F_{d}(x_1,y,0))-(F_{d}(x_2,y,z)-F_{d}(x_2,y,0))|
        \\&
        =
        \left|\I_{0}^{1}\inner{D_{z}F_{d}(x_1,y,\theta z)}{z}\,d\theta - \I_{0}^{1}\inner{D_{z}F_{d}(x_2,y,\theta z)}{z}\,d\theta\right|
        \\&
        \leqslant
        \I_{0}^{1}|D_{z}F_{d}(x_1,y,\theta z)-D_{z}F_{d}(x_2,y,\theta z)|\,|z|d\theta
        \\&
        \leqslant
        L\omega(|x_1-x_2|)\left[ \Psi_{d}(x_1,|z|) + \Psi_{d}(x_2,|z|) \right]
         + L|a(x_1)-a(x_2)|H_{a}(|z|),
    \end{split}
    \end{align*}
    where the last inequality of the last display is implied by $\eqref{dsa:2}_{3}$. This proves \eqref{ddefV1_3}.
\end{proof}

 In order to simplify the notations in the present section, we use the set of parameters for a minimizer $u$ of the functional $\F_{d}$ depending on which one of the assumptions \eqref{ma:1}-\eqref{ma:3} under $\omega_{b}(\cdot)\equiv 0$ comes into play as the $\data$ in this section.

\begin{align}
	\label{ddata}
	\data_{d}
	 \equiv \left\{\begin{array}{ll}
        &n,\lambda_{1},s(G),s(H_{a}),\nu,L, \norm{a}_{C^{\omega_{a}}(\O)}, \omega(\cdot), \norm{\Psi(x,|Du|)}_{L^1(\O)}, \norm{u}_{L^{1}(\O)}, \omega_a(1) \\
        & \text{if } \eqref{ma:1} \text{ is considered under } \omega_{b}(\cdot)\equiv 0,\\
        & n,\lambda_{2},s(G),s(H_{a}),\nu,L,  \norm{a}_{C^{\omega_{a}}(\O)},\omega(\cdot), \norm{u}_{L^{\infty}(\O)},\omega_a(1) \\
         & \text{if } \eqref{ma:2} \text{ is considered under } \omega_{b}(\cdot)\equiv 0, \\
       & n,\lambda_{3},s(G),s(H_{a}),\nu,L,  \norm{a}_{C^{\omega_{a}}(\O)},\omega(\cdot), [u]_{0,\gamma},\omega_a(1) \\
       & \text{ if } \eqref{ma:3} \text{ is considered under } \omega_{b}(\cdot)\equiv 0,
        \end{array}\right.
\end{align}
where $\lambda_1$, $\lambda_2$, $\lambda_3$ are the same as defined in \eqref{ma:1}-\eqref{ma:3} and $s(G),s(H_a)$ are indices of the functions $G,H_{a}$ in the sense of Definition \ref{N-func}, respectively.
With $\O_0\Subset\O$ being a fixed open subset, we also denote by $\data_{d}(\O_0)$ the above set of parameters together with $\dist(\O_{0},\partial\O)$:
\begin{align}
    \label{ddata0}
    \data_{d}(\O_0) \equiv \data_{d}, \dist(\O_0,\partial\O).
\end{align}

Now we provide the main results in this section, which correspond to Theorem \ref{mth:mr} and Theorem \ref{mth:md}.
 
 \begin{thm}[Maximal regularity]
 	\label{dmth:mr}
 	Let $u\in W^{1,\Psi_{d}}(\O)$ be a local minimizer of the functional $\F_{d}$ defined in \eqref{dfunctional} under the assumptions \eqref{dsa:1}, \eqref{dsa:2} and \eqref{omega1} with $\omega_{b}(\cdot)\equiv 0$. Suppose that $\omega_{a}(t) = t^{\alpha}$ for some $\alpha\in (0,1]$. If one of the following assumptions
 \begin{subequations}
	 \begin{empheq}[left={\empheqlbrace}]{align}
        &\eqref{ma:1}   \label{dmth:mr:1},\\
        &\eqref{ma:2}  \label{dmth:mr:2}, \\
        &\eqref{ma:3} \quad\text{with}\quad \limsup\limits_{t\to 0^{+}} \Lambda\left(t^{\frac{1}{1-\gamma}},\frac{1}{t}\right)=0 \label{dmth:mr:3}
      \end{empheq}
	\end{subequations}
	is satisfied,  then there exists $\theta\in (0,1)$ depending only on $n,s(G),s(H_{a}),\nu,L,\alpha$ and $\mu$ such that $Du\in C^{0,\theta}_{\loc}(\O)$. 	
 \end{thm}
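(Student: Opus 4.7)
The proof will follow the architecture of Section \ref{sec:9} adapted to the Orlicz double-phase setting, since the hypotheses and conclusions of Theorem \ref{dmth:mr} are the natural specializations of Theorem \ref{mth:mr} to the case $b(\cdot)\equiv 0$, $\omega_b(\cdot)\equiv 0$. First, I would establish the direct analog of Theorem \ref{mth:md} for the functional $\F_d$ under each of the conditions \eqref{dmth:mr:1}--\eqref{dmth:mr:3} with $\limsup\nolimits_{t\to 0^+}\Lambda(\cdots)=0$ automatically holding when $\omega_a(t)=t^\alpha$. Concretely, this yields that for every open $\O_0\Subset\O$ and every $k>0$,
\begin{equation*}
\FI_{B_{2R}}\Psi_d(x,|Du|)\,dx \leqslant cR^{-k}
\quad\text{whenever}\quad B_{2R}\subset\O_0,\ R\leqslant 1,
\end{equation*}
with $c\equiv c(\data_d(\O_0),k)$. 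The proof of this Morrey estimate is obtained by repeating the arguments of Sections \ref{sec:4}--\ref{sec:8} in the two-phase framework, where the almost-standard Caccioppoli inequality of Lemma \ref{lem:1cacct}, the Sobolev--Poincar\'e inequality of Remark \ref{rmk:sp}, and the harmonic approximation of Lemma \ref{hta: lemma_hta} all specialize transparently when one of the phases is absent.

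Next, I would build the comparison chain. Fix a ball $B_{2R}\equiv B_{2R}(x_0)\subset\O_0$ with $R\leqslant 1$ and let $w\in u+W^{1,\Psi_d}_0(B_{2R})$ minimize $\u\mapsto\I F_d(x,(u)_{B_{2R}},D\u)\,dx$, then let $w_c\in w+W^{1,\Psi_d}_0(B_R)$ minimize the $x$-frozen functional obtained by freezing $x$ in the $z$-derivatives of $F_d$, and finally let $h\in w_c+W^{1,\Psi_d^-_{B_R}}_0(B_{R/8})$ be the minimizer of the fully frozen functional with density
\begin{equation*}
F_0(z):=F_d(x_c,(u)_{B_{2R}},z)\big|_{a\mapsto a(x_a)},
\end{equation*}
where $x_a\in\overline{B_R}$ realizes $\inf_{B_R}a$. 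The analogs of Lemmas \ref{lem:0ce}, \ref{lem:1ce}, \ref{lem:3ce}, \ref{lem:5ce} can then be established using the monotonicity and growth information from Lemma \ref{lem:dnf4}, and the harmonic approximation scheme of Lemma \ref{hta: lemma_hta} applied to the frozen two-phase Young function $\bar\Psi_0(t)=\bar G(t)+\bar a(x_a)\bar H_a(t)$. The outcome is the comparison estimate
\begin{equation*}
\FI_{B_{R/8}}\bigl(|V_G(Du)-V_G(Dh)|^2+a(x_a)|V_{H_a}(Du)-V_{H_a}(Dh)|^2\bigr)\,dx
\leqslant c\bigl(R^{\mu\gamma}+[q(R)]^{s_1}\bigr)\FI_{B_{2R}}\Psi_d(x,|Du|)\,dx,
\end{equation*}
with $q(R)$ a positive power of $R$ whose exponent depends on $\alpha$ and the regime of \eqref{dmth:mr:1}--\eqref{dmth:mr:3}, exactly as in \eqref{mr:3}.

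With the comparison estimate in hand, I would apply the Lieberman-type gradient H\"older regularity for minimizers of the frozen functional with density $F_0$, which satisfies the standard Orlicz two-phase ellipticity and growth conditions with coefficient $a(x_a)\in[0,\infty)$; this yields
\begin{equation*}
\FI_{B_\rho}\Psi^-_{B_R}(|Dh-(Dh)_{B_\rho}|)\,dx \leqslant c(\rho/R)^{\beta_1}\FI_{B_{R/8}}\Psi^-_{B_R}(|Dh|)\,dx
\end{equation*}
for all $\rho\leqslant R/8$, with $\beta_1$ and $c$ depending only on $n,s(G),s(H_a),\nu,L$. Combining this with the $V$-function estimate (mirroring \eqref{mr:6}) and the Morrey bound with $k$ chosen sufficiently small, and splitting $\FI_{B_\rho}G(|Du-(Du)_{B_\rho}|)\,dx$ through the triangle inequality as in \eqref{mr:8}--\eqref{mr:10}, I obtain a power-type decay
\begin{equation*}
\FI_{B_\rho}|Du-(Du)_{B_\rho}|\,dx \leqslant c\rho^{\theta_0},
\end{equation*}
for some explicit $\theta_0>0$ depending on $\data_d(\O_0)$. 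By the Campanato--Meyers characterization, this gives $Du\in C^{0,\theta_0}_{\loc}(\O)$, establishing local H\"older continuity of $Du$ with an exponent that a priori depends on $\data_d(\O_0)$.

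The main obstacle, and the last step of the proof, is to upgrade the dependence of the exponent $\theta$ so that it depends only on $n,s(G),s(H_a),\nu,L,\alpha,\mu$ as claimed in the statement. For this I would invoke a standard perturbation/bounded slope argument: once we know $Du\in L^\infty_{\loc}(\O)$, the frozen functional with density $F_0$ satisfies the bounded slope condition, so Lieberman's theory delivers $\|Dh\|_{L^\infty(B_R)}\leqslant c$ with $c$ depending only on $n,s(G),s(H_a),\nu,L$ and $\|Du\|_{L^\infty(B_R)}$. This upgraded regularity lets us rerun the comparison estimate with $\gamma=1$ in the analog of \eqref{0ce:3} and replace $q(R)$ by a clean power $R^{\min\{\mu,\alpha\}/4}$, as in \eqref{mr:11}--\eqref{mr:13}. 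The resulting Campanato-type decay then produces an exponent $\theta$ whose dependence is restricted to $n,s(G),s(H_a),\nu,L,\alpha,\mu$, completing the proof. The delicate point throughout is that the modified continuity assumption $\eqref{dsa:2}_3$ contributes the term $L|a(x_1)-a(x_2)|H_a(|z|)$ in Lemma \ref{lem:dnf4}, which must be absorbed into the harmonic approximation defect exactly as the $I_a$-term in \eqref{2ce:22_1}; this is straightforward once one observes that the bound $|a(x_1)-a(x_2)|\leqslant 2[a]_{\omega_a}\omega_a(R)$ matches the very structure driving the condition $\eqref{ma:1}_2$, $\eqref{ma:2}_2$, or $\eqref{ma:3}_2$.
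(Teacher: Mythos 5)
Your overall architecture (Morrey decay giving $\FI_{B_{2R}}\Psi_d(x,|Du|)\,dx\leqslant cR^{-k}$, a comparison chain ending at a minimizer of a frozen autonomous functional, Lieberman's reference estimates, Campanato--Meyers, and a final perturbation step to purge the $\O_0$-dependence from $\theta$) matches Section \ref{sec:10} of the paper. However, there are two concrete problems. First, your claim that the vanishing-limsup conditions hold ``automatically'' when $\omega_a(t)=t^\alpha$ is false: for the standard double phase $G(t)=t^p$, $H_a(t)=t^q$ with $q/p=1+\alpha/n$ one computes $\Lambda\left(t,G^{-1}(t^{-n})\right)=\frac{1}{1+t^{\alpha}}\to 1$ as $t\to 0^{+}$, so the limsup in \eqref{mth:md:1} equals $1$, not $0$. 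The Morrey decay in the regimes \eqref{dmth:mr:1} and \eqref{dmth:mr:2} therefore cannot be obtained from the vanishing-limsup cases; the paper instead proves it under \eqref{dmth:md:4} and \eqref{dmth:md:5}, estimating the defect $I_a$ by a positive power of $R$ using the higher integrability of $\Psi_d(x,|Du|)$ (as in \eqref{d2ce:42_1}) or the H\"older continuity of $u$ (as in \eqref{d2ce:42_5}). Without this substitute argument your first step does not go through in two of the three regimes.

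Second, your three-step comparison chain $u\to w\to w_c\to h$ with an intermediate minimizer $w_c$ of ``the $x$-frozen functional obtained by freezing $x$ in the $z$-derivatives of $F_d$'' is not well defined here: the integrand $F_d$ in \eqref{dfunctional} is a general Carath\'eodory function satisfying only \eqref{dsa:1}--\eqref{dsa:2}, and it need not decompose as $F_G+a(x)F_{H_a}$ (that is only the model case \eqref{dcase:2}). There is no canonical way to freeze the ``explicit'' $x$ while retaining the coefficient $a(x)$, nor to form $F_d(x_c,(u)_{B_{2R}},z)\big|_{a\mapsto a(x_a)}$. This is precisely why the paper skips the analogue of Lemma \ref{lem:1ce} in Section \ref{sec:10} and freezes the full $x$-dependence in one stroke inside the harmonic approximation: the combined structure condition $\eqref{dsa:2}_{3}$ controls $|D_zF_d(x_1,y,z)-D_zF_d(x_2,y,z)||z|$ by $\omega(|x_1-x_2|)[\Psi_d(x_1,|z|)+\Psi_d(x_2,|z|)]+|\Psi_d(x_1,|z|)-\Psi_d(x_2,|z|)|$, and consequently the approximation defect $D(\varepsilon,R)$ in \eqref{d2ce:44} carries an extra additive $\omega(R)$ term that your scheme would place elsewhere. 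To repair your argument for the general $F_d$ you must collapse your two freezing steps into one and absorb both the $\omega(R)$ contribution and the coefficient oscillation $|a(x)-a(x_a)|$ into the defect of a single harmonic approximation, as in Lemma \ref{lem:d2ce}.
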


\begin{thm}[Morrey decay]
	\label{dmth:md}
	Let $u\in W^{1,\Psi_{d}}(\O)$ be a local minimizer of the functional $\F_{d}$ defined in \eqref{dfunctional}, under the assumptions \eqref{dsa:1}, \eqref{dsa:2} and \eqref{omega2}. Assume that $\omega_{b}(\cdot)\equiv 0$ in what follows.
 If one of the following assumptions
	\begin{subequations}
	 \begin{empheq}[left={\empheqlbrace}]{align}
          &\eqref{ma:1}  \quad\text{with}\quad \limsup\limits_{t\to 0^{+}} \Lambda\left(t,G^{-1}(t^{-n})\right)=0 \label{dmth:md:1},\\
         &\eqref{ma:2}\quad\text{with}\quad \limsup\limits_{t\to 0^{+}} \Lambda\left(t,\frac{1}{t}\right)=0 \label{dmth:md:2},\\
        &\eqref{ma:3} \quad\text{with}\quad \limsup\limits_{t\to 0^{+}} \Lambda\left(t^{\frac{1}{1-\gamma}},\frac{1}{t}\right)=0,\label{dmth:md:3} \\
        &\eqref{ma:1}  \quad\text{with}\quad \omega_{a}(t)=t^{\alpha}\quad
         \text{for some}\quad \alpha\in (0,1] \label{dmth:md:4},\\
          &\eqref{ma:2}\quad\text{with}\quad \omega_{a}(t)=t^{\alpha}
          \quad \text{for some}\quad \alpha\in (0,1]  \label{dmth:md:5}
      \end{empheq}
	\end{subequations}
	is satisfied, then
	\begin{align}
	    \label{dmth:md:6}
	    u\in C^{0,\theta}_{\loc}(\O)\quad \text{for every}\quad \theta\in (0,1).
	\end{align}
	Moreover, for every $\sigma\in (0,n)$, there exists a positive constant $c\equiv c(\data_{d}(\O_0),\sigma)$ such that the decay estimate
	\begin{align}
		\label{dmth:md:7}
		\I_{B_{\rho}} \Psi_{d}(x,|Du|)\,dx \leqslant c\left(\frac{\rho}{R} \right)^{n-\sigma}\I_{B_{R}}\Psi_{d}(x,|Du|)\,dx
	\end{align}
holds for every concentric balls  $B_{\rho}\subset B_{R} \subset \O_{0}\Subset\O$ with $R\leqslant 1$.
\end{thm}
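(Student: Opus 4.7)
The plan is to run the scheme developed in Sections \ref{sec:7} and \ref{sec:8} with the one coefficient function $a(\cdot)$ and with the modified structure condition $\eqref{dsa:2}_{3}$, which is the genuinely new ingredient compared with the multi--phase setting. As a preparation, the higher integrability, local boundedness/H\"older continuity and Caccioppoli bounds of Sections \ref{subsec:5.1}--\ref{subsec:5.4} carry over verbatim under $\omega_{b}\equiv 0$, so in what follows I may use all of them together with their data dependencies collected in \eqref{ddata}--\eqref{ddata0}. In particular, Lemma \ref{lem:dnf4} plays for $F_{d}$ the role played by Lemma \ref{lem:nf4} for $F$, delivering both the monotonicity $\eqref{ddefV1_1}$ and the comparison inequality $\eqref{ddefV1_2}$.

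Following Section \ref{sec:7}, I first solve the Dirichlet problem
\[
\begin{cases} w\mapsto \min_{\u}\ \I_{B_{2R}}F_{d}(x,(u)_{B_{2R}},D\u)\,dx,\\ \u\in u+W^{1,\Psi_{d}}_{0}(B_{2R}),\end{cases}
\]
and then freeze the $x$-dependence at a point $x_{c}\in B_{R}$ to produce $w_{c}\in w+W^{1,\Psi_{d}}_{0}(B_{R})$ minimizing $\I_{B_{R}}F_{d}(x_{c},(u)_{B_{2R}},Dv)\,dx$. The analogues of Lemmas \ref{lem:0ce} and \ref{lem:1ce} follow from \eqref{ddefV1_2} together with the $y$-Lipschitz assumption $\eqref{dsa:2}_{4}$ and the H\"older continuity of $u$ (in the variants \eqref{dmth:md:1},\eqref{dmth:md:2},\eqref{dmth:md:4},\eqref{dmth:md:5} via Theorem \ref{thm:hc}, in \eqref{dmth:md:3} via the assumption itself). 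The only difference appears in the $x$-comparison between $w$ and $w_{c}$: the test function $\varphi=w-w_{c}$ now produces the term
\[
\FI_{B_{R}}|D_{z}F_{d}(x_{c},(u)_{B_{2R}},Dw)-D_{z}F_{d}(x,(u)_{B_{2R}},Dw)|\,|Dw-Dw_{c}|\,dx,
\]
which by $\eqref{dsa:2}_{3}$ is controlled by $\omega(R)[\Psi_{d}(x,|Dw|)+\Psi_{d}(x_{c},|Dw|)]+|a(x)-a(x_{c})|H_{a}(|Dw|)$. After a Young inequality with conjugate exponents $(s(\Psi)+1,(s(\Psi)+1)/s(\Psi))$ and reabsorption, this gives the double phase version of \eqref{1ce:3} and \eqref{1ce:7}, still with a right-hand side of the form $c(\omega(R)+\omega_{a}(R))^{\kappa}\FI_{B_{R}}\Psi_{d}(x,|Du|)\,dx$, which is the crucial form needed for what follows.

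Next I adapt the harmonic approximation (Lemma \ref{hta: lemma_hta}) to the scaled frozen functional as in Steps 2--4 of the proof of Lemma \ref{lem:2ce}, retaining only the phases $\bar{G}$ and $\bar{a}\bar{H}_{a}$. The bound
\[
\Big|\FI_{B_{1/8}}\langle\bar{A}_{0}(D\bar{w}_{c}),D\varphi\rangle\,dx\Big|\le c\,d(\varepsilon,R)\|D\varphi\|_{L^{\infty}}
\]
is proved exactly as in \eqref{2ce:16}--\eqref{2ce:42_8}, with $I_{b}$ deleted and $I_{a}$ estimated case by case through $\Lambda$ and the a priori bounds for the excess $E(w_{c},B_{R/2})$ given by Theorems \ref{thm:hc}, \ref{thm:hi} and Lemma \ref{lem:1ce}; the function $d(\varepsilon,R)$ is the obvious one-phase analogue of \eqref{2ce:44}. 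Combining with the a priori $C^{1,\beta_{1}}$-estimate of \cite{L1} for the homogeneous limit $\bar{F}_{0}(z)=\bar{F}_{G}(z)+\bar{a}(\bar x_{a})\bar{F}_{H_{a}}(z)$ yields the decay estimate
\[
\FI_{B_{\tau R}}\Psi_{d,B_{R}}^{-}\!\left(\tfrac{|w_{c}-(w_{c})_{B_{\tau R}}|}{\tau R}\right)dx\le c(1+\tau^{-(n+s(\Psi)+1)}\varepsilon^{*})\FI_{B_{R/2}}\Psi_{d,B_{R}}^{-}\!\left(\tfrac{|w_{c}-(w_{c})_{B_{R/2}}|}{R}\right)dx,
\]
and, triangulating back to $u$ via the comparison estimates, the master inequality
\[
\FI_{B_{\tau R}}\Psi_{d,B_{R}}^{-}\!\left(\tfrac{|u-(u)_{B_{\tau R}}|}{\tau R}\right)dx\le c\bigl(\tau^{n}+\tau^{-(s(\Psi)+1)}\varepsilon_{*}\bigr)\FI_{B_{2R}}\Psi_{d}(x,|Du|)\,dx
\]
for all $R\le R_{*}(\data_{d}(\O_{0}),\varepsilon_{*})$, i.e.\ the double phase analogue of Lemma \ref{lem:4ce}.

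Finally, to obtain \eqref{dmth:md:7} I replay the exit time argument of Section \ref{sec:8}, simplified because only the two alternatives \eqref{G-phase} and \eqref{G-a phase} can occur (the other two being vacuous with $b\equiv 0$). Lemma \ref{lem:1cacct} produces the replacement of $\Psi_{d,B_{R}}^{-}$ by $G$ in the $G$-phase and by $G+a^{-}(B_{R})H_{a}$ in the $(G,H_{a})$-phase, so that a single nested iteration suffices: choose $\tau_{a},\tau_{0}\in(0,1/64)$ and $\varepsilon_{*}$ so small (depending on $\data_{d}(\O_{0})$ and the target decay rate $\sigma\in(0,n)$) that at each phase one gains a factor $\tau^{n-\sigma}$. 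A first exit time for the $G$-phase followed by a stable $(G,H_{a})$-phase tail reproduces verbatim the chain \eqref{nmd:18}--\eqref{nmd:22} and yields \eqref{dmth:md:7}; \eqref{dmth:md:6} is then an immediate consequence via Morrey's embedding and Sobolev--Poincar\'e. The main obstacle in the program is the first one above: properly handling $\eqref{dsa:2}_{3}$ in the $x$-comparison so that the comparison error is still a multiple of $\FI_{B_{R}}\Psi_{d}(x,|Du|)\,dx$ and does not degenerate where $a(\cdot)$ vanishes; everything else is a streamlining of the multi-phase argument.
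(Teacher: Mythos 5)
Your overall architecture (comparison maps, harmonic approximation against the frozen integrand, exit-time iteration over the phases) matches the paper's, but at the one step you yourself single out as the main obstacle — the $x$-comparison under $\eqref{dsa:2}_{3}$ — your proposal diverges from the paper and, as written, has a genuine gap. You propose to keep the intermediate minimizer $w_{c}$ of $v\mapsto\int_{B_{R}}F_{d}(x_{c},(u)_{B_{2R}},Dv)\,dx$ and to close the comparison between $w$ and $w_{c}$ by testing with $w-w_{c}$, invoking $\eqref{dsa:2}_{3}$ and a Young-type reabsorption to get an error of the form $c(\omega(R)+\omega_{a}(R))^{\kappa}\fint_{B_{R}}\Psi_{d}(x,|Dw|)\,dx$. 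This does not go through. The condition $\eqref{dsa:2}_{3}$ produces the term $|a(x)-a(x_{c})|\,H_{a}(|Dw|)\,|Dw-Dw_{c}|/|Dw|$, and after any splitting à la Lemma \ref{lem:nf3} you are left with a multiple of $\omega_{a}(R)\,H_{a}(|Dw|)$ \emph{unweighted} by $a(x)$. In the double phase setting $H_{a}(|Dw|)$ alone is not controlled by $\Psi_{d}(x,|Dw|)$ where $a(\cdot)$ degenerates (this is the whole point of the problem), and the companion term $\varepsilon\,\omega_{a}(R)H_{a}(|Dw-Dw_{c}|)$ cannot be reabsorbed either, since the left-hand side only carries $a(x)|V_{H_{a}}(Dw)-V_{H_{a}}(Dw_{c})|^{2}$. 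Choosing $x_{c}=x_{a}$ (the minimum point of $a$) only yields $(a(x)-a(x_{a}))H_{a}(|Dw|)\leqslant\Psi_{d}(x,|Dw|)$ with constant one, i.e. boundedness but no smallness, which is useless for a comparison estimate. This is precisely the contrast with the multi-phase structure $\eqref{sa:2}_{3}$, where each $F_{H_{a}}$ separately has the modulus $\omega$ in $x$ and the clean estimate \eqref{1ce:10} of Lemma \ref{lem:1ce} is available.

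The paper resolves this by \emph{not} introducing $w_{c}$ in Section \ref{sec:10} at all: the $x$-freezing is performed only inside the harmonic-approximation step (Lemma \ref{lem:d2ce}), at the point $\bar{x}_{a}$ where the scaled coefficient attains its infimum, and only in the weak (dual) form $|\fint\langle\bar{A}_{0}(D\bar{w})-\bar{A}_{d}(x,D\bar{w}),D\varphi\rangle|\leqslant\fint|\bar{A}_{0}(D\bar{w})-\bar{A}_{d}(x,D\bar{w})|\,dx\,\|D\varphi\|_{L^{\infty}}$. There the troublesome term becomes $\fint|\bar{a}(x)-\bar{a}(\bar{x}_{a})|\bar{H}_{a}(|D\bar{w}|)/|D\bar{w}|\,dx$, which via \eqref{growth2} reduces to fractional powers of $\bar{H}_{a}(|D\bar{w}|)$ and can then be estimated by H\"older \emph{against the weighted energy} $\fint\bar{a}(x)\bar{H}_{a}(|D\bar{w}|)\,dx$, using the sign information $\bar{a}(\bar{x}_{a})\leqslant\bar{a}(x)$ (see \eqref{d2ce:17}--\eqref{d2ce:18}); smallness then comes from $\|\bar{a}-\bar{a}(\bar{x}_{a})\|_{L^{\infty}}$ via the $\Lambda$-assumptions. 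You should either adopt this one-step freezing, or explain how your energy comparison avoids the unweighted $H_{a}(|Dw|)$ term; as it stands, that step fails.
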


The above theorems completely cover the main results of \cite{BCM3}, where the special case  that $G(t)=t^{p}$, $H_{a}(t)= t^{q}$ and $\omega_{a}(t)=t^{\alpha}$ with some constants $q\geqslant p >1$ and $\alpha\in (0,1]$ is considered. Also the results of \cite{BCM2} can be considered for a general class of functionals not only for the model functional in \eqref{func:log}. Let us now briefly overview our arguments employed in proving the above theorems comparing with the ones used in \cite{BCM2,BCM3}. We do not distinguish between the $G$-phase, where an inequality of the type $a(\cdot) \leqslant M\omega_{a}(R)$ is satisfied, and $(G,H_{a})$-phase, where a complementary inequality $a(\cdot)\geqslant M\omega_{a}(R)$ holds in a certain ball $B_{R}$ under consideration for some suitable large constant $M$, which has a drawback to deal with the multi-phase type problems and even double phase type problems that we consider. Instead we consider the function $\left[\Psi_{d}\right]_{B_{R}}^{-}(\cdot)$ defined in \eqref{dispsi} for a ball $B_{R}\subset\O$ under the investigation to obtain various estimates, and the advantage of considering this function is that $\left[\Psi_{d}\right]_{B_{R}}^{-} \in \mathcal{N}$ with an index $s(\Psi_{d})= s(G)+s(H_{a})$ by Remark \ref{rmk:nf2}, which is independent of the considered ball $B_{R}$.  Also the approach introduced in this paper may open a gate to study parabolic double phase equations of type 

\begin{align*}
	u_{t} - \Div\left( G'(|Du|)\frac{Du}{|Du|} + a(x,t)H_{a}'(|Du|)\frac{Du}{|Du|}\right) = 0,
\end{align*}
which would be one of attracting topics for the regularity theory in the future, we refer some recent results on this topic \cite{BS1,Def1}. Essentially, the idea of the proofs of Theorem \ref{dmth:md} and Theorem \ref{dmth:mr} is based on the arguments previously used for proving Theorem \ref{mth:md} and Theorem \ref{mth:mr}, but the functional $\F_{d}$ in \eqref{dfunctional} is much more general than the functional $\F$ in \eqref{functional} for the consideration under the double phase settings. In this regard, we need to take care of some points in more detail depending on the structure assumptions \eqref{dsa:2}, specially Lemma \ref{lem:d2ce} below.  Since $u\in W^{1,\Psi_{d}}(\O)$ is a local $L/\nu$-minimizer of the functional $\P$ defined in \eqref{ifunct} with $b(\cdot)\equiv 0$ if $u$ is a minimizer of the functional $\F_{d}$ in \eqref{dfunctional}, we are able to rewrite the results together with their proofs under the double phase settings  up to the end of Section \ref{sec:6}. Starting by Section \ref{sec:7}, we shall investigate in a different way.

In what follows let $B_{R}\equiv B_{R}(x_0)$ be a ball such that $B_{2R}\subset \O_0 \Subset \O$, where $\O_0$ is some fixed open subset of $\O$. We define a functional given by 
\begin{align}
    \label{d0ce:1}
    W^{1,1}(B_{2R})\ni \u\mapsto \mathcal{F}_{d,B_{2R}}(\u):= \I_{B_{2R}} F_{d}(x,(u)_{B_{2R}},D\u)\,dx
\end{align}
with $u$ being a local minimizer of the functional $\F_{d}$ defined in \eqref{dfunctional}. Now we consider a function $w\in u + W^{1,\Psi_{d}}_{0}(B_{2R})$ being the solution to the following variational Dirichlet problem:

\begin{align}
    \label{d0ce:2}
\begin{cases}
    w \mapsto \min\limits_{\u} \mathcal{F}_{d,B_{2R}}(\u)
    \\
    \u \in u + W^{1,\Psi_{d}}_{0}(B_{2R}).
\end{cases}
\end{align}
As in Lemma \ref{lem:0ce} we shal consider the first comparison estimates in order to remove $u$-dependence in the original functional $\mathcal{F}_{d}$ defined in \eqref{dfunctional}.

\begin{lem}
	\label{lem:d0ce}
	Let $w\in W^{1,\Psi}(B_{2R})$ be the solution to the variational problem \eqref{d0ce:2} under the assumptions \eqref{dsa:1}, \eqref{dsa:2} and \eqref{omega2}. Let the coefficient function $a(\cdot)\in C^{\omega_a}(\O)$ for $\omega_{a}$ being non-negative concave function vanishing at the origin. Assume that one of the assumptions \eqref{ma:1}, \eqref{ma:2} and \eqref{ma:3} under $\omega_{b}(\cdot)\equiv 0$ is satisfied. Then there exists a constant $c\equiv c(\data_{d}(\O_0))$ such that 
	\begin{align}
		\label{d0ce:3}
		\begin{split}
		\FI_{B_{2R}} \left( |V_{G}(Du)-V_{G}(Dw)|^{2} + a(x)|V_{H_{a}}(Du)-V_{H_{a}}(Dw)|^{2} \right)\,dx
		\leqslant
		c\omega(R^{\gamma})\FI_{B_{2R}}\Psi_{d}(x,|Du|)\,dx
		\end{split}
	\end{align}
	holds, where $\gamma\equiv \gamma(\data_{d}(\O_0))$ is the H\"older exponent determined via Theorem \ref{thm:hc} in the double phase settings. Moreover, the following estimates holds true:
	\begin{align}
		\label{d0ce:4}
		\FI_{B_{2R}}\Psi_{d}(x,|Dw|)\,dx \leqslant \frac{L}{\nu} \FI_{B_{2R}}\Psi_{d}(x,|Du|)\,dx,
	\end{align}
	\begin{align}
		\label{d0ce:5}
		\norm{w}_{L^{\infty}(B_{2R})} \leqslant \norm{u}_{L^{\infty}(B_{2R})},
	\end{align}
	\begin{align}
		\label{d0ce:6}
		\osc\limits_{B_{2R}}w \leqslant \osc\limits_{B_{2R}}u
	\end{align}
	and 
	\begin{align}
		\label{d0ce:7}
		\FI_{B_{2R}}\Psi_{d}\left(x,\left|\frac{u-w}{R} \right| \right)\,dx
		\leqslant
		c [\omega(R^{\gamma})]^{\frac{1}{2}}\FI_{B_{2R}}\Psi_{d}(x,|Du|)\,dx
	\end{align}
	for some constant $c\equiv c(\data_{d}(\O_0))$.
	Moreover, there exist a positive higher integrability exponent $\delta_0\equiv \delta_0(\data_{d})$ with $\delta_0 \leqslant \delta$, where $\delta$ has been determined via Theorem \ref{thm:hi} under the double phase settings, and a constant $c\equiv c(\data_{d})$ satisfying the following reverse H\"older inequalities: 
     \begin{align}
        \label{d0ce:7_1}
        \left[\FI_{B_{R/2}}[\Psi_{d}(x,|Dw|)]^{1+\delta_{0}}\,dx\right]^{\frac{1}{1+\delta_0}}
        \leqslant
        c\FI_{B_{R}}\Psi_{d}(x,|Dw|)\,dx.
    \end{align}
Here, in the case that \eqref{ma:3} is considered, $\gamma$ appearing in \eqref{d0ce:3} and \eqref{d0ce:7} is the same as in the assumption \eqref{ma:3}. 
\end{lem}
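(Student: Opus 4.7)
The plan is to follow the template of Lemma \ref{lem:0ce}, dropping all terms involving $b(\cdot)$ and $H_b$, but being careful that the energy density $F_d$ is now comparable to $\Psi_d$ rather than decomposed into $G$- and $H_a$-pieces; the extra ingredient is the reverse Hölder inequality \eqref{d0ce:7_1}, which is absent from Lemma \ref{lem:0ce} and must be lifted from what was done in Lemma \ref{lem:1ce}. First I would record the Euler--Lagrange equation associated to $\mathcal{F}_{d,B_{2R}}$, which produces \eqref{d0ce:4} by comparing $\mathcal{F}_{d,B_{2R}}(w) \leqslant \mathcal{F}_{d,B_{2R}}(u)$ and invoking \eqref{dsa:1}. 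Testing the minimality of $w$ against $\min\{w,u_{B_{2R}}^{+}\}$ and $\max\{w,u_{B_{2R}}^{-}\}$ and using once more \eqref{dsa:1} gives
\begin{align*}
\inf_{B_{2R}} u \leqslant w(x) \leqslant \sup_{B_{2R}} u \quad\text{a.e.\ } x\in B_{2R},
\end{align*}
from which both \eqref{d0ce:5} and \eqref{d0ce:6} follow at once.

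For \eqref{d0ce:3} I would use the monotonicity bound \eqref{ddefV1_2} of Lemma \ref{lem:dnf4} combined with the Euler--Lagrange equation satisfied by $w$, exactly as in \eqref{0ce:12}, to reduce matters to controlling
\begin{align*}
\FI_{B_{2R}}\bigl[F_d(x,(u)_{B_{2R}},Du)-F_d(x,u,Du)\bigr]+\bigl[F_d(x,u,Du)-F_d(x,w,Dw)\bigr]+\bigl[F_d(x,w,Dw)-F_d(x,(w)_{B_{2R}},Dw)\bigr]+\bigl[F_d(x,(w)_{B_{2R}},Dw)-F_d(x,(u)_{B_{2R}},Dw)\bigr]\,dx.
\end{align*}
The second piece is nonpositive by the minimality of $u$; the remaining three pieces are controlled by $\eqref{dsa:2}_{4}$, the concavity of $\omega$, the Hölder continuity of $u$ furnished by Theorem \ref{thm:hc} (which provides the exponent $\gamma$), the oscillation bound \eqref{d0ce:6} for $w$, and $|(w)_{B_{2R}}-(u)_{B_{2R}}|\leqslant \osc_{B_{2R}} u$, together with the energy estimate \eqref{d0ce:4}. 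Assembling these and absorbing constants yields \eqref{d0ce:3}.

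For \eqref{d0ce:7} I would apply Theorem \ref{thm:sp} with $d\equiv 1$ to $u-w\in W^{1,\Psi_d}_{0}(B_{2R})$, then split the resulting $[\Psi_d(x,|Du-Dw|)]^{\theta}$ via \eqref{defV3} and a Hölder inequality with exponents $(2/\theta,2/(2-\theta))$, to reduce it to the product of $\bigl(\FI_{B_{2R}}|V_{\Psi_d}(x,Du)-V_{\Psi_d}(x,Dw)|^{2}\bigr)^{1/2}$ and $\bigl(\FI_{B_{2R}}\Psi_d(x,|Du|+|Dw|)\bigr)^{1/2}$; the first factor is estimated by \eqref{d0ce:3} and the second by \eqref{d0ce:4}. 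This is precisely the routine \eqref{0ce:18}--\eqref{0ce:19}.

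Finally, for \eqref{d0ce:7_1} I would observe that $w$ is a local $L/\nu$-minimizer of the functional $v\mapsto \int_{B_{2R}}\Psi_d(x,|Dv|)\,dx$, so that Lemma \ref{lem:cacc} with $k\equiv (w)_{B_R}$ followed by the Sobolev--Poincaré inequality of Remark \ref{rmk:sp} produces, for some $\theta\equiv \theta(n,s(G),s(H_a))\in(0,1)$,
\begin{align*}
\FI_{B_{R/2}}\Psi_d(x,|Dw|)\,dx \leqslant c\,\bar\kappa_{sp}\left[\FI_{B_R}[\Psi_d(x,|Dw|)]^{\theta}\,dx\right]^{\frac{1}{\theta}}.
\end{align*}
The key point is that in each of the three cases \eqref{ma:1}--\eqref{ma:3} the Sobolev--Poincaré factor $\bar\kappa_{sp}$ is bounded a priori by $\data_d$: in case \eqref{ma:1} one uses \eqref{d0ce:4} to control the $L^1$ norm of $G(|Dw|)$; in case \eqref{ma:2} one uses \eqref{d0ce:5}; in case \eqref{ma:3} one uses \eqref{d0ce:6} together with the Hölder continuity of $u$. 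A variant of Gehring's lemma then delivers \eqref{d0ce:7_1} with a higher integrability exponent $\delta_0$ depending only on $\data_d$.

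The step I expect to be most delicate is the case analysis in the proof of \eqref{d0ce:7_1}, since $\bar\kappa_{sp}$ is the place where the three assumptions \eqref{ma:1}--\eqref{ma:3} genuinely differ and care is needed to ensure that, thanks to \eqref{d0ce:4}--\eqref{d0ce:6}, the control of $\bar\kappa_{sp}$ for $w$ reduces to quantities already embedded in $\data_d$; everything else follows the template of Lemmas \ref{lem:0ce} and \ref{lem:1ce} with $b\equiv 0$.
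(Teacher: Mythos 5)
Your proposal is correct and follows essentially the same route as the paper: the paper proves \eqref{d0ce:4}--\eqref{d0ce:7} by repeating the argument of Lemma \ref{lem:0ce} with Lemma \ref{lem:dnf4} in place of Lemma \ref{lem:nf4}, and obtains \eqref{d0ce:7_1} exactly as you describe, via Caccioppoli, the Sobolev--Poincar\'e inequality of Remark \ref{rmk:sp}, the a priori control of $\bar\kappa_{sp}$ through \eqref{d0ce:4}--\eqref{d0ce:6} in each of the three cases, and Gehring's lemma. Your identification of the $\bar\kappa_{sp}$ case analysis as the genuinely new step relative to Lemma \ref{lem:0ce} is accurate.
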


\begin{proof}
  First of all the meaning of $\data_{d}$ and $\data_{d}(\O_0)$ has been defined in \eqref{ddata} and \eqref{ddata0}, respectively.  The proofs for \eqref{d0ce:4}-\eqref{d0ce:7} can be done by arguing similarly as in the proof Lemma \ref{lem:0ce} together with Lemma \ref{lem:dnf4}.  Since $w$ is a $L/\nu$-minimizer of the functional $\F_{d,B_{2R}}$ defined in \eqref{d0ce:1}, we are able to apply Lemma \ref{lem:cacc} under the double phase settings. In turn, it gives us that
\begin{align}
    \label{d0ce:8}
    \FI_{B_{R/2}}\Psi_{d}(x,|Dw|)\,dx 
    \leqslant
    c\FI_{B_{R}}\Psi_{d}\left(x, \left|\frac{w-(w)_{B_{R}}}{R}\right| \right)\,dx
\end{align}
holds with $c\equiv c(n,s(G),s(H_{a}),L,\nu)$. Then applying Theorem \ref{thm:sp}, there exists $\theta\equiv \theta(n,s(G),s(H_{a}))\in (0,1)$ such that
\begin{align}
		\label{d0ce:9}
		\FI_{B_{R/2}}\Psi_{d}\left(x,|Dw| \right)\,dx \leqslant
		c\bar{\kappa}_{sp}\left[ \FI_{B_{R}} [\Psi_{d}(x,|Dw|)]^{\theta}\,dx \right]^{\frac{1}{\theta}}
	\end{align}
	holds with some constant $c\equiv c(n,s(G),s(H_{a}),L,\nu, \omega_{a}(1))$, where
	\begin{subequations}
	 \begin{empheq}[left={\bar{\kappa}_{sp}=\empheqlbrace}]{align}
        \qquad & 1+\lambda_1[a]_{\omega_{a}} +  \lambda_1[a]_{\omega_{a}}\left( \I_{B_{R}}G(|Dw|)\,dx \right)^{\frac{1}{n}}   &\text{if }& \eqref{ma:1} \text{ is in force with } \omega_{b}(\cdot)\equiv 0, \label{d0ce:10_1}\\
        & 1+ \lambda_2[a]_{\omega_{a}} + \lambda_2[a]_{\omega_{a}} \norm{w}_{L^{\infty}(B_{R})} &\text{if }&  \eqref{ma:2} \text{ is in force with } \omega_{b}(\cdot)\equiv 0, \label{d0ce:10_2}\\
       &1+ \lambda_3[a]_{\omega_{a}} + \lambda_3[a]_{\omega_{a}}\left[ R^{-\gamma}\osc\limits_{B_{R}} w \right]^{\frac{1}{1-\gamma}}
         &\text{if }&  \eqref{ma:3} \text{ is in force with } \omega_{b}(\cdot)\equiv 0.\label{d0ce:10_3}
      \end{empheq}
	\end{subequations}
Furthermore, taking into account \eqref{d0ce:4}-\eqref{d0ce:7} in the last display, we conclude that 
\begin{align}
    \label{d0ce:11}
    \FI_{B_{R/2}}\Psi_{d}(x,|Dw|)\,dx 
    \leqslant
    c\left[\FI_{B_{R}}[\Psi_{d}\left(x, |Dw| \right)]^{\theta}\,dx\right]^{\frac{1}{\theta}}
\end{align}
holds for some constants $\theta\equiv\theta(n,s(G),s(H_{a}))\in (0,1)$ and $c\equiv c(\data_{d})$. The last display follows \eqref{d0ce:7_1} by applying a variant of Gehring's lemma.
\end{proof}

At this stage, we do not need to consider Lemma \ref{lem:1ce} because we shall freeze $x$-variable in the non-linearity at once. For this, let us consider the excess functional given by 
\begin{align}
	\label{dexcess2}
	E_{d}(v, B_{r}):= \left( \left[ \Psi_{d} \right]_{B_{2r}}^{-} \right)^{-1}\left(\FI_{B_{r}}\left[\Psi_{d} \right]_{B_{2r}}^{-}\left(\left|\frac{v-(v)_{B_{r}}}{2r} \right| \right)\,dx \right)
\end{align}
for any function $v\in L^1(B_{2r})$ and ball $B_{2r}\subset\O$, where now and in the rest of this section for every open subset $\mathcal{B}\subset\O$, we shall denote by 
\begin{align}
	\label{dispsi}
	\left[\Psi_{d} \right]_{\mathcal{B}}^{-}(t) := G(t) + \inf\limits_{x\in\mathcal{B}}a(x)H_{a}(t)\quad (\forall t\geqslant 0), 
\end{align}
and $\left( \left[ \Psi_{d} \right]_{\mathcal{B}}^{-} \right)^{-1}$ is the inverse function of $ \left[ \Psi_{d} \right]_{\mathcal{B}}^{-}$. By convexity of the function $\left[\Psi_{d}  \right]_{B_{2r}}^{-}$ and Lemma \ref{lem:nf1}, there is a constant $c\equiv c(s(G)+s(H_{a}))$ such that
\begin{align}
	\label{dexcess4}
	E_{d}(v,B_{r})\leqslant c\left( \left[ \Psi_{d} \right]_{B_{2r}}^{-} \right)^{-1}\left(\FI_{B_{r}}\left[\Psi_{d} \right]_{B_{2r}}^{-}\left(\left|\frac{v-v_0}{2r} \right| \right)\,dx \right)
\end{align}
holds for every $v_0\in\R$. Now we consider the estimates corresponding to the outcome of Lemma \ref{lem:2ce} under our double phase settings.

\begin{lem}
	\label{lem:d2ce}
	Let $u\in W^{1,\Psi_{d}}(\O)$ be a local minimizer of the functional $\F_{d}$ defined in \eqref{dfunctional} under the assumptions \eqref{dsa:1}, \eqref{dsa:2} and \eqref{omega2}. Let $w\in W^{1,\Psi}(B_{2R})$ be the solution to the variational problem \eqref{d0ce:2}. Suppose $\omega_{b}(\cdot)\equiv 0$ in what follows.
	If one of the assumptions \eqref{dmth:md:1}-\eqref{dmth:md:5} is satisfied, then for every $\varepsilon^{*}\in (0,1)$, there exists a positive radius 
	 \begin{align}
	 	\label{d2ce:1}
	 	R^{*}\equiv R^{*}(\data_{d}(\O_0),\varepsilon^{*})
	 \end{align}
such that
	\begin{align}
		\label{d2ce:2}
		\FI_{B_{\tau R}}\left[\Psi_{d}\right]_{B_{R}}^{-}\left(\left|\frac{w-(w)_{B_{\tau R}}}{\tau R} \right| \right)\,dx
		\leqslant
		c\left(1+\tau^{-(n+s(\Psi_{d})+1)}\varepsilon^{*} \right)\FI_{B_{R/2}}\left[\Psi_{d}\right]_{B_{R}}^{-}\left(\left|\frac{w-(w)_{B_{R/2}}}{R} \right| \right)\,dx
	\end{align}
	for some constant $c\equiv c\left(\data_{d}(\O_0)\right)$, whenever $\tau\in (0,1/16)$ and $R\leqslant R^{*}$.
	
\end{lem}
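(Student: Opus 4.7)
The strategy mirrors that of Lemma \ref{lem:2ce}, with one crucial simplification and one crucial complication: since the statement concerns $w$ directly (rather than a further freezing $w_c$), we have only one comparison to make; however, $F_d$ is not assumed to split as $F_G + a F_{H_a}$, so we cannot freeze the two phases separately and must freeze the full integrand $F_d(\cdot,(u)_{B_{2R}},z)$ at a single spatial point in one step, using only the weaker assumption $\eqref{dsa:2}_3$. Assuming $E := E_d(w,B_{R/2})>0$ (otherwise \eqref{d2ce:2} is trivial), I introduce the scaled minimizer $\bar{w}(x) := [w(x_0+Rx)-(w)_{B_{R/2}}]/(ER)$ on $B_1$, the scaled coefficient $\bar{a}(x) := a(x_0+Rx)\,H_a(E)/[\Psi_d]_{B_R}^{-}(E)$, and the associated normalized Young function $\bar{\Psi}_d(x,t) := \bar{G}(t) + \bar{a}(x)\bar{H}_a(t)$ satisfying $\bar{\Psi}_d(x,1) \leq 1$. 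By the Caccioppoli inequality and the higher integrability estimate \eqref{d0ce:7_1}, $\bar{w}$ satisfies the $\bar{\Psi}_d$-energy bounds required to invoke Lemma \ref{hta: lemma_hta} on $B_{1/8}$.

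Next I pick $\bar{x}_0 \in \overline{B_{1/8}}$ with $\bar{a}(\bar{x}_0) = \inf_{B_{1/8}} \bar{a}$ and freeze at $\bar{x}_0$, setting $\bar{F}_0(z) := \bar{F}_d(\bar{x}_0,z)$ and $\bar{A}_0(z) := D_z \bar{F}_0(z)$. After scaling, $\eqref{dsa:2}_3$ translates into
\begin{equation*}
|\bar{A}_0(z) - \bar{A}(x,z)|\,|z| \leq c\,\omega(R)\bigl[\bar{\Psi}_d(x,|z|) + \bar{\Psi}_d(\bar{x}_0,|z|)\bigr] + c\,|\bar{a}(x)-\bar{a}(\bar{x}_0)|\,\bar{H}_a(|z|).
\end{equation*}
The first term yields a small factor $\omega(R)$ upon testing against $\varphi \in W^{1,\infty}_0(B_{1/8})$. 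The second is the decisive one: it is controlled by the oscillation of $\bar{a}$, which, exactly as in \eqref{2ce:22_1}--\eqref{2ce:42_8}, is bounded by a function $d(\varepsilon,R)$ vanishing as $R \to 0^+$ under each of the assumptions \eqref{dmth:md:1}--\eqref{dmth:md:5}. Here the excess-based normalization of $\bar{a}$ is essential, since it converts the ratio $H_a(E)/[\Psi_d]_{B_R}^{-}(E)$ into a quantity controlled either by $\Lambda$ evaluated at a small argument (under \eqref{dmth:md:1}--\eqref{dmth:md:3}) or by an explicit power of $R$ (under \eqref{dmth:md:4}--\eqref{dmth:md:5}), using \eqref{d0ce:4}--\eqref{d0ce:6} together with \eqref{hi:2}. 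Combining these two contributions with Young's and H\"older's inequalities, as in \eqref{2ce:17}--\eqref{2ce:20}, produces the bound
\begin{equation*}
\Bigl|\FI_{B_{1/8}} \langle \bar{A}_0(D\bar{w}), D\varphi \rangle\,dx\Bigr| \leq c\,\bar{d}(\varepsilon,R)\,\|D\varphi\|_{L^\infty(B_{1/8})},
\end{equation*}
with $\bar{d}(\varepsilon,R) \to 0$ as $\varepsilon, R \to 0^+$.

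At this point, Lemma \ref{hta: lemma_hta} applied with $A_0 \equiv \bar{A}_0$ and $\Psi_0(t) \equiv \bar{G}(t) + \bar{a}(\bar{x}_0)\bar{H}_a(t)$ delivers a harmonic replacement $\bar{h}$ satisfying \eqref{hta:hta5}--\eqref{hta:hta7}. Choosing $\varepsilon$ and $R^*$ small so that the constant in \eqref{hta:hta7} is at most $\varepsilon^*$ yields the dependence \eqref{d2ce:1}. The frozen minimizer $\bar{h}$ is governed by the Lipschitz theory of \cite{L1}, giving $\sup_{B_{1/16}} \bar{\Psi}_0(|D\bar{h}|) \lesssim \fint_{B_{1/8}} \bar{\Psi}_0(|D\bar{h}|)\,dx$. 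Comparing $\bar{w}$ to $(\bar{h})_{B_\tau}$ through $\bar{h}$ on $B_\tau$ for $\tau \in (0,1/16)$ exactly as in \eqref{2ce:53}, and then scaling back via the definitions of $\bar{w}$ and $\bar{\Psi}_d$, produces \eqref{d2ce:2}. The main obstacle is the quantitative control of the oscillation of $\bar{a}$ across the five distinct regimes \eqref{dmth:md:1}--\eqref{dmth:md:5}: each requires a slightly different manipulation of the ratio $H_a(E)/[\Psi_d]_{B_R}^{-}(E)$, using \eqref{concave2} together with the appropriate $\Lambda$-hypothesis or power-type condition on $\omega_a$.
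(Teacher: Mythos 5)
Your proposal is correct and follows essentially the same route as the paper: the same excess-based scaling, freezing the full integrand at a point where $\bar{a}$ attains its infimum, splitting the error via $\eqref{dsa:2}_{3}$ into an $\omega(R)$ term plus the oscillation of $\bar{a}$, a five-case estimate of that oscillation, and then Lemma \ref{hta: lemma_hta} combined with the Lipschitz theory of \cite{L1}. You also correctly identify the key structural point that distinguishes the double-phase case, namely that the freezing must be done in a single step directly on $w$ using the modified continuity hypothesis, rather than through the intermediate comparison function $w_c$ of the multi-phase argument.
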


\begin{proof}
Again note that the meaning of $\data_{d}$ and $\data_{d}(\O_0)$ already has been introduced in \eqref{ddata}-\eqref{ddata0}. We can always assume $E_{d}(w,B_{R/2})>0$, otherwise there is nothing to prove in \eqref{d2ce:2}. For the simplicity, we shall write 
	\begin{align}
		\label{d2ce:E_1E_2}
		E_{d}(R):= E_{d}(w,B_{R/2}),
	\end{align}
where the notion $E_{d}$ has been defined in \eqref{dexcess2}. The proof falls in several steps, similarly as we have done in the proof of Lemma \ref{lem:2ce}. For the sake of completeness, we provide the proof in a full detail.
	
	\textbf{Step 1: Initial information on $w$.} Applying Lemma \ref{lem:cacct} under the double phase settings to $B_{R/2}$ with $k\equiv (w)_{B_{R/2}}$, we have 
	\begin{align}
		\label{d2ce:3}
		\FI_{B_{R/4}}\Psi_{d}\left(x, |Dw| \right)\,dx
		\leqslant
		c \FI_{B_{R/2}}\left[\Psi_{d}\right]_{B_{R}}^{-}\left( \left|\frac{w-(w)_{B_{R/2}}}{R}\right| \right)\,dx
	\end{align}
	for some constant $c\equiv c(\data_{d})$. Moreover, it follows from Lemma \ref{lem:d0ce} that there exists a higher integrability exponent $\delta_{0}\equiv \delta_{0}(\data_{d})$ such that
	\begin{align}
		\label{d2ce:4}
		\left(\FI_{B_{R/8}}\left[\Psi_{d}(x,|Dw|)\right]^{1+\delta_0}\,dx \right)^{\frac{1}{1+\delta_{0}}}
		\leqslant
		c\FI_{B_{R/4}}\Psi_{d}(x,|Dw|)\,dx
	\end{align}
	for a constant $c\equiv c(\data_{d})$.
	
	\textbf{Step 2: Scaled functions.} We consider scaled functions of $w(\cdot)$ and $a(\cdot)$ in the ball $B_{1}$ by setting
	\begin{subequations}
	 \begin{empheq}[left={\empheqlbrace}]{align}
          & \bar{w}(x):= \frac{w(x_0+Rx)-(w)_{B_{R/2}}}{E_{d}(R)R}, \label{d2ce:5_1}\\
         &\bar{a}(x):= a(x_0+Rx)\frac{H_{a}(E_{d}(R))}{\left[\Psi_{d}\right]_{B_{R}}^{-}\left( E_{d}(R) \right)}\label{d2ce:5_2}
      \end{empheq}
	\end{subequations}
	for every $x\in B_{1}$. Now we introduce the control function and energy density associated to our scaling introduced above in \eqref{d2ce:5_1}-\eqref{d2ce:5_2} as
	\begin{subequations}
	 \begin{empheq}[left={\empheqlbrace}]{align}
          & \bar{\Psi}_{d}(x,|z|):= \bar{G}(|z|) + \bar{a}(x) \bar{H}_{a}(|z|)  \label{d2ce:6_1},\\
         &\bar{F}_{d}(x,z):= \frac{F_{d}(x_0+Rx,(u)_{B_{2R}}, E_{d}(R)z)}{\left[\Psi_{d}\right]_{B_{R}}^{-}\left( E_{d}(R)\right)} \quad\text{and}\quad
        \bar{A}_{d}(x,z):= D_{z}\bar{F}_{d}(x,z)\label{d2ce:6_2}
      \end{empheq}
	\end{subequations}
for every $x\in B_{1}$ and $z\in\R^n$, where to the end of the proof of this lemma, we always shall understand by
\begin{align}
	\label{d2ce:7}
           \bar{G}(t):= \frac{G(E(R)t)}{\left[\Psi_{d}\right]_{B_{R}}^{-}\left( E_{d}(R) \right)}\quad\text{and}\quad
	\bar{H}_{a}(t):= \frac{H_{a}(E(R)t)}{H_{a}\left( E_{d}(R) \right)}
\end{align}
	for every $t\geqslant 0$. By elementary computations, we can observe that $\bar{G},\bar{H}_{a}\in \mathcal{N}$ with indices $s(G),s(H_{a})$, respectively, and also that
	\begin{align}
		\label{d2ce:8}
		\bar{G}(1) \leqslant 1
		\quad\text{and}\quad
		\bar{H}_{a}(1)=1.
	\end{align}
Clearly, the function $\bar{w}$ minimizes the following functional 
\begin{align}
	\label{d2ce:9}
	W^{1,\bar{\Psi}_{d}}(B_{1})\ni v\mapsto \I_{B_{1}}\bar{F}_{d}(x,Dv)\,dx,
\end{align}
where the functions $\bar{\Psi}_{d}(\cdot)$ and $\bar{F}_{d}(\cdot)$ have been defined in \eqref{d2ce:6_1} and \eqref{d2ce:6_2}, respectively. The Euler-Lagrange equation arising from the functional in \eqref{d2ce:9} can be written as 
\begin{align}
	\label{d2ce:10}
	\FI_{B_{1}}\inner{\bar{A}_{d}(x,D\bar{w})}{D\varphi}\,dx = \FI_{B_{1}}\inner{ D_{z}\bar{F}_{d}\left(x, D\bar{w}\right)}{D\varphi}\,dx = 0
\end{align}
for every $\varphi\in W^{1,\bar{\Psi}_{d}}_{0}(B_{1})$. By the assumptions \eqref{dsa:1} and \eqref{dsa:2} via elementary computations, we have the following structure conditions in the scaled settings:

\begin{subequations}
	 \begin{empheq}[left={\empheqlbrace}]{align}
	 &\nu \bar{\Psi}_{d}(x,|z|) \leqslant \bar{F}_{d}(x,z) \leqslant L\bar{\Psi}_{d}(x,|z|),
	 	\label{d2ce:11_1} \\
          & |\bar{A}_{d}(x,z)||z| + |D_{z}\bar{A}_{d}(x,z)||z|^2 \leqslant L\bar{\Psi}_{d}(x,|z|)  \label{d2ce:11_2},\\
         & \nu \frac{\bar{\Psi}_{d}(x,|z|)}{|z|^2}|\xi|^{2} \leqslant
         \inner{D_{z}\bar{A}_{d}(x,z)\xi}{\xi}
         \label{d2ce:11_3},  \\
         &\left|\bar{A}_{d}(x_1,z)-\bar{A}_{d}(x_2,z) \right||z| \leqslant
         L \omega(R|x_1-x_2|)\left[ \bar{\Psi}_{d}(x_1,|z|) + \bar{\Psi}_{d}(x_2,|z|)
         \right] \label{d2ce:11_4}\\ 
         &\hspace{5cm} + L|\bar{a}(x_1)-\bar{a}(x_2)|\bar{H}_{a}(|z|)
         \label{d2ce:11_5}
      \end{empheq}
	\end{subequations}
for every $x,x_1,x_2\in B_{1}$ and $z\in\R^n\setminus \{0\}$.

\textbf{Step 3: Freezing.} Now we shall consider frozen functional and vector field associated to $\bar{F}_{d}(\cdot)$ and $\bar{A}_{d}(\cdot)$ defined in \eqref{d2ce:6_2}. Let $\bar{x}_{a}\in \overline{B}_{1}$ such that $\bar{a}(\bar{x}_a)= \inf\limits_{x\in B_{1}}\bar{a}(x)$. Then we denote by
\begin{align}
	\label{d2ce:12_1}
	\bar{F}_{0}(z)&:= \bar{F}_{d}(\bar{x}_{a},z),
	\quad
	\bar{A}_{0}(z):= D_{z}\bar{F}_{d}(\bar{x}_{a},z),
\end{align}
and 
\begin{align}
	\label{d2ce:13}
	\bar{\Psi}_{0}(t):= \bar{G}(t) + \bar{a}(\bar{x}_{a})\bar{H}_{a}(t)
\end{align}
for every $x\in B_{1}$, $z\in\R^{n}$ and $t\geqslant 0$. Here we single out that here is a difference between Step 3 of the proof for Lemma \ref{lem:2ce} and our present situation. By the very definition in \eqref{d2ce:6_1} and \eqref{d2ce:7}, one can check
\begin{align}
	\label{d2ce:13_1}
	\bar{\Psi}_0(1) = 1.
\end{align}
 In our newly scaled environment, let us now consider the functional
\begin{align}
	\label{d2ce:14}
	W^{1,\bar{\Psi}_{0}}\left( B_{1/8} \right)\ni v\mapsto \I_{B_{1/8}} \bar{F}_{0}(Dv)\,dx.
\end{align}
We observe that the newly defined integrand $\bar{F}_{0}(\cdot)$ and vector field $\bar{A}_{0}(\cdot)$ satisfy the growth and ellipticity conditions as 
\begin{subequations}
	 \begin{empheq}[left={\empheqlbrace}]{align}
	 &\nu \bar{\Psi}_{0}(|z|) \leqslant \bar{F}_{0}(z) \leqslant L\bar{\Psi}_{0}(|z|),
	 	\label{d2ce:15_1} \\
          & |\bar{A}_{0}(z)||z| + |D_{z}\bar{A}_{0}(z)||z|^2 \leqslant L\bar{\Psi}_{0}(|z|)  \label{d2ce:15_2},\\
         & \nu \frac{\bar{\Psi}_{0}(|z|)}{|z|^2}|\xi|^{2} \leqslant
         \inner{D_{z}\bar{A}_{0}(z)\xi}{\xi}\label{d2ce:15_3}
      \end{empheq}
	\end{subequations}
for every $z\in\R^{n}\setminus\{0\}$ and $\xi\in\R^{n}$. Therefore, the energy and higher integralibility estimates in \eqref{d2ce:3} and \eqref{d2ce:4} can be seen in the view of $\bar{w}$ as
\begin{align}
	\label{d2ce:15_4}
	\FI_{B_{1/4}}\bar{\Psi}_{d}(x,|D\bar{w}|)\,dx + \left(\FI_{B_{1/8}}[\bar{\Psi}_{d}(x,|D\bar{w}|)]^{1+\delta_0}\,dx\right)^{\frac{1}{1+\delta_0}} \leqslant c(\data_{d}).
\end{align}

\textbf{Step 4: Harmonic type approximation.} Let $\varphi\in W^{1,\infty}_{0}\left( B_{1/8} \right)$ be any fixed function. Using \eqref{d2ce:10}, we see 
\begin{align}
	\label{d2ce:16}
	\begin{split}
	I_{0}&:=\left|\FI_{B_{1/8}}\inner{\bar{A}_{0}(D\bar{w})}{D\varphi}\,dx \right|
	=
	\left|\FI_{B_{1/8}}\inner{\bar{A}_{0}(D\bar{w})-\bar{A}_{d}(x,D\bar{w})}{D\varphi}\,dx \right|
	\\&
	\leqslant
	\FI_{B_{1/8}}|\bar{A}_{0}(D\bar{w})-\bar{A}_{d}(x,D\bar{w})|\,dx \norm{D\varphi}_{L^{\infty}(B_{1/8})} =:  I_{1} \norm{D\varphi}_{L^{\infty}(B_{1/8})}.
	\end{split}
\end{align}

Now we estimate $I_{1}$ in the last display using \eqref{d2ce:11_4}-\eqref{d2ce:11_5}. In turn, we have
\begin{align}
	\label{d2ce:17}
	\begin{split}
	I_{1} &\leqslant L\omega(R)\FI_{B_{1/8}}\left( \frac{\bar{\Psi}_{d}(\bar{x}_{a},|D\bar{w}|)}{|D\bar{w}|} + \frac{\bar{\Psi}_{d}(x,|D\bar{w}|)}{|D\bar{w}|} \right)\,dx
	\\&
	\quad
	+
	L\FI_{B_{1/8}} |\bar{a}(x)-\bar{a}(\bar{x}_{a})|\frac{\bar{H}_{a}(|D\bar{w}|)}{|D\bar{w}|}\,dx
	\\&
	\leqslant
	2L \omega(R)\FI_{B_{1/8}} \frac{\bar{\Psi}_{d}(\bar{x}_{a},|D\bar{w}|)}{|D\bar{w}|}\,dx
	 +
	 2L(1+\omega(R))\FI_{B_{1/8}} |\bar{a}(x)-\bar{a}(\bar{x}_{a})|\frac{\bar{H}_{a}(|D\bar{w}|)}{|D\bar{w}|}\,dx
	 \\&
	 =:2L\omega(R) I_{11} + 2L(1+\omega(R))I_{12}.
	\end{split}
\end{align}
Now we estimate the terms appearing in the last display. Recalling \eqref{d2ce:13} and \eqref{d2ce:13_1} together with \eqref{growth2}, we find 

\begin{align}
	\label{d2ce:17_1}
	\begin{split}
	I_{11} \leqslant c \FI_{B_{1/8}}\bar{\Psi}_{0}'(|D\bar{w}|)\,dx 
	&\leqslant
	c\left[ \bar{\Psi}_{0}(1)\right]^{\frac{s(\Psi_d)}{1+s(\Psi_d)}}\FI_{B_{1/8}}\left[ \bar{\Psi}_{0}(|D\bar{w}|)\right]^{\frac{1}{1+s(\Psi_d)}}\,dx
	\\&
	\quad
	+
	c\left[ \bar{\Psi}_{0}(1)\right]^{\frac{1}{1+s(\Psi_d)}}\FI_{B_{1/8}}\left[ \bar{\Psi}_{0}(|D\bar{w}|)\right]^{\frac{s(\Psi_d)}{1+s(\Psi_d)}}\,dx
	\\&
	\leqslant
	c \left(\FI_{B_{1/8}}\bar{\Psi}_{0}(|D\bar{w}|)\,dx\right)^{\frac{1}{1+s(\Psi_d)}}
	+
	c \left(\FI_{B_{1/8}}\bar{\Psi}_{0}(|D\bar{w}|)\,dx\right)^{\frac{s(\Psi_d)}{1+s(\Psi_d)}}
	\\&
	\leqslant
	c(\data_{d}),
	\end{split}
\end{align} 
where we have applied the H\"older's inequality together with \eqref{d2ce:15_4} and the fact that $\bar{\Psi}_0\in \mathcal{N}$ with an index $s(\Psi_d)= s(G)+s(H_{a})$. Next we shall deal with estimating the second term $I_{12}$ in \eqref{d2ce:17}. In turn, using \eqref{growth2} and \eqref{d2ce:8}, we have
\begin{align}
	\label{d2ce:18}
	\begin{split}
	I_{12} &\leqslant 
	c\FI_{B_{1/8}}|\bar{a}(x)-\bar{a}(\bar{x}_{a})|\left( [\bar{H}_{a}(|D\bar{w}|)]^{\frac{1}{s(H_{a})+1}} + [\bar{H}_{a}(|D\bar{w}|)]^{\frac{s(H_{a})}{s(H_{a})+1}}\right)\,dx
	\\&
	\leqslant
	c\norm{\bar{a}-\bar{a}(\bar{x}_{a})}_{L^{\infty}(B_{1/8})}^{\frac{s(H_{a})}{s(H_{a})+1}}\left(\FI_{B_{1/8}} \bar{a}(x)\bar{H}_{a}(|D\bar{w}|)\,dx  \right)^{\frac{1}{s(H_{a})+1}}
	\\&
	\quad
	+
	c\norm{\bar{a}-\bar{a}(\bar{x}_{a})}_{L^{\infty}(B_{1/8})}^{\frac{1}{s(H_{a})+1}}\left(\FI_{B_{1/8}} \bar{a}(x)\bar{H}_{a}(|D\bar{w}|)\,dx  \right)^{\frac{s(H_{a})}{s(H_{a})+1}}
	\\&
	\leqslant
	c(\data_{d})\left( \norm{\bar{a}-\bar{a}(\bar{x}_{a})}_{L^{\infty}(B_{1/8})}^{\frac{1}{s(H_{a})+1}} + \norm{\bar{a}-\bar{a}(\bar{x}_{a})}_{L^{\infty}(B_{1/8})}^{\frac{s(H_{a})}{s(H_{a})+1}} \right),
	\end{split}
\end{align}
where we have used also H\"older's inequality and the fact that $\bar{a}(\bar{x}_a) \leqslant \bar{a}(x)$ for every $x\in B_{1}$. Inserting those estimates coming from the last two displays into \eqref{d2ce:17} and then \eqref{d2ce:16}, we find 
\begin{align}
	\label{d2ce:20}
	\begin{split}
	I_{0} &\leqslant c(\data_{d}(\O_0))\left[ \omega(R) + (1+\omega(R))\left( \norm{\bar{a}-\bar{a}(\bar{x}_{a})}_{L^{\infty}(B_{1/8})}^{\frac{1}{s(H_{a})+1}} + \norm{\bar{a}-\bar{a}(\bar{x}_{a})}_{L^{\infty}(B_{1/8})}^{\frac{s(H_{a})}{s(H_{a})+1}}\right)\right].
	\end{split}
\end{align}

Now we shall estimate the term $\norm{\bar{a}-\bar{a}(\bar{x}_{a})}_{L^{\infty}(B_{1/8})}$ depending on which one of the assumptions \eqref{dmth:md:1}-\eqref{dmth:md:5} comes into play. Recalling the definition of $\bar{a}(\cdot)$ in \eqref{d2ce:5_2} and the excess functional in \eqref{d2ce:E_1E_2}, we have 
\begin{align}
	\label{d2ce:22_1}
	I_{a}:=\norm{\bar{a}-\bar{a}(\bar{x}_{a})}_{L^{\infty}(B_{1/8})}
	\leqslant
	c \omega_{a}(R)\frac{H_{a}(E_{d}(R))}{\left[\Psi_{d}\right]_{B_{R}}^{-}\left( E_{d}(R) \right)}.
\end{align}

\textbf{Case 1: Assumption \eqref{dmth:md:1} is in force.} It follows from the assumption $\eqref{dmth:md:1}_{2}$ that for any $\varepsilon\in (0,1)$ there exists $\mu_1 >0$ depending on $\varepsilon$ such that 
\begin{align}
	\label{d2ce:21}
	\Lambda\left(t, G^{-1}\left(t^{-n} \right) \right) \leqslant \varepsilon
	\quad\text{for every}\quad t\in (0,\mu_1).
\end{align} 
Then using the last display, \eqref{ma:1} and the fact that $\left(\left[\Psi_{d}\right]_{B_{R}}^{-} \right)^{-1}(t) \leqslant G^{-1}(t)$ for every $t\geqslant 0$, $I_{a}$ in \eqref{d2ce:22_1} can be estimated as 
\begin{align}
	\label{d2ce:23}
	\begin{split}
	I_{a}
	&\leqslant
	c\omega_{a}(R)\frac{\left(H_{a}\circ G^{-1}\right)\left(\left[\Psi_{d}\right]_{B_{R}}^{-}(E_{d}(R)) \right)}{\left[\Psi_{d}\right]_{B_{R}}^{-}(E_{d}(R))}
	\\&
	\leqslant
	c\omega_{a}(R)\varepsilon\left(1+\frac{1}{\omega_{a}\left(\left[\left[\Psi_{d}\right]_{B_{R}}^{-}(E_{d}(R))\right]^{-\frac{1}{n}}\right)} \right)
	+
	c\omega_{a}(R)\left(1+\frac{1}{\omega_{a}\left(\mu_1\right)} \right)
	\end{split}
\end{align}
with $c\equiv c([a]_{\omega_{a}}, \lambda_{1})$.  Using \eqref{concave2} and the energy estimate \eqref{d0ce:4}, we see
\begin{align}
	\label{d2ce:24}
	\begin{split}
	\frac{1}{\omega_{a}\left(\left[\left[\Psi_{d}\right]_{B_{R}}^{-}(E_{d}(R))\right]^{-\frac{1}{n}}\right)}
	&\leqslant
	\frac{c}{\omega_{a}(R)} + \frac{c}{\omega_{a}(R)}\I_{B_{R/2}}\left[\Psi_{d}\right]_{B_{R}}^{-}\left(\left|\frac{w-(w)_{B_{R/2}}}{R} \right|\right)\,dx
	\\&
	\leqslant
	\frac{c}{\omega_{a}(R)} + \frac{c}{\omega_{a}(R)}\I_{B_{2R}}\Psi_{d}\left(x,|Du|\right)\,dx
	\leqslant
	\frac{c(\data_{d})}{\omega_{a}(R)}.
	\end{split}
\end{align}
Combining the last two displays, we conclude 
\begin{align}
	\label{d2ce:25}
	I_{a} \leqslant c\left( \varepsilon + \omega_{a}(R)\left( 1+ \frac{1}{\omega_{a}(\mu_1)} \right) \right)
\end{align}
with some constant $c\equiv c(\data_{d})$. Therefore, inserting the estimates in the last two displays into \eqref{d2ce:20} and recalling \eqref{d2ce:16}, we have 
\begin{align}
	\label{d2ce:27}
	\left|\FI_{B_{1/8}}\inner{\bar{A}_{0}(D\bar{w})}{D\varphi}\,dx \right|
	\leqslant
	c(\data_{d})P_{1}(\varepsilon,R)\norm{D\varphi}_{L^{\infty}(B_{1/8})},
\end{align}
where 
\begin{align}
	\label{d2ce:28}
	\begin{split}
	P_1(\varepsilon,R)
	&:= \omega(R) + (1+ \omega(R))\left[ \varepsilon + \omega_{a}(R)\left( 1+ \frac{1}{\omega_{a}(\mu_1)} \right) \right]^{\frac{1}{s(H_{a})+1}}
	\\&
	\quad
	 + (1+ \omega(R))\left[ \varepsilon + \omega_{a}(R)\left( 1+ \frac{1}{\omega_{a}(\mu_1)} \right) \right]^{\frac{s(H_{a})}{s(H_{a})+1}}
	\end{split}
\end{align}

\textbf{Case 2: Assumption \eqref{dmth:md:2} is in force.} From the assumption $\eqref{dmth:md:2}_{2}$ it holds that for every $\varepsilon\in (0,1)$ there exists $\mu_{2}>0$ depending on $\varepsilon$ such that 
\begin{align}
	\label{d2ce:29}
	\Lambda\left(t,\frac{1}{t}\right) \leqslant \varepsilon
	\quad\text{for every}\quad t\in (0,\mu_2).
\end{align}
Then by the very definition of $\left[\Psi_{d}\right]_{B_{R}}^{-}$ in \eqref{dispsi} together with \eqref{d2ce:29} and \eqref{ma:2} under $\omega_{b}\equiv 0$, we have 
\begin{align}
	\label{d2ce:30}
	\begin{split}
	I_{a}
	&\leqslant
	c\omega_{a}(R)\frac{H_{a}(E_{d}(R))}{G(E_{d}(R))}
	\\&
	\leqslant
	c\omega_{a}(R)\varepsilon\left(1+\frac{1}{\omega_{a}\left([E_{d}(R)]^{-1}\right)} \right)
	+
	c\omega_{a}(R)\left(1+\frac{1}{\omega_{a}\left(\mu_2\right)} \right).
	\end{split}
\end{align}
Again using \eqref{concave1} together with taking into account  \eqref{d0ce:5}, we see 
\begin{align}
	\label{d2ce:31}
	\frac{1}{\omega_{a}\left([E_{d}(R)]^{-1}\right)} 
	\leqslant
	\frac{1}{\omega_{a}\left(\frac{R}{2\norm{w}_{L^{\infty}(B_{R})}} \right)}
	\leqslant
	\frac{c(\data_{d})}{\omega_{a}(R)}.
\end{align}
Combining the last two displays, we find 
\begin{align}
	\label{d2ce:32}
	I_{a} \leqslant c\left( \varepsilon + \omega_{a}(R)\left( 1+ \frac{1}{\omega_{a}(\mu_2)} \right) \right)
\end{align}
with some constant $c\equiv c(\data_{d})$.
Then, plugging the estimates in the last two displays into \eqref{d2ce:20} and recalling \eqref{d2ce:16}, we have 
\begin{align}
	\label{d2ce:34}
	\left|\FI_{B_{1/8}}\inner{\bar{A}_{0}(D\bar{w})}{D\varphi}\,dx \right|
	\leqslant
	c(\data_{d})P_{2}(\varepsilon,R)\norm{D\varphi}_{L^{\infty}(B_{1/8})},
\end{align}
where 
\begin{align}
	\label{d2ce:35}
	\begin{split}
	P_2(\varepsilon,R)
	&:= \omega(R) + (1+\omega(R)) \left[ \varepsilon + \omega_{a}(R)\left( 1+ \frac{1}{\omega_{a}(\mu_2)} \right) \right]^{\frac{1}{s(H_{a})+1}} 
	\\&
	+ (1+\omega(R))\left[ \varepsilon + \omega_{a}(R)\left( 1+ \frac{1}{\omega_{a}(\mu_2)} \right) \right]^{\frac{s(H_{a})}{s(H_{a})+1}}.
	\end{split}
\end{align}

\textbf{Case 3: Assumption \eqref{dmth:md:3} is in force.} The assumption $\eqref{dmth:md:3}_{2}$ implies that for any $\varepsilon\in (0,1)$, there exists $\mu_3>0$ depending on $\varepsilon$ such that 
\begin{align}
	\label{d2ce:36}
	\Lambda\left(t^{\frac{1}{1-\gamma}}, \frac{1}{t} \right) \leqslant \varepsilon
	\quad\text{for every}\quad t\in (0,\mu_3).
\end{align}
This one together with using \eqref{d2ce:22_1} and \eqref{ma:3} under $\omega_{b}(\cdot)\equiv 0$ implies
\begin{align}
	\label{d2ce:37}
	\begin{split}
	I_{a}
	&\leqslant
	c\omega_{a}(R)\frac{H_{a}(E_{d}(R))}{G(E_{d}(R))}
	\\&
	\leqslant
	c\omega_{a}(R)\varepsilon\left(1+\frac{1}{\omega_{a}\left([E_{d}(R)]^{-\frac{1}{1-\gamma}}\right)} \right)
	+
	c\omega_{a}(R)\left(1+\frac{1}{\omega_{a}\left(\mu_3^{\frac{1}{1-\gamma}}\right)} \right).
	\end{split}
\end{align}
Now using \eqref{d0ce:6} and \eqref{ma:3}, we have 
\begin{align}
	\label{d2ce:38}
	\frac{1}{\omega_{a}\left([E_{d}(R)]^{-\frac{1}{1-\gamma}}\right)}
	\leqslant
	\frac{1}{\omega_{a}\left(\left[\frac{\osc\limits_{B_{2R}}u}{R}\right]^{-\frac{1}{1-\gamma}}
	\right)}
	\leqslant
	\frac{c(\data_{d})}{\omega_{a}(R)}.
\end{align}
Combining the last two displays, we find 
\begin{align}
	\label{d2ce:39}
	I_{a} \leqslant c\left( \varepsilon + \omega_{a}(R)\left( 1+ \frac{1}{\omega_{a}\left(\mu_3^{\frac{1}{1-\gamma}}\right)} \right) \right)
\end{align}
for some constant $c\equiv c(\data_{d})$. Using the estimate \eqref{d2ce:39} in \eqref{d2ce:20}, we conclude 
\begin{align}
	\label{d2ce:41}
	\left|\FI_{B_{1/8}}\inner{\bar{A}_{0}(D\bar{w})}{D\varphi}\,dx \right|
	\leqslant
	c(\data_{d})P_{3}(\varepsilon,R)\norm{D\varphi}_{L^{\infty}(B_{1/8})},
\end{align}
where 
\begin{align}
	\label{d2ce:42}
	\begin{split}
	P_3(\varepsilon,R)
	&:= \omega(R) + (1+\omega(R))
	\left[ \varepsilon + \omega_{a}(R)\left( 1+ \frac{1}{\omega_{a}\left(\mu_3^{\frac{1}{1-\gamma}}\right)} \right) \right]^{\frac{1}{s(H_{a})+1}} 
	\\&
	+ (1+\omega(R))\left[ \varepsilon + \omega_{a}(R)\left( 1+ \frac{1}{\omega_{a}\left(\mu_3^{\frac{1}{1-\gamma}}\right)} \right) \right]^{\frac{s(H_{a})}{s(H_{a})+1}}.
	\end{split}
\end{align}

\textbf{Case 4. Assumption \eqref{dmth:md:4} is in force.} Now we take the advantage that $w_a(\cdot)$ is the power function. Recalling $I_{a}$ denoted in \eqref{d2ce:22_1}, we see that 
\begin{align}
	\label{d2ce:42_1}
	\begin{split}
	I_{a} &\leqslant c R^{\alpha} \frac{\left(H_{a}\circ G^{-1} \right)\left(\left[\Psi_{d} \right]_{B_{R}}^{-}(E_{d}(R))\right)}{\left[\Psi_{d} \right]_{B_{R}}^{-}(E_{d}(R))}
	\leqslant
	cR^{\alpha}\left( 1 + \left[\FI_{B_{R/2}}\left[\Psi_{d} \right]_{B_{R}}^{-}\left(\left|\frac{w-(w)_{B_{R/2}}}{R} \right| \right)\,dx \right]^{\frac{\alpha}{n}} \right)
	\\&
	\leqslant
	cR^{\alpha} + c\left(\I_{B_{R/2}}\left[\Psi_{d} \right]_{B_{R}}^{-}\left(\left|Dw \right| \right)\,dx \right)^{\frac{\alpha}{n}}
	\leqslant
	cR^{\alpha} + cR^{\frac{\alpha\delta_0}{1+\delta_0}} \left(\I_{B_{R/2}}\left[\left[\Psi_{d} \right]_{B_{R}}^{-}\left(\left|Dw \right| \right)\right]^{1+\delta_0}\,dx \right)^{\frac{\alpha}{n(1+\delta_0)}}
	\\&
	\leqslant
	c(\data_{d}(\O_0))R^{\frac{\alpha\delta_0}{1+\delta_0}},
	\end{split}
\end{align}
where we have used the higher integrability estimates \eqref{hi:2} of Theorem \ref{thm:hi} under the double phase settings. Using estimates from the last display in \eqref{d2ce:20} and recalling $R\leqslant 1$, we see 
\begin{align}
	\label{d2ce:42_3}
	\left|\FI_{B_{1/8}}\inner{\bar{A}_{0}(D\bar{w})}{D\varphi}\,dx \right|
	\leqslant
	c(\data_{d}(\O_0))Q_{1}(R)\norm{D\varphi}_{L^{\infty}(B_{1/8})},
\end{align}
where 
\begin{align}
	\label{d2ce:42_4}
	Q_{1}(R):= \omega(R) + (1+\omega(R)) R^{\frac{\alpha\delta_0}{(1+\delta_0)(1+s(H_{a}))}}.
\end{align}

\textbf{Case 5: Assumption \eqref{dmth:md:5} is in force.}  Using the assumption \eqref{ma:2} and \eqref{d0ce:6}, $I_{a}$ in \eqref{d2ce:22_1} can be estimated as 
\begin{align}
	\label{d2ce:42_5}
	\begin{split}
	I_{a} &\leqslant
	cR^{\alpha}\frac{H_{a}(E_{d}(R))}{G(E_{d}(R))}
	\\&
	\leqslant
	 cR^{\alpha}\left( 1 + \left[ \left(\left[\Psi_{d} \right]_{B_{R}}^{-}\right)^{-1}\left(\FI_{B_{R/2}}\left[\Psi_{d} \right]_{B_{R}}^{-}\left(\left|\frac{w-(w)_{B_{R/2}}}{R} \right| \right)\,dx \right) \right]^{\alpha} \right)
	\\&
	\leqslant
	c\left(R^{\alpha}+ \left[\osc\limits_{B_{2R}}u\right]^{\alpha}\right)
	\leqslant
	c(\data_{d}(\O_0))R^{\gamma\alpha},
	\end{split}
\end{align}
where we have also used \eqref{hc:3} and $\gamma$ is the H\"older continuity exponent coming from Theorem \ref{thm:hc} under the double phase settings. Inserting the estimate from the last display into \eqref{d2ce:20} and recalling $R\leqslant 1$, we see 
\begin{align}
	\label{d2ce:42_7}
	\left|\FI_{B_{1/8}}\inner{\bar{A}_{0}(D\bar{w})}{D\varphi}\,dx \right|
	\leqslant
	c(\data_{d}(\O_0))Q_{2}(R)\norm{D\varphi}_{L^{\infty}(B_{1/8})},
\end{align}
where 
\begin{align}
	\label{d2ce:42_8}
	Q_{2}(R):= \omega(R) + (1+\omega(R)) R^{\frac{\alpha\gamma}{1+s(H_{a})}}.
\end{align}

Collecting the estimates obtained in \eqref{d2ce:27}, \eqref{d2ce:34},\eqref{d2ce:41}, \eqref{d2ce:42_3} and \eqref{d2ce:42_7}, we conclude with 
    \begin{align}
        \label{d2ce:43}
        \left|\FI_{B_{1/8}} \inner{\bar{A}_{0}(D\bar{w})}{D\varphi}\,dx\right|
        \leqslant
        c_{h}D(\varepsilon,R) \norm{D\varphi}_{L^{\infty}(B_{1/8})}
    \end{align}
     for some constant $c_{h}\equiv c_{h}(\data_{d}(\O_0))$ for every $\varphi\in W^{1,\infty}_{0}(B_{1/8})$,
    where 
    \begin{align}
	\label{d2ce:44}
	 D(\varepsilon,R)
	 := \left\{\begin{array}{lr}
        P_1(\varepsilon,R) & \text{if } \eqref{dmth:md:1} \text{ is assumed, }\\
        P_2(\varepsilon,R)  & \text{if } \eqref{dmth:md:2} \text{ is assumed, } \\
        P_3(\varepsilon,R)  & \text{if } \eqref{dmth:md:3} \text{ is assumed, } \\
        Q_1(R)  & \text{if } \eqref{dmth:md:4} \text{ is assumed, } \\
        Q_2(R)  & \text{if } \eqref{dmth:md:5} \text{ is assumed, }
        \end{array}\right.
\end{align}
in which $P_1, P_2$, $P_3$, $Q_1$ and $Q_2$ have been defined in \eqref{d2ce:28}, \eqref{d2ce:35}, \eqref{d2ce:42}, \eqref{d2ce:42_4} and \eqref{d2ce:42_8}, respectively. By \eqref{d2ce:13_1}, \eqref{d2ce:15_1}-\eqref{d2ce:15_3} and \eqref{d2ce:43}, we are able to apply Lemma \ref{hta: lemma_hta} with $A_{0}(z)\equiv \bar{A}_{0}(z)$, $\Psi_{0}(t)\equiv \bar{\Psi}_{0}(t)$ with $a_{0}\equiv \bar{a}(\bar{x}_{a})$ and $b_{0}\equiv 0$. By Lemma \ref{hta: lemma_hta}, there exists $\bar{h}\in \bar{w} + W_{0}^{1,\bar{\Psi}_{0}}(B_{1/8})$ such that 
    \begin{align}
        \label{d2ce:45}
        \FI_{B_{1/8}}\inner{\bar{A}_0(D\bar{h})}{D\varphi}\,dx = 0\qquad \text{ for all }
        \qquad \varphi\in W^{1,\infty}_{0}(B_{1/8}),
    \end{align}
    \begin{align}
        \label{d2ce:46}
        \FI_{B_{1/4}}\bar{\Psi}_{0}(|D\bar{h}|)\,dx + \FI_{B_{1/8}}[\bar{\Psi}_{0}(|D\bar{h}|)]^{1+\delta_1}\,dx \leqslant c
        \text{ for some } \delta_1 \leqslant \delta_0,
    \end{align}
    \begin{align}
        \label{d2ce:47}
        \begin{split}
        \FI_{B_{1/8}}&\left( |V_{\bar{G}}(D\bar{w})-V_{\bar{G}}(D\bar{h})|^{2} + \bar{a}(\bar{x}_{a})|V_{\bar{H}_{a}}(D\bar{w})-V_{\bar{H}_{a}}(D\bar{h})|^{2}  \right)\,dx
        \leqslant
        c[D(\varepsilon, R)]^{s_1}
        \end{split}
    \end{align}
    and finally
    \begin{align}
        \label{d2ce:48}
        \FI_{B_{1/8}} \left(\bar{G}\left( |\bar{w}-\bar{h}|\right) + \bar{a}(\bar{x}_{a}) \bar{H_{a}}\left( |\bar{w}-\bar{h}| \right) \right)\,dx \leqslant c_{d}[D(\varepsilon, R)]^{s_{0}}
    \end{align}
    with some constants $c,c_d\equiv c,c_d(\data_{d}(\O_0))\geqslant 1$ and $s_0,s_1\equiv s_0,s_1(\data_{d})\in (0,1)$, but they are all independent of $R$. The rest of the proof is similar as the argument after \eqref{2ce:49} of Lemma \ref{lem:2ce}.
    \end{proof}

\begin{lem}
	\label{lem:d3ce}
	Under the assumptions of Lemma \ref{lem:d2ce}, let $w\in W^{1,\Psi}(B_{2R})$ be the solution to the problem defined in \eqref{d0ce:2}. If one of the assumptions \eqref{dmth:md:1}-\eqref{dmth:md:5} is satisfied, then there exists $h \in w + W^{1,\left[\Psi_{d}\right]_{B_{R}}^{-}}_{0}(B_{R/8})$ being  a local minimizer of the functional defined by 
	\begin{align}
		\label{d3ce:1}
		W^{1,1}(B_{R/8})\ni v\mapsto \F_{0}(v):= \I_{B_{R/8}}F_{0}(Dv)\,dx, 
	\end{align}
	where the integrand function is given by
\begin{align}
	\label{d3ce:1_1}
	F_{0}(z):= F\left(x_{a}, (u)_{B_{2R}},z\right)
\end{align}
for $x_{a}\in\overline{B}_{R}$ being a point such that $a(x_{a}):= a^{-}(B_{R})$, whenever $z\in\R^n$, such that 
\begin{align}
	\label{d3ce:2}
	\begin{split}
	\FI_{B_{R/8}}&\left[ |V_{G}(Du)-V_{G}(Dh)|^2 + a(x_a)|V_{H_{a}}(Du)-V_{H_{a}}(Dh)|^2 \right]\,dx
	\\&
	\leqslant
	c\left( \omega\left(R^{\gamma}\right) + [D(\varepsilon,R)]^{s_1}  \right)\FI_{B_{2R}}\Psi_{d}(x,|Du|)\,dx
	\end{split}
\end{align}
for some constant $c\equiv c(\data_{d}(\O_0))$, where $s_1$ and $D(\varepsilon,R)$ have been defined in \eqref{d2ce:47} and \eqref{d2ce:44}, respectively. Moreover, we have the energy estimate
\begin{align}
	\label{d3ce:3}
	\FI_{B_{R/8}}\left[\Psi_{d} \right]_{B_{R}}^{-}(|Dh|)\,dx \leqslant c\FI_{B_{2R}}\Psi_{d}(x,|Du|)\,dx
\end{align}
for some constant $c\equiv c(n,\nu,L)$.
\end{lem}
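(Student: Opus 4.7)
The plan is to essentially mirror the proof of Lemma \ref{lem:3ce}, but adapted to the Orlicz double phase framework. We begin by invoking Lemma \ref{lem:d2ce}: under any of the assumptions \eqref{dmth:md:1}--\eqref{dmth:md:5}, one obtains a function $\bar{h} \in \bar{w} + W^{1,\bar{\Psi}_0}_0(B_{1/8})$ satisfying \eqref{d2ce:45}--\eqref{d2ce:48}. The candidate $h$ on $B_{R/8}(x_0)$ is then defined by scaling back via
\begin{align*}
    h(x) := E_d(w,B_{R/2})\, R\, \bar{h}\!\left(\tfrac{x-x_0}{R}\right) + (w)_{B_{R/2}}, \qquad x \in B_{R/8}(x_0),
\end{align*}
so that $h \in w + W^{1,[\Psi_d]^-_{B_R}}_0(B_{R/8})$. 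Since $\bar{h}$ solves the Euler--Lagrange equation \eqref{d2ce:45} for the frozen vector field $\bar{A}_0$, and the scaling was chosen so that $\bar{A}_0 = D_z \bar{F}_d(\bar{x}_a,\cdot)$ corresponds (after scaling back) to $D_z F_d(x_a,(u)_{B_{2R}},\cdot)$, the function $h$ is a local minimizer of the functional $\F_0$ defined in \eqref{d3ce:1}, as required.

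Next I would derive the energy estimate \eqref{d3ce:3}. As in the computation leading to \eqref{0ce:9}, the minimality of $h$ against its own boundary values $w$ combined with the double-sided bound in $\eqref{dsa:1}$ gives
\begin{align*}
    \FI_{B_{R/8}} [\Psi_d]^-_{B_R}(|Dh|)\,dx \leqslant \frac{L}{\nu}\FI_{B_{R/8}}[\Psi_d]^-_{B_R}(|Dw|)\,dx \leqslant c\,\FI_{B_R}\Psi_d(x,|Dw|)\,dx,
\end{align*}
where the last inequality follows from $[\Psi_d]^-_{B_R}(t) \leqslant \Psi_d(x,t)$ and a change of domain. A further application of \eqref{d0ce:4} then yields \eqref{d3ce:3}, with the constant depending only on $n,\nu,L$.

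For the comparison estimate \eqref{d3ce:2}, I would rewrite \eqref{d2ce:47} in the original variables (recalling the definitions \eqref{d2ce:5_1}, \eqref{d2ce:5_2}, \eqref{d2ce:7}, and noting $\bar{a}(\bar{x}_a) H_a(E_d(R)) = a(x_a) H_a(E_d(R))/[\Psi_d]^-_{B_R}(E_d(R))$) to obtain
\begin{align*}
    \FI_{B_{R/8}}\!\Bigl[|V_G(Dw)-V_G(Dh)|^2 + a(x_a)|V_{H_a}(Dw)-V_{H_a}(Dh)|^2\Bigr]\,dx \leqslant c [D(\varepsilon,R)]^{s_1}\FI_{B_{R/2}}[\Psi_d]^-_{B_R}\!\left(\left|\tfrac{w-(w)_{B_{R/2}}}{R}\right|\right)dx,
\end{align*}
and the right-hand side is controlled by $c[D(\varepsilon,R)]^{s_1}\FI_{B_{2R}}\Psi_d(x,|Du|)\,dx$ via a Sobolev--Poincaré step using Lemma \ref{lem:os} applied to $[\Psi_d]^-_{B_R}$, a Caccioppoli inequality for $w$, and the energy estimate \eqref{d0ce:4}. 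Combining this with \eqref{d0ce:3} from Lemma \ref{lem:d0ce} through the elementary inequality $|V_\Phi(z_1)-V_\Phi(z_3)|^2 \leqslant 2|V_\Phi(z_1)-V_\Phi(z_2)|^2 + 2|V_\Phi(z_2)-V_\Phi(z_3)|^2$ for $\Phi \in \{G,H_a\}$ and using $R \leqslant 1$ to absorb $\omega(R^\gamma)$ and $[D(\varepsilon,R)]^{s_1}$ into a single multiplier yields \eqref{d3ce:2}.

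The main obstacle I anticipate is the bookkeeping in identifying the scaled-back vector field with $D_z F_d(x_a,(u)_{B_{2R}},\cdot)$, since in the double phase structure assumptions \eqref{dsa:2} the frozen point $x_a$ only freezes the coefficient $a(\cdot)$ and the $x$-dependence inside $F_d$ simultaneously, and one must verify that the frozen integrand $\bar{F}_0$ defined in the proof of Lemma \ref{lem:d2ce} indeed corresponds, after scaling back, to $F_d(x_a,(u)_{B_{2R}}, \cdot)$ as defined in \eqref{d3ce:1_1}. A related secondary subtlety is that \eqref{d3ce:2} is stated with the coefficient $a(x_a) = a^-(B_R)$, whereas the raw scaled estimate \eqref{d2ce:47} contains $\bar{a}(\bar{x}_a) = \inf_{B_1}\bar{a}$; these coincide precisely because the infimum of $\bar{a}$ over $B_1$ equals $a^-(B_R)$ times the scaling factor, so the identification is consistent. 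Once these identifications are carefully made, the remaining steps reduce to the already-established Sobolev--Poincaré and Caccioppoli toolkit.
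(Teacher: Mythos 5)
Your proposal is correct and follows essentially the same route as the paper: scale back the harmonic-type approximant $\bar h$ from Lemma \ref{lem:d2ce} to obtain $h$, identify the scaled-back frozen vector field with $D_zF_d(x_a,(u)_{B_{2R}},\cdot)$ so that $h$ minimizes $\F_0$, get \eqref{d3ce:3} from minimality plus \eqref{d0ce:4}, and obtain \eqref{d3ce:2} by rewriting \eqref{d2ce:47} in the original variables and combining with \eqref{d0ce:3} via the quasi-triangle inequality for the $V$-maps. Your two flagged subtleties (the freezing point and the coefficient $\bar a(\bar x_a)$ versus $a^-(B_R)$) are resolved exactly as in the paper, and your inclusion of the additive constant $(w)_{B_{R/2}}$ in the scaling back is a harmless (indeed slightly more precise) variant of the paper's definition \eqref{d3ce:4}.
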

\begin{proof}
	We need to revisit the proof of Lemma \ref{lem:d2ce}, specially Step 3 and Step 4. We consider a function $\bar{h}\in \bar{w} + W^{1,\bar{\Psi}_{0}}_{0}(B_{1/8})$ satisfying \eqref{d2ce:45}-\eqref{d2ce:48}. Let $h$ be the scaled back function of $\bar{h}$ in $B_{R/8}$ as
	\begin{align}
		\label{d3ce:4}
		h(x):= E_{d}(w,B_{R/2})R\bar{h}\left(\frac{x-x_0}{R} \right)
		\quad\text{for every}\quad
		x\in B_{R/8}(x_0).
	\end{align}
Clearly, $h\in w + W^{1,\left[\Psi_{d} \right]_{B_{R}}^{-}}_{0}(B_{R/8})$ is a local minimizer of the functional $\F_0$ defined in \eqref{d3ce:1} which means that 
\begin{align}
	\label{d3ce:5}
	\F_{0}(h) = \I_{B_{R/8}}F_0(Dh)\,dx \leqslant \I_{B_{R/8}}F_0(Dh+D\varphi)\,dx
	\leqslant
	\F_{0}(h+\varphi)
\end{align}
holds for every $\varphi\in W^{1,\left[\Psi_{d} \right]_{B_{R}}^{-}}_{0}(B_{R/8})$. As shown in \eqref{0ce:9}, we recall \eqref{d0ce:4} to discover that
\begin{align}
	\label{d3ce:6}
	\begin{split}
	\FI_{B_{R/8}}\left[\Psi_{d} \right]_{B_{R}}^{-}(|Dh|)\,dx 
	&\leqslant
	\frac{L}{\nu}\FI_{B_{R/8}}\left[\Psi_{d} \right]_{B_{R}}^{-}(|Dw|)\,dx 
	\\&
	\leqslant
	\frac{8^{n}L}{\nu} \FI_{B_{R}}\Psi_{d}(x,|Dw|)\,dx
	\leqslant
	c(n,\nu,L)  \FI_{B_{2R}}\Psi_{d}(x,|Du|)\,dx,
	\end{split}
\end{align}
which proves \eqref{d3ce:3}. We write the inequality \eqref{d2ce:47} in view of $G,H_{a}$, $w$ and $h$ in order to have 
\begin{align}
	\label{d3ce:7}
	\begin{split}
		\FI_{B_{R/8}}&\left[ |V_{G}(Du)-V_{G}(Dh)|^2 + a(x_a)|V_{H_{a}}(Du)-V_{H_{a}}(Dh)|^2   \right]\,dx
		\\&
		\leqslant
		c[D(\varepsilon,R)]^{s_1}\FI_{B_{R/2}}\left[\Psi_{d} \right]_{B_{R}}^{-}\left(\left|\frac{w-(w)_{B_{R/2}}}{R} \right| \right)\,dx
		\leqslant
		c[D(\varepsilon,R)]^{s_1}\FI_{B_{R/2}}\left[\Psi_{d} \right]_{B_{R}}^{-}\left(\left|Dw \right| \right)\,dx
		\\&
		\leqslant
		c[D(\varepsilon,R)]^{s_1}\FI_{B_{R/2}}\Psi_{d}\left(x,\left|Du \right| \right)\,dx
	\end{split}
\end{align}
for some constant $c\equiv c(\data_{d}(\O_0))$, where we have applied the Sobolev-Poincar\'e inequality and \eqref{d3ce:6}. Combining this estimate together with \eqref{d0ce:3} via some elementary computations, we directly reach \eqref{d3ce:2}. 
\end{proof}

We finally finish the present subsection with a crucial decay estimate on $u$.

\begin{lem}
    \label{lem:d4ce}
    Under the assumptions of Lemma \ref{lem:d2ce}, if one of the conditions \eqref{dmth:md:1}-\eqref{dmth:md:5} is satisfied, then  for every $\varepsilon_{*}\in (0,1)$, there exists a positive radius $R_{*}$ with the dependence as 
    \begin{align}
        \label{d4ce:1}
        R_{*}\equiv R_{*}(\data_{d}(\O_0),\varepsilon_{*})
    \end{align}
    such that if $R\leqslant R_{*}$, then there exists a constant $c_{G}\equiv c_{G}(\data_{d}(\O_0))$ such that
    \begin{align}
        \label{d4ce:2}
        \I_{B_{\tau R}} \left[\Psi_{d}\right]_{B_{R}}^{-}\left(\left|\frac{u-(u)_{B_{\tau R}}}{\tau R}\right|\right)\,dx \leqslant
        c_{G}\left( \tau^{n} + \tau^{-(s(\Psi_{d})+1)}\varepsilon_{*} \right)\I_{B_{2R}}\Psi_{d}(x,|Du|)\,dx
    \end{align}
    holds for every $\tau\in \left(0,1/32 \right)$.
\end{lem}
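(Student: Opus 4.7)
The plan is to mirror the argument of Lemma \ref{lem:4ce} in the Orlicz double phase setting, with the comparison function $w$ from the variational problem \eqref{d0ce:2} replacing the intermediate minimizer $w_c$ used in the multi-phase case. Since Lemma \ref{lem:d2ce} has already absorbed the $x$-freezing into the harmonic approximation step (in contrast to the multi-phase setting where two successive freezings were needed), one may compare $u$ directly against $w$, so the only perturbation coming from the $u$-dependence of the integrand is the term $[\omega(R^{\gamma})]^{1/2}$ supplied by \eqref{d0ce:7}.

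Fix $\varepsilon^{*}\in(0,1)$ to be chosen, and apply Lemma \ref{lem:d2ce} to obtain a corresponding $R^{*}\equiv R^{*}(\data_{d}(\O_0),\varepsilon^{*})$ as in \eqref{d2ce:1}. For $R\leqslant R^{*}$ and $\tau\in(0,1/32)$, I would use the convexity of $[\Psi_d]_{B_R}^{-}$ together with Lemma \ref{lem:nf1} (recalling that $s([\Psi_d]_{B_R}^{-})=s(\Psi_d)$ by Remark \ref{rmk:nf2}) to estimate
\begin{align*}
\FI_{B_{\tau R}}[\Psi_d]_{B_R}^{-}&\left(\left|\frac{u-(u)_{B_{\tau R}}}{\tau R}\right|\right)dx\\
&\leqslant c\FI_{B_{\tau R}}[\Psi_d]_{B_R}^{-}\left(\left|\frac{w-(w)_{B_{\tau R}}}{\tau R}\right|\right)dx + c\tau^{-(n+s(\Psi_d)+1)}\FI_{B_R}[\Psi_d]_{B_R}^{-}\left(\left|\frac{u-w}{R}\right|\right)dx.
\end{align*}
The first integral is controlled via \eqref{d2ce:2}, followed by a standard Caccioppoli--Sobolev reduction and the energy estimate \eqref{d0ce:4}, yielding the bound $c(1+\tau^{-(n+s(\Psi_d)+1)}\varepsilon^{*})\FI_{B_{2R}}\Psi_d(x,|Du|)\,dx$. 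The second integral is bounded by \eqref{d0ce:7} by $c[\omega(R^{\gamma})]^{1/2}\FI_{B_{2R}}\Psi_d(x,|Du|)\,dx$.

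Multiplying the resulting inequality through by $|B_{\tau R}|$ (so the leading constant $1$ acquires a factor $\tau^{n}$ and the $\tau^{-(n+s(\Psi_d)+1)}$ factor becomes $\tau^{-(s(\Psi_d)+1)}$), I would arrive at
\begin{align*}
\I_{B_{\tau R}}[\Psi_d]_{B_R}^{-}\left(\left|\frac{u-(u)_{B_{\tau R}}}{\tau R}\right|\right)dx \leqslant c\bigl(\tau^{n}+\tau^{-(s(\Psi_d)+1)}\varepsilon^{*}+\tau^{-(s(\Psi_d)+1)}[\omega(R^{\gamma})]^{1/2}\bigr)\I_{B_{2R}}\Psi_d(x,|Du|)\,dx.
\end{align*}
Setting $\varepsilon^{*}:=\varepsilon_{*}/2$ and then choosing $R_{*}\leqslant R^{*}$ small enough that $[\omega(R_{*}^{\gamma})]^{1/2}\leqslant\varepsilon_{*}/2$ (which is possible since $\omega(t)\to 0$ as $t\to 0^{+}$) absorbs both perturbation terms into a single $\tau^{-(s(\Psi_d)+1)}\varepsilon_{*}$ contribution, producing \eqref{d4ce:2}. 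The main bookkeeping obstacle will be keeping track of the $\tau$ powers when switching between averaged and non-averaged integrals and when pulling $\tau^{-1}$ out of $[\Psi_d]_{B_R}^{-}$ via Lemma \ref{lem:nf1}; once this is organised, no ideas beyond Lemma \ref{lem:d2ce} and Lemma \ref{lem:d0ce} are needed.
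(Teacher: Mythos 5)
Your proposal is correct and follows essentially the same route as the paper: compare $u$ with the single comparison map $w$ from \eqref{d0ce:2}, control the $w$-term via \eqref{d2ce:2} plus the energy estimate, control $u-w$ via \eqref{d0ce:7}, and absorb $[\omega(R^{\gamma})]^{1/2}$ into $\varepsilon_{*}$ by shrinking $R_{*}$. Your observation that only one comparison function is needed here (because the $x$-freezing is already built into Lemma \ref{lem:d2ce}) matches exactly how the paper's proof differs from the multi-phase Lemma \ref{lem:4ce}.
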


\begin{proof}
    For the proof, we apply Lemma \ref{lem:d2ce} with $\varepsilon^{*}\in (0,1)$ to be determined in a few lines, and we can use \eqref{d2ce:2} provided
    \begin{align*}
        R\leqslant R^{*}\equiv R^{*}(\data_{d}(\O_0),\varepsilon^{*})
    \end{align*}
    is found via \eqref{d2ce:1}.
    For every $\tau\in (0,1/32)$ with some elementary manipulations, we see that 
    \begin{align}
        \label{dgp:gp68}
        \begin{split}
            \FI_{B_{\tau R}}& \left[\Psi_{d} \right]_{B_{R}}^{-}\left(\left|\frac{u-(u)_{B_{\tau R}}}{\tau R} \right|\right)\,dx
            \leqslant
            c\FI_{B_{\tau R}} \left[\Psi_{d} \right]_{B_{R}}^{-}\left(\left|\frac{u-(w)_{B_{\tau R}}}{\tau R} \right|\right)\,dx
            \\&
            \leqslant
            c\FI_{B_{\tau R}} \left[\Psi_{d} \right]_{B_{R}}^{-}\left(\left|\frac{w-(w)_{B_{\tau R}}}{\tau R} \right|\right)\,dx
            + c\tau^{-(n+s(\Psi_{d})+1)} \FI_{B_{R}} \left[\Psi_{d} \right]_{B_{R}}^{-}\left(\left|\frac{u-w}{R} \right|\right)\,dx
            \\&
            \leqslant
            c\left( 1+\tau^{-(n+s(\Psi_{d})+1)}\varepsilon^{*} \right)\FI_{B_{R/2}} \left[\Psi_{d} \right]_{B_{R}}^{-}\left(\left|\frac{w-(w)_{B_{R/2}}}{R} \right|\right)\,dx
            \\&
            \quad
             + c\tau^{-(n+s(\Psi_{d})+1)} \FI_{B_{R}} \left[\Psi_{d} \right]_{B_{R}}^{-}\left(\left|\frac{u-w}{R} \right|\right)\,dx
            \\&
            \leqslant
            c\left( 1+\tau^{-(n+s(\Psi_{d})+1)}\varepsilon^{*} \right)\FI_{B_{R}} \left[\Psi_{d} \right]_{B_{R}}^{-}\left(\left| Dw \right|\right)\,dx + c\tau^{-(n+s(\Psi_{d})+1)} \FI_{B_{R}} \left[\Psi_{d} \right]_{B_{R}}^{-}\left(\left|\frac{u-w}{R} \right|\right)\,dx
        \end{split}
    \end{align}
    with some constant $c\equiv c(\data_{d}(\O_0))$, where throughout the last display we repeatedly used \eqref{growth1} and \eqref{excess4}. The last display and \eqref{d0ce:7} along with some elementary manipulations  yield
    \begin{align*}
        \I_{B_{\tau R}}& \left[\Psi_{d} \right]_{B_{R}}^{-}\left(\left|\frac{u-(u)_{B_{\tau R}}}{\tau R} \right|\right)\,dx
        \leqslant
        c\left( \tau^{n} + \tau^{-(s(\Psi_{d})+1)}\varepsilon^{*} + \tau^{-(s(\Psi_{d})+1)}[\omega(R^{\gamma})]^{\frac{1}{2}} \right)\I_{B_{2R}}\Psi_{d}(x,|Du|)\,dx
    \end{align*}
     for every $\tau\in\left(0,1/16 \right)$ and some $c\equiv c(\data_{d}(\O_0))$. Then we choose $\varepsilon^{*} \equiv \varepsilon_{*}/2$ and $R_{*}\leqslant R^{*}$ in such a way that
    $[\omega(R_{*}^{\gamma})]^{\frac{1}{2}}\leqslant \varepsilon_{*}/2$. This choice gives us the dependence as described in \eqref{d4ce:1} and yields \eqref{d4ce:2}.
\end{proof}

We have now discovered all the necessary tools. They are Lemma \ref{lem:d0ce}, Lemma \ref{lem:d2ce} and Lemma \ref{lem:d4ce} in the double phase settings for proving Theorem \ref{dmth:mr} and  Theorem \ref{dmth:md}. Applying those lemmas with arguing in a similar manner as in the proofs of Theorem \ref{mth:md} and Theorem \ref{mth:mr}, we are able to prove Theorem \ref{dmth:mr} and  Theorem \ref{dmth:md}. For the sake of the completeness, we provide a sketch of the proofs.


\textbf{Proof of Theorem \ref{dmth:md}.} The proof of Theorem \ref{dmth:md} can be done similarly as for the proof of Theorem \ref{mth:md}. We just combine Lemma \ref{lem:1cacct} under the double phase settings and Lemma \ref{lem:d4ce}, as we already have done in \eqref{nmd:1}-\eqref{nmd:25}.


\begin{lem}
	\label{lem:d5ce}
	Under the assumptions and notations of Lemma \ref{lem:d2ce} and Lemma \ref{lem:d3ce}, let $w\in W^{1,\Psi_{d}}(B_{R})$ be the function defined in \eqref{d0ce:2}. Suppose that  \eqref{dmth:md:3} is satisfied for $\omega_{a}(t)=t^{\alpha}$ with some $\alpha \in (0,1]$. Then there exists a function $h \in w + W^{1,\left[\Psi_{d}\right]_{B_{R}}^{-}}_{0}(B_{R/8})$ being  a local minimizer of the functional $\F_{0}$ defined in \eqref{d3ce:1} such that 
\begin{align}
	\label{d5ce:1}
	\begin{split}
	\FI_{B_{R/8}}&\left[ |V_{G}(Du)-V_{G}(Dh)|^2 + a(x_a)|V_{H_{a}}(Du)-V_{H_{a}}(Dh)|^2  \right]\,dx
	\\&
	\leqslant
	c\left( \omega\left(R^{\gamma}\right) + \left[ \omega(R) + (1+\omega(R)) R^{\frac{\alpha}{2(1+s(H_{a})}} \right]^{s_1}  \right)\FI_{B_{2R}}\Psi_{d}(x,|Du|)\,dx
	\end{split}
\end{align}
for some constant $c\equiv c(\data_{d}(\O_0))$ and $s_1\equiv s_1(\data_{d})$, respectively. Moreover, the energy estimate
\begin{align}
	\label{d5ce:2}
	\FI_{B_{R/8}}\left[\Psi_{d}\right]_{B_{R}}^{-}(|Dh|)\,dx \leqslant c\FI_{B_{2R}}\Psi_{d}(x,|Du|)\,dx
\end{align}
holds for some constant $c\equiv c(n,\nu,L)$.
\end{lem}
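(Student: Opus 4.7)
The strategy is to run essentially the same argument as in Lemma \ref{lem:d3ce}, but to exploit the already-established Morrey decay from Theorem \ref{dmth:md} in order to replace the qualitative term $D(\varepsilon,R)$ produced by the assumption \eqref{dmth:md:3} by an explicit power of $R$. First, since \eqref{dmth:md:3} is in force, Theorem \ref{dmth:md} grants H\"older continuity of $u$ with every exponent in $(0,1)$; the plan is to fix $\theta := (1+\gamma)/2 \in (\gamma,1)$, so that $[u]_{0,\theta;\O_0} \leqslant c(\data_d(\O_0))$ with an exponent strictly above $\gamma$.

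With this H\"older estimate in hand, I would revisit the scaling and freezing construction in Steps 2--4 of the proof of Lemma \ref{lem:d2ce}. Everything up to the estimate \eqref{d2ce:22_1} for $I_a = \|\bar a - \bar a(\bar x_a)\|_{L^\infty(B_{1/8})}$ is unchanged. The key new input is that in the present case $\omega_a(t)=t^\alpha$, so that using \eqref{ma:3} together with Lemma \ref{lem:nf1} one may bound
\begin{align*}
I_a \;\leqslant\; cR^\alpha\,\frac{H_a(E_d(R))}{G(E_d(R))} \;\leqslant\; cR^\alpha\Bigl(1+[E_d(R)]^{\alpha/(1-\gamma)}\Bigr).
\end{align*}
Combining \eqref{d0ce:5}, \eqref{d0ce:6} and the refined H\"older bound $[u]_{0,\theta;\O_0}\leqslant c$ with $\theta=(1+\gamma)/2$, we obtain
\begin{align*}
E_d(R) \;\leqslant\; c\,\frac{\osc_{B_{2R}}u}{R} \;\leqslant\; c(\data_d(\O_0))\,R^{\theta-1} \;=\; c\,R^{(\gamma-1)/2},
\end{align*}
so that $[E_d(R)]^{\alpha/(1-\gamma)}\leqslant cR^{-\alpha/2}$ and therefore $I_a\leqslant c(\data_d(\O_0))\,R^{\alpha/2}$. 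Plugging this into \eqref{d2ce:20} in place of the estimates \eqref{d2ce:37}--\eqref{d2ce:39} and recalling $R\leqslant 1$, one finds, for every $\varphi\in W^{1,\infty}_0(B_{1/8})$,
\begin{align*}
\left|\,\FI_{B_{1/8}}\inner{\bar A_0(D\bar w)}{D\varphi}\,dx\,\right|
\;\leqslant\; c(\data_d(\O_0))\,Q_3(R)\,\|D\varphi\|_{L^\infty(B_{1/8})},
\end{align*}
with $Q_3(R):=\omega(R)+(1+\omega(R))\,R^{\alpha/(2(1+s(H_a)))}$.

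Having this quantitative smallness statement together with \eqref{d2ce:13_1}, \eqref{d2ce:15_1}--\eqref{d2ce:15_3} and the a priori bound \eqref{d2ce:15_4}, I would then apply the harmonic-type approximation Lemma \ref{hta: lemma_hta} with $A_0\equiv \bar A_0$, $\Psi_0\equiv\bar\Psi_0$, $a_0\equiv \bar a(\bar x_a)$, $b_0\equiv 0$ and $\sigma\equiv c_h Q_3(R)$, producing $\bar h\in \bar w + W^{1,\bar\Psi_0}_0(B_{1/8})$ satisfying the analogues of \eqref{d2ce:45}--\eqref{d2ce:48} but with $D(\varepsilon,R)$ replaced by $Q_3(R)$. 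Scaling back to $B_{R/8}$ exactly as in \eqref{d3ce:4}, the resulting $h\in w + W^{1,[\Psi_d]_{B_R}^-}_0(B_{R/8})$ is a local minimizer of $\F_0$ and satisfies the energy estimate \eqref{d5ce:2} by the same computation as in \eqref{d3ce:6}. Finally, combining the rescaled version of the $Q_3(R)$-analogue of \eqref{d2ce:47} with the Sobolev--Poincar\'e inequality on $w$ and the first comparison estimate \eqref{d0ce:3}, one deduces \eqref{d5ce:1}.

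The only nontrivial point is the bootstrap step: one must be sure that when applying Theorem \ref{dmth:md} to guarantee $[u]_{0,\theta;\O_0}\leqslant c$ with $\theta>\gamma$, the constant $c$ depends only on $\data_d(\O_0)$ and not circularly on the conclusion of Lemma \ref{lem:d5ce}; this is the case because Theorem \ref{dmth:md} is proved independently of the present improvement. All remaining manipulations (the $I_a$ absorption, the scaling, and the invocation of Lemma \ref{hta: lemma_hta}) are direct repetitions of the arguments in Lemma \ref{lem:d2ce} and Lemma \ref{lem:d3ce}, with the qualitative term $D(\varepsilon,R)$ systematically replaced by the explicit $Q_3(R)$.
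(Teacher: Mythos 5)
Your proposal is correct and follows essentially the same route as the paper: invoke Theorem \ref{dmth:md} to get H\"older continuity with exponent $\theta=(1+\gamma)/2>\gamma$, use it to bound $I_a\leqslant cR^{\alpha/2}$, and then rerun the harmonic approximation and scaling of Lemmas \ref{lem:d2ce}--\ref{lem:d3ce} with $D(\varepsilon,R)$ replaced by $Q_3(R)$. The circularity concern you raise is resolved exactly as you say, since Theorem \ref{dmth:md} is established independently of this lemma.
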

\begin{proof}
	First we apply Theorem \ref{dmth:md} in order to obtain that, for every $\theta\in (0,1)$ and every open subset $\O_0\Subset \O$, there exists a constant $c\equiv c(\data_{d}(\O_0),\theta)$ such that
	\begin{align}
		\label{d5ce:3}
		[u]_{0,\theta;\O_0} \leqslant c(\data_{d}(\O_0),\theta).
	\end{align}
In particular, we choose $\theta\equiv (\gamma+1)/2$. By revisiting the proof of Lemma \ref{lem:d2ce}, we shall estimate the term $I_{a}$ introduced in \eqref{d2ce:22_1}. Using \eqref{ma:3} and \eqref{d0ce:6}, we have 
\begin{align}
	\label{d5ce:4}
	\begin{split}
	I_{a} &\leqslant cR^{\alpha}\left( 1 + \left[ \left(\left[\Psi_{d}\right]_{B_{R}}^{-} \right)^{-1}\left(\FI_{B_{R/2}}\left[\Psi_{d}\right]_{B_{R}}^{-}\left(\left|\frac{w-(w)_{B_{R/2}}}{R} \right| \right)\,dx \right) \right]^{\frac{\alpha}{1-\gamma}} \right)
	\\&
	\leqslant
	c\left(R^{\alpha}+ R^{-\frac{\alpha\gamma}{1-\gamma}}\left[\osc\limits_{B_{2R}}u\right]^{\frac{\alpha}{1-\gamma}}\right)
	\leqslant
	c(\data_d(\O_0))R^{\alpha/2},
	\end{split}
\end{align}
where we have used \eqref{d5ce:3} with the choice of $\theta\equiv (1+\gamma)/2$ and $B_{2R}\subset\O_0$ with $R\leqslant 1$. Plugging this estimate in \eqref{d2ce:20}, we  find 

\begin{align}
	\label{d5ce:6}
	\left|\FI_{B_{1/8}}\inner{\bar{A}_{0}(D\bar{w})}{D\varphi}\,dx \right|
	\leqslant
	c(\data_{d}(\O_0))Q_{3}(R)\norm{D\varphi}_{L^{\infty}(B_{1/8})},
\end{align}
where 
\begin{align}
	\label{d5ce:7}
	Q_{3}(R):= \omega(R) + (1+\omega(R)) R^{\frac{\alpha}{2(1+s(H_{a})}},
\end{align}
where the vector field  $\bar{A}_{0}$ has been defined in \eqref{d2ce:12_1}.
We consider a function $\bar{h}\in \bar{w} + W^{1,\bar{\Psi}_{0}}_{0}(B_{1/8})$ satisfying \eqref{d2ce:45}-\eqref{d2ce:48} with the term $D(\varepsilon,R)$ replaced by $Q_{3}(R)$ defined above. Let $h$ be the scaled back function of $\bar{h}$ in $B_{R/8}$ as
	\begin{align}
		\label{d5ce:8}
		h(x):= E_{d}(w,B_{R/2})R\bar{h}\left(\frac{x-x_0}{R} \right)
		\quad\text{for every}\quad
		x\in B_{R/8}(x_0).
	\end{align}
Clearly, $h\in w + W^{1,\left[\Psi_{d}\right]_{B_{R}}^{-}}_{0}(B_{R/8})$ is a local minimizer of the functional $\F_0$ defined in \eqref{d3ce:1} which means that 
\begin{align}
	\label{d5ce:9}
	\F_{0}(h) = \I_{B_{R/8}}F_0(Dh)\,dx \leqslant \I_{B_{R/8}}F_0(Dh+D\varphi)\,dx
	\leqslant
	\F_{0}(h+\varphi)
\end{align}
holds for every $\varphi\in W^{1,\left[\Psi_{d}\right]_{B_{R}}^{-}}_{0}(B_{R/8})$. Arguing similarly as in the proof of Lemma \ref{lem:d0ce} together with recalling \eqref{d0ce:4}, we see 
\begin{align}
	\label{d5ce:10}
	\begin{split}
	\FI_{B_{R/8}}\left[\Psi_{d}\right]_{B_{R}}^{-}(|Dh|)\,dx 
	\leqslant
	\frac{L}{\nu}\FI_{B_{R/8}}\left[\Psi_{d}\right]_{B_{R}}^{-}(|Dw|)\,dx 
	&\leqslant
	\frac{8^{n}L}{\nu} \FI_{B_{R}}\Psi_{d}(x,|Dw|)\,dx
	\\&
	\leqslant
	c(n,\nu,L)  \FI_{B_{2R}}\Psi_{d}(x,|Du|)\,dx,
	\end{split}
\end{align}
which proves \eqref{d5ce:2}. We write the inequality \eqref{d2ce:47} in view of $G,H_{a}$, $w$ and $h$ in order to have 
\begin{align}
	\label{d5ce:11}
	\begin{split}
		\FI_{B_{R/8}}&\left[ |V_{G}(Dw)-V_{G}(Dh)|^2 + a(x_a)|V_{H_{a}}(Dw)-V_{H_{a}}(Dh)|^2 \right]\,dx
		\\&
		\leqslant
		c[Q_{3}(R)]^{s_1}\FI_{B_{R/2}}\left[\Psi_{d}\right]_{B_{R}}^{-}\left(\left|\frac{w-(w)_{B_{R/2}}}{R} \right| \right)\,dx
		\leqslant
		c[Q_3(R)]^{s_1}\FI_{B_{R/2}}\left[\Psi_{d}\right]_{B_{R}}^{-}\left(\left|Dw \right| \right)\,dx
		\\&
		\leqslant
		c[Q_{3}(R)]^{s_1}\FI_{B_{2R}}\Psi_{d}\left(x,\left|Du \right| \right)\,dx
	\end{split}
\end{align}
for some constant $c\equiv c(\data_{d}(\O_0))$, where we have applied the Sobolev-Poincar\'e inequality and \eqref{d3ce:6}. Combining this estimate together with \eqref{d0ce:3} via some elementary computations implies \eqref{d5ce:1}. 
\end{proof}

 
 \textbf{Proof of Theorem \ref{dmth:mr}.}
It follows from Theorem \ref{dmth:md} and a standard covering argument that, for every open subset $\O_0\Subset \O$ and any number $k>0$, there exists a constant  $c\equiv c(\data_{d}(\O_0),k)$ such that 
\begin{align}
	\label{dmr:1}
	\FI_{B_{2R}}\Psi_{d}(x,|Du|)\,dx \leqslant cR^{-k}
\end{align}
for every $B_{2R}\subset\O_0$ with $R\leqslant 1$. Now we fix an open subset  $\O_0\Subset\O$ and a ball $B_{2R}\equiv B_{2R}(x_0)\subset \O_0$ with $R\leqslant 1$. Then applying Lemma \ref{lem:d3ce} and Lemma \ref{lem:d5ce}, 

\begin{align}
	\label{dmr:2}
	\begin{split}
	\FI_{B_{R/8}} &\left( |V_{G}(Du)-V_{G}(Dh)|^2  + a(x_a)|V_{H_{a}}(Du)-V_{H_{a}}(Dh)|^2 \right)\,dx
	\\&
	\leqslant
	c\left( R^{\mu\gamma} + [Q(R)]^{s_1} \right)\FI_{B_{2R}}\Psi_{d}(x,|Du|)\,dx
	\end{split}
\end{align}
for some constant $c\equiv c(\data_{d}(\O_0))$ and $s_1\equiv s_1(\data_{d})$, where 

\begin{align}
	\label{dmr:3}
	 Q(R)
	 := \left\{\begin{array}{lr}
        R^{\mu} + (1+R^{\mu})R^{\frac{\alpha\delta_0}{(1+\delta_0)(1+s(H_{a}))}}   & \text{if } \eqref{dmth:mr:1} \text{ is assumed, } \\
       R^{\mu} + (1+R^{\mu})R^{\frac{\alpha\gamma}{1+s(H_{a})}}   & \text{if } \eqref{dmth:mr:2} \text{ is assumed, } \\
        R^{\mu} + (1+R^{\mu})R^{\frac{\alpha}{2(1+s(H_{a}))}} & \text{if } \eqref{dmth:mr:3} \text{ is assumed, }
        \end{array}\right.
\end{align}
in which $\gamma$ is the H\"older continuity exponent determined via Theorem \ref{thm:hc} under the double phase settings and $\delta_0$ is the higher integrability exponent coming from Lemma \ref{lem:d0ce}. Denoting by
\begin{align}
	\label{dmr:3_1}
	 d\equiv d(\data_{d}(\O_0))
	 := \left\{\begin{array}{lr}
        \min\left\{\mu\gamma, s_1\mu, \frac{\alpha\delta_0 s_1}{(1+\delta_0)(1+s(H_{a}))} \right\} & \text{if } \eqref{dmth:mr:1} \text{ is assumed, } \\
        \min\left\{\mu\gamma, s_1\mu, \frac{\alpha\gamma s_1}{1+s(H_{a})} \right\}  & \text{if } \eqref{dmth:mr:2} \text{ is assumed, } \\
        \min\left\{\mu\gamma,  s_1\mu, \frac{\alpha s_1}{2(1+s(H_{a}))} \right\}  & \text{if } \eqref{dmth:mr:3} \text{ is assumed, }
        \end{array}\right.
\end{align}
and choosing $k\equiv d/4$ in \eqref{dmr:1}, the inequality \eqref{dmr:2} can be written as 
\begin{align}
	\label{dmr:4}
	\begin{split}
	\FI_{B_{R/8}} &\left( |V_{G}(Du)-V_{G}(Dh)|^2  + a(x_a)|V_{H_{a}}(Du)-V_{H_{a}}(Dh)|^2  \right)\,dx
	\leqslant
	cR^{3d/4}
	\end{split}
\end{align}
for some constant $c\equiv c(\data_{d}(\O_0))$, where we again recall that the function $h$ has been defined via Lemma \ref{lem:d3ce} and Lemma \ref{lem:d5ce}. Recalling \eqref{d3ce:3} and \eqref{d5ce:2}, we have the energy estimate 
\begin{align}
	\label{dmr:5}
	\FI_{B_{R/8}}\left[\Psi_{d}\right]_{B_{R}}^{-}(|Dh|)\,dx
	\leqslant
	c\FI_{B_{2R}}\Psi_{d}(x,|Du|)\,dx
\end{align} 
with a constant $c\equiv c(n,\nu,L)$. Once we arrive at this stage, the rest of the proof is in the same way as argued in the proof of Theorem \ref{mth:mr}.  The proof is complete.


\section{Regularity results under additional  integrability }
\label{sec:11}
We turn our attention to studying properties of a local $Q$-minimizer of the functional $\P$ defined in \eqref{ifunct} under some additional Lebesgue integrability assumption. We shall consider a local $Q$-minimizer $u$ of the functional $\P$ under the following assumptions:

\begin{align}
	\label{ma:4}
	\begin{cases}
		u\in W^{1,\Psi}(\O)\cap L^{\kappa}(\O)\quad \left(\kappa\geqslant 1 \right)
		\\
		\lambda_{4}(\kappa):= \displaystyle \sup\limits_{t>0}\Lambda\left(t^{\frac{\kappa}{n+\kappa}}, \frac{1}{t} \right) < \infty,
	\end{cases}
\end{align}
where the function $\Lambda : (0,\infty)\times (0,\infty) \rightarrow (0,\infty)$ has been defined in \eqref{Lambda} together with $\omega_{a}, \omega_{b} : [0,\infty)\rightarrow [0,\infty)$ being concave functions vanishing at the origin such that $a(\cdot)\in C^{0,\omega_{a}}(\O)$ and $b(\cdot)\in C^{0,\omega_{b}}(\O)$. To see the meaning of the assumption $\eqref{ma:4}_{2}$, let us consider the standard double phase that $G(t)=t^{p}$, $H_{a}(t)=t^{q}$ and $\omega_{a}(t)=t^{\alpha}$, $\omega_{b}\equiv 0$ for $1<p\leqslant q$ and $\alpha\in (0,1]$. Under these standard double phase settings, the assumption $\eqref{ma:4}_{2}$ is equivalent to the following one:
\begin{align}
	\label{ma:4_1}
	q \leqslant p + \frac{\alpha \kappa}{n+\kappa}.
\end{align}
A local $Q$-minimizer $u\in W^{1,\Psi}(\O)$  implies that $u\in W^{1,p}(\O)$. It is clearly interesting point that $p<n$, otherwise we can prove $u\in L^{\infty}_{\loc}(\O)$ by using Morrey-Embedding properties for $p>n$ and using a higher integrability for $p=n$. Then, for $1<p<n$, applying Sobolev embedding properties, one can see that $u\in L^{\frac{np}{n-p}}_{\loc}(\O)$. Choosing $\kappa \equiv \frac{np}{n-p}$, the condition \eqref{ma:4_1} is equivalent to the  following one 
\begin{align*}
q \leqslant p + \frac{\alpha p}{n},
\end{align*}
which generates the same condition as \eqref{(p,q):con1}, as we have discussed in the introduction part. Now if $\kappa > \frac{np}{n-p}$, then we would have 
\begin{align*}
	q \leqslant p + \frac{\alpha p}{n} < p + \frac{\alpha\kappa}{n+\kappa},
\end{align*}
which tells us the possible range of $q$ is larger than the one in \eqref{(p,q):con1}. Considering a local $Q$-minimizers of the functional $\P$ under the assumption \eqref{ma:4} , we shall show that $u\in L^{\infty}_{\loc}(\O)$. To do this, we start by proving a Sobolev-Poincar\'e inequality under the assumption $\eqref{ma:4}_{2}$.
\begin{thm}
	\label{thm:sp_ai}
	  Let $v\in W^{1,\Psi}(B_{R})\cap L^{\kappa}(B_{R})$ for a ball $B_{R}\subset\O$ with $R\leqslant 1$ under the assumption $\eqref{ma:4}_{2}$. Then, for any $d\in \left[1,\frac{n(n+\kappa)}{n(n+\kappa)-\kappa} \right)$, there exist constants $\theta\equiv \theta(n,s(G),s(H_{a}),s(H_{b}),\kappa, d)\in (0,1)$ and $c\equiv c(n, s(G),s(H_{a}),s(H_{b}), \omega_a(1), \omega_b(1),\kappa,d)$ such that the following Sobolev-Poincar\'e-type inequality holds:
	\begin{align}
		\label{sp_ai:1}
		\left[\FI_{B_{R}}\left[\Psi\left(x,\left|\frac{v-(v)_{B_{R}}}{R}\right|\right)\right]^{d}\,dx\right]^{\frac{1}{d}} \leqslant
		c\lambda_{sp}\left[ \FI_{B_{R}} [\Psi(x,|Dv|)]^{\theta}\,dx \right]^{\frac{1}{\theta}},
	\end{align}
	 where
\begin{align}
	\label{sp_ai:2}
	\lambda_{sp}= 1+([a]_{\omega_{a}}+ [b]_{\omega_{b}})\left(\lambda_{4}(\kappa) +  \lambda_{4}(\kappa)\left( \I_{B_{R}}|v|^{\kappa}\,dx \right)^{\frac{1}{n+\kappa}}\right)
\end{align}

Moreover, the above estimate \eqref{sp_ai:1} is still valid with $v-(v)_{B_{R}}$ replaced by $v$ if $v\in W^{1,\Psi}_{0}(B_{R})\cap L^{\kappa}(B_{R})$.
\end{thm}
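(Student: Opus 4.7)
My plan is to mirror the case-by-case proof of Theorem \ref{thm:sp}, treating assumption \eqref{ma:4} as a fourth case parallel to \eqref{ma:1}--\eqref{ma:3}. Writing $w:=(v-(v)_{B_R})/R$, I would split the left-hand side of \eqref{sp_ai:1} exactly as in \eqref{sp:3},
\[
I \;\leq\; 18[a]_{\omega_a}I_1 + 18[b]_{\omega_b}I_2 + 9I_3,
\]
where $I_1,I_2$ collect the $H_a$- and $H_b$-pieces multiplied by $\omega_a(R)$, $\omega_b(R)$, and $I_3$ is the $\Psi^-_{B_R}$-part. The term $I_3$ is handled as in \eqref{sp:7} by applying Lemma \ref{lem:os} with $\Phi\equiv\Psi^-_{B_R}$ (using Remark \ref{rmk:nf2}, so $\Psi^-_{B_R}\in\mathcal{N}$ with index $s(G)+s(H_a)+s(H_b)$), yielding an exponent $\theta_3\in(0,1)$ and the bound $c(\FI_{B_R}[\Psi(x,|Dv|)]^{\theta_3}\,dx)^{1/\theta_3}$.

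For the new contributions $I_1, I_2$, I would set $t\equiv 1/|w|$ in $\eqref{ma:4}_{2}$ to obtain
\[
\frac{H_a(|w|)}{G(|w|)} \;\leq\; \lambda_4(\kappa)\bigl(1+[\omega_a(|w|^{-\kappa/(n+\kappa)})]^{-1}\bigr),
\]
and then use \eqref{concave2} with $\sigma\equiv\omega_a$, $t\equiv R$, $\lambda\equiv|w|^{-\kappa/(n+\kappa)}/R$ to deduce
\[
\omega_a(R)\frac{H_a(|w|)}{G(|w|)} \;\leq\; \lambda_4(\kappa)\bigl(1+\omega_a(1)+R|w|^{\kappa/(n+\kappa)}\bigr).
\]
Since $R|w|^{\kappa/(n+\kappa)} = R^{n/(n+\kappa)}|v-(v)_{B_R}|^{\kappa/(n+\kappa)}$, multiplying by $G(|w|)$ and taking the $d$-th averaged root bounds $I_1$ by $c\lambda_4(\kappa)(1+\omega_a(1))(\FI_{B_R}[G(|w|)]^{d}\,dx)^{1/d}$ plus a cross term
\[
c\lambda_4(\kappa)\,R^{n/(n+\kappa)}\left(\FI_{B_R}|v-(v)_{B_R}|^{d\kappa/(n+\kappa)}[G(|w|)]^{d}\,dx\right)^{1/d}.
\]

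The main obstacle is controlling this cross term through a careful Hölder split. I would use conjugate exponents $(n+\kappa)/d$ and $(n+\kappa)/(n+\kappa-d)$, so the first factor collapses to $(\FI_{B_R}|v-(v)_{B_R}|^\kappa\,dx)^{1/(n+\kappa)}$ and the second to $(\FI_{B_R}[G(|w|)]^{d(n+\kappa)/(n+\kappa-d)}\,dx)^{(n+\kappa-d)/(d(n+\kappa))}$. This is precisely where the hypothesized range of $d$ enters: a direct computation shows $d(n+\kappa)/(n+\kappa-d) < n/(n-1)$ iff $d < n(n+\kappa)/(n(n+\kappa)-\kappa)$, so Lemma \ref{lem:os} applies to $\Phi\equiv G$ at both exponents $d$ and $d(n+\kappa)/(n+\kappa-d)$, producing $\theta_1,\theta_2\in(0,1)$ depending only on $n, s(G), \kappa, d$.

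Finally, I would absorb $R^{n/(n+\kappa)}|B_R|^{-1/(n+\kappa)}$ to convert the averaged integral into the full one, estimate $(\I_{B_R}|v-(v)_{B_R}|^\kappa\,dx)^{1/(n+\kappa)}$ by a constant times $(\I_{B_R}|v|^\kappa\,dx)^{1/(n+\kappa)}$ plus a constant, and repeat the identical argument for $I_2$ with $H_b,\omega_b$. Combining the three pieces with $\theta\equiv\max\{\theta_1,\theta_2,\theta_3\}$ yields \eqref{sp_ai:1} with $\lambda_{sp}$ as in \eqref{sp_ai:2}. The boundary variant follows unchanged, since the zero-boundary form of Lemma \ref{lem:os} remains available and $|v-(v)_{B_R}|$ is simply replaced by $|v|$ throughout.
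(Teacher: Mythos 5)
Your proposal is correct and follows essentially the same route as the paper's proof: the same three-term decomposition, the same use of $\eqref{ma:4}_{2}$ together with \eqref{concave2} to produce the cross term $R^{n/(n+\kappa)}|v-(v)_{B_{R}}|^{\kappa/(n+\kappa)}G(|w|)$, the same H\"older split with conjugate exponents $\left(\frac{n+\kappa}{d},\frac{n+\kappa}{n+\kappa-d}\right)$ (whose admissibility is exactly the stated range of $d$), and the same applications of Lemma \ref{lem:os} to $G$ and to $\Psi_{B_{R}}^{-}$ before taking $\theta$ as the maximum of the resulting exponents.
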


\begin{proof}
Note that the above theorem covers \cite[Theorem 3.1]{Ok3}, which is a special case when $G(t)=t^{p}$, $H(t)=t^{q}$, $\omega_{a}(t)=t^{\alpha}$ and $\omega_{b}(\cdot)\equiv 0$ for some constants $1<p\leqslant q$ and $\alpha\in (0,1]$. Also our proof is much more elementary comparing with the approach used there. Using the continuity of the coefficient functions $a(\cdot)$ and $b(\cdot)$ and arguing in the same way as in \eqref{sp:3}, we find
\begin{align}
		\label{sp_ai:3}
		\begin{split}
		I:= \left(\FI_{B_{R}}\left[\Psi\left(x,\left|\frac{v-(v)_{B_{R}}}{R}\right|\right)\right]^{d}\,dx \right)^{\frac{1}{d}}
		&\leqslant
		 6[a]_{\omega_{a}}\omega_{a}(R)\left(\FI_{B_{R}} \left[H_{a}\left(\left|\frac{v-(v)_{B_{R}}}{R}\right|\right)\right]^{d}\,dx \right)^{\frac{1}{d}}
		 \\&
		 \qquad
		 + 6[b]_{\omega_{b}}\omega_{b}(R)\left(\FI_{B_{R}}\left[H_{b}\left(\left|\frac{v-(v)_{B_{R}}}{R}\right|\right)\right]^{d}\,dx \right)^{\frac{1}{d}}
		\\&
		\qquad
		+ 
		3\left(\FI_{B_{R}}\left[\Psi_{B_{R}}^{-}\left(\left|\frac{v-(v)_{B_{R}}}{R}\right|\right)\right]^{d}\,dx\right)^{\frac{1}{d}}
		\\&
		 =: 6[a]_{\omega_{a}}I_{1} + 6[b]_{\omega_{b}}I_{2} + 3I_{3}.
		\end{split}
	\end{align}
We now shall deal with estimating the terms $I_{i}$ with $i\in {1,2,3}$ in \eqref{sp_ai:3} using the additional a priori assumption $u\in L^{\kappa}(B_{R})$ under $\eqref{ma:4}_{2}$.
In turn, using \eqref{concave2}  and the assumption $\eqref{ma:4}_{2}$, we see
\begin{align*}
	\begin{split}
	I_{1} &= \displaystyle  \omega_{a}(R) \left(\FI_{B_{R}} \left[\frac{H_{a}\left(\left|\frac{v-(v)_{B_{R}}}{R}\right|\right)}{G\left(\left|\frac{v-(v)_{B_{R}}}{R}\right|\right)} G\left(\left|\frac{v-(v)_{B_{R}}}{R}\right|\right)\right]^{d}\,dx\right)^{\frac{1}{d}}
	\\&
	\leqslant
	\lambda_{4}(\kappa)\omega_{a}(R)\left(
	\FI_{B_{R}} \left[\left(1+ \left[ \omega_{a}\left( \left(\left|\frac{v-(v)_{B_{R}}}{R}\right|\right)^{-\frac{\kappa}{n+\kappa}} \right)  \right]^{-1} \right)G\left(\left|\frac{v-(v)_{B_{R}}}{R}\right|\right)\right]^{d}\,dx\right)^{\frac{1}{d}}
	\\&
	\leqslant
	\lambda_{4}(\kappa)\omega_{a}(R)
	\left(\FI_{B_{R}}\left[ \left(1+ \left[ \frac{1}{\omega_{a}(R)} + \frac{R}{\omega_{a}(R)} \left|\frac{v-(v)_{B_{R}}}{R}\right|^{\frac{\kappa}{n+\kappa}} \right] \right)G\left(\left|\frac{v-(v)_{B_{R}}}{R}\right|\right)\right]^{d}\,dx\right)^{\frac{1}{d}}
	\\&
	\leqslant
	c_{*}\lambda_{4}(\kappa)
	\left(\FI_{B_{R}}\left[G\left(\left|\frac{v-(v)_{B_{R}}}{R}\right|\right)\right]^{d}\,dx\right)^{\frac{1}{d}}
	\\&
	\qquad
	+
	c_{*}\lambda_{4}(\kappa) R^{\frac{n}{n+\kappa}}\left(\FI_{B_{R}} \left|v-(v)_{B_{R}}\right|^{\frac{d\kappa}{n+\kappa}} \left[G\left(\left|\frac{v-(v)_{B_{R}}}{R}\right|\right)\right]^{d}\,dx\right)^{\frac{1}{d}} 
	\end{split}
\end{align*}
for the constant $c_{*} = 2(1+\omega_{a}(1))$. Using H\"older's inequality with conjugate exponents $\left( \frac{n+\kappa}{d}, \frac{n+\kappa}{n+\kappa-d} \right)$, we have 
\begin{align*}
	\begin{split}
	& R^{\frac{n}{n+\kappa}}\left(\FI_{B_{R}} \left|v-(v)_{B_{R}}\right|^{\frac{d\kappa}{n+\kappa}} \left[G\left(\left|\frac{v-(v)_{B_{R}}}{R}\right|\right)\right]^{d}\,dx\right)^{\frac{1}{d}}
	\\&
	\leqslant
	R^{\frac{n}{n+\kappa}}\left(\FI_{B_{R}} \left|v-(v)_{B_{R}}\right|^{\kappa}\,dx\right)^{\frac{1}{n+\kappa}} \left(\FI_{B_{R}}\left[G\left(\left|\frac{v-(v)_{B_{R}}}{R}\right|\right)\right]^{\frac{(n+\kappa)d}{n+\kappa-d}}\,dx\right)^{\frac{n+\kappa-d}{(n+\kappa)d}}
	\\&
	\leqslant
	c \left(\I_{B_{R}} \left|v\right|^{\kappa}\,dx\right)^{\frac{1}{n+\kappa}} \left(\FI_{B_{R}}\left[G\left(\left|\frac{v-(v)_{B_{R}}}{R}\right|\right)\right]^{\frac{(n+\kappa)d}{n+\kappa-d}}\,dx\right)^{\frac{n+\kappa-d}{(n+\kappa)d}}
	\end{split}
\end{align*}
for some constant $c\equiv c(n)$. 
Combining the last two displays and arguing similarly for $I_{2}$, we discover 
\begin{align*}
	\begin{split}
	I_{1} +I_{2} 
	&\leqslant
	c\lambda_{4}(\kappa)
	\left(\FI_{B_{R}}\left[G\left(\left|\frac{v-(v)_{B_{R}}}{R}\right|\right)\right]^{d}\,dx\right)^{\frac{1}{d}}
	\\&
	\qquad
	+
	c \lambda_{4}(\kappa) \left(\I_{B_{R}} \left|v\right|^{\kappa}\,dx\right)^{\frac{1}{n+\kappa}} \left(\FI_{B_{R}}\left[G\left(\left|\frac{v-(v)_{B_{R}}}{R}\right|\right)\right]^{\frac{(n+\kappa)d}{n+\kappa-d}}\,dx\right)^{\frac{n+\kappa-d}{(n+\kappa)d}}
	\end{split}
\end{align*}
for some constant $c\equiv c(n,\omega_{a}(1),\omega_{b}(1))$. Now we apply Lemma \ref{lem:os} to $\Phi\equiv G$ with $d_{0}\equiv d$ and $d_{0}\equiv \frac{n+\kappa-d}{(n+\kappa)d}$ in order to have an exponent $\theta_1\equiv \theta_1(n,s(G),\kappa,d)\in (0,1)$ such that
\begin{align}
	\label{sp_ai:7}
	I_{1} + I_{2} \leqslant c\left( \lambda_{4}(\kappa) + \lambda_{4}(\kappa)\left(\I_{B_{R}} \left|v\right|^{\kappa}\,dx\right)^{\frac{1}{n+\kappa}} \right)
	\left(\FI_{B_{R}}\left[G\left(\left|Dv\right|\right)\right]^{\theta_{1}}\,dx\right)^{\frac{1}{\theta_1}}
\end{align}
holds for some constant $c\equiv c(n,s(G),\omega_{a}(1), \omega_{b}(1),\kappa,d)$. On the other hand, since $\Psi_{B_{R}}^{-}\in \N$ with an index $s(\Psi)=s(G)+s(H_{a})+s(H_{b})$ by Remark \ref{rmk:nf2}, we are able to apply Lemma \ref{lem:os} with $\Phi\equiv \Psi_{B_{R}}^{-}$ for $d_{0}\equiv d$. In turn, there exists $\theta_2\equiv \theta_2(n,s(\Psi),d)$ such that 
\begin{align}
	\label{sp_ai:8}
	I_3 \leqslant c \left[\FI_{B_{R}}\left[\Psi_{B_{R}}^{-}(|Dv|)\right]^{\theta_2}\,dx\right]^{\frac{1}{\theta_{2}}}
\end{align}
with some constant $c\equiv c(n,s(\Psi),d)$. Taking into account the estimates obtained in \eqref{sp_ai:7}-\eqref{sp_ai:8} into \eqref{sp_ai:3}, recalling the very definition of $\Psi_{B_{R}}^{-}$ in \eqref{ispsi} and setting $\theta := \max\{\theta_1,\theta_2\}$, we arrive at \eqref{sp_ai:1}. The proof is finished.
\end{proof}

\begin{rmk}
	\label{rmk:sp_ai} We here remark that choosing $d\equiv 1$ in a Sobolev-Poincar\'e type inequality of Theorem \ref{thm:sp_ai}, we see that there exists an exponent  $\theta\equiv \theta(n,s(G),s(H_a),s(H_{b}),\kappa)$  such that 
	\begin{align}
		\FI_{B_{R}}\Psi\left(x,\left|\frac{v-(v)_{B_{R}}}{R}\right|\right)\,dx \leqslant
		c\lambda_{sp}\left[ \FI_{B_{R}} [\Psi(x,|Dv|)]^{\theta}\,dx \right]^{\frac{1}{\theta}}
	\end{align}
	holds for some constant $c\equiv c(n,s(G),s(H_{a}),s(H_{b}),\omega_a(1),\omega_b(1),\kappa)$, where $\lambda_{sp}$ is the one same as in \eqref{sp_ai:2}.
\end{rmk}

\begin{rmk}
	\label{rmk:sp_ai:2}
	With $u\in W^{1,\Psi}(\O)$ being a local $Q$-minimizer of the functional $\P$, we here point out that it is also possible to suppose a priori $u\in W^{\Phi}(\O)$ for some Young function $\Phi$. In this case, discovering a relevant assumption like $\eqref{ma:4}_{2}$ would be an interesting point to find how it is connected to Embedding properties in Orlicz-Sobolev spaces \cite{CP1, Cia1, Cia2} likewise we have discussed above in Lebesgue settings. Moreover, proving various regularity results under a new relevant condition may generate a different phenomenon even for a Lavrentiev gap. We can also a priori assume that local $Q$-minimizers belong to certain Campanato, BMO, VMO, or some other spaces. Under all those a priori assumptions, it should be necessary to discover out the relevant optimal conditions under which various regularity results are obtainable.  
\end{rmk}

For a local $Q$-minimizer $u$ of the functional $\P$ under the assumption \eqref{ma:4}, the data of the problem is understood by the following set of parameters: 
\begin{align}
	\label{data_ai}
	\data_{i} \equiv \{n,\lambda_{4}(\kappa),\kappa, s(G),s(H_{a}), s(H_{b}), \omega_a(1), \omega_{b}(1), \norm{u}_{L^{\kappa}(\O)}, Q\}.
\end{align}
As usual, for any open subset $\O_0\Subset \O$, we denote by $\data_{i}(\O_0)$ the set of parameters defined above together with $\dist(\O_0,\partial\O)$. Now we focus on showing  local boundedness estimates of a local $Q$-minimizer $u$ of the functional $\P$ in \eqref{ifunct} under the assumption $\eqref{ma:4}$.

\begin{thm}
	\label{thm:lb_ai}
	Let $u\in W^{1,\Psi}(\O)$ be a local $Q$-minimizer of the functional $\P$ in \eqref{ifunct} under the assumption \eqref{ma:4}. Then there exists a constant $c\equiv c(\data_{i})$ such that 
	\begin{align}
		\label{lb_ai:1}
		\norm{\Psi_{B_{R}}^{-}\left(\left|\frac{(u-(u)_{B_{R}})_{\pm}}{R} \right| \right)}_{L^{\infty}(B_{R/2})}
		\leqslant 
		c \FI_{B_{R}}\Psi\left(x,\left|\frac{(u-(u)_{B_{R}})_{\pm}}{R} \right| \right)\,dx
	\end{align}
	and 
	\begin{align}
		\label{lb_ai:1_1}
		\Psi_{B_{R}}^{-}\left(\left|\frac{u(x_1)-u(x_2)}{R} \right| \right)
		\leqslant 
		c \FI_{B_{R}}\Psi\left(x,\left|Du\right| \right)\,dx
		\quad\text{for a.e}\quad x_1,x_2\in B_{R/2},
	\end{align}
 whenever $B_{R}\equiv B_{R}(x_0)\subset\O$ is a ball with $R\leqslant 1$. In particular, $u\in L^{\infty}_{\loc}(\O)$.	
\end{thm}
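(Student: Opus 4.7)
The plan is to mirror the proof of Theorem \ref{thm:lb} essentially verbatim, replacing the role of $\eqref{ma:1}_{2}$ by $\eqref{ma:4}_{2}$ and the $L^{1}$-bound on $G(|Du|)$ by the a priori $L^{\kappa}$-bound on $u$. First I would use exactly the scaling \eqref{lb:2}: set $\bar u(x):=(u(x_{0}+Rx)-(u)_{B_{R}})/R$, $\bar a(x):=a(x_{0}+Rx)$, $\bar b(x):=b(x_{0}+Rx)$, $\bar\Psi(x,t):=G(t)+\bar a(x)H_{a}(t)+\bar b(x)H_{b}(t)$, and $\bar A(k,s),\bar B(k,s)$, recording the scaling identity
\begin{equation*}
\int_{B_{1}}|\bar u|^{\kappa}\,dy = R^{-n-\kappa}\int_{B_{R}}|u-(u)_{B_{R}}|^{\kappa}\,dx \leqslant c R^{-n-\kappa}\norm{u}_{L^{\kappa}(\O)}^{\kappa}.
\end{equation*}
The core of the argument, carried out in analogy with \eqref{lb:3}--\eqref{lb:8}, is a scaled Sobolev--Poincar\'e inequality. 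From $\eqref{ma:4}_{2}$ one extracts $H_{a}(s)/G(s)\leqslant \lambda_{4}(\kappa)(1+[\omega_{a}(s^{-\kappa/(n+\kappa)})]^{-1})$, and invoking \eqref{concave2} with the choice $\lambda=|f|^{-\kappa/(n+\kappa)}/R$ gives
\begin{equation*}
\frac{1}{\omega_{a}(|f|^{-\kappa/(n+\kappa)})}\leqslant \frac{1}{\omega_{a}(R)}+\frac{R\,|f|^{\kappa/(n+\kappa)}}{\omega_{a}(R)}.
\end{equation*}
Arguing as in \eqref{lb:7}--\eqref{lb:8}, applying H\"older's inequality with conjugate exponents $(n+\kappa,(n+\kappa)/(n+\kappa-1))$ to $R\int|f|^{\kappa/(n+\kappa)}G(|f|)\,dy$, and then Lemma \ref{lem:os} with $\Phi\equiv G$ for both $d_{0}\equiv 1$ and $d_{0}\equiv (n+\kappa)/(n+\kappa-1)$ (the latter lies in $[1,n/(n-1))$ precisely because $\kappa>0$), plus Lemma \ref{lem:os} applied to $\bar\Psi_{B_{1}}^{-}$ for the analogue of the term $I_{3}$, yields for every $f\in W^{1,\bar\Psi}_{0}(B_{1})\cap L^{\kappa}(B_{1})$
\begin{equation*}
\int_{B_{1}}\bar\Psi(x,|f|)\,dy\leqslant c\,\bar\kappa_{sp}\Big(\int_{B_{1}}[\bar\Psi(x,|Df|)]^{\theta}\,dy\Big)^{1/\theta},
\end{equation*}
with $\theta\equiv\theta(n,s(G),s(H_{a}),s(H_{b}),\kappa)\in(0,1)$ and
\begin{equation*}
\bar\kappa_{sp}=1+([a]_{\omega_{a}}+[b]_{\omega_{b}})\lambda_{4}(\kappa)\Big(1+R\Big(\int_{B_{1}}|f|^{\kappa}\,dy\Big)^{1/(n+\kappa)}\Big).
\end{equation*}

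From here the proof follows \eqref{lb:9}--\eqref{lb:14} line by line. Using Caccioppoli (Lemma \ref{lem:cacc}) in the scaled form \eqref{lb:9}, the test function $f=\eta(\bar u-k)_{+}$ with a standard cut-off $\eta$ between $B_{\rho}$ and $B_{(\rho+s)/2}$, and the bound just obtained, the scaling identity gives
\begin{equation*}
R\Big(\int_{B_{1}}|f|^{\kappa}\,dy\Big)^{1/(n+\kappa)}\leqslant \norm{u-(u)_{B_{R}}}_{L^{\kappa}(B_{R})}^{\kappa/(n+\kappa)}\leqslant c\,\norm{u}_{L^{\kappa}(\O)}^{\kappa/(n+\kappa)},
\end{equation*}
so $\bar\kappa_{sp}\leqslant c(\data_{i})(s-\rho)^{-(s(G)+1)}$ exactly as in \eqref{lb:11_1}. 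Setting up the recursion with $\rho_{i}=(1/2)(1+2^{-i})$, $k_{i}=2l_{0}(1-2^{-i-1})$ and $M_{i}=[\bar\Psi_{B_{1}}^{-}(l_{0})]^{-1}\int_{\bar A(k_{i},\rho_{i})}\bar\Psi(x,\bar u-k_{i})\,dx$ and invoking Lemma \ref{lem:t1} produces, for a suitable $l_{0}$, the bound $\norm{\bar\Psi_{B_{1}}^{-}(\bar u_{+})}_{L^{\infty}(B_{1/2})}\leqslant c\int_{B_{1}}\bar\Psi(x,\bar u_{+})\,dy$. Scaling back and repeating the argument for $-u$ yields \eqref{lb_ai:1}; finally \eqref{lb_ai:1_1} and the conclusion $u\in L^{\infty}_{\loc}(\O)$ follow from \eqref{lb_ai:1}, the subadditivity \eqref{growth1} and Remark \ref{rmk:sp_ai}, exactly as in Step 3 of the proof of Theorem \ref{thm:lb}.

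The main obstacle is pinning down the correct exponent of $R$ in the scaled Sobolev--Poincar\'e inequality. A na\"ive dimensional count would suggest $R^{\kappa/(n+\kappa)}$ in front of $\int|f|^{\kappa/(n+\kappa)}G(|f|)$, in which case scaling back would produce a factor $R^{-n/(n+\kappa)}$ that blows up as $R\to 0^{+}$ and destroys the iteration. The actual exponent $R^{1}$ is a direct consequence of the particular choice $\lambda=|f|^{-\kappa/(n+\kappa)}/R$ in \eqref{concave2}; this $R^{1}$ precisely absorbs the $R^{-1}$ coming from the scaling of $\norm{\bar u}_{L^{\kappa}(B_{1})}$, leaving a bound $c\,\norm{u}_{L^{\kappa}(\O)}^{\kappa/(n+\kappa)}$ that belongs to $\data_{i}$. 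Once this cancellation is secured, the rest of the argument is a routine transcription of Theorem \ref{thm:lb}.
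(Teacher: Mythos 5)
Your proposal is correct and follows essentially the same route as the paper: the same scaling \eqref{lb_ai:2}, the same derivation of the scaled Sobolev--Poincar\'e inequality from $\eqref{ma:4}_{2}$ and \eqref{concave2} with $\lambda=|f|^{-\kappa/(n+\kappa)}/R$, H\"older with exponents $(n+\kappa,(n+\kappa)/(n+\kappa-1))$ plus Lemma \ref{lem:os}, and then the De Giorgi iteration of Theorem \ref{thm:lb}. You also correctly identify the crucial cancellation between the factor $R$ produced by \eqref{concave2} and the $R^{-1}$ from scaling the $L^{\kappa}$-norm, which is exactly the point on which the paper's argument (see \eqref{sp_ai:2} and the bound on $\bar\kappa_{sp}$ in Step 2) hinges.
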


\begin{proof}
	The meaning of $\data_{i}$ under the assumption \eqref{ma:4}, already has been introduced in \eqref{data_ai}. As in the proof of Theorem \ref{thm:lb}, we consider the following scaled functions as: 
	\begin{align}
		\label{lb_ai:2}
		\begin{split}
			&\bar{u}(x):= \frac{u(x_0 + Rx)-(u)_{B_{R}}}{R},\quad
			\bar{a}(x):= a(x_0+Rx),\quad 
			\bar{b}(x):= b(x_0+Rx),
			\\&
			\bar{\Psi}(x,t):= G(t) + \bar{a}(x)H(t) + \bar{b}(x)H(t),
			\\&
			\bar{A}(k,s):= B_{s}(0)\cap\{\bar{u}>k\}
			\quad\text{and}\quad
			\bar{B}(k,s):= B_{s}(0)\cap\{\bar{u}<k\}
		\end{split}
	\end{align}
	for every $x\in B_{1}(0)$, $t\geqslant 0$, $s\in (0,1)$ and $k\in\R$. The remaining part of the proof consists of 3 steps as in the proof of Theorem \ref{thm:lb}.
	
\textbf{Step 1: Sobolev-Poincar\'e under the scaling in \eqref{lb_ai:2}.}
	 In this step, we prove that there exists a positive exponent $\theta\equiv \theta(n,s(G),s(H_{a}),s(H_{b}),\kappa)\in (0,1)$ such that 
	\begin{align}
		\label{lb_ai:3}
		\I_{B_{1}}\bar{\Psi}(x,|f|)\,dx \leqslant c\bar{k}_{sp} \left(\I_{B_{1}}[\bar{\Psi}(x,|Df|)]^{\theta}\,dx\right)^{\frac{1}{\theta}}
	\end{align}
	for some constant $c\equiv c(n,s(G),s(H_{a}),s(H_{b}),\omega_a(1),\omega_{b}(1),\kappa)$, whenever $f\in W^{1,\bar{\Psi}}_{0}(B_{1})\cap L^{\kappa}(B_{1})$, where
	\begin{align*}
		\bar{\kappa}_{sp} = 1+([a]_{\omega_{a}} + [b]_{\omega_{b}})\left(\lambda_1 + \lambda_1 R\left(\I_{B_{1}}|f|^{\kappa} \,dx\right)^{\frac{1}{n+\kappa}}\right).
	\end{align*}
Using the continuity properties of $\bar{a}(\cdot)$ and $\bar{b}(\cdot)$, we see 
\begin{align*}
	\begin{split}
	I:&= \I_{B_{1}}\bar{\Psi}(x,|f|)\,dx 
	\\&
	\leqslant 
	2[a]_{\omega_{a}}\omega_a(R)\I_{B_{1}}H_{a}(|f|)\,dx + 2[b]_{\omega_{b}}\omega_{b}(R)\I_{B_{1}}H_{b}(|f|)\,dx + \I_{B_{1}}\bar{\Psi}_{B_1}^{-}(|f|)\,dx
	\\&
	=: 2[a]_{\omega_{a}}I_1 + 2[b]_{\omega_{b}}I_2 +I_3,
	\end{split}
\end{align*}
where 
\begin{align*}
	\bar{\Psi}_{B_{1}}^{-}(t):= G(t) + \inf\limits_{x\in B_{1}}\bar{a}(x)H_{a}(t) + \inf\limits_{x\in B_{1}}\bar{b}(x)H_{b}(t)
	\quad\text{for every}\quad
	t\geqslant 0.
\end{align*}
Now we estimate the terms $I_{i}$ for $i\in \{1,2,3\}$ similarly as in the proof of Theorem \ref{thm:sp_ai}. In turn, using the assumption $\eqref{ma:4}_{2}$ and \eqref{concave2}, we have 
\begin{align*}
	\begin{split}
		I_1 &= \omega_{a}(R)\I_{B_{1}}\frac{H_{a}(|f|)}{G(|f|)}G(|f|)\,dx
		\\&
	\leqslant
	\lambda_{4}(\kappa)\omega_{a}(R)
	\I_{B_{1}} \left(1+ \left[ \omega_{a}\left( \left|f\right|^{-\frac{\kappa}{n+\kappa}}  \right) \right]^{-1} \right)G\left(\left|f\right|\right)\,dx
	\\&
	\leqslant
	\lambda_{4}(\kappa)\omega_{a}(R)
	\I_{B_{1}} \left(1+ \left[ \frac{1}{\omega_{a}(R)} + \frac{R}{\omega_{a}(R)}\left|f\right|^{\frac{\kappa}{n+\kappa}} \right] \right)G\left(\left|f\right|\right)\,dx
	\\&
	\leqslant
	\lambda_{4}(\kappa)(1+\omega_{a}(1)) \I_{B_{1}}G\left(\left|f\right|\right)\,dx + 2\lambda_{4}(\kappa) R \I_{B_{1}} |f|^{\frac{\kappa}{n+\kappa}} G\left(\left|f\right|\right) \,dx.
	\end{split}
\end{align*}
Arguing in the same way, we have
\begin{align*}
	I_{2} \leqslant \lambda_{4}(\kappa)(1+\omega_{b}(1)) \I_{B_{1}}G\left(\left|f\right|\right)\,dx + 2\lambda_{4}(\kappa) R \I_{B_{1}} |f|^{\frac{\kappa}{n+\kappa}} G\left(\left|f\right|\right) \,dx.
\end{align*}
Then the inequality \eqref{lb_ai:3} follows from the arguments used in the proof of Theorem \ref{thm:sp_ai} and Lemma \ref{lem:os}.

\textbf{Step 2. Proof of \eqref{lb_ai:1}.}
	 Since $u-(u)_{B_{R}}$ is a local $Q$-minimizer of the functional $\P$ in \eqref{ifunct}, using a Caccioppoli inequality of Lemma \ref{lem:cacc}, one can see that 
	\begin{align}
		\label{lb_ai:9}
		\I_{B_{t}} \bar{\Psi}(x,|D(\bar{u}-k)_{\pm}|)\,dx 
		\leqslant 
		c\I_{B_{s}} \bar{\Psi}\left(x,\frac{(\bar{u}-k)_{\pm}}{s-t} \right)\,dx
	\end{align}
	holds for some constant $c\equiv c(s(G),s(H_{a}),s(H_{b}),Q)$, whenever $0<t<s \leqslant 1$ and $k\in\R$. Let us now consider the concentric balls $B_{\rho}\Subset B_{t} \Subset B_{s} $ with $1/2 \leqslant \rho < s \leqslant 1$ and $t:= (\rho+s)/2$. Let $\eta\in C_{0}^{\infty}(B_{t})$ be a standard cut-off function such that $\chi_{B_{\rho}} \leqslant \eta \leqslant \chi_{B_{t}}$ and $|D\eta| \leqslant \frac{2}{t-\rho} = \frac{4}{s-\rho}$. Now we apply inequality \eqref{lb_ai:3} from Step 1 above in order to have a positive exponent $\theta\equiv \theta(n,s(G),s(H_{a}),s(H_{b}),\kappa)$ such that
	\begin{align*}
		\I_{\bar{A}(k,\rho)}\bar{\Psi}(x,\bar{u}-k)\,dx
		\leqslant
		\I_{B_{1}}\bar{\Psi}(x,\eta(\bar{u}-k)_{+})\,dx
		\leqslant
		c\bar{k}_{sp}\left(\I_{B_{1}} [\bar{\Psi}(x,|D(\eta(\bar{u}-k)_{+})|)]^{\theta}\,dx \right)^{\frac{1}{\theta}}
	\end{align*}
for some constant $c\equiv c(n,s(G),s(H_{a}),s(H_{b}),\omega_{a}(1),\omega_{b}(1),\kappa)$, where
	\begin{align*}
		\bar{\kappa}_{sp}= 1+ ([a]_{\omega_{a}} + [b]_{\omega_{b}})\left(\lambda_{4}(\kappa) + \lambda_{4}(\kappa)R\left(\I_{B_{1}}\left[\eta(\bar{u}-k)_{+})\right]^{\kappa} \,dx\right)^{\frac{1}{n+\kappa}}\right).
	\end{align*}
By recalling the definition of $\bar{u}$ in \eqref{lb_ai:2}, we have 
\begin{align*}
	\begin{split}
	\bar{\kappa}_{sp} &\leqslant c\left[1+ R \left(\FI_{B_{R}}\left|\frac{u-(u)_{B_{R}}}{R}\right|^{\kappa} \,dx\right)^{\frac{1}{n+\kappa}} \right]	
	\leqslant
	c\left[ 1 + \left(\I_{B_{R}} |u|^{\kappa} \,dx\right)^{\frac{1}{n+\kappa}} \right]
	\end{split}
\end{align*}
with a constant $c\equiv c(n,\lambda_{4}(\kappa), [a]_{\omega_{a}} + [b]_{\omega_{b}})$. Once we arrive at this stage the rest of the proof can be proceed in the same way as in the proof of Theorem \ref{thm:lb}.
\end{proof}

\begin{thm}
 	\label{thm:hc_ai}
 	Let $u\in W^{1,\Psi}(\O)$ be a local $Q$-minimizer of the functional $\P$ defined in \eqref{ifunct} under the coefficient functions $a(\cdot)\in C^{\omega_a}(\O)$ and $b(\cdot)\in C^{\omega_{b}}(\O)$ for $\omega_{a}, \omega_{b}$ being non-negative concave functions vanishing at the origin. If the assumption $\eqref{ma:4}$ is satisfied, then for for every open subset $\O_{0}\Subset\O$, there exists a H\"older continuity exponent
 		$\gamma\equiv \gamma(\data_{i}(\O_{0}))\in (0,1)$ such that
 		\begin{align}
 		\label{hc_ai:1}
 		\norm{u}_{L^{\infty}(\O_{0})} + [u]_{0,\gamma;\O_{0}} \leqslant c(\data_{i}(\O_{0}))
 	\end{align}
 	and the oscillation estimate
 	\begin{align}
 		\label{hc_ai:2}
 		\osc\limits_{B_{\rho}} u \leqslant c\left(\frac{\rho}{R} \right)^{\gamma}\osc\limits_{B_{R}} u
 	\end{align}
 	holds for some $c\equiv c(\data_{i}(\O_0))$ and all concentric balls
 	$B_{\rho}\Subset B_{R}\Subset \O_{0}\Subset \O$ with $R\leqslant 1$.
 \end{thm}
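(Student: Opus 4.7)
The plan is to reduce the argument to the De Giorgi iteration already carried out in the proof of Theorem \ref{thm:hc}. Two preparatory observations make this reduction possible. First, Theorem \ref{thm:lb_ai} already supplies the local $L^\infty$ bound \eqref{lb_ai:1_1}, so in particular $u\in L^\infty_{\loc}(\O)$ with $\|u\|_{L^\infty(\O_0)}\leqslant c(\data_i(\O_0))$. Second, all of Lemma \ref{lem:cacc}, Sobolev-Poincar\'e (Theorem \ref{thm:sp_ai} with Remark \ref{rmk:sp_ai}) and the general machinery of Section \ref{sec:5} continue to hold for $u$ under assumption \eqref{ma:4}.

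The core step is to establish an analogue of the almost standard Caccioppoli inequality of Lemma \ref{lem:cacct} under \eqref{ma:4}. Starting from Lemma \ref{lem:cacc} applied to $B_{R_1}\Subset B_{R_2}\subset B_R$ with $R\leqslant 1$, one splits the right hand side into the three pieces $I_1,I_2,I_3$ as in \eqref{cacct:2}. The genuinely new estimate is $I_1$: using Lemma \ref{lem:nf1} and $\eqref{ma:4}_2$ written as
\begin{align*}
\frac{H_a(t)}{G(t)} \;\leqslant\; \lambda_4(\kappa)\Bigl(1+\bigl[\omega_a(t^{-\kappa/(n+\kappa)})\bigr]^{-1}\Bigr),
\end{align*}
together with \eqref{concave2} at the scale $R$, one obtains
\begin{align*}
I_1 &\leqslant c\,\omega_a(R)\Bigl(\tfrac{R}{R_2-R_1}\Bigr)^{s(H_a)+1}\int_{B_{R_2}}\Bigl(1+\tfrac{1}{\omega_a(R)}+\tfrac{|u-k|^{\kappa/(n+\kappa)}}{\omega_a(R)R^{\kappa/(n+\kappa)}}\Bigr)G\Bigl(\tfrac{(u-k)_\pm}{R_2-R_1}\Bigr)dx.
\end{align*}
Here one exploits the a priori bound $\|u\|_{L^\infty(\O_0)}\leqslant c(\data_i(\O_0))$ from Theorem \ref{thm:lb_ai} (assuming $k\in\R$ lies in the relevant range, and treating the complementary cases exactly as at the end of the proof of Lemma \ref{lem:cacct}) to absorb $|u-k|^{\kappa/(n+\kappa)}/R^{\kappa/(n+\kappa)}$ into a constant depending on $\data_i(\O_0)$. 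The analogous estimate for $I_2$ is identical with $(a,H_a)$ replaced by $(b,H_b)$, and $I_3$ is handled exactly as in \eqref{cacct:5}. Summing, one arrives at
\begin{align*}
\int_{B_{R_1}}\Psi_{B_R}^{-}\!\bigl(|D(u-k)_\pm|\bigr)dx \;\leqslant\; c\Bigl(\tfrac{R}{R_2-R_1}\Bigr)^{s(\Psi)+1}\int_{B_{R_2}}\Psi_{B_R}^{-}\!\Bigl(\tfrac{(u-k)_\pm}{R}\Bigr)dx
\end{align*}
with $c\equiv c(\data_i(\O_0))$, which is the desired Caccioppoli inequality.

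Once this estimate is in place, the remainder is a verbatim repetition of Steps 1--3 in the proof of Theorem \ref{thm:hc} (the case when \eqref{ma:2} is in force, since both hypotheses give $u\in L^\infty_{\loc}$). Namely: for any $\varepsilon\in(0,1)$ the Sobolev-Poincar\'e inequality applied to the functions $w_i$ combined with the Caccioppoli inequality above and Lemma \ref{lem:nf3} yields a natural number $m\equiv m(\data_i(\O_0),\varepsilon)$ realizing \eqref{hc:7}; then the De Giorgi lemma iteration of Step 2 produces a small absolute $\varepsilon_0\equiv\varepsilon_0(\data_i(\O_0))$ such that \eqref{hc:11} implies \eqref{hc:12}; combining the two gives an oscillation decay of the form $\osc_{B_{R/4}}u\leqslant (1-2^{-(m+1)})\osc_{B_R}u$, which upgrades by the standard iteration to \eqref{hc_ai:2} and hence to $u\in C^{0,\gamma}_{\loc}(\O)$ with $\gamma\equiv\gamma(\data_i(\O_0))$. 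The local sup bound already recorded in Theorem \ref{thm:lb_ai} then yields \eqref{hc_ai:1} by a standard covering argument.

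The main difficulty lies solely in the derivation of the new Caccioppoli estimate: unlike the cases \eqref{ma:1}--\eqref{ma:3}, the compatibility condition $\eqref{ma:4}_2$ mixes the gradient-scale $1/t$ with the power $t^{\kappa/(n+\kappa)}$ that would ordinarily grow with $|u|$, so one cannot close the bound without first invoking the quantitative $L^\infty$ estimate \eqref{lb_ai:1_1} of Theorem \ref{thm:lb_ai}. Once that is inserted, all subsequent arguments are unchanged from Theorem \ref{thm:hc}.
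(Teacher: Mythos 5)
Your strategy is sound and does yield the theorem, but it takes a noticeably longer route than the paper. The paper's proof is a two-line reduction: once Theorem \ref{thm:lb_ai} gives $u\in L^{\infty}_{\loc}(\O)$, one checks directly that $\eqref{ma:4}_{2}$ implies $\eqref{ma:2}_{2}$ — for $t\leqslant 1$ one has $\omega_{a}(t)\leqslant\omega_{a}(t^{\kappa/(n+\kappa)})$ so $\Lambda(t,1/t)\leqslant\Lambda(t^{\kappa/(n+\kappa)},1/t)\leqslant\lambda_{4}(\kappa)$, while for $t\geqslant 1$ the elementary bound $\frac{\omega_{a}(t)}{1+\omega_{a}(t)}\cdot\frac{1+\omega_{a}(t^{\kappa/(n+\kappa)})}{\omega_{a}(t^{\kappa/(n+\kappa)})}\leqslant 1+\frac{1}{\omega_{a}(1)}$ gives $\Lambda(t,1/t)\leqslant\lambda_{4}(\kappa)(1+\omega_{a}(1)^{-1}+\omega_{b}(1)^{-1})$. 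Hence assumption \eqref{ma:2} holds and Theorem \ref{thm:hc}, Lemma \ref{lem:cacct} and all the De Giorgi machinery apply \emph{verbatim}; nothing needs to be re-derived. Your plan instead rebuilds the almost standard Caccioppoli inequality from scratch under \eqref{ma:4}. That works, but it duplicates work that the reduction makes unnecessary; the trade-off is that your version keeps the quantitative dependence on $\lambda_{4}(\kappa)$ explicit at each step, whereas the paper's passes through the derived constant $\lambda_{2}$.

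One computation in your key estimate needs correction. Applying \eqref{concave2} with $\sigma\equiv\omega_{a}$, $t\equiv R$ and $\lambda t\equiv\bigl[(u-k)_{\pm}/R\bigr]^{-\kappa/(n+\kappa)}$ produces the term
\begin{align*}
\frac{1}{\omega_{a}(R)}+\frac{R}{\omega_{a}(R)}\Bigl[\frac{(u-k)_{\pm}}{R}\Bigr]^{\kappa/(n+\kappa)}
=\frac{1}{\omega_{a}(R)}+\frac{R^{n/(n+\kappa)}}{\omega_{a}(R)}(u-k)_{\pm}^{\kappa/(n+\kappa)},
\end{align*}
not $\frac{1}{\omega_{a}(R)}\bigl[(u-k)_{\pm}/R\bigr]^{\kappa/(n+\kappa)}$ as you wrote: you dropped the factor $R$ supplied by \eqref{concave2}. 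As literally stated, your claim to ``absorb $|u-k|^{\kappa/(n+\kappa)}/R^{\kappa/(n+\kappa)}$ into a constant'' fails, since that quantity blows up as $R\to 0$ even for bounded $u$. With the correct factor $R^{n/(n+\kappa)}\leqslant 1$, the term is bounded by $(2\norm{u}_{L^{\infty}(\O_0)})^{\kappa/(n+\kappa)}$ and the Caccioppoli inequality closes with $c\equiv c(\data_{i}(\O_0))$; the remainder of your argument then goes through as you describe.
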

 
\begin{proof}
First let us observe that, for every $t\geqslant 1$, we have 
\begin{align*}
	\begin{split}
	\frac{\omega_{a}(t)}{1+\omega_{a}(t)} \frac{1+ \omega_{a}\left( t^{\frac{\kappa}{n+\kappa}} \right)}{\omega_{a}\left( t^{\frac{\kappa}{n+\kappa}} \right)}
	&\leqslant
	1 + \frac{\omega_{a}(t)}{\omega_{a}\left( t^{\frac{\kappa}{n+\kappa}} \right) + \omega_{a}(t)\omega_{a}\left( t^{\frac{\kappa}{n+\kappa}} \right)}
	\\&
	\leqslant
	1 + \frac{1}{\omega_{a}\left( t^{\frac{\kappa}{n+\kappa}} \right)}
	\leqslant 
	1+\frac{1}{\omega_{a}(1)}.
	\end{split}
\end{align*}
This same inequality holds true also for $\omega_{b}$. Therefore, for every $t\geqslant 1$, we see that 
\begin{align*}
	\begin{split}
	\Lambda\left(t, \frac{1}{t} \right) 
	&\leqslant \lambda_{4}(\kappa)\left( \frac{\omega_{a}(t)}{1+\omega_{a}(t)} \frac{1+ \omega_{a}\left( t^{\frac{\kappa}{n+\kappa}} \right)}{\omega_{a}\left( t^{\frac{\kappa}{n+\kappa}} \right)} + 
	\frac{\omega_{b}(t)}{1+\omega_{b}(t)} \frac{1+ \omega_{b}\left( t^{\frac{\kappa}{n+\kappa}} \right)}{\omega_{b}\left( t^{\frac{\kappa}{n+\kappa}} \right)}\right)
	\\&
	\leqslant
	\lambda_{4}(\kappa)\left( 1 + \frac{1}{\omega_{a}(1)} + \frac{1}{\omega_{b}(1)} \right)=: \lambda_{2},
	\end{split}
\end{align*}
where we have used the assumption $\eqref{ma:4}_{2}$. On the other hand, recalling that the functions $\omega_{a}$ and $\omega_{b}$ are increasing, we have 
\begin{align*}
	\Lambda\left(t,\frac{1}{t} \right) \leqslant 
	\Lambda\left(t^{\frac{\kappa}{n+\kappa}},\frac{1}{t} \right) \leqslant \lambda_{4}(\kappa) \leqslant \lambda_{2}
\end{align*}
for every $t\in (0,1]$. Recalling that $u\in L^{\infty}_{\loc}(\O)$ by Theorem \ref{thm:lb_ai} and taking into account the last two displays, we are able to apply Theorem \ref{thm:hc} in order to have \eqref{hc_ai:1} and \eqref{hc_ai:2}.
\end{proof}

\begin{rmk}
	\label{rmk:hc_ai:1}
	As a consequence of the last two theorems like we have that if $u\in W^{1,\Psi}(\O)$ is a local $Q$-minimizer of the functional $\P$ under the assumption \eqref{ma:4}, then $u\in L^{\infty}_{\loc}(\O)$ and $\eqref{ma:2}_{2}$ is satisfied. Therefore, the results of Theorem \ref{mth:mr}, Theorem \ref{mth:md}, Theorem \ref{dmth:mr} and Theorem \ref{dmth:md} are still available under the assumption \eqref{ma:4}. Furthermore, the results of the present section can be considered under multi-phase settings, as we have pointed out in Remark \ref{rmk:multi}.
\end{rmk}

%


\bibliographystyle{amsplain}

\end{document}